\newtheorem{thm}{Theorem}[section]
\newtheorem{lem}[thm]{Lemma}
\newtheorem{prop}[thm]{Proposition}
\theoremstyle{definition}
\theoremstyle{remark}
\newtheorem{rem}[thm]{Remark}
\numberwithin{equation}{section}
\DeclareMathOperator*{\esssup}{ess\,sup}
\begin{document}

\title[Stabilization of higher order Schr\"{o}dinger equations]
{Stabilization of higher order Schr\"{o}dinger equations \\ on a finite interval: Part I}

\author{Ahmet Batal$^{\MakeLowercase{a}}$, Türker Özsarı$^{\MakeLowercase{b},*}$, and Kemal Cem Yılmaz$^{\MakeLowercase{a}}$}
\address{$^a$ Department of Mathematics, Izmir Institute of Technology\\ Urla, Izmir, 35430 Turkey}
\address{$^b$ Department of Mathematics, Bilkent University\\ Çankaya, Ankara, 06800 Turkey}
\thanks{*Corresponding author: Türker Özsarı, turker.ozsari@bilkent.edu.tr}
\keywords{higher order Schrödinger equation, backstepping, stabilization, observer, boundary controller, exponential stability.}
\subjclass[2020]{35Q93, 93B52, 93C20, 93D15, 93D20, 93D23 (primary), and 35A01, 35A02, 35Q55, 35Q60 (secondary)}

\begin{abstract}
	We study the backstepping stabilization of higher order linear and nonlinear Schrödinger equations on a finite interval, where the boundary feedback acts from the left Dirichlet boundary condition.  The plant is stabilized with a prescribed rate of decay. The construction of the backstepping kernel is based on a challenging successive approximation analysis. This contrasts with the case of second order pdes.  Second, we consider the case where the full state of the system cannot be measured at all times but some partial information such as measurements of a boundary trace are available. For this problem, we simultaneously construct an observer and the associated backstepping controller which is capable of stabilizing the original plant.  Wellposedness and regularity results are provided for all pde models. Although the linear part of the model is similar to the KdV equation, the power type nonlinearity brings additional difficulties.  We give two examples of boundary conditions and partial measurements.    We also present numerical algorithms and simulations verifying our theoretical results to the fullest extent.  Our numerical approach is novel in the sense that we solve the target systems first and obtain the solution to the feedback system by using the bounded invertibility of the backstepping transformation.
\end{abstract}

\maketitle
\newpage
\tableofcontents
\newpage
\section{Introduction}
\subsection{Statement of problems and main results}\label{statement}
\begin{sloppypar}The main purpose of this paper is to establish the boundary backstepping stabilization of the higher order linear and nonlinear Schrödinger equations with a prescribed decay rate. The linear equation is given by
	\begin{equation}\label{linmaineqintro}
	iu_t + i\beta u_{xxx} +\alpha u_{xx} +i\delta u_x = 0,
	\end{equation} while the higher order nonlinear Schrödinger equation (HNLS) has the following form:
	\begin{equation}\label{maineqintro}
	iu_t + i\beta u_{xxx} +\alpha u_{xx} +i\delta u_x + f(u) = 0,
	\end{equation} where $\beta>0,\alpha,\delta\in \mathbb{R}$, $u$ is complex valued, and $f(u)=|u|^pu, p\in (0,4].$
\end{sloppypar}
The initial condition is given by
\begin{equation}\label{maininit}
u(x,0)=u_0(x),
\end{equation} where $u_0$ will assume various different degrees of smoothness depending on the type of problem that we study below.  We will associate \eqref{linmaineqintro}-\eqref{maininit} and \eqref{maineqintro}-\eqref{maininit} with two different sets of boundary conditions. In Sections 2-4 below, we assume
\begin{equation}\label{bdryconA}
u(0,t)=g_0(t), u(L,t)=0, u_x(L,t)=0,
\end{equation}whereas in Section 5, we take
\begin{equation}\label{bdryconB}
u(0,t)=g_0(t), u_x(L,t)=0, u_{xx}(L,t)=0.
\end{equation} The left end boundary input $g_0$ denotes a backstepping feedback controller.

HNLS is used to describe the evolution of femtosecond pulse propagation in a nonlinear optical fiber \cite{koda85,Kod87}. In \eqref{linmaineqintro}, the third order term corresponds to the higher order linear dispersion. The nonlinear term in \eqref{maineqintro} is the self-phase modulation.  Indeed, more general nonlinearities could be considered here to take into account self-steepening and self-frequency shift due to the stimulated Raman scattering.  In the absence of the higher order dispersion, the model becomes the classical nonlinear Schrödinger equation (NLS) which describes slowly varying wave envelopes in a dispersive medium.  However, for the pulses in the femtosecond regime, the NLS equation becomes inadequate and higher order nonlinear and dispersive terms become crucial. See \cite{agrawal} for a detailed discussion on the higher order effects upon the propagation of an optical pulse.  From the practical point of view, the stabilization of solutions to HNLS becomes necessary to suppress any chaotic behaviour during the transmission of optical pulses. This paper shows how this can be achieved with a prescribed speed by using a controller which acts only on the boundary of the medium.  The latter is especially important in applications for which access to the medium is severely restricted and only external control mechanisms are available.

Consider for example the linearized equation \eqref{linmaineqintro} together with the initial condition \eqref{maininit} and the set of boundary conditions \eqref{bdryconA}:
\begin{eqnarray}\label{heatlin}
\begin{cases}
iu_t + i\beta u_{xxx} +\alpha u_{xx} +i\delta u_x = 0, x\in (0,L), t\in (0,T),\\
u(0,t)=g_0(t), u(L,t)=0, u_x(L,t)=0,\\
u(x,0)=u_0(x).
\end{cases}
\end{eqnarray}
It is not difficult to show that when $g_0\equiv 0$, the solution of \eqref{heatlin} satisfies
\begin{equation*}
\frac{1}{2}\frac{d}{dt}|u(\cdot,t)|_2^2 = -\frac{\beta}{2}|u_x(0,t)|^2,\, t\ge 0.
\end{equation*} One can see this formally by multiplying \eqref{heatlin} by $\bar{u}$, taking the imaginary parts, and integrating over $(0,L)$.  This implies that the $L^2$-norm is nonincreasing since we assume $\beta > 0$.  Some solutions may decay to zero of course, but there are certainly some solutions which do not decay. Consider for instance $\beta=1, \alpha=2,\delta=8,L=\pi,$ and $u_0(x)=3-e^{4ix}-2e^{-2ix}.$  Then, $u(x,t)=u_0(x)$ is a time independent solution of \eqref{heatlin} on the interval $(0,\pi)$, whose $L^2(0,L)$-norm is conserved.  In any case, what we  really want is that all solutions to have an exponential decay with a prescribed large rate.  This suggests inserting a fast stabilizing effect into the system correlated with the prescribed decay rate.  We will achieve this by using the classical backstepping controller design, which comes with several technical challenges to overcome in the case of the current problem, explained below in more detail.
A backstepping controller acting at the left endpoint of the domain is  constructed by using a transformation given by
\begin{equation}\label{backstepping}w(x,t) = (I-\Upsilon_k)u(x,t)  \doteq u(x,t)- \int_x^Lk(x,y)u(y,t)dy.\end{equation}
In \eqref{backstepping} the kernel $k$ is suitably chosen so that $w$ becomes the solution of a pde model whose solution readily satisfies the exponential decay property with the prescribed decay rate constant, say $r>0$.  An obvious choice is the weakly damped higher order Schrödinger equation:
\begin{eqnarray}\label{target}
\begin{cases}
iw_t + i\beta w_{xxx} +\alpha w_{xx} +i\delta w_x + ir w= 0, x\in (0,L), t\in (0,T),\\
w(0,t)=0, w(L,t)=0, w_x(L,t)=0,\\
w(x,0)=w_0(x)\doteq u_0-\int_x^Lk(x,y)u_0(y)dy.
\end{cases}
\end{eqnarray} The solution of \eqref{target} satisfies \begin{equation}\label{targetdecay}
|w(\cdot,t)|_2\lesssim |w_0|_2e^{-rt},\,t\ge 0.
\end{equation} One can see this by multiplying \eqref{target} with $\bar{w}$, integrating over $(0,L)$ and taking the imaginary parts.

It is clear from \eqref{backstepping} and the boundary conditions $w(0,t)=0, u(0,t)=g_0(t)$ that the backstepping feedback must have the form \begin{equation}\label{controller}g_0(t)\doteq \int_0^Lk(0,y)u(y,t)dy.\end{equation}

The difficulty is generally associated with finding a suitable kernel $k$ so that one can reach at the target system \eqref{target} starting from the original plant \eqref{heatlin}.  Once such kernel is found and one proves that the backstepping transformation is bounded invertible on a suitable space, then one can conclude that the same decay rate property also holds for the solution of \eqref{heatlin}.

Therefore, the problem in which we are interested can be stated as follows:\\\\
\vspace{.1in}
	\noindent\fbox{%
		\begin{varwidth}{\dimexpr\textwidth-2\fboxsep-2\fboxrule\relax}
			{\textbf{Rapid stabilization:}
				Given $r>0$, find a (sufficiently smooth) kernel $k$ such that the solution of \eqref{heatlin} satisfies $$|u(\cdot,t)|_2\lesssim |u_0|_2e^{-rt},t\ge 0,$$ with the feedback controller $g_0$ in \eqref{controller}.}
		\end{varwidth}%
	}
After some calculations (see Appendix \ref{kerneldeduct} for details) one can find that the solution of the original linearized problem {\eqref{heatlin}} is transformed into the solution of \eqref{target} via \eqref{backstepping} if the kernel $k=k(x,y)$ is a solution of the boundary value problem
\begin{eqnarray}\label{kernela}
\begin{cases}
k_{xxx}+k_{yyy}-i\tilde{\alpha}(k_{xx}-k_{yy})+\tilde{\delta}(k_x+k_y)+\tilde{r}k=0, \\
k(x,x)=k(x,L)=0,\\
\frac{d}{dx}k_x(x,x)=-\frac{\tilde{r}}{3},
\end{cases}
\end{eqnarray}
where $(x,y)$ belongs to the triangular domain $$\Delta_{x,y}\doteq\{(x,y)\in \mathbb{R}^2\,|\,x\in (0,L), y\in (x,L)\} \text{ (see Figure }\ref{fig:Deltaxy}), $$$\tilde{\alpha}=\alpha/\beta$, $\tilde{\delta}=\delta/\beta$, and $\tilde{r}=r/\beta$.

\begin{rem}
	We will sometimes write $k=k(x,y;r)$ to emphasize the fact that the kernel implicitly depends on the prescribed rate constant $r$.
\end{rem}

One of the novelties of this paper is the proof of the existence and smoothness of a backstepping kernel $k$ solving \eqref{kernela}.  Although the proof relies on the classical scheme of successive approximations, implementation of this technique for \eqref{kernela} requires a very delicate and rigorous series analysis at each step of the succession. We present a unified approach for solving \eqref{kernela} which can also be applied to the stabilization of various other second and higher order evolution equations.

Our main result regarding the linearized model \eqref{heatlin} is the following.
\begin{thm}\label{thm1}
	Let $T,\beta,r>0$, $\alpha,\delta\in \mathbb{R}$, $u_0\in L^2(0,L)$, and $g_0(t)=g_0(u(\cdot,t))$ be as in \eqref{controller} where $k=k(x,y;r)$ is a smooth backstepping kernel solving \eqref{kernela} (constructed in Lemma \ref{lemkernel} below).  Then \eqref{heatlin} has a unique mild solution $u\in C([0,T];L^2(0,L))\cap L^2(0,T;H^{1}(0,L))$ with $u_x(0,\cdot)\in L^2(0,T)$ and
	$\left|u(\cdot,t)\right|_2 \le c_k\left|u_0\right|_2e^{-rt}, t\ge 0,$ where $c_k \ge 0$ depending only on $k$ given by $c_k=\left|(I-\Upsilon_{k})^{-1}\right|_{2\rightarrow 2} \left(1+\left|k\right|_{L^2(\Delta_{x,y})}\right)$.
\end{thm}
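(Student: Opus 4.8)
The plan is to obtain Theorem \ref{thm1} from the linear theory of the damped target system \eqref{target} by conjugating with the backstepping transformation $I-\Upsilon_k$, exactly as outlined after \eqref{controller}. The argument will rest on three ingredients: (i) $\Upsilon_k$ is a Volterra-type integral operator with a smooth kernel, so $I-\Upsilon_k$ is a bounded bijection of $L^2(0,L)$ with bounded inverse; (ii) the well-posedness and regularity theory of \eqref{target}, which provides a unique $w\in C([0,T];L^2(0,L))\cap L^2(0,T;H^1(0,L))$ with $w_x(0,\cdot)\in L^2(0,T)$ and the decay $\abs{w(\cdot,t)}_2\le\abs{w_0}_2 e^{-rt}$ (the latter being the energy computation recalled around \eqref{targetdecay}); and (iii) the kernel identities \eqref{kernela}, established in Appendix \ref{kerneldeduct}, which make $I-\Upsilon_k$ intertwine the closed-loop dynamics of \eqref{heatlin}--\eqref{controller} with those of \eqref{target}.

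For (i): since $k=k(x,y;r)$ is smooth on the closed triangle $\overline{\Delta_{x,y}}$, Cauchy--Schwarz gives $\abs{\Upsilon_k}_{2\rightarrow 2}\le\abs{k}_{L^2(\Delta_{x,y})}$, and because $\Upsilon_k$ integrates only over $(x,L)$ its iterated kernels decay factorially, so $\sum_{n\ge 0}\Upsilon_k^n$ converges in operator norm and $(I-\Upsilon_k)^{-1}$ is bounded on $L^2(0,L)$. The same smoothness lets regularity pass through the inverse: writing $u=w+\Upsilon_k u$, once $w\in C([0,T];L^2)$ is fixed one gets $u:=(I-\Upsilon_k)^{-1}w\in C([0,T];L^2(0,L))$, and the correction $(\Upsilon_k u)(\cdot,t)=\int_\cdot^L k(\cdot,y)u(y,t)\,dy$ is smooth in $x$ with all $x$-derivatives bounded pointwise in $t$ by $C\abs{u(\cdot,t)}_2$. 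Hence $u\in L^2(0,T;H^1(0,L))$ whenever $w$ is, and $u_x(0,t)=w_x(0,t)+O(\abs{u(\cdot,t)}_2)$, so $u_x(0,\cdot)\in L^2(0,T)$.

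To construct the solution I would set $w_0:=(I-\Upsilon_k)u_0\in L^2(0,L)$, take the corresponding $w$ solving \eqref{target}, and define $u:=(I-\Upsilon_k)^{-1}w$. The boundary data transfer automatically from $w=(I-\Upsilon_k)u$ and $k(x,x)=k(x,L)=0$: at $x=L$ one finds $u(L,t)=w(L,t)=0$ and $u_x(L,t)=w_x(L,t)=0$, while $w(0,t)=0$ is precisely $u(0,t)=\int_0^L k(0,y)u(y,t)\,dy$, i.e. $g_0$ being the feedback \eqref{controller}. That $u$ is then a mild solution of \eqref{heatlin} is the content of \eqref{kernela}: one runs the inverse backstepping transformation (whose kernel solves the companion boundary value problem) and the Appendix computation in reverse, extending from smooth data by density and the continuity of $I-\Upsilon_k$. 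Uniqueness is immediate: any mild solution $u$ of the closed loop gives $w=(I-\Upsilon_k)u$ solving \eqref{target} with data $w_0$, which is unique, whence $u$ is unique by invertibility.

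The decay is then just the composition of two bounded maps with the target estimate: for $t\ge 0$,
\[
\abs{u(\cdot,t)}_2\le\abs{(I-\Upsilon_k)^{-1}}_{2\rightarrow 2}\abs{w(\cdot,t)}_2\le\abs{(I-\Upsilon_k)^{-1}}_{2\rightarrow 2}\abs{w_0}_2 e^{-rt},
\]
and $\abs{w_0}_2=\abs{(I-\Upsilon_k)u_0}_2\le\bigl(1+\abs{k}_{L^2(\Delta_{x,y})}\bigr)\abs{u_0}_2$, which yields $\abs{u(\cdot,t)}_2\le c_k\abs{u_0}_2 e^{-rt}$ with $c_k=\abs{(I-\Upsilon_k)^{-1}}_{2\rightarrow 2}\bigl(1+\abs{k}_{L^2(\Delta_{x,y})}\bigr)$, as claimed. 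I expect the real difficulty to lie not in this transfer argument but in its two inputs, handled separately: constructing a smooth kernel $k$ solving \eqref{kernela} via the successive-approximation and series analysis of Lemma \ref{lemkernel}, and establishing the well-posedness together with the hidden boundary-trace regularity of the third-order target system \eqref{target}.
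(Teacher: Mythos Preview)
Your proposal is correct and follows essentially the same approach as the paper: the paper proves Theorem \ref{thm1} by combining Proposition \ref{wplin} (well-posedness via the target system \eqref{target} and the bounded invertibility of $I-\Upsilon_k$ from Lemma \ref{inverselem}) with Proposition \ref{stablin} (the decay estimate obtained exactly as you wrote, composing $\abs{(I-\Upsilon_k)^{-1}}_{2\to 2}$ with the target decay \eqref{targetdecay} and the bound $\abs{w_0}_2\le(1+\abs{k}_{L^2(\Delta_{x,y})})\abs{u_0}_2$). Your treatment of the $H^1$ and trace regularity transfer through the smooth Volterra correction also matches the paper's argument in the proof of Proposition \ref{wplin}.
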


Once we achieve the rapid stabilization for the linearized model, we are able to prove that small solutions of the corresponding nonlinear model below has the same decay property, where $g_0$ is the backstepping controller in \eqref{controller} with the kernel $k$ solving \eqref{kernela}:
\begin{eqnarray}\label{heat}
\begin{cases}
iu_t + i\beta u_{xxx} +\alpha u_{xx} +i\delta u_x + |u|^pu = 0, x\in (0,L), t\in (0,T),\\
u(0,t)=g_0(t), u(L,t)=0, u_x(L,t)=0,\\
u(x,0)=u_0(x).
\end{cases}
\end{eqnarray}

The nonlinear problem is treated via the multiplier method. Unfortunately, in this case, it turns out that the backstepping transformation spoils the monotone structure of the nonlinear power type term in the target system; see \eqref{ch414} below. We use the special multiplier $(1+x)u$ for dealing with some of the technical challenges in nonlinear estimates. However, there is another major difficulty here when $p>1$, which is the fact that the Lyapunov analysis yields a differential inequality which involves two nonlinear terms and one has to deal with the asymptotic analysis of the solution of a Chini's type differential inequality. Nevertheless, we are able to prove that the exponential decay can still be obtained for small solutions, although the situation is much better for the local wellposedness, where we prove existence of local solutions even for large data, except for $p=4$, in which case smallness is a natural condition. The following theorem states the corresponding wellposedness and stability results for the nonlinear model.

\begin{thm}\label{thm2}
	Let $T,\beta,r'>0$, $\alpha,\delta\in \mathbb{R}$, $p\in (0,4]$, $u_0\in L^2(0,L)$ (small if $p=4$). Then, there corresponds $r>0$ and $g_0(t)=g_0(u(\cdot,t))$ as in \eqref{controller} where $k=k(x,y;r)$ is a smooth backstepping kernel solving \eqref{kernela} (constructed in Lemma \ref{lemkernel} below) such that
	\begin{itemize}
		\item[(i)] \eqref{heat} has a unique local mild solution $u \in C([0,T_0];L^2(0,L)) \cap L^2(0,T_0;H^{1}(0,L))$ for some $T_0\in (0,T]$ with $u_x(0,\cdot)\in L^2(0,T_0)$ and
		\item[(ii)] if $|u_0|_2$ is small, then $u$ can be extended globally and it satisfies
		$\left|u(\cdot,t)\right|_2 \lesssim \left|u_0\right|_2e^{-r't}, t\ge 0.$
	\end{itemize}
\end{thm}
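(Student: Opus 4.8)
The plan is to pass to the target system via the backstepping transformation \eqref{backstepping} and then combine the linear estimates behind Theorem~\ref{thm1} with the structure of the transformed nonlinearity. Since $k=k(x,y;r)$ solves \eqref{kernela}, the transformation \eqref{backstepping} carries the linear part of \eqref{heat} exactly onto that of \eqref{target}; writing $u=(I-\Upsilon_k)^{-1}w$ and using the bounded invertibility of $I-\Upsilon_k$ on $L^2(0,L)$ (and, since $k$ is smooth, on $H^1(0,L)$ as well), the plant \eqref{heat} becomes equivalent to the damped target system
\[
iw_t+i\beta w_{xxx}+\alpha w_{xx}+i\delta w_x+irw+N(w)=0,\qquad w(0,t)=w(L,t)=w_x(L,t)=0,
\]
with $w_0=(I-\Upsilon_k)u_0$ and $N(w)\doteq(I-\Upsilon_k)\bigl(|(I-\Upsilon_k)^{-1}w|^p(I-\Upsilon_k)^{-1}w\bigr)$. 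All the analysis is carried out for $w$ and then transferred back to $u$ using $|u(\cdot,t)|_2\lesssim|w(\cdot,t)|_2$, $|w_0|_2\lesssim|u_0|_2$, and the fact that the traces at $x=0$ transfer (from $w_x(0,t)$ to $u_x(0,t)$) via the smooth kernel.

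For part (i) I would run a contraction mapping argument on the Duhamel formula $w(t)=S(t)w_0-i\int_0^t S(t-s)N(w(s))\,ds$, where $S(\cdot)$ is the $C_0$-semigroup of the linear damped problem. The linear estimates (established earlier for the linearized model) give $\|S(\cdot)w_0\|_{X_{T_0}}\lesssim|w_0|_2$ together with $w_x(0,\cdot)\in L^2(0,T_0)$, where $X_{T_0}\doteq C([0,T_0];L^2(0,L))\cap L^2(0,T_0;H^1(0,L))$, and the analogous bound for the Duhamel term in terms of $\|N(w)\|_{L^1(0,T_0;L^2)}$. The key nonlinear estimate is the one-dimensional Gagliardo--Nirenberg inequality $\||v|^pv\|_{L^2}\lesssim\|v\|_{L^\infty}^p|v|_2\lesssim|v|_2^{1+p/2}\|v\|_{H^1}^{p/2}$, which, after invoking the boundedness of $I-\Upsilon_k$ and $(I-\Upsilon_k)^{-1}$ on $L^2$ and $H^1$ and Hölder's inequality in time (legitimate since $p/2\le 2$), yields
\[
\|N(w)\|_{L^1(0,T_0;L^2)}\lesssim T_0^{\,1-p/4}\,\|w\|_{X_{T_0}}^{\,1+p};
\]
the difference estimate is identical after the elementary bound $\bigl||v|^pv-|\tilde v|^p\tilde v\bigr|\lesssim(|v|^p+|\tilde v|^p)|v-\tilde v|$. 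For $p<4$ the factor $T_0^{1-p/4}$ makes the map a contraction on a ball of $X_{T_0}$ once $T_0$ is small; for $p=4$ this gain in $T_0$ disappears, so one closes the fixed point on a ball of small radius instead, which is exactly where the smallness assumption on $|u_0|_2$ enters. Undoing the transformation gives the local mild solution of \eqref{heat} with the stated regularity and trace.

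For part (ii), assuming $|u_0|_2$ small, I would perform a Lyapunov analysis on the target system with the special multiplier $(1+x)\bar w$. Multiplying the $w$-equation by $(1+x)\bar w$, integrating over $(0,L)$ and taking real parts, the boundary conditions $w(0,t)=w(L,t)=w_x(L,t)=0$ kill the boundary contributions, the third-order term produces the interior dissipation $-\tfrac{3\beta}{2}|w_x|_2^2$ (plus $-\tfrac{\beta}{2}|w_x(0,t)|^2$), the $\alpha$- and $\delta$-terms are lower order and absorbable, and the damping contributes $-2r|\sqrt{1+x}\,w|_2^2$. The obstruction is that $-\mathrm{Im}\int_0^L(1+x)\bar w\,N(w)$ does not vanish: its ``diagonal'' part $-\mathrm{Im}\int_0^L(1+x)|w|^{p+2}=0$, but the cross terms generated by $\bar w\ne\bar u$ and by the integral part of $N$ survive, and after the Gagliardo--Nirenberg bound they are controlled by $c_1|w|_2^{2+p/2}\|w\|_{H^1}^{p/2}+c_2|w|_2^{2+p}$, i.e.\ by \emph{two} nonlinear terms of different homogeneity. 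Using Young's inequality on the first of them (valid since $p/2<2$, with $p=4$ forcing smallness so that it is absorbed directly into the dissipation) to swallow a fraction of $\tfrac{3\beta}{2}|w_x|_2^2$, one arrives at a Chini-type differential inequality
\[
E'(t)\le -2r\,E(t)+c_1 E(t)^{1+\theta_1}+c_2 E(t)^{1+\theta_2},\qquad E(t)\doteq|\sqrt{1+x}\,w(\cdot,t)|_2^2,
\]
with exponents $\theta_1,\theta_2>0$ depending on $p$. A continuity/bootstrap argument then shows that if $E(0)$ (equivalently $|u_0|_2$) is small enough, the two nonlinear terms stay dominated by $(2r-2r')E(t)$ for any prescribed $r'\in(0,r)$, so $E'(t)\le -2r'E(t)$ and hence $E(t)\le E(0)e^{-2r't}$ for all $t\ge0$; in particular $|w(\cdot,t)|_2$ remains bounded, so the local solution extends globally, and transferring back through $(I-\Upsilon_k)^{-1}$ yields $|u(\cdot,t)|_2\lesssim|u_0|_2e^{-r't}$. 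This is also where the internal rate $r$ is chosen: one takes $r$ large relative to the prescribed $r'$ so as to absorb the lower-order linear terms and the loss incurred in the bootstrap.

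I expect the main obstacle to be the last two steps of part (ii): tracking precisely the cross terms produced because the backstepping transformation breaks the monotone (sign-definite) structure of $|u|^pu$, and then carrying out the asymptotic analysis of the resulting Chini-type inequality involving two nonlinear terms — together with the critical case $p=4$, where smallness is unavoidable already for local wellposedness.
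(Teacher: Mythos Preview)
Your plan is correct and matches the paper's approach almost step for step: pass to the target system, run a contraction in $X_{T_0}$ using the Gagliardo--Nirenberg bound (with the $T_0^{1-p/4}$ gain for $p<4$ and smallness for $p=4$), and then carry out the $(1+x)\bar w$ multiplier computation to obtain a Chini-type differential inequality with two nonlinear powers. Two small points: you should be taking \emph{imaginary} parts after multiplying by $(1+x)\bar w$ (the equation starts with $iw_t$), and the paper separates the cases $0<p\le 1$ (where only one nonlinear power survives) and $1<p\le 4$. The only substantive difference is in how the Chini inequality is handled: you propose a direct bootstrap (while $E$ is small, $c_1E^{1+\theta_1}+c_2E^{1+\theta_2}\le (2r-2r')E$, hence $E$ decays and stays small), whereas the paper's Lemma~\ref{stab_lem} argues by splitting into the regimes $y>1$ and $y<1$ to isolate the dominant nonlinearity and then solves the resulting Bernoulli inequalities explicitly; your argument is the more economical of the two and yields the same conclusion for small data. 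The paper also justifies the multiplier computation via a density argument through an $H^3$ local theory (Proposition~\ref{higerreg}), which you should flag.
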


If the state of a system can be measured at all times, one can construct an exponentially stabilizing
backstepping controller as we proved in Theorem \ref{thm1} and Theorem \ref{thm2}.  When this is not the case, the general approach is to (i) design an observer which uses some partial information extracted from the original plant such as a boundary measurement, (ii) construct an exponentially stabilizing backstepping controller for the observer, and then (iii) prove that the same controller (which uses the observer's state) also stabilizes the original plant in a similar manner.

In the above case, we introduce the following observer (estimator) for \eqref{heatlin}:
\begin{equation}\label{observer} \left\{ \begin{array}{ll}
i\hat{u}_t + i\beta \hat{u}_{xxx} +\alpha \hat{u}_{xx} +i\delta \hat{u}_x \\
-p_1(x)\left(y(t)-\hat{u}_{xx}(L,t)\right)=0,\text { in } (0,L)\times (0,T),\\
\hat{u}(0,t)=g_0(t),\,\hat{u}(L,t)=0,\,\hat{u}_x(L,t)=0,\text { in } (0,T),\\
\hat{u}(x,0)=\hat{u}_0(x),\text { in } (0,L),\end{array} \right.
\end{equation} where $y(t)={u}_{xx}(L,t)$ denotes the partial information extracted from the original plant through a sensor placed at the boundary point $x=L$. In this case, we set the controller to be
\begin{equation}\label{controller2}g_0(t)\doteq \int_0^Lk(0,y)\hat{u}(y,t)dy.
\end{equation}
Observe that the new feedback uses the states of the observer \eqref{observer} instead of the states of the original plant \eqref{heatlin}.  The same feedback will be supplied also to the original plant \eqref{heatlin}.  Our aim is to find a function $p_1 = p_1(x)$ such that $\hat{u}(t)-u(t)$ tends to zero as $t\rightarrow \infty$, desirably with a prescribed exponential decay rate, in a physically suitable norm.  This can be achieved by stabilizing the error model below written with the unknown  $\tilde{u}=\hat{u}-u$:
\begin{equation}\label{error} \left\{ \begin{array}{ll}
i\tilde{u}_t + i\beta \tilde{u}_{xxx} +\alpha \tilde{u}_{xx} +i\delta \tilde{u}_x +p_1(x)\tilde{u}_{xx}(L,t)=0,\text { in } (0,L)\times (0,T),\\
\tilde{u}(0,t)=0, \tilde{u}(L,t)=0,\tilde{u}_x(L,t)=0 \text { in } (0,T);\\
\tilde{u}(x,0)=\hat{u}_0(x)-u_0(x) \text { in }  (0,L). \end{array} \right.
\end{equation}
We will show that the error can also be controlled via backstepping, in which case $p_1\tilde{u}_{xx}(L,\cdot)$ is regarded as a feedback acting from the interior. Here we use a backstepping transformation
\begin{equation}\label{transtildew}
\tilde{u}(x,t)=\tilde{w}(x,t)-\int_x^L p(x,y)\tilde{w}(y,t)dy,
\end{equation} where $\tilde{w}$ is the solution of an exponentially decaying target system (this is written in \eqref{tildew} below) and $p$ is the corresponding kernel which solves the kernel pde model \eqref{p} below on $\Delta_{x,y}$ (see Appendix \eqref{dedkerp} for details). Once $p$ is found, it will turn out that we can set $p_1(x):=-i\beta p(x,L)$. We prove the following theorem.
\begin{thm}\label{obsthm}
	Let $T,\beta,r>0$, $\alpha,\delta\in\mathbb{R}$, $u_0,\hat{u}_0\in H^6(0,L)$, $u_0(0)= \\\int_0^Lk(0,y)\hat{u}_0(y)dy$, $u_0(L)=0$, $\tilde{w}_0=(I-\Upsilon_p)^{-1}\tilde{u}_0$ satisfy the compatibility conditions
	\begin{equation}\label{compa}\varphi(\bar{x})=- \beta \varphi'''(\bar{x}) +i\alpha \varphi''(\bar{x}) -\delta \varphi'(\bar{x})=0,\bar{x}=0,L,\end{equation} and $g_0(t)=g_0(\hat{u}(\cdot,t))$ be as in \eqref{controller2} where $k$ and $p$ are smooth solutions of \eqref{kernela} and \eqref{p}, respectively. Then the plant-observer-error system \eqref{heatlin}-\eqref{observer}-\eqref{error} has a solution $(u,\hat{u},\tilde{u})\in X_T^3\times X_T^3\times X_T^6$. Moreover, for $\epsilon>0$ (small) and $r>0$, the components of the solution $(u,\hat{u},\tilde u)$ satisfy
	\begin{itemize}
		\item[(i)] $\left|u(\cdot,t)\right|_2 \le c_{\epsilon,k,p,\hat u_0,\tilde u_0}e^{-(r- \epsilon c_{k,p})t}+c_p\left|\tilde u_0\right|_{H^3(0,L)}e^{-rt}$,
		\item[(ii)] $\left|\hat u(\cdot,t)\right|_2 \le c_{\epsilon,k,p,\hat u_0,\tilde u_0}e^{-(r- \epsilon c_{k,p})t}$, and
		\item[(iii)] $\left|\tilde{u}(\cdot,t)\right|_{H^3(0,L)} \le c_p\left|\tilde u_0\right|_{H^3(0,L)}e^{-r t},$ respectively,
	\end{itemize} where $c_{\epsilon,k,p,\hat u_0,\tilde u_0}$, $c_{k,p}$, and $c_p$ are nonnegative constants depending on their sub-indices.
\end{thm}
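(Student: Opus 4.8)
The proof exploits the cascade (triangular) structure of the plant--observer--error system \eqref{heatlin}--\eqref{observer}--\eqref{error}: the error equation \eqref{error} is self-contained, so the plan is to solve it first, read off its boundary trace $\tilde u_{xx}(L,\cdot)$ as a known scalar function of $t$, feed this into the observer \eqref{observer}, and recover the plant from $u=\hat u-\tilde u$. Thus both wellposedness and the three decay estimates are obtained stage by stage. \emph{Stage 1 (the error).} Using the kernel $p$ solving \eqref{p} and the relation $p_1(x)=-i\beta p(x,L)$ from Appendix~\eqref{dedkerp}, the transformation \eqref{transtildew} conjugates the exponentially damped target system \eqref{tildew} (with homogeneous data $\tilde w(0,t)=\tilde w(L,t)=\tilde w_x(L,t)=0$) to \eqref{error}. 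The wellposedness/regularity theory of the earlier sections applies to \eqref{tildew}; since $\tilde w_0=(I-\Upsilon_p)^{-1}\tilde u_0\in H^6(0,L)$ carries the endpoint compatibility conditions \eqref{compa}, one gets $\tilde w\in X_T^6$ and hence, by bounded invertibility of $I-\Upsilon_p$, also $\tilde u=(I-\Upsilon_p)\tilde w\in X_T^6$. Because the damping in \eqref{tildew} enters diagonally through $ir\tilde w$, we may write $\tilde w(x,t)=e^{-rt}v(x,t)$ with $v$ solving the undamped problem; combining the boundedness of the undamped evolution on $H^3$ for compatible data with the boundedness of $I-\Upsilon_p$ on $H^3$ yields (iii). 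In particular, $H^1(0,L)\hookrightarrow C[0,L]$ applied to $\tilde u_{xx}(\cdot,t)$ gives $|\tilde u_{xx}(L,t)|\lesssim|\tilde u_0|_{H^3(0,L)}e^{-rt}$.

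\emph{Stage 2 (observer and plant decay).} With $\tilde u_{xx}(L,\cdot)$ now a known, exponentially small function, \eqref{observer} becomes a linear nonhomogeneous problem whose only active feedback is the boundary input \eqref{controller2}. Applying $\hat w=(I-\Upsilon_k)\hat u$ with the kernel $k$ of \eqref{kernela}: the choice \eqref{controller2} forces $\hat w(0,t)=0$, the conditions $k(x,x)=k(x,L)=0$ preserve $\hat w(L,t)=\hat w_x(L,t)=0$, and the computation of Appendix~\ref{kerneldeduct} turns \eqref{observer} into the damped target \eqref{target} for $\hat w$ with an added source $q(x)\tilde u_{xx}(L,t)$, where $q(x)=p_1(x)-\int_x^L k(x,y)p_1(y)\,dy$. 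The resulting $L^2$ energy identity
$$\tfrac12\tfrac{d}{dt}|\hat w(\cdot,t)|_2^2+r|\hat w(\cdot,t)|_2^2+\tfrac{\beta}{2}|\hat w_x(0,t)|^2=-\,\mathrm{Im}\!\int_0^L q(x)\,\tilde u_{xx}(L,t)\,\overline{\hat w(x,t)}\,dx,$$
together with Cauchy--Schwarz and Stage 1, gives $\frac{d}{dt}|\hat w|_2^2+2r|\hat w|_2^2\lesssim|\tilde u_0|_{H^3}e^{-rt}|\hat w|_2$, so a Gronwall/Duhamel argument produces $|\hat w(\cdot,t)|_2\lesssim(|\hat w_0|_2+t\,|\tilde u_0|_{H^3})e^{-rt}$. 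Absorbing the polynomial factor through $te^{-rt}\le(\epsilon' e)^{-1}e^{-(r-\epsilon')t}$ and using bounded invertibility of $I-\Upsilon_k$ on $L^2$ yields (ii), with the small rate loss $\epsilon c_{k,p}$ encoding the choice of $\epsilon'$ and $c_{\epsilon,k,p,\hat u_0,\tilde u_0}$ collecting $|\hat w_0|_2$, $|q|_2$ and the transformation norms. Then (i) follows at once from $u=\hat u-\tilde u$, the triangle inequality, (ii) and (iii).

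\emph{Stage 3 (wellposedness of the coupled system).} It remains to certify $(u,\hat u,\tilde u)\in X_T^3\times X_T^3\times X_T^6$. The error component is produced in $X_T^6$ in Stage 1; inserting it, \eqref{observer} is a $g_0$-feedback plant driven by a source of class $X_T^6$, so the linear wellposedness theory underlying Theorem~\ref{thm1}, upgraded from $L^2$ to $H^3$ by means of $\hat u_0\in H^6$ (hence $H^3$) and the first order compatibility conditions $u_0(L)=0$, $u_0(0)=\int_0^L k(0,y)\hat u_0(y)\,dy$, produces $\hat u\in X_T^3$. Finally $u:=\hat u-\tilde u$ is verified to solve \eqref{heatlin} with the feedback \eqref{controller2}, using $g_0(\hat u(\cdot,t))=g_0(u(\cdot,t))+\int_0^L k(0,y)\tilde u(y,t)\,dy$ and consistency of the boundary traces.

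The main obstacle is Stage 1: showing that the damped target evolution for $\tilde w$ is exponentially stable \emph{in the $H^3$ topology} at the prescribed rate $r$, not merely in $L^2$. This is precisely where the hypothesis $\tilde u_0\in H^6$ and the endpoint compatibility \eqref{compa} are consumed — one must propagate hidden-regularity and boundary-trace bounds for the third order dispersive operator through successive differentiation in $t$ (equivalently, through the domain of powers of the generator), and then transfer these bounds across $I-\Upsilon_p$ and $I-\Upsilon_k$, whose bounded invertibility must be available on $H^3$ and not only on $L^2$. Controlling the scalar trace $\tilde u_{xx}(L,t)$ with the sharp exponential rate — so that the Duhamel term in Stage 2 costs only an arbitrarily small $\epsilon$ in the exponent — is the crux of the whole argument.
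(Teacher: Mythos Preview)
Your proposal is correct and follows essentially the same three-stage cascade strategy as the paper (Propositions~\ref{propobs} and~\ref{obsprop2} together with Lemma~\ref{wtildelem}): solve the error first via the $p$-transformation to the damped target \eqref{tildew}, extract the trace $\tilde u_{xx}(L,\cdot)=\tilde w_{xx}(L,\cdot)$, feed it as a known forcing into the $k$-transformed observer target \eqref{targetobs}, and recover $u=\hat u-\tilde u$.

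There is one methodological variation worth recording. For the observer $L^2$ decay, the paper applies $\epsilon$-Young's inequality directly in the energy identity, obtaining
\[
\tfrac{d}{dt}|\hat w|_2^2+2\bigl(r-\epsilon|(I-\Upsilon_k)p_1|_2^2\bigr)|\hat w|_2^2\le c_\epsilon|\tilde w_0|_{H^3}^2e^{-2rt},
\]
so the rate loss $\epsilon c_{k,p}$ appears already at the differential-inequality level and integration gives (ii) with no polynomial prefactor. You instead keep the sharp Cauchy--Schwarz bound, integrate the Duhamel at the full rate $r$ to pick up a factor $t$, and then trade $te^{-rt}\le(\epsilon' e)^{-1}e^{-(r-\epsilon')t}$. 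Both routes are valid and yield the stated form of (ii); the paper's is marginally cleaner in that the constant $c_{\epsilon,k,p,\hat u_0,\tilde u_0}$ is displayed explicitly. For the $H^3$ decay of $\tilde w$ in Stage~1, your factorization $\tilde w=e^{-rt}v$ together with ``boundedness of the undamped evolution on $H^3$'' is exactly what the paper proves in Lemma~\ref{wtildelem} by differentiating \eqref{tildew} in $t$, bounding $|\tilde w_t(\cdot,t)|_2$, and invoking \eqref{newwxest2}; you should point to that argument (or reproduce it) rather than assert the $H^3$ bound. Finally, in Stage~3 your phrase ``source of class $X_T^6$'' is imprecise: the relevant fact is that $[(I-\Upsilon_k)p_1](x)\tilde w_{xx}(L,t)\in W^{1,1}(0,T;L^2(0,L))$, which is what permits the $H^3$ regularity of $\hat w$ via Proposition~\ref{wtildehigherreg}.
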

\begin{rem}
	The function spaces $X_T^s$ $(s\ge 0)$ used in the above theorem are defined in Section \ref{secinv} below.
\end{rem}
\begin{rem}
	Extending the above result to the nonlinear model gets terribly difficult  due to the technical challenges related with multiplier calculations, and therefore constructing an observer in the nonlinear case remains open.
\end{rem}
In the last section of the paper, we show that all of the above results extend to another set of boundary conditions given in \eqref{bdryconB}. Considering the linearized model
\begin{eqnarray}\label{heatlin_obc}
\begin{cases}
iu_t + i\beta u_{xxx} +\alpha u_{xx} +i\delta u_x = 0, x\in (0,L), t\in (0,T),\\
u(0,t)=g_0(t), u_x(L,t)=0, u_{xx}(L,t)=0,\\
u(x,0)=u_0(x),
\end{cases}
\end{eqnarray}we find that a backstepping transformation \begin{equation} \label{backstepping_obc}
w(x,t) := u(x,t) - \int_x^L \ell(x,y) u(y,t)dy
\end{equation} yields a boundary value problem for the kernel $\ell$ given by
\begin{eqnarray}\label{kernela_obc}
\begin{cases}
\ell_{xxx}+\ell_{yyy}-i\tilde{\alpha}(\ell_{xx}-\ell_{yy})+\tilde{\delta}(\ell_x+\ell_y)+\tilde{r}\ell=0, \\
\left(\ell_{yy} + i \tilde{\alpha}\ell_y+ \tilde{\delta}\ell\right)(x,L) = 0,\\
\ell(x,x) = 0, \\
\ell_x(x,x)=-\frac{\tilde{r} (L - x)}{3},
\end{cases}
\end{eqnarray}
where $(x,y) \in \Delta_{x,y}$ and $\tilde{\alpha}=\alpha/\beta$, $\tilde{\delta}=\delta/\beta$, $\tilde{r}=r/\beta$. In \eqref{backstepping_obc}, $w$ is assumed to satisfy the target system introduced in \eqref{target_obc} below.

In the absence of the boundary control (i.e. $g_0\equiv 0$), multiplying the main equation by $\overline{u}$, integrating over $(0,L)$ and taking the imaginary parts, one can see that the solution of \eqref{heatlin_obc} satisfies
\begin{equation*}
\frac{d}{dt}|u(\cdot,t)|_2^2 = -\left(\beta |u_x(0,t)|^2 + \delta |u(L,t)|^2\right) \leq 0
\end{equation*} given that we further assume $\delta \ge  0$.
We prove the following.
\begin{thm}\label{thm3}
	Let $T,\beta,r>0$, $\delta\ge 0$, $\alpha\in \mathbb{R}$, $u_0\in L^2(0,L)$, and $g_0(t)=g_0(u(\cdot,t))$ be given by \begin{equation}\label{controllerell}g_0(t)\doteq \int_0^L\ell(0,y){u}(y,t)dy.\end{equation} where $\ell=\ell(x,y;r)$ is a smooth backstepping kernel solving \eqref{kernela_obc} (constructed in Lemma \ref{lemkernel_obc} below).  Then \eqref{heatlin_obc} has a unique mild solution $u\in C([0,T];L^2(0,L))\cap L^2(0,T;H^{1}(0,L))$ with $u_x(0,\cdot)\in L^2(0,T)$ that satisfies
	$\left|u(\cdot,t)\right|_2 \le c_\ell\left|u_0\right|_2e^{-rt}, t\ge 0,$ where $c_\ell \ge 0$ depending only on $\ell$ given by \\ $c_\ell=\left|(I-\Upsilon_{\ell})^{-1}\right|_{2\rightarrow 2} \left(1+\left|\ell\right|_{L^2(\Delta_{x,y})}\right)$.
\end{thm}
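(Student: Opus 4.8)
The plan is to follow the route of the proof of Theorem \ref{thm1}, with the kernel $k$ replaced by $\ell$ and the target system \eqref{target} replaced by its analogue \eqref{target_obc}, which is the weakly damped equation $iw_t+i\beta w_{xxx}+\alpha w_{xx}+i\delta w_x+irw=0$ subject to $w(0,t)=w_x(L,t)=w_{xx}(L,t)=0$ and $w(x,0)=w_0:=(I-\Upsilon_\ell)u_0$. First I would invoke Lemma \ref{lemkernel_obc} to obtain a smooth kernel $\ell=\ell(x,y;r)$ solving \eqref{kernela_obc} on $\Delta_{x,y}$. Since $\ell$ is continuous (indeed smooth) up to the closure of $\Delta_{x,y}$, the operator $\Upsilon_\ell$ is a bounded Volterra operator on $L^2(0,L)$ whose iterated kernels decay rapidly; hence $I-\Upsilon_\ell$ is a bounded bijection of $L^2(0,L)$ with bounded inverse, and because the construction provides enough smoothness of $\ell$, the same holds on $C([0,T];L^2(0,L))\cap L^2(0,T;H^1(0,L))$ together with the $x=0$-trace. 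This is exactly the invertibility statement used for $k$ in Section \ref{secinv}.

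Second, I would establish well-posedness and decay for the target system \eqref{target_obc}. The operator associated with \eqref{heatlin_obc} under homogeneous boundary data generates, by the same energy and multiplier estimates used for \eqref{heatlin} in Sections 2--4, a strongly continuous semigroup on $L^2(0,L)$ with the Kato-type smoothing gain $L^2(0,T;H^1(0,L))$ and the sharp boundary regularity $w_x(0,\cdot)\in L^2(0,T)$; the bounded zeroth-order perturbation $irw$ does not affect this. The exponential bound $|w(\cdot,t)|_2\le|w_0|_2e^{-rt}$ follows from the energy identity: multiplying the equation by $\bar w$, integrating over $(0,L)$ and taking imaginary parts, the boundary conditions $w(0,t)=w_x(L,t)=w_{xx}(L,t)=0$ leave
\begin{equation*}
\tfrac{d}{dt}|w(\cdot,t)|_2^2=-2r|w(\cdot,t)|_2^2-\beta|w_x(0,t)|^2-\delta|w(L,t)|^2\le -2r|w(\cdot,t)|_2^2,
\end{equation*}
where the hypotheses $\delta\ge 0$ and $\beta>0$ are precisely what make the boundary contributions favorable.

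Third, I would set $u:=(I-\Upsilon_\ell)^{-1}w$ and verify, using that $\ell$ solves \eqref{kernela_obc}, that $u$ is the unique mild solution of the closed-loop plant \eqref{heatlin_obc} with feedback \eqref{controllerell}. The three conditions in \eqref{kernela_obc} are exactly what is needed: the interior PDE for $\ell$ cancels the bulk terms produced by differentiating \eqref{backstepping_obc}; the conditions $\ell(x,x)=0$ and $\ell_x(x,x)=-\tilde{r}(L-x)/3$ produce $u_x(L,t)=u_{xx}(L,t)=0$; and the flux condition $(\ell_{yy}+i\tilde{\alpha}\ell_y+\tilde{\delta}\ell)(x,L)=0$ disposes of the trace at $y=L$ — these are the computations recorded in the derivation of \eqref{kernela_obc}. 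Since $w(0,t)=0$, the transformation gives $u(0,t)=\int_0^L\ell(0,y)u(y,t)\,dy=g_0(t)$, so the loop is indeed closed with \eqref{controllerell}; uniqueness transfers from uniqueness for \eqref{target_obc} through the bounded invertibility of $I-\Upsilon_\ell$, and the regularity class $C([0,T];L^2)\cap L^2(0,T;H^1)$ with $u_x(0,\cdot)\in L^2(0,T)$ is inherited the same way.

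Finally, combining the bounds gives $|u(\cdot,t)|_2\le|(I-\Upsilon_\ell)^{-1}|_{2\to2}\,|w(\cdot,t)|_2\le|(I-\Upsilon_\ell)^{-1}|_{2\to2}\,|w_0|_2e^{-rt}$, and $|w_0|_2=|(I-\Upsilon_\ell)u_0|_2\le(1+|\ell|_{L^2(\Delta_{x,y})})\,|u_0|_2$ by the Cauchy--Schwarz inequality applied to the integral term, which yields the asserted constant $c_\ell=|(I-\Upsilon_\ell)^{-1}|_{2\to2}(1+|\ell|_{L^2(\Delta_{x,y})})$. I expect the main obstacle to be the linear well-posedness and smoothing analysis for the higher-order Schrödinger operator under the boundary conditions \eqref{bdryconB} — in particular propagating the sharp trace $u_x(0,\cdot)\in L^2(0,T)$ and the half-derivative gain through the flux-type condition at $x=L$ — while the backstepping and decay parts are then essentially bookkeeping once $\ell$ from Lemma \ref{lemkernel_obc} and these linear estimates are in hand.
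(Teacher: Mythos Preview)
Your proposal is correct and follows essentially the same route as the paper: Theorem~\ref{thm3} is obtained from Proposition~\ref{obcprop1} (wellposedness via the semigroup for the target system \eqref{target_obc} under the boundary conditions \eqref{bdryconB}, together with the bounded invertibility of $I-\Upsilon_\ell$) and Proposition~\ref{obcprop2} (decay by the $L^2$ energy identity for \eqref{target_obc}, using $\beta>0$ and $\delta\ge 0$, and transferring through $I-\Upsilon_\ell$ exactly as you describe). Your identification of the role of each boundary condition in \eqref{kernela_obc} and of the constant $c_\ell$ via Cauchy--Schwarz matches the paper's argument.
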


The corresponding nonlinear model is given by
\begin{eqnarray}\label{heat_obc}
\begin{cases}
iu_t + i\beta u_{xxx} +\alpha u_{xx} +i\delta u_x + |u|^pu = 0, x\in (0,L), t\in (0,T),\\
u(0,t)=g_0(t), u_x(L,t)=0, u_{xx}(L,t)=0,\\
u(x,0)=u_0(x).
\end{cases}
\end{eqnarray} We have the following theorem regarding \eqref{heat_obc}.
\begin{thm}\label{thm4}
	Let $T,\beta,r'>0$, $\delta\ge 0$, $\alpha\in \mathbb{R}$, $p\in (0,4]$, $u_0\in L^2(0,L)$ (small if $p=4$). Then, there corresponds $r>0$ and $g_0(t)=g_0(u(\cdot,t))$ as in \eqref{controllerell} where $\ell=\ell(x,y;r)$ is a smooth backstepping kernel solving \eqref{kernela_obc} (constructed in Lemma \ref{lemkernel_obc} below) such that
	\begin{itemize}
		\item[(i)] \eqref{heat_obc} has a unique local mild solution $u \in C([0,T_0];L^2(0,L)) \cap L^2(0,T_0;H^{1}(0,L))$ for some $T_0\in (0,T]$ with $u_x(0,\cdot)\in L^2(0,T_0)$ and
		\item[(ii)] if $|u_0|_2$ is small, then $u$ can be extended globally and it satisfies
		$\left|u(\cdot,t)\right|_2 \lesssim \left|u_0\right|_2e^{-r't}, t\ge 0.$
	\end{itemize}
\end{thm}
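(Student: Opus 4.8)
The plan is to mirror, step by step, the proof of Theorem \ref{thm2}, replacing the kernel $k$ of \eqref{kernela}, the target system \eqref{target} and the linear result of Theorem \ref{thm1} by the kernel $\ell$ of \eqref{kernela_obc}, the target system \eqref{target_obc} and Theorem \ref{thm3}, respectively, and using throughout the smoothness of $\ell$ (Lemma \ref{lemkernel_obc}) together with the bounded invertibility of $I-\Upsilon_\ell$ on $L^2(0,L)$ and on $H^1(0,L)$ (part of the linear theory underlying Theorem \ref{thm3}). The key reduction is that, under $w=(I-\Upsilon_\ell)u$ with $g_0$ given by \eqref{controllerell}, the feedback system \eqref{heat_obc} is equivalent to the target system \eqref{target_obc} driven by the transformed source $F[u]:=|u|^{p}u-\Upsilon_\ell\big(|u|^{p}u\big)$, i.e. to the equation $iw_t+i\beta w_{xxx}+\alpha w_{xx}+i\delta w_x+irw+F\big[(I-\Upsilon_\ell)^{-1}w\big]=0$ with the boundary conditions of \eqref{target_obc} --- in particular $w(0,t)=0$, which is precisely the identity that makes $g_0=\int_0^L\ell(0,y)u(y,t)\,dy$ consistent. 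Since $I-\Upsilon_\ell$ is a bounded isomorphism, solving \eqref{heat_obc} is the same as solving this $w$-equation, and the norms asserted in (i)--(ii) transfer both ways with constants depending only on $\ell$.

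For part (i) I fix $r>0$ (its size only matters in (ii)) and write the Duhamel formula $w(t)=S_\ell(t)w_0+\int_0^{t}S_\ell(t-s)F\big[(I-\Upsilon_\ell)^{-1}w(s)\big]\,ds$, where $S_\ell$ is the $C_0$-semigroup of the linear damped operator with the boundary conditions of \eqref{target_obc}; its $C([0,T];L^2)\cap L^2(0,T;H^1)$ boundedness, the inhomogeneous smoothing estimate $\big\|\int_0^{\cdot}S_\ell(\cdot-s)F(s)\,ds\big\|_{C([0,T];L^2)\cap L^2(0,T;H^1)}\lesssim\|F\|_{L^1(0,T;L^2)}$, and the boundary trace bound for $w_x(0,\cdot)$ are part of the linear theory behind Theorem \ref{thm3}. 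On the ball of radius $\rho$ in $B_{T_0}:=C([0,T_0];L^2(0,L))\cap L^2(0,T_0;H^1(0,L))$ the map defined by this Duhamel formula is a contraction for suitable $\rho,T_0$: using the boundedness of $\Upsilon_\ell$ and of $(I-\Upsilon_\ell)^{-1}$ on $L^2$ and $H^1$ and the one-dimensional Gagliardo--Nirenberg inequality $\big\||v|^{p}v\big\|_{L^2_x}=\|v\|_{L^{2p+2}_x}^{p+1}\lesssim\|v\|_{H^1_x}^{p/2}\|v\|_{L^2_x}^{(p+2)/2}+\|v\|_{L^2_x}^{p+1}$, one obtains, with $v=(I-\Upsilon_\ell)^{-1}w$, the bound $\|F[v]\|_{L^1(0,T_0;L^2)}\lesssim T_0^{(4-p)/4}\|w\|_{B_{T_0}}^{p/2}\|w\|_{C([0,T_0];L^2)}^{(p+2)/2}+T_0\|w\|_{C([0,T_0];L^2)}^{p+1}$, together with the analogous Lipschitz estimate for differences. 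When $p<4$ the factor $T_0^{(4-p)/4}$ gives the contraction for arbitrary $u_0$; when $p=4$ this factor equals $1$ and the argument closes only on a small ball, which forces the smallness hypothesis at that endpoint. Pulling back by $u=(I-\Upsilon_\ell)^{-1}w$ yields the local mild solution $u\in C([0,T_0];L^2)\cap L^2(0,T_0;H^1)$, and $u_x(0,\cdot)\in L^2(0,T_0)$ follows from $w_x(0,\cdot)\in L^2(0,T_0)$ via the Volterra identity expressing $u_x(0,\cdot)$ in terms of $w_x(0,\cdot)$, $w(0,\cdot)=0$ and $w\in C([0,T_0];L^2)$.

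For part (ii), assume $|u_0|_2$ small and run the multiplier scheme of Theorem \ref{thm2} on the $w$-equation. The multiplier $\overline w$ gives, using $\delta\ge0$ to control the sign of the $x=L$ boundary term exactly as in Theorem \ref{thm3}, $\frac{d}{dt}|w(\cdot,t)|_2^2\le-2r|w|_2^2-\beta|w_x(0,t)|^2+2\,\mathrm{Im}\int_0^LF\big[(I-\Upsilon_\ell)^{-1}w\big]\,\overline w\,dx$, where --- because $\Upsilon_\ell$ destroys the purely imaginary character of $\int|u|^{p}u\,\overline u$ --- the last integral is not sign-definite and, after expanding $F$ and $w=(I-\Upsilon_\ell)u$ and applying Hölder, the smoothing of $\Upsilon_\ell$ and Gagliardo--Nirenberg, is bounded by $c_\ell\big(|w|_2^{p+2}+|w|_2^{(p+2)/2}|w_x|_2^{p/2}\big)$. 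The weighted multiplier $(1+x)\overline w$ is then used, as in the computation leading to \eqref{ch414}, to generate a favourably signed term in $|w_x|_2^2$, so that $|w_x|_2^2$ can be estimated by $|w|_2^2$ plus nonlinear quantities and absorbed; one is left with a Chini-type differential inequality of the form $y'\le-2ry+c_\ell y^{1+p/2}+c_\ell y^{1+p}$ for $y(t)\sim|w(\cdot,t)|_2^2$, the two super-linear terms being the source of the difficulty when $p>1$. Choosing the kernel rate $r$ larger than the prescribed $r'$ by a fixed margin and $|u_0|_2$ --- hence $|w_0|_2$ --- small enough that $c_\ell\,y(0)^{p/2}$ and $c_\ell\,y(0)^{p}$ stay below $2(r-r')$ for all time, a continuation argument shows $y$ is global and $y(t)\le y(0)e^{-2r't}$, i.e. $|w(\cdot,t)|_2\lesssim|w_0|_2e^{-r't}$; transferring back, $|u(\cdot,t)|_2\le\big|(I-\Upsilon_\ell)^{-1}\big|_{2\to2}|w(\cdot,t)|_2\lesssim|u_0|_2e^{-r't}$, so in particular the local solution of (i) extends globally.

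The main obstacle I anticipate is part (ii): tracking precisely how $\Upsilon_\ell$ couples the $L^2$- and $H^1$-norms inside $F\big[(I-\Upsilon_\ell)^{-1}w\big]$ so that the resulting estimate really is a Chini-type inequality with the correct exponents, and then carrying out its asymptotic analysis so that small-data solutions decay at exactly the prescribed rate $r'$; this is where the interplay among the smallness of $u_0$, the gap $r-r'$, and the constant $c_\ell$ (built from $\|\ell\|$ and $\big|(I-\Upsilon_\ell)^{-1}\big|_{2\to2}$) has to be made quantitative. By contrast part (i) is a routine Banach fixed point once Theorem \ref{thm3} and the one-dimensional Gagliardo--Nirenberg inequality are available, the only delicate point being the borderline exponent $p=4$.
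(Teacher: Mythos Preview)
Your proposal follows essentially the same route as the paper: reduce \eqref{heat_obc} to the nonlinear target system via $w=(I-\Upsilon_\ell)u$, run the Banach fixed point on the Duhamel formulation for (i), and use the multiplier $(1+x)\bar w$ to derive a Chini-type differential inequality for (ii). The paper indeed states that Propositions \ref{p1} and \ref{p2} (which together give Theorem \ref{thm4}) are proved by repeating verbatim the arguments of Section \ref{nonlinwp} and Section \ref{stabsec} with $\ell$ in place of $k$.

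Two points where your sketch drifts from what the paper actually does, and which you should correct when you write it out. First, in bounding $\mathrm{Im}\int_0^L F[(I-\Upsilon_\ell)^{-1}w](1+x)\bar w\,dx$ the paper does \emph{not} apply a blunt Cauchy--Schwarz $\|F[u]\|_2|w|_2$; instead it writes the integrand as $(I-\Upsilon_\ell)[|w+v|^p(w+v)](1+x)\bar w$ with $v=\Phi w$, observes that $-2\,\mathrm{Im}\int(1+x)|w+v|^p|w|^2\,dx=0$, and is left with cross terms $\int(|w|^{p+1}|v|+|v|^{p+1}|w|)$ and the $\Upsilon_\ell$-piece, all of which are controlled via $|v|_\infty\le c_\ell|w|_2$ (see \eqref{lastterm1a}--\eqref{lastterm1a2}). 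This structural cancellation is what produces the exponent $(p-1)/2$ on $|w_x|_2$ for $p>1$, not the $p/2$ you wrote. Second, and consequently, the resulting Chini inequality is \eqref{yeq1}, namely $y'+(2r-\delta-c_{\alpha,\epsilon})y\le c\,y^{(p+2)/2}+c\,y^{(p+5)/(5-p)}$ for $1<p\le4$ (and just the first nonlinearity when $0<p\le1$, see \eqref{yeq2}); your exponent $1+p$ does not appear. The asymptotic analysis of this inequality is done as a separate lemma (Lemma \ref{stab_lem}) by splitting into the regimes $y>1$ and $y<1$, where one nonlinear term dominates the other, rather than by a direct continuation argument.
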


Regarding the observer design in the case of boundary conditions \eqref{bdryconB}, we assume that we can extract the information $y(t) = u(L,t)$ from the original plant. This motivates us to consider the following linearized observer system:
\begin{eqnarray}\label{observer_obc}
\begin{cases}
i\hat{u}_t + i\beta \hat{u}_{xxx} +\alpha \hat{u}_{xx} +i\delta u_x+p_1(x)\left(y(t)-\hat{u}(L,t)\right)=0,\text { in } (0,L)\times (0,T),\\
\hat{u}(0,t)=g_0(t),\,\hat{u}_x(L,t)=0,\,\hat{u}_{xx}(L,t)=0,\text { in } (0,T),\\
\hat{u}(x,0)=\hat{u}_0(x),\text { in } (0,L),
\end{cases}
\end{eqnarray}
where $p_1(x)$, to be determined, is again intended to achieve $\tilde{u}(t)=u(t) - \hat{u}(t) \to 0$ in the sense of a suitable norm as $t \to \infty$. The error model takes the form
\begin{equation}\label{error_obc}
\begin{cases}
i\tilde{u}_t + i\beta \tilde{u}_{xxx} +\alpha \tilde{u}_{xx} +i\delta \tilde{u}_x -p_1(x)\tilde{u}(L,t)=0,\text { in } (0,L)\times (0,T),\\
\tilde{u}(0,t)=0, \tilde{u}_x(L,t)=0,\tilde{u}_{xx}(L,t)=0 \text { in } (0,T),\\
\tilde{u}(x,0)=u_0(x)-\hat{u}_0(x) \text { in }  (0,L).
\end{cases}
\end{equation}

We again first focus on stabilizing the error system \eqref{error_obc} via a backstepping transformation given by \eqref{transtildew} which uses a suitable kernel $p$ that solves \eqref{p_obc} below.  In this case, the correct choice of $p_1$ is given by $p_1 := -i\beta p_{yy}(\cdot,L) + \alpha p_y(\cdot,L) - i\delta p(\cdot,L).$  We have the following result.

\begin{thm}\label{obsthmobc}
	Let $T,\beta,r>0$, $\delta\ge 0$, $\alpha\in\mathbb{R}$, $u_0,\hat{u}_0\in H^3(0,L)$, $\tilde{u}_0(0)=0$, and $g_0(t)=g_0(\hat{u}(\cdot,t))$ be given by \begin{equation}\label{controller2_obc}g_0(t)\doteq \int_0^L\ell(0,y)\hat{u}(y,t)dy.\end{equation} where $\ell=\ell(x,y;r)$ is a smooth backstepping kernel solving \eqref{kernela_obc} (constructed in Lemma \ref{lemkernel_obc} below). Let also $p$ be a smooth kernel solving \eqref{p_obc}. Then the plant-observer-error system \eqref{heatlin_obc}-\eqref{observer_obc}-\eqref{error_obc} has a solution $(u,\hat{u},\tilde{u})\in X_T^0\times X_T^0\times X_T^3$. Moreover, for $\epsilon>0$ (small) and $r>0$, the components of the solution $(u,\hat{u},\tilde u)$ satisfy
	\begin{itemize}
		\item[(i)] $\left|u(\cdot,t)\right|_2 \le c_{\epsilon,k,p,\hat u_0,\tilde u_0}e^{-(r- \epsilon c_{\ell,p})t}+c_p\left|\tilde u_0\right|_{H^3(0,L)}e^{-rt}$,
		\item[(ii)] $\left|\hat u(\cdot,t)\right|_2 \le c_{\epsilon,\ell,p,\hat u_0,\tilde u_0}e^{-(r- \epsilon c_{\ell,p})t}$, and
		\item[(iii)] $\left|\tilde{u}(\cdot,t)\right|_{H^3(0,L)} \le c_p\left|\tilde u_0\right|_{H^3(0,L)}e^{-r t},$ respectively,
	\end{itemize} where $c_{\epsilon,\ell,p,\hat u_0,\tilde u_0}$, $c_{\ell,p}$, and $c_p$ are nonnegative constants depending on their sub-indices.
\end{thm}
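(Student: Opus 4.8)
The plan is to exploit the triangular (cascade) structure of the coupled system \eqref{heatlin_obc}--\eqref{observer_obc}--\eqref{error_obc}: the error $\tilde u$ obeys \eqref{error_obc} on its own, the observer $\hat u$ obeys a closed loop driven by the error only through the scalar output injection term (indeed $y(t)-\hat u(L,t)=u(L,t)-\hat u(L,t)=\tilde u(L,t)$ since $\tilde u=u-\hat u$), and the plant is then recovered by $u=\hat u+\tilde u$. I would therefore prove the three decay estimates in the order (iii)$\,\Rightarrow\,$(ii)$\,\Rightarrow\,$(i), at each stage converting the relevant subsystem into an exponentially stable target system by a backstepping transformation whose boundedness and bounded invertibility on the relevant $X_T^s$ spaces are provided by Section \ref{secinv} (and by Lemma \ref{lemkernel_obc} for the kernels).

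\textbf{Step 1 (the error system).} Put $\tilde w\doteq(I-\Upsilon_p)^{-1}\tilde u$ with $p$ the smooth solution of \eqref{p_obc} and $p_1\doteq-i\beta p_{yy}(\cdot,L)+\alpha p_y(\cdot,L)-i\delta p(\cdot,L)$. The kernel computation (as in the Appendix) shows that, for these choices, the boundary relations built into \eqref{p_obc} are exactly what makes \eqref{transtildew} intertwine \eqref{error_obc} with the homogeneous, weakly damped target system for $\tilde w$ --- the analogue of \eqref{target_obc} carrying the dissipative term $ir\tilde w$ and the boundary data $\tilde w(0,t)=\tilde w_x(L,t)=\tilde w_{xx}(L,t)=0$ --- with initial datum $\tilde w_0=(I-\Upsilon_p)^{-1}\tilde u_0\in H^3(0,L)$, whose compatibility is inherited from $\tilde u_0(0)=0$ and the boundary conditions of $p$. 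I would then (a) establish $X_T^3$-wellposedness of this target system, a \emph{linear, autonomous} problem with homogeneous boundary data, hence within the semigroup/energy framework already used for \eqref{target}, and (b) prove $|\tilde w(\cdot,t)|_{H^3(0,L)}\lesssim e^{-rt}|\tilde w_0|_{H^3(0,L)}$. At the $L^2$ level this is the standard multiplier computation: testing against $\overline{\tilde w}$ and taking imaginary parts yields $\frac{d}{dt}|\tilde w(\cdot,t)|_2^2=-2r|\tilde w(\cdot,t)|_2^2-\beta|\tilde w_x(0,t)|^2-\delta|\tilde w(L,t)|^2\le-2r|\tilde w(\cdot,t)|_2^2$, where $\beta>0$ and the hypothesis $\delta\ge0$ make the boundary contributions dissipative. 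For the $H^3$ level I would differentiate the equation in $t$ (so that $\tilde w_t$ solves the same problem with data $-\beta\tilde w_0'''+i\alpha\tilde w_0''-\delta\tilde w_0'-r\tilde w_0\in L^2(0,L)$), apply the $L^2$ estimate to $\tilde w_t$, and then recover $\tilde w_{xxx}$ from the equation together with an interpolation inequality to absorb the intermediate derivatives; the damping term propagates the \emph{same} rate $r$ to every order. Transporting back through $I-\Upsilon_p$, bounded and boundedly invertible on $H^3(0,L)$ and on $X_T^3$, gives $\tilde u\in X_T^3$ and estimate (iii).

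\textbf{Step 2 (the observer, then the plant).} Since $y(t)-\hat u(L,t)=\tilde u(L,t)$, the observer \eqref{observer_obc} is \eqref{heatlin_obc} with boundary input $g_0(t)=\int_0^L\ell(0,y)\hat u(y,t)\,dy$ and interior forcing $-p_1(x)\tilde u(L,t)$. Setting $\hat w\doteq(I-\Upsilon_\ell)\hat u$ with $\ell$ solving \eqref{kernela_obc}, the choice \eqref{controller2_obc} of $g_0$ forces $\hat w(0,t)=\hat u(0,t)-\int_0^L\ell(0,y)\hat u(y,t)\,dy=0$, while $\ell(x,x)=0$ (and the remaining conditions in \eqref{kernela_obc}) give $\hat w_x(L,t)=\hat w_{xx}(L,t)=0$; hence $\hat w$ solves the target system \eqref{target_obc} with the additional forcing $\tilde u(L,t)\,q(x)$, $q\doteq-(I-\Upsilon_\ell)[p_1]\in L^2(0,L)$ (a fixed smooth profile, since $p$ is smooth on $\overline{\Delta_{x,y}}$). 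By Step 1 and $H^3(0,L)\hookrightarrow C([0,L])$ we have $|\tilde u(L,s)|\lesssim e^{-rs}|\tilde u_0|_{H^3(0,L)}$, so Duhamel's formula against the exponentially stable target evolution yields
\[
|\hat{w}(\cdot,t)|_2\lesssim e^{-rt}|\hat{w}_0|_2+\left(\int_0^t e^{-r(t-s)}e^{-rs}\,ds\right)|\tilde u_0|_{H^3(0,L)}
=e^{-rt}\left(|\hat{w}_0|_2+t\,|\tilde u_0|_{H^3(0,L)}\right).
\]
Since $t e^{-rt}\lesssim_{\epsilon} e^{-(r-\epsilon)t}$ for any small $\epsilon>0$, bounded invertibility of $I-\Upsilon_\ell$ on $L^2(0,L)$ (and the $X_T^0$ framework of Theorem \ref{thm3}, now with an $L^1(0,T;L^2(0,L))$ forcing) gives $\hat u\in X_T^0$ and estimate (ii) after relabeling the kernel-dependent constants as $c_{\ell,p}$, $c_{\epsilon,\ell,p,\hat u_0,\tilde u_0}$. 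Finally $u=\hat u+\tilde u$ solves \eqref{heatlin_obc} with the boundary data \eqref{controller2_obc}, so $u\in X_T^0$, and the triangle inequality $|u(\cdot,t)|_2\le|\hat u(\cdot,t)|_2+|\tilde u(\cdot,t)|_{H^3(0,L)}$ combined with (ii)--(iii) gives (i).

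\textbf{Main difficulty.} The delicate point is Step 1(b): running the multiplier/energy estimates up to the $H^3$ level for a third-order dispersive operator while preserving the prescribed rate $r$ and keeping every boundary term produced by the integrations by parts under control --- precisely where the sign hypothesis $\delta\ge0$ and the target conditions $\tilde w_x(L,t)=\tilde w_{xx}(L,t)=0$ enter --- together with the algebraic verification (in the Appendix) that the boundary conditions encoded in \eqref{p_obc} and the above formula for $p_1$ are exactly those making $I-\Upsilon_p$ transform \eqref{error_obc} into the damped target system, and that the compatibility survives under $(I-\Upsilon_p)^{-1}$. We note that the regularity demanded here ($H^3$ data, $X_T^3$ solution) is markedly lower than in Theorem \ref{obsthm} ($H^6$, $X_T^6$); this gain is afforded by the gentler boundary measurement $y(t)=u(L,t)$ in place of $y(t)=u_{xx}(L,t)$.
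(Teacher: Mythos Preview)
Your proposal is correct and mirrors the paper's own argument: the cascade structure (error $\to$ observer $\to$ plant), the backstepping transforms $I-\Upsilon_p$ and $I-\Upsilon_\ell$, the $H^3$ decay for $\tilde w$ obtained by differentiating in $t$ and recovering $\tilde w_{xxx}$ from the equation, and the final triangle inequality $|u|_2\le|\hat u|_2+|\tilde u|_{H^3}$ are exactly what the paper does in Section \ref{wpobc}, Lemma \ref{wtildelem_obc}, and Proposition \ref{obcwp}. The one cosmetic difference is your treatment of the observer target: you use Duhamel and the contractivity bound $\|T_r(t)\|_{2\to2}\le e^{-rt}$ to get $te^{-rt}\lesssim_\epsilon e^{-(r-\epsilon)t}$, whereas the paper applies $\epsilon$-Young directly in the energy identity to produce the modified rate $r-\epsilon c_{\ell,p}$; both routes are standard and give the same estimate.
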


\begin{rem}
	Note that Theorem \ref{obsthmobc} requires less smoothness and compatibility conditions compared to Theorem \ref{obsthm}.  This is due to the fact that in Theorem \ref{obsthm} we are using second order trace terms in the main equation of the observer whereas in Theorem \ref{obsthmobc} we only use the Dirichlet traces.
\end{rem}

Finally, we provide numerical treatment of all of the problems studied here in Sections 4 and 5 supporting our theoretical results to the fullest extent.
\subsubsection*{Proofs of main theorems} Theorem \ref{thm1} follows from Proposition \ref{wplin} and Proposition \ref{stablin}. Theorem \ref{thm2} is a consequence of Proposition \ref{nonlinprop} and Proposition \ref{stabnonlin}. Theorem \ref{obsthm} follows from Proposition \ref{propobs} and Proposition \ref{obsprop2}. Theorem \ref{thm3} is due to Proposition \ref{obcprop1} and Proposition \ref{obcprop2}. Theorem \ref{thm4} follows from Proposition \ref{p1} and Proposition \ref{p2}. Theorem \ref{obsthmobc} can be obtained from the discussion in Section \ref{wpobc} and Proposition \ref{obcwp}.
\subsection{Literature review and motivation} The higher order nonlinear Schrödinger equation was proposed by \cite{Kod87} for modeling nonlinear propagation of pulses in optical fibers taking into account the effect of the higher order dispersion.  From a mathematical point of view, researchers studied both the analysis and controllability aspects of this equation.
On the wellposedness side, we would like to refer the reader to \cite{Carvajal03}, \cite{Carvajal04}, \cite{Carvajal06}, \cite{Laurey97}, and \cite{Taka00}. Regarding the controllability aspect, the internal stabilization of the HNLS with constant coefficients was studied by \cite{chen2018} and \cite{Bis07}. A numerical treatment of this problem was given in \cite{Caval19}.  The exact boundary controllability for the higher order nonlinear Schrödinger equation with constant coefficients was studied in \cite{Ceballos05}.

To the best of our knowledge there is no work yet dealing with the boundary feedback stabilization of the higher order Schrödinger equations.  However, this is an important physical problem because in some physical systems access to the interior of the medium may not be available and boundary might be the only location where one can apply a feedback. One of the most effective methods for constructing a boundary feedback is the backstepping technique which was explained in detail in Section \ref{statement} above.   We also suggest that the reader consult \cite{KrsBook} for a detailed review of the backstepping method and its application to the stabilization of evolution equations.

There are also some recent works on the boundary feedback stabilization of other evolution equations which involve higher order dispersion such as the Korteweg-de Vries (see e.g. \cite{BatalOzsari2019},\cite{Cerpa2013}, \cite{Cor14},\cite{BatalOzsari2018-1}) and Korteweg-de Vries Burgers (see e.g. \cite{Eda19},\cite{Balogh2000},\cite{Jia2016}, \cite{Liu2002}) equations.

The first difference of this paper compared to other authors' work on the classical Schrödinger equation or KdV equation, is the construction of the backstepping kernel. For instance, in the case of the classical  Schrödinger equation, the kernel satisfies an integral equation in which the integral involves only the kernel function itself. This makes successive approximation analysis much easier so that even an exact form of the solution can be found by using a Bessell function \cite{KrsBook}. However, in the case of the higher order Schrödinger equation, corresponding integral equation for the kernel involves not only the kernel function itself but also its various partial derivatives.  This makes the analysis much more difficult because each step of the succession brings a linear combination of monomials of different orders. Therefore, finding the exact form of the series which converges to the kernel function is almost impossible and thus, a careful analysis of the behavior of the coefficients in the series without actually computing them is essential. This is what we do in the construction of the kernel (see Lemma \ref{lemkernel} below) and this technique is so general that it can also be applied to kernel models associated with stabilization of  other higher order PDE models. Other approaches claiming existence of backstepping kernels for higher order PDEs were given for the KdV equation in \cite{Cerpa2013} and \cite{Cor14} and for the KdV-Burgers equation in \cite{Eda19}.  In \cite{Cerpa2013} and \cite{Eda19}, authors state the form of the series converging to the kernel with unknown coefficients and claim that coefficients satisfy some bound conditions without proof. In \cite{Cor14}, a backstepping kernel is constructed utilizing exact controllability properties of the KdV equation.  However, this work has two differences compared to ours: (i) the sought after kernel is only $H^1$ as opposed to a $C^\infty$ kernel in this paper and (ii) the construction only applies to domains of uncritical lengths whereas our construction is independent of the domain length.

Another contribution our paper is the Lyapunov analysis of the nonlinear target systems which are obtained once the backstepping transformation is applied.  This is because the power type structure of the nonlinear term is distorted (see \eqref{ch414}) and standard multipliers yield Chini's type ODE inequalities which involve more than one nonlinearities, see \eqref{yeq1} and \eqref{yeq2} below. The Lyapunov analysis given in the context of the KdV equation (see e.g., \cite{Cerpa2013} and \cite{BatalOzsari2018-1}) does not involve ODE inequalities which involve several nonlinearities.  This is an intrinsic feature of the higher order Schrödinger equation and contrasts with the KdV equation.  Our approach for treating this issue is based on reducing the more complicated Lyapunov analysis to a simpler one by examining the time periods in which one nonlinear term dominates the other one.

Finally, we also introduce a numerical approach for the backstepping problem which is different than numerical approaches of other authors who treated the KdV equation. For instance, in \cite{Marx18}, the authors directly solve the original model with the boundary feedback whereas in our paper we solve the target systems that have homogeneous boundary values first and then use the bounded invertibility properties of the backstepping transformation to find the solution of the original model.  In this way, we can use a finite element method which suits best for homogeneous boundary value problems and not susceptible to numerical errors which might happen due to inhomogeneous and rough boundary interactions.  Moreover, we do not solve the kernel PDE model numerically since it is defined on a triangular region which might create complications from the point of computational aspects.  Instead, we use the idea of the proof of Lemma \ref{lemkernel} and construct a numerical kernel through the same procedure. Namely, we obtain an exact polynomial after sufficiently many iterations and use the resulting polynomial as a numerical approximation to the sought after kernel.

\subsection{Preliminaries and notation}\label{secinv}
Given $1\leq p\leq \infty $ and $u\in L^p(0,L)$, $|u|_p$ will denote its $L^p(0,L)$ norm, i.e. $|u|_p=\left(\int_0^L|u(x)|^pdx\right)^{\frac{1}{p}} \text{ if } p<\infty$ and $|u|_\infty=\displaystyle \esssup_{x\in(0,L)}|u(x)|.$

We will write $ A\lesssim B$ in the sense of $A\leq cB$ where the constant $c>0$ may depend on the fixed parameters of the problem under consideration which are not of interest.

We will use $X_T^s$, $(s\ge 0)$ to denote the spaces $$C([0,T];H^s(0,L)) \cap L^2(0,T;H^{s+1}(0,L)).$$

If $A$ is a linear bounded operator on $L^2(0,L)$, we will denote its operator norm on $L^2(0,L)$ by $|A|_{2\rightarrow 2}$.
The following form of the Gagliardo-Nirenberg's inequality will be quite useful in nonlinear estimates.
\begin{lem}\label{gag} Let $p\ge 2$, $\alpha=1/2-1/p$, and $u\in H^1(0,L)$. Then, $|u|_{p}\le c_1|u'|_{2}^\alpha|u|_{2}^{1-\alpha}+c_2|u|_2,$ where $c_1,c_2$ are positive constants depending only on $L$. If in addition $u\in H_0^1(0,L)$, then $c_2=0$.
\end{lem}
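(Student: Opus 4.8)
The plan is to deduce the inequality from the one-dimensional embedding $H^1(0,L)\hookrightarrow L^\infty(0,L)$ combined with a crude interpolation between $L^\infty$ and $L^2$. \emph{Step 1 (pointwise bound).} For $u\in H^1(0,L)$, the mean value property of the integral gives a point $x_0\in[0,L]$ with $|u(x_0)|^2=L^{-1}|u|_2^2$, and for every $x\in[0,L]$ one writes $|u(x)|^2=|u(x_0)|^2+2\int_{x_0}^x\mathrm{Re}(\bar u\,u')\,ds$, whence by Cauchy--Schwarz
\begin{equation*}
|u|_\infty^2\le L^{-1}|u|_2^2+2|u|_2|u'|_2 .
\end{equation*}
If moreover $u\in H_0^1(0,L)$ one may take $x_0=0$, where $u$ vanishes, and obtain the sharper $|u|_\infty^2\le 2|u|_2|u'|_2$ with no zeroth order term.

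\emph{Step 2 (interpolation).} For $p\ge 2$,
\begin{equation*}
|u|_p^p=\int_0^L|u|^{p-2}|u|^2\,dx\le |u|_\infty^{p-2}\,|u|_2^2 .
\end{equation*}
Inserting the bound of Step 1 and using the elementary inequality $(a+b)^s\le C_s(a^s+b^s)$, valid for $a,b\ge 0$ and $s\ge 0$ with $C_s=\max\{1,2^{s-1}\}$ (here $s=p-2$), gives
\begin{equation*}
|u|_p^p\le C\Big(L^{-(p-2)/2}|u|_2^{\,p}+|u|_2^{\,(p+2)/2}|u'|_2^{\,(p-2)/2}\Big).
\end{equation*}
Taking $p$-th roots and using $(a+b)^{1/p}\le a^{1/p}+b^{1/p}$ (subadditivity of $t\mapsto t^{1/p}$, valid since $p\ge1$) splits the right-hand side into $c_2|u|_2+c_1|u|_2^{\theta}|u'|_2^{\alpha}$; a direct computation of exponents yields $\alpha=(p-2)/(2p)=1/2-1/p$ and $\theta=(p+2)/(2p)=1-\alpha$, which is precisely the claimed inequality. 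In the $H_0^1$ case the first monomial is absent, so one may take $c_2=0$ (indeed $c_1=2^\alpha$ then works).

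There is no real obstacle here; the only points needing a little care are the bookkeeping of the elementary power inequalities (the exponent $p-2$ may lie on either side of $1$) and of the constants, which depend on $L$ and on the fixed exponent $p$ — the phrasing ``depending only on $L$'' should be read in that light. Alternatively, the general $H^1$ statement could be obtained from the $H_0^1$ one by extending $u$ to a slightly larger interval (reflection across the endpoints followed by a smooth cutoff) and then invoking subadditivity of $t\mapsto t^\alpha$ with $\alpha\in[0,1)$; I would keep the self-contained argument above since it also produces explicit constants.
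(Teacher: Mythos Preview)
The paper does not actually prove this lemma; it is stated in the preliminaries section as a known form of the Gagliardo--Nirenberg inequality and used freely thereafter. Your self-contained argument via the $L^\infty$ bound and $L^\infty$--$L^2$ interpolation is correct and is in fact the standard one-dimensional route to this inequality.

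One small slip in bookkeeping: when you split $(|u|_\infty^2)^{(p-2)/2}$ using $(a+b)^s\le C_s(a^s+b^s)$, the relevant exponent is $s=(p-2)/2$, not $s=p-2$ as you wrote in the parenthetical. Your displayed bound is nonetheless correct, so this is only a typo in the label, not in the computation. Your observation that the constants also depend on $p$ is accurate; the paper's phrasing ``depending only on $L$'' tacitly treats $p$ as a fixed parameter of the problem.
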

We will also need the following higher order Gagliardo-Nirenberg inequalities in developing the linear theory:
\begin{lem}\label{gag2}Let $u\in H^m(0,L)$ and $\alpha=j/m\le 1$ where $j,m\in \mathbb{N}$. Then, $|u^{(j)}|_{2}\le c_1|u^{(m)}|_{2}^\alpha|u|_{2}^{1-\alpha}+c_2|u|_2,$ where $c_1,c_2$ are positive constants depending only on $L$ and $m$. If in addition $u\in H_0^1(0,L)$, then $c_2=0$.
\end{lem}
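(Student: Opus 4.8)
The plan is to reduce the whole family of inequalities to a single interpolation estimate between $H^1$ and $H^2$ on a reference interval, and then to recover the general statement by rescaling, covering $(0,L)$ by short intervals, and iterating in the order of differentiation. The cases $j=0$ and $j=m$ are trivial, so I assume $1\le j\le m-1$, in particular $m\ge 2$. The key ingredient is the seed estimate $|w'|_{L^2(0,1)}\le C_0\big(|w''|_{L^2(0,1)}+|w|_{L^2(0,1)}\big)$ for $w\in H^2(0,1)$. To prove it without invoking compactness, I would write $w=p+g$, where $p(x)=w(0)+(w(1)-w(0))x$ is the affine interpolant of $w$ and $g:=w-p\in H^1_0(0,1)$. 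Then $g''=w''$, so Wirtinger's inequality $|g|_{L^2(0,1)}\le\pi^{-1}|g'|_{L^2(0,1)}$ together with $|g'|_{L^2(0,1)}^2=-\int_0^1 g\,g''\,dx\le |g|_{L^2(0,1)}|g''|_{L^2(0,1)}$ gives $|g'|_{L^2(0,1)}\le\pi^{-1}|w''|_{L^2(0,1)}$ and $|g|_{L^2(0,1)}\le\pi^{-2}|w''|_{L^2(0,1)}$, while for the affine part one has the elementary bound $|p'|_{L^2(0,1)}=|w(1)-w(0)|\le\sqrt{12}\,|p|_{L^2(0,1)}\le\sqrt{12}\,(|w|_{L^2(0,1)}+|g|_{L^2(0,1)})$. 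Adding $|w'|_{L^2(0,1)}\le|p'|_{L^2(0,1)}+|g'|_{L^2(0,1)}$ then yields the seed estimate with an explicit $C_0$.

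Next I would rescale: setting $w(x)=v(x/h)$ converts the seed estimate into $|w'|_{L^2(I)}\le C_0\big(h\,|w''|_{L^2(I)}+h^{-1}|w|_{L^2(I)}\big)$ on any interval $I$ of length $h$. Splitting $(0,L)$ into $N=L/h$ intervals of length $h$ and summing the squared $L^2$-norms yields, for every admissible $h=L/N$, $N\in\mathbb N$,
\[
|w'|_2\le C_1\big(h\,|w''|_2+h^{-1}|w|_2\big),\qquad w\in H^2(0,L).
\]
Choosing $h$ among the admissible values as close as possible to the balancing scale $\big(|w|_2/|w''|_2\big)^{1/2}$, and taking $h=L$ when that scale exceeds $L$, produces the single-step inequality $|w'|_2\le c\,|w''|_2^{1/2}|w|_2^{1/2}+c\,|w|_2$; equivalently, $|w'|_2\le\eta\,|w''|_2+c_\eta|w|_2$ for every $\eta>0$ with $c_\eta\lesssim\eta^{-1}$.

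To reach general $j$ and $m$, apply the single-step inequality to $w=u^{(i-1)}$ (which lies in $H^2$ whenever $u\in H^m$ and $i\le m-1$), obtaining the three-term recursion $|u^{(i)}|_2\le\eta\,|u^{(i+1)}|_2+c_\eta|u^{(i-1)}|_2$ for $1\le i\le m-1$. Iterating this — choosing the free parameters at successive steps small enough to absorb the intermediate terms, which is the standard bookkeeping argument underlying "intermediate derivative" estimates — pushes the top index up to $m$ and the bottom index down to $0$, so that $|u^{(j)}|_2\le\eta\,|u^{(m)}|_2+c_\eta|u|_2$ for all $\eta>0$; optimizing in $\eta$ gives exactly $|u^{(j)}|_2\le c_1|u^{(m)}|_2^{j/m}|u|_2^{1-j/m}+c_2|u|_2$. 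For the last assertion, if $u\in H^1_0(0,L)$ then Poincaré's inequality $|u|_2\lesssim|u'|_2$ lets one absorb the lower-order term $c_2|u|_2$ into the interpolation term — this is already transparent when $m=2$ from $|u'|_2^2=-\int_0^L u\,u''\,dx\le|u|_2|u''|_2$ — so $c_2$ may be taken to be $0$.

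The only genuinely delicate point is the seed estimate: a direct integration by parts on $(0,L)$ leaves boundary contributions $u(0)u'(0)$ and $u(L)u'(L)$ which cannot be absorbed into $|u'|_2^2$, so one must pass to a reference interval (via the affine-interpolant/$H^1_0$ splitting above, or alternatively via a Rellich compactness argument) and only then recover the correct power of the interval length by scaling. Everything after that — the covering sum, the optimization in $h$, the iteration in the derivative order, and the Poincaré absorption in the homogeneous case — is routine, though one must keep the constants honest so that the absorption in the last step indeed closes.
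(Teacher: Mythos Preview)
The paper does not prove this lemma at all; it is stated in the preliminaries as a known fact and used later without justification. So there is no ``paper's own proof'' to compare against, and your write-up is more than the paper provides.

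Your argument for the main inequality is sound and standard: the affine-interpolant decomposition gives the seed estimate on $(0,1)$ with explicit constants, rescaling and covering by subintervals of length $h$ yields $|w'|_2\le C(h|w''|_2+h^{-1}|w|_2)$, optimizing in $h$ produces the single-step multiplicative form (with the additive $c_2|w|_2$ appearing precisely when the balancing scale exceeds $L$), and iterating the three-term recursion $|u^{(i)}|_2\le\eta|u^{(i+1)}|_2+c_\eta|u^{(i-1)}|_2$ pushes the indices to $0$ and $m$. This is a clean, self-contained route to the general-$u$ statement.

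There is, however, a genuine gap in the last assertion, and it is not repairable as stated. Your integration-by-parts argument $|u'|_2^2=-\int_0^L u\,u''\,dx\le|u|_2|u''|_2$ correctly handles $m=2$, but the sentence ``Poincar\'e's inequality lets one absorb the lower-order term'' does not extend to $m\ge 3$. In fact the claim $c_2=0$ is \emph{false} for $m\ge 3$ under the sole hypothesis $u\in H_0^1(0,L)$: take $u(x)=x(L-x)$, which lies in $H_0^1\cap H^3$ with $u'''\equiv 0$ but $u'\not\equiv 0$, so $|u'|_2\le c_1|u'''|_2^{1/3}|u|_2^{2/3}$ cannot hold. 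Poincar\'e only controls $|u|_2$ by $|u'|_2$, and absorbing $c_2|u|_2$ into $|u'|_2$ would require the a priori unknown smallness $c_2 C_P<1$, which your construction does not guarantee; more to the point, no constant can work because of the polynomial counterexample. The defect is in the lemma's statement, not in your strategy: one needs either $u\in H_0^m$ (so that $u^{(m)}=0$ forces $u=0$), or to restrict to $m=2$, or to accept the $c_2|u|_2$ term. In the paper's applications (e.g.\ bounding $|y_{xxx}|_2$ via \eqref{qyrel1}) the $c_2$ term is harmless anyway, since it is absorbed after an $\epsilon$-Young step rather than dropped.
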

Let $\eta$ be a $C^\infty$-function and $\Upsilon_{\eta}:H^l(0,L)\rightarrow H^l(0,L)$ ($l\ge 0$) be the integral operator defined by $(\Upsilon_{\eta}\varphi)(x):=\int_x^L{\eta}(x,y)\varphi(y)dy.$ Then, $\Upsilon_{\eta}$ has the following remarkable properties.
\begin{lem}\label{inverselem}
	$I-\Upsilon_{\eta}$ is invertible with a bounded inverse from $H^l(0,L)\rightarrow H^l(0,L)$ ($l\ge 0$). Moreover,  $(I-\Upsilon_{\eta})^{-1}$ can be written as $I+\Phi$, where $\Phi$ is a bounded operator from $L^2(0,L)$ into $H^l(0,L)$ for $l=0,1,2$ and from $H^{l-2}(0,L)$ into $H^{l}(0,L)$ for $l> 2$.
\end{lem}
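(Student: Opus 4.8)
The operator $\Upsilon_{\eta}$ is of Volterra type: the integration runs over the shrinking interval $(x,L)$, so the plan is to exploit its quasi-nilpotence. First I would iterate the kernel. Writing $\Upsilon_{\eta}^n$ as an integral operator with kernel $\eta_n$, an induction based on Fubini gives the recursion $\eta_{n+1}(x,y)=\int_x^y\eta(x,z)\eta_n(z,y)\,dz$ and hence the pointwise bound $|\eta_n(x,y)|\le E^n(y-x)^{n-1}/(n-1)!$ on $\Delta_{x,y}$, where $E:=|\eta|_{L^\infty(\Delta_{x,y})}$. A Cauchy--Schwarz estimate in $y$ followed by integration in $x$ then yields $|\Upsilon_{\eta}^n|_{2\rightarrow 2}\le C\,(EL)^n/(n-1)!$, which is summable. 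Consequently the Neumann series $\sum_{n\ge 0}\Upsilon_{\eta}^n$ converges in operator norm on $L^2(0,L)$ and, by the usual telescoping identity $(I-\Upsilon_{\eta})\sum_{n=0}^N\Upsilon_{\eta}^n=I-\Upsilon_{\eta}^{N+1}$, its limit is a bounded two-sided inverse of $I-\Upsilon_{\eta}$.

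To upgrade this to $H^l(0,L)$ I would first record how $\Upsilon_{\eta}$ interacts with differentiation: Leibniz' rule gives $(\Upsilon_{\eta}\varphi)'(x)=-\eta(x,x)\varphi(x)+\Upsilon_{\eta_x}\varphi(x)$, and iterating, $(\Upsilon_{\eta}\varphi)^{(m)}(x)=\sum_{j=0}^{m-1}a_j(x)\varphi^{(j)}(x)+\widetilde\Upsilon_m\varphi(x)$, where the $a_j$ are $C^\infty$ functions built from $\eta$ and its traces on the diagonal and $\widetilde\Upsilon_m$ is again an $\Upsilon$-type operator with $C^\infty$ kernel. In particular $\Upsilon_{\eta}$ maps $H^{m-1}(0,L)$ boundedly into $H^m(0,L)$ for every $m\ge 1$ (and $L^2$ into $H^1$), so $\Upsilon_{\eta}\in B(H^l(0,L))$ for all $l\ge 0$. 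The crux is then the summability of $|\Upsilon_{\eta}^n|_{H^l\rightarrow H^l}$: for $n>l$ I would factor $\Upsilon_{\eta}^n\varphi=\Upsilon_{\eta}^l(\Upsilon_{\eta}^{n-l}\varphi)$ and estimate $|\Upsilon_{\eta}^n\varphi|_{H^l}\le |\Upsilon_{\eta}^l|_{L^2\rightarrow H^l}\,|\Upsilon_{\eta}^{n-l}|_{2\rightarrow 2}\,|\varphi|_{H^l}\le C_l\,(EL)^{n-l}/(n-l-1)!\,|\varphi|_{H^l}$; the $l$ ``outer'' copies absorb the loss of derivatives at the price of a fixed constant $C_l$, while the remaining $n-l$ copies retain the $L^2$ Volterra decay. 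The finitely many terms with $n\le l$ are harmless since $\Upsilon_{\eta}\in B(H^l)$. Hence $\sum_{n\ge 0}\Upsilon_{\eta}^n$ converges in $B(H^l(0,L))$ and equals $(I-\Upsilon_{\eta})^{-1}$ there. I expect this $H^l$-summability to be the only genuinely delicate point: differentiating the Neumann series $l$ times spawns boundary terms $-\eta(x,x)\varphi,\ -\eta_x(x,x)\varphi,\dots$ at every level of the iteration, and one must verify these do not destroy the factorial decay — the peel-off device above (equivalently, an induction on $l$ using $\partial_x\Upsilon_{\eta}=-\eta(x,x)\,\cdot\,+\,\Upsilon_{\eta_x}$) is exactly what controls them.

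Finally I would set $\Phi:=(I-\Upsilon_{\eta})^{-1}-I=\Upsilon_{\eta}(I-\Upsilon_{\eta})^{-1}$. Since $(I-\Upsilon_{\eta})^{-1}\in B(L^2)$ and $\Upsilon_{\eta}\colon L^2\to H^1$, we obtain $\Phi\colon L^2\to H^1$, hence also $\Phi\colon L^2\to L^2$, which settles $l=0,1$. For the two-derivative gain I would invoke the diagonal condition $\eta(x,x)\equiv 0$ enjoyed by the backstepping kernels to which the lemma is applied (cf.\ $k(x,x)=0$ in \eqref{kernela}, $\ell(x,x)=0$ in \eqref{kernela_obc}, and likewise for $p$): then $(\Upsilon_{\eta}\varphi)'=\Upsilon_{\eta_x}\varphi$ has no boundary term, and the next differentiation produces only the boundary term $-\eta_x(x,x)\varphi$, which costs no derivative of $\varphi$; iterating, $\Upsilon_{\eta}$ maps $H^m(0,L)$ into $H^{m+2}(0,L)$ for every $m\ge 0$. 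Composing with the already established $(I-\Upsilon_{\eta})^{-1}\in B(H^{l-2})$ gives $\Phi=\Upsilon_{\eta}(I-\Upsilon_{\eta})^{-1}\colon H^{l-2}(0,L)\to H^l(0,L)$ for $l>2$ and, taking $l=2$, $\Phi\colon L^2(0,L)\to H^2(0,L)$. This exhausts all the claimed mapping properties, and the proof is complete.
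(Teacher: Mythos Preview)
Your argument is correct and is essentially what the cited references \cite{Liu03,BatalOzsari2018-1} do: the paper itself gives no proof at all, only the sentence ``The proof can be done as in \cite[Lemma 2.4]{Liu03} and \cite[Lemma 2.2, Remark 2.3]{BatalOzsari2018-1} \dots\ We omit the details.'' Your Neumann-series construction with the factorial Volterra bound, the peel-off trick $\Upsilon_\eta^n=\Upsilon_\eta^l\Upsilon_\eta^{n-l}$ for the $H^l$ summability, and the identification $\Phi=\Upsilon_\eta(I-\Upsilon_\eta)^{-1}$ are exactly the standard ingredients.

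One point is worth flagging explicitly, and you already caught it: the two-derivative smoothing $\Phi:L^2\to H^2$ (and more generally $\Phi:H^{l-2}\to H^l$) genuinely \emph{requires} the diagonal condition $\eta(x,x)\equiv 0$. Without it, $(\Upsilon_\eta\varphi)''$ contains the term $-\eta(x,x)\varphi'$, which is only in $L^2$ when $\varphi\in H^1$, so $\Upsilon_\eta$ would gain only one derivative, not two. The lemma as literally stated (``let $\eta$ be a $C^\infty$-function'') is therefore slightly too strong for arbitrary $\eta$; your restriction to kernels satisfying $\eta(x,x)=0$ --- which all of $k$, $\ell$, $p$ in the paper do --- is the correct hypothesis, and is presumably implicit in the references the paper cites.
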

\begin{proof}
	The proof can be done as in \cite[Lemma 2.4]{Liu03} and \cite[Lemma 2.2, Remark 2.3]{BatalOzsari2018-1}, where the integral in the definition of $\Upsilon_{\eta}$ is of the form $\int_0^x$ instead of $\int_x^L$. We omit the details as the arguments are the same.
\end{proof}
\begin{rem}
	The following estimates will be useful later:
	\begin{align}\label{Phiwest1}
	|\Phi w|_{\infty}&\le c_\eta|w|_2, \\
	\label{Phiwest2}
	|\Phi w|_{2}&\le c_\eta|w|_2, \\
	\label{Phiwest3}
	\left|\frac{d}{dx}[\Phi w]\right|_{2}&\le c_\eta|w|_2,
	\end{align}
	where $c_\eta$ is a nonnegative constant depending on various norms of $\eta$, see \cite[Lemma 2.2, Remark 2.4]{BatalOzsari2018-1} for the details.
\end{rem}
\subsection{Outline of the paper} Section \ref{contsecker} is dedicated to the proof of the existence of a smooth backstepping kernel $k$ which solves \eqref{kernela}. We convert \eqref{kernela} into an integral equation and use the method of succession to solve it.  Finding a solution of \eqref{kernela} relies on a subtle series analysis.  In Section \ref{wpsec}, we study the wellposedness for the linearized and nonlinear models \eqref{heatlin} and \eqref{heat}, respectively.  Thanks to the invertibility property of the backstepping transformation presented in Section \ref{secinv}, it is enough to deal with the wellposedness problem for the corresponding target systems. Local solutions for the nonlinear model are obtained via Banach's fixed point theorem by showing that the solution map is contractive on a suitably chosen closed ball of the solution space. This requires a gentle analysis of the nonlinear terms using Gagliardo-Nirenberg inequalities. We prove the decay of solutions for the linearized and nonlinear models in Section \ref{stabsec}.  The multiplier $(1+x)u$ plays a crucial role here. Stabilization is proved only for small solutions in the case of the nonlinear problem due to the structure of the subsequent Lyapunov inequalities. The case $p>1$ is more difficult because then the Lyapunov inequality involves two different nonlinearities. This issue is treated case by case by analysing how one nonlinear term dominates the other one. In Section \ref{obssec}, we design an observer system assuming the second order trace $y(t)=u_{xx}(L,t)$ can be measured.  We prove that the observer efficiently estimates the original plant, and most importantly its states can be used to construct a boundary feedback which also stabilizes the original plant. Here, wellposedness analysis is carried out at a higher regularity level. This is essential because the main equation of the observer involves second order traces. In Section \ref{SecNumResults}, we provide numerical experiments and the associated numerical algorithms illustrating the validity of the theoretical results mentioned above to the fullest extent. In Section \ref{obcsec}, we prove the analogues of the above results and provide the related numerical experiments for the set of boundary conditions given in \eqref{bdryconB}.  Here, the observer problem is studied at a relatively lower regularity level since we are using the measurement $y(t)=u(L,t)$ instead of a second order trace.  In Section 6, we give some remarks based on the comparison of the problem studied here with the dual problem where one places one or two controllers at the right hand side.  Finally, we put the details of several lengthy calculations in the Appendices section not to distract the reader too much.

\section{Controller design}\label{contsec}
In this section the purpose is threefold: (i) we prove that the kernel boundary value problem \eqref{kernela} has a $C^\infty$ solution, (ii) we show that the linear plant \eqref{heatlin} is wellposed and the nonlinear plant \eqref{heat} is locally wellposed for $p\in (0,4]$, (iii) we obtain the exponential stability for the linear and nonlinear plants with the prescribed decay rate.  This in particular gives the global wellposedness for the nonlinear plant.

\subsection{Backstepping kernel}\label{contsecker}
\subsubsection{Smooth kernel}
In order to prove the existence of a solution of \eqref{kernela} we first make a change of variables and write $G(s,t)\equiv k(x,y)$ with $s\equiv y-x$, $t\equiv L-y$.
We obtain the following relationships between partial derivatives of $G$ and $k$:
\begin{equation}\label{Gk1}
k_x = -G_s, k_y = -G_t+G_s,
\end{equation}
\begin{equation}\label{Gk2}
k_{xx} = G_{ss}, k_{yy} = G_{tt}-2G_{ts}+G_{ss}, k_{xy} = -G_{ss} + G_{st},
\end{equation} and
\begin{equation}\label{Gk3}
k_{xxx} = - G_{sss}, k_{yyy} = -G_{ttt}+3G_{tts}-3G_{sst}+G_{sss}.
\end{equation}
Using \eqref{Gk1}-\eqref{Gk3}, the main equation in \eqref{kernela} is equivalent to
\begin{equation}\label{ktoG}
3G_{sst}-3G_{tts}+G_{ttt}+i\tilde{\alpha}(2G_{ts}-G_{tt})+\tilde{\delta}G_t-\tilde{r}G =0.
\end{equation} Moreover, the boundary conditions of $k$ translate as
\begin{align}\label{bGk1}
k(x,x) = 0 &\Leftrightarrow G(0,t) = 0, \\
\label{bGk2}
k(x,L) = 0 &\Leftrightarrow G(s,0) = 0,
\end{align} and
\begin{equation}\label{bGk3}
\frac{d}{dx}k_x(x,x) = 0 \Leftrightarrow G_{st}(0,t) = -\frac{\tilde{r}}{3}\Leftrightarrow G_{s}(0,t) = -\frac{\tilde{r}}{3}t.
\end{equation}
Note that in \eqref{bGk3}, we use the fact that $G_s(0,0)=0$, which follows from \eqref{bGk2}.
\eqref{ktoG}-\eqref{bGk3} gives the equivalent kernel pde model below in the new variables $(s,t)$.
\begin{eqnarray}\label{kernelGa}
\begin{cases}
3G_{sst}-3G_{tts}+G_{ttt}+i\tilde{\alpha}(2G_{ts}-G_{tt})+\tilde{\delta }G_t-\tilde{r} G=0\\
G(0,t)=G(s,0)=0,\\
G_s(0,t)=-\frac{\tilde{r}t}{3},
\end{cases}
\end{eqnarray} where $(s,t)$ belongs to the rotated triangular domain $\Delta_{s,t}\doteq\{(s,t)\in \mathbb{R}^2\,|\,s\in (0,L), t\in (0,L-s)\} \text{ (see Figure }\ref{fig:Deltaxy}).$
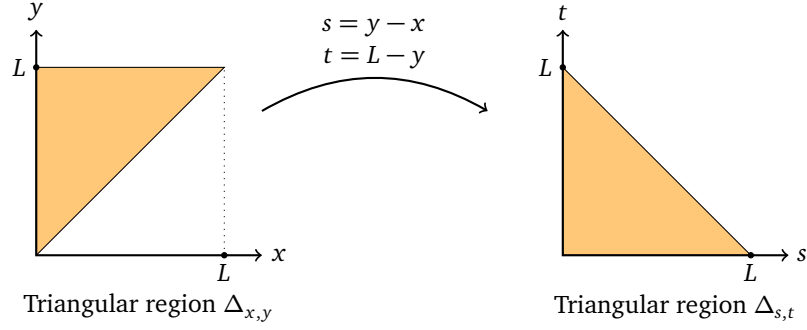
\begin{figure}
	\begin{tikzpicture}
	\draw [black, fill={rgb:orange,1;yellow,2;pink,5}] (0,0) -- (2.5,2.5) -- (0,2.5) -- (0,0);
	\draw [thick, <->] (0,3) node[above]{$y$} -- (0,0) -- (3,0) node[right]{$x$};
	\filldraw
	(2.5,0) circle (1pt) node[align=center, below] {$L$};
	\filldraw
	(0,2.5) circle (1pt) node[align=center, left] {$L$};
	\draw[dotted]
	(2.5,0) -- (2.5,2.5);
	\node[align=center] (title) at (1.5,-0.7) {{Triangular region $\Delta_{x,y}$}};
	
	\node  (A) at (3,1.8)  {};
	\node  (B) at (6,1.8)  {};
	\draw[->,thick]  (A.north)  to [bend left = 30]  node[above,yshift=12pt] {$s = y - x$} node[above] {$t = L - y$} (B.north);
	
	\draw [black, fill={rgb:orange,1;yellow,2;pink,5}] (7,2.5) -- (7,0) -- (9.5,0) --  (7,2.5);
	\draw [thick, <->] (7,3) node[above]{$t$} -- (7,0) -- (10,0) node[right]{$s$};
	\filldraw
	(9.5,0) circle (1pt) node[align=center, below] {$L$};
	\filldraw
	(7,2.5) circle (1pt) node[align=center, left] {$L$};
	\node[align=center] (title) at (8.5,-0.7) {{Triangular region $\Delta_{s,t}$}};
	\end{tikzpicture}
	\caption{Triangular regions} \label{fig:Deltaxy}
\end{figure}

We will convert \eqref{kernelGa} into an equivalent integral equation. To this end, we first write
$$G_{sst}=DG\doteq\frac{1}{3}\left[3G_{tts}-G_{ttt}-i\tilde{\alpha}(2G_{ts}-G_{tt})-\tilde{\delta}G_t+\tilde{r}G\right].$$ Integrating the above identity in $t$ and twice in the first variable and using \eqref{bGk1}-\eqref{bGk3}
%
we deduce that $G$ solves
\begin{eqnarray}\label{GepsInt}G(s,t)=-\frac{\tilde{r}}{3}st+\int_0^t\int_0^s\int_0^\omega[DG](\xi,\eta)d\xi d\omega d\eta\end{eqnarray} if and only if it solves \eqref{kernelGa}.

Existence of a solution of \eqref{kernelGa} will be proven by applying the successive approximations method to the integral equation \eqref{GepsInt}.  Indeed, we prove the following lemma.
\begin{lem}\label{lemkernel} There exists a $C^\infty$-function ${G}$ such that ${G}$ solves the integral equation \eqref{GepsInt} as well as the boundary value problem given in \eqref{kernelGa}.
\end{lem}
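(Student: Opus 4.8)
The plan is to construct $G$ as the limit of a sequence of iterates $G^0, G^1, G^2, \dots$ defined by the Picard-type recursion suggested by \eqref{GepsInt}, namely
\begin{equation*}
G^0(s,t) \doteq -\frac{\tilde r}{3}st, \qquad G^{n+1}(s,t) \doteq -\frac{\tilde r}{3}st + \int_0^t\int_0^s\int_0^\omega [D\,\Delta G^{n}](\xi,\eta)\,d\xi\,d\omega\,d\eta,
\end{equation*}
where $\Delta G^n \doteq G^n - G^{n-1}$ (with $\Delta G^0 \doteq G^0$), so that $G = \sum_{n\ge 0}\Delta G^n$ provided the series converges. Since $D$ is a third order linear differential operator, each increment $\Delta G^{n}$ is obtained from $\Delta G^{n-1}$ by applying $D$ and then integrating three times; starting from the polynomial $G^0$, every iterate is again a polynomial. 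First I would set up this recursion carefully and record that the increments satisfy the homogeneous relation $\Delta G^{n+1}(s,t) = \int_0^t\int_0^s\int_0^\omega [D\,\Delta G^{n}](\xi,\eta)\,d\xi\,d\omega\,d\eta$.

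The core of the argument is a quantitative bound on the increments. I would write each $\Delta G^n$ as a finite linear combination of monomials $s^a t^b$ and track how the coefficients propagate under the operation "apply $D$, then integrate once in $t$ and twice in $s$." Applying $D$ lowers the total degree by (at most) one in a way that can raise the $t$-degree relative to the $s$-degree, while the triple integration raises the $s$-degree by two and the $t$-degree by one and divides coefficients by factors like $a(a-1)b$ coming from the integrations; the terms $-i\tilde\alpha(2G_{ts}-G_{tt})$, $-\tilde\delta G_t$ and $\tilde r G$ in $D$ contribute lower-order monomials with bounded multipliers. The goal is to prove by induction an estimate of the shape
\begin{equation*}
\abs{\Delta G^n(s,t)} \le \frac{(C\,\rho)^{n}}{(n!)^{c}}\,,\qquad (s,t)\in \Delta_{s,t},
\end{equation*}
for suitable constants $C=C(\tilde\alpha,\tilde\delta,\tilde r)>0$, $c>0$, and $\rho$ depending on $L$ and $\tilde r$ — together with the analogous estimates for all partial derivatives $\partial_s^i\partial_t^j \Delta G^n$ up to any fixed order, with possibly a shifted index in the factorial. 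The combinatorial gain comes from the repeated division by the integration factors, which beats the fixed-order polynomial growth in the number of monomials produced at each step. Once such bounds hold uniformly on $\Delta_{s,t}$ for every derivative order, the series $\sum_n \partial_s^i\partial_t^j\Delta G^n$ converges absolutely and uniformly on $\overline{\Delta_{s,t}}$ for all $i,j$, so $G \doteq \sum_n \Delta G^n \in C^\infty(\overline{\Delta_{s,t}})$.

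It remains to verify that the limit $G$ genuinely solves \eqref{GepsInt} and \eqref{kernelGa}. Passing to the limit in the recursion is justified by the uniform convergence established above (the triple integral over the bounded domain commutes with the sum), giving \eqref{GepsInt}. The boundary conditions $G(0,t)=0$ and $G(s,0)=0$ hold because every iterate vanishes when $s=0$ or $t=0$ (each integral carries an $\int_0^s$ and an $\int_0^t$, and $G^0=-\frac{\tilde r}{3}st$ vanishes there too), and $G_s(0,t) = -\frac{\tilde r}{3}t$ follows by differentiating \eqref{GepsInt} in $s$ and setting $s=0$, since the triple-integral term contributes $\int_0^t\int_0^0[\cdots] = 0$ after one $s$-differentiation. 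Differentiating \eqref{GepsInt} three times (once in $t$, twice in $s$) recovers $G_{sst} = DG$, i.e. the PDE in \eqref{kernelGa}, and this is legitimate precisely because $G\in C^\infty$. Finally, uniqueness within the class of smooth solutions follows from the same contraction-type estimate applied to the difference of two solutions. I expect the main obstacle to be the bookkeeping in the inductive monomial estimate: one must choose the weighting of $s$- versus $t$-degrees and the form of the factorial denominator so that the map "$D$ then integrate thrice" is genuinely contractive on the chosen scale of norms, uniformly in the derivative order being tracked, and this is exactly the "delicate and rigorous series analysis" the introduction flags as the crux.
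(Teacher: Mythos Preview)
Your strategy is exactly the paper's: Picard iteration on \eqref{GepsInt}, writing each increment as a finite sum of monomials $s^m t^k$ and tracking how the operator $P$ (apply $D$, then integrate $\int_0^t\int_0^s\int_0^\omega$) acts on such monomials. The paper likewise decomposes $P=\sum P_{i,j}$ into six pieces, one for each term of $D$, with $P_{i,j}\,s^m t^k=c_{i,j}\,s^{m+i}t^{k+j}$, $i\in\{1,2\}$, $j\in\{-2,-1,0,1\}$.

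The gap is the estimate itself: the shape $\abs{\Delta G^n}\le (C\rho)^n/(n!)^c$ with a single factorial does not close the induction. The culprit is $P_{2,-2}$, coming from the $-\tfrac13 G_{ttt}$ term: it multiplies the coefficient of $s^m t^k$ by $-\tfrac{k(k-1)}{3(m+1)(m+2)}$, and since the $t$-exponent $k$ is not controlled by the iteration index $n$ alone, a bound depending only on $n$ will not propagate. The paper's remedy is to track the $t$-degree explicitly. Writing $H^n\doteq P^n(st)$ as a sum of $6^n$ monomials $C_{r,n}\,s^\beta t^{\sigma+1}$ (with $\sigma$ the running sum of the $j$-shifts, and $\beta\ge n+1$), one proves by induction the \emph{two-factorial} bound
\[
\abs{C_{r,n}}\le \frac{M^n}{(n+1)!\,(\sigma+1)!},\qquad M=\max\{1,|\tilde\alpha|,|\tilde\delta|,|\tilde r|\}.
\]
The extra $(\sigma+1)!$ in the denominator is precisely what absorbs the factor $k=\sigma+1$ appearing in the numerator of $c_{i,j}$ for $j<0$, while $\beta\ge n+1$ turns the $(m+1)(m+2)$ in the denominator into the growth of $(n+1)!$. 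Summing over the $6^n$ monomials gives $\abs{H^n}_\infty\le 6^n M^n L^{3n+2}/(n+1)!$, which is summable; derivatives cost only a polynomial factor $(2n+1)^a(n+1)^b$ and do not spoil summability. So the ``weighting of $s$- versus $t$-degrees'' you correctly flag as the crux has a specific answer: a second factorial tied to the $t$-exponent, not a power of a single $n!$.
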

\begin{proof}Let $P$ be defined by
	\begin{equation}\label{aP}
	(P f)(s,t) \doteq \int_0^t\int_0^s\int_0^\omega[Df](\xi,\eta)d\xi d\omega d\eta.
	\end{equation}
	
	Then \eqref{GepsInt} can be rewritten as
	\begin{equation}\label{GPG}
	G(s,t)=-\frac{{\tilde{r}}}{3}st+PG(s,t).
	\end{equation}
	Define $G^0\equiv 0,$ $\displaystyle G^1(s,t)=-\frac{{\tilde{r}}}{3}st,$ and
	$G^{n+1}=G^1+P G^n.$ Then for $n\geq 1$, $$G^{n+1}-G^{n} = P(G^{n}-G^{n-1}).$$
	Defining $H^n\equiv-\frac{3}{\tilde{r}}(G^{n+1}-G^{n})$ we see that $H^0(s,t)=st$, $H^{n+1}=PH^n$ and for $j>i,$
	\begin{equation}\label{aCauchy}
	G^j-G^i= \sum_{n=i}^{j-1}(G^{n+1}-G^{n})=-\frac{\tilde{r}}{3}\sum_{n=i}^{j-1}H^{n}.
	\end{equation}
	Let us denote the supremum norm of a function in the triangle $\Delta_{s,t}$ by $| \cdot |_{\infty}$. From \eqref{aCauchy} we see that if $H^n$ (and its partial derivatives) is an absolutely summable sequence with respect to the norm $| \cdot |_{\infty}$ then $G^n$ (and its partial derivatives) is Cauchy with respect to the same norm, which implies $G^n$'s are convergent and the limit solves \eqref{GepsInt}. So let us start by writing $P$ as sum of six operators $$P= P_{2,-2}+P_{1,-1}+P_{2,-1}+P_{1,0}+P_{2,0}+P_{2,1},$$ where
	\begin{align*}
	P_{2,-2}f&= -\frac{1}{3}\int_0^t\int_0^s\int_0^\omega f_{ttt}(\xi,\eta) d\xi d\omega d\eta, \\
	P_{1,-1}f&=\int_0^t\int_0^s\int_0^\omega f_{tts}(\xi,\eta) d\xi d\omega d\eta, \\
	P_{2,-1}f&=\frac{i\tilde{\alpha}}{3}\int_0^t\int_0^s\int_0^\omega f_{tt}(\xi,\eta) d\xi d\omega d\eta, \\
	P_{1,0}f&= -\frac{2 i\tilde{\alpha}}{3}\int_0^t\int_0^s\int_0^\omega f_{ts}(\xi,\eta) d\xi d\omega d\eta,\\
	P_{2,0}f&= -\frac{\tilde{\delta}}{3}\int_0^t\int_0^s\int_0^\omega f_{t}(\xi,\eta) d\xi d\omega d\eta, \\
	P_{2,1}f&= \frac{\tilde{r}}{3}\int_0^t\int_0^s\int_0^\omega f(\xi,\eta) d\xi d\omega d\eta.
	\end{align*}
	Then
	\begin{equation}\label{aproduct}
	H^n=P^nH^0=(P_{2,-2}+P_{1,-1}+P_{2,-1}+P_{1,0}+P_{2,0}+P_{2,1})^nst=\sum_{r=1}^{6^n}R_{r,n}st,
	\end{equation}
	where $R_{r,n}:=P_{i_{r,n}, j_{r,n}}P_{i_{r,n-1}, j_{r,n-1}}\cdot\cdot\cdot P_{i_{r,1},j_{r,1}}$, $i_{r,q} \in \{1,2\}$, $j_{r,q} \in \{-2,-1,0,1\}$  for $1\le q\le n$.
	
	Note that for positive integers $m$ and nonnegative integers $k$
	\begin{equation}\label{aPi}
	P_{i,j}s^m t^k= c_{i,j}s^{m+i} t^{k+j}
	\end{equation}
	where $c_{i,j}=0$ if $j+k\leq 0$ or $i+m = 1$, and
	\begin{equation}\label{acm2}
	\begin{split}
	c_{2,-2}&=-\frac{k(k-1)}{3(m+1)(m+2)}, \\
	c_{1,-1}&=\frac{k}{(m+1)}, \\
	c_{2,-1}&=\frac{i\tilde{\alpha}k}{3(m+1)(m+2)}, \\
	c_{1,0}&=-\frac{2i\tilde{\alpha}}{3(m+1)}, \\
	c_{2,0}&=-\frac{\tilde{\delta}}{3(m+1)(m+2)}, \\
	c_{2,1}&=\frac{\tilde{r}}{3(m+1)(m+2)(k+1)}
	\end{split}
	\end{equation}
	otherwise. Let $\sigma=\sigma(n,r)\equiv\sum_{q=1}^n j_{r,q}$. From \eqref{aPi}-\eqref{acm2} one can see that for each $n$ and $r$
	\begin{equation}\label{amonomials}
	R_{r,n}st=
	\begin{cases}
	0 & \text { if } \sigma \leq-1,\\
	C_{r,n}s^\beta t^{\sigma+1} & \text { if } \sigma > -1\\
	\end{cases}
	\end{equation}
	where $n+1\leq \beta\leq 2n+1$ and $C_{r,n}$ is a constant which only depends on $n$ and $r$.
	
	Let $M=\max\{1,|\tilde{\alpha}|,|\tilde{\delta}|,|\tilde{r}|\}$. We claim that for each $n$ and $r$,
	\begin{equation}\label{aclaim}
	|C_{r,n}|\leq \frac{M^n}{(n+1)!(\sigma+1)!}.
	\end{equation}
	Taking $m=1$, $k=1$ in  \eqref{aPi}-\eqref{acm2} we see that the claim holds for $n=1$. Suppose it holds for $n=\ell-1$ and for all $r \in \{1,2,.. ,6^{\ell -1}\}$. Then for $n=\ell$ and $r^* \in \{1,2,.. ,6^{\ell}\}$, using \eqref{aPi} and \eqref{amonomials}, we get $$R_{r^*,\ell}st=P_{i,j} R_{r,\ell-1}st= C_{r,\ell-1}P_{i,j} s^\beta t^{\sigma+1}=C_{r,\ell-1}c_{i,j} s^{\beta^*} t^{\sigma^*+1}$$
	for some $i\in\{1,2\}$, $j\in\{-2,-1,0,1\}$ and $r \in \{1,2,.. ,6^{\ell -1}\}$, where $\beta^*$ is either $\beta+1$ or $\beta+2$, $\sigma^*=\sigma +j$. By the induction assumption $C_{r,\ell-1}\leq \frac{M^{\ell-1}}{\ell!(\sigma+1)!}.$  Moreover using \eqref{acm2} and the fact that $\beta\geq \ell$  we see that $|c_{i,j}|\leq M\frac{\sigma+1}{\ell+1}$ for $j=-1,-2$, $|c_{i,0}| <\frac{M}{\ell+1}$, and $|c_{i,1}|< \frac{M}{(\sigma+2)(\ell+1)}$. Hence for each $i\in \{1,2\}$ and $j\in \{-2,-1,0,1\}$ we obtain $$|C_{r^*,\ell}|= |C_{r,(\ell-1)}c_{i,j}| \leq \frac{M^{\ell}}{(\ell+1)!(\sigma+j+1)!}=\frac{M^{\ell}}{(\ell+1)!(\sigma^*+1)!}$$ which proves that the claim holds for $n=\ell$ as well.
	
	Using \eqref{aproduct}, \eqref{amonomials}, \eqref{aclaim} and the fact that $0\leq s, t \leq L$ in the triangle $\Delta_{s,t}$ we obtain \begin{equation}\label{Hnest0}|H^n|_{\infty}\leq \frac{6^n M^n L^{3n+2}}{(n+1)!}\end{equation} which shows $H^n$ is absolutely summable. On the other hand since $H^n$ is a linear combination of $6^n$ monomials of the form $s^\beta t^{\sigma+1}$ with $\beta\leq 2n+1$ and $\sigma\leq n$,
	any partial derivative $\partial^a_s \partial^b_t H^n$ of $H^n$ will be absolutely less than \begin{equation}\label{Hnest}\displaystyle\frac{(2n+1)^a (n+1)^b 6^n M^nL^{3n+2-a-b}}{(n+1)!}\end{equation} which is a summable sequence.
\end{proof}
A graph and a contour plot of the kernel are given below in Figure \ref{fig:kernel} for the particular values of parameters given by $L=\pi$, $\beta = 1$, $\alpha = 2$, $\delta = 8$, and $r = 1$.
\begin{figure}[h]
	\centering
	\begin{subfigure}[b]{0.5\textwidth}
		\includegraphics[width=\textwidth]{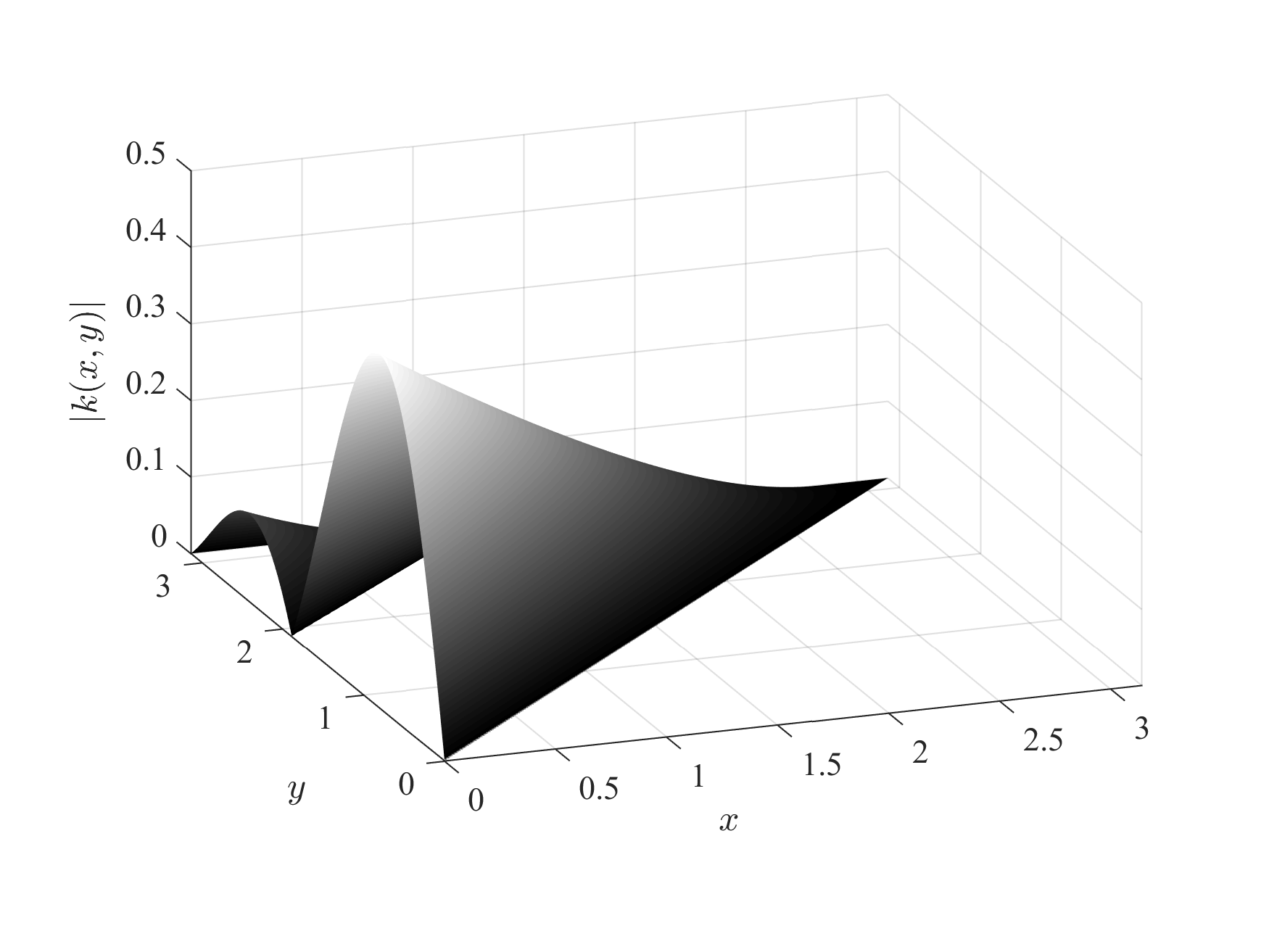}
		\label{fig:ker_contour}
	\end{subfigure}
	~ 
	\begin{subfigure}[b]{0.5\textwidth}
		\includegraphics[width=\textwidth]{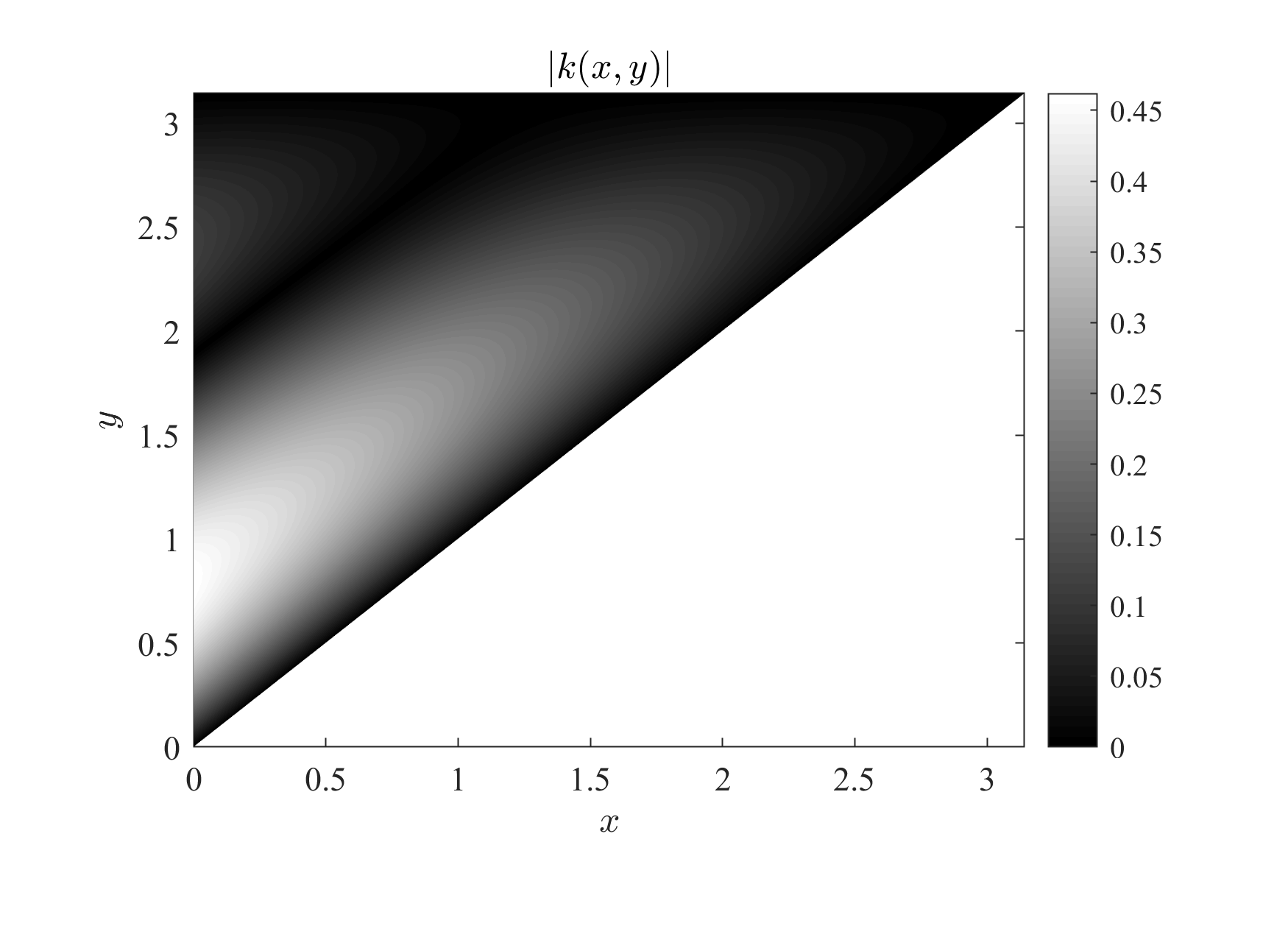}
		\label{fig:ker_x=0}
	\end{subfigure}
	\vspace*{-15mm}
	\caption{Backstepping kernel on $\Delta_{x,y}$ for $L=\pi$, $\beta = 1$, $\alpha = 2$, $\delta = 8$ and $r = 1$.}
	\label{fig:kernel}
\end{figure}
\subsection{Wellposedness}\label{wpsec}
\subsubsection{Linear model}
We introduce the notation $\tilde{w}(x,t)\doteq e^{rt}w(x,t)$,  where $r>0$ and $w$ is the sought-after solution of the linearized target system \eqref{target}.  Then $\tilde{w}$ satisfies the following pde model
\begin{eqnarray}\label{targettilde}
\begin{cases}
i\tilde{w}_t + i\beta \tilde{w}_{xxx} +\alpha \tilde{w}_{xx} +i\delta \tilde{w}_x= 0, x\in (0,L), t\in (0,T),\\
\tilde{w}(0,t)=0, \tilde{w}(L,t)=0, \tilde{w}_x(L,t)=0,\\
\tilde{w}(x,0)=\tilde{w}_0(x)\doteq w_0(x).
\end{cases}
\end{eqnarray}
Regarding the above model, the following wellposedness result is known.
\begin{prop}[\cite{chen2018}]\label{wtildeprop}
	Let $T>0$, $\tilde{w}_0\in L^2(0,L)$.  Then \eqref{targettilde} has a unique mild solution $\tilde{w}\in X_T^0$ which satisfies
	\begin{equation}\label{linearestimate}
	|\tilde{w}|_{L^\infty(0,T;L^2(0,L))}+ |\tilde{w}|_{L^2(0,T;H^1(0,L))}\le  C(1+\sqrt{T})|\tilde{w}_0|_2
	\end{equation} and the trace regularity $\tilde{w}_x(0,\cdot)\in L^2(0,T)$.
\end{prop}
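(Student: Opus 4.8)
The plan is to establish \eqref{targettilde} by combining $C_0$-semigroup theory (for the $L^2$ wellposedness and the contraction bound) with a multiplier/Kato-smoothing argument (for the $L^2(0,T;H^1)$ gain and the hidden trace regularity), as in \cite{chen2018}. First I would rewrite \eqref{targettilde} as $\tilde w_t = A\tilde w$ with $A\varphi:=-\beta\varphi'''+i\alpha\varphi''-\delta\varphi'$ on $D(A)=\{\varphi\in H^3(0,L):\varphi(0)=\varphi(L)=\varphi'(L)=0\}$. Two integrations by parts give $\operatorname{Re}\int_0^L(A\varphi)\overline\varphi\,dx=-\tfrac{\beta}{2}|\varphi'(0)|^2\le 0$, so $A$ is dissipative (and its formal adjoint $A^*\psi=\beta\psi'''-i\alpha\psi''+\delta\psi'$ on $D(A^*)=\{\psi\in H^3(0,L):\psi(0)=\psi(L)=\psi'(0)=0\}$ satisfies $\operatorname{Re}\int_0^L(A^*\psi)\overline\psi\,dx=-\tfrac{\beta}{2}|\psi'(L)|^2\le 0$). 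To invoke the Lumer--Phillips theorem I would verify the range condition: for $\lambda>0$ the equation $\lambda\tilde w-A\tilde w=f$ is a third order linear ODE with the three prescribed boundary constraints, which I would solve by variation of parameters against the roots of $\beta\mu^3-i\alpha\mu^2+\delta\mu+\lambda=0$ (or by a Galerkin/Fredholm argument), uniqueness being automatic from the dissipativity estimate. This yields a contraction $C_0$-semigroup $\{S(t)\}$ on $L^2(0,L)$, hence for $\tilde w_0\in L^2(0,L)$ a unique mild solution $\tilde w=S(\cdot)\tilde w_0\in C([0,T];L^2(0,L))$ with $|\tilde w(\cdot,t)|_2\le|\tilde w_0|_2$.

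Next I would prove the smoothing estimate. Working first with $\tilde w_0\in D(A)$, so that $\tilde w$ is classical, and passing to the limit afterwards by density, I would multiply the equation by $(1+x)\overline{\tilde w}$ (the weight $x$ works equally well), integrate over $(0,L)$, and take real parts. The weight is chosen so that, using $\tilde w(0)=\tilde w(L)=0$ and $\tilde w_x(L)=0$, all boundary contributions of the third order term vanish, leaving the favorable interior term $-\tfrac{3\beta}{2}\int_0^L|\tilde w_x|^2\,dx$; the contributions of $\alpha\tilde w_{xx}$ and $\delta\tilde w_x$ reduce, after integration by parts, to terms bounded by $|\tilde w|_2|\tilde w_x|_2$ and $|\tilde w|_2^2$, which are absorbed via Young's inequality. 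This produces $\tfrac{d}{dt}\int_0^L(1+x)|\tilde w|^2\,dx+\beta\int_0^L|\tilde w_x|^2\,dx\lesssim|\tilde w(\cdot,t)|_2^2$; integrating in $t$ and using $\int_0^T|\tilde w(\cdot,t)|_2^2\,dt\le T|\tilde w_0|_2^2$ gives $\int_0^T|\tilde w_x(\cdot,t)|_2^2\,dt\lesssim(1+T)|\tilde w_0|_2^2$, so $\tilde w\in L^2(0,T;H^1(0,L))$, and combining this with the $L^\infty(0,T;L^2)$ bound yields \eqref{linearestimate}.

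Finally, multiplying the equation by $\overline{\tilde w}$ and taking imaginary parts gives the identity $\tfrac12\tfrac{d}{dt}|\tilde w(\cdot,t)|_2^2=-\tfrac{\beta}{2}|\tilde w_x(0,t)|^2$, whence $\tfrac{\beta}{2}\int_0^T|\tilde w_x(0,t)|^2\,dt\le\tfrac12|\tilde w_0|_2^2$; this shows the map $\tilde w_0\mapsto\tilde w_x(0,\cdot)$ is bounded from $L^2(0,L)$ into $L^2(0,T)$ on $D(A)$ and extends by density, giving the hidden trace regularity $\tilde w_x(0,\cdot)\in L^2(0,T)$. Uniqueness is already contained in the semigroup construction. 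The step I expect to be the main obstacle is the range/resolvent condition, i.e. solvability of the third order boundary value problem for $\lambda I-A$ uniformly in the data; secondarily, one must keep careful track of the boundary terms in the multiplier identity so that the weight is compatible with the triple boundary condition $\tilde w(0)=\tilde w(L)=\tilde w_x(L)=0$.
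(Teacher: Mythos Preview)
Your proposal is correct and follows the standard route. Note that in the paper this proposition is not proved but simply cited from \cite{chen2018}; the only indication the paper gives is the Remark immediately after, which records the operator $A\phi=-\beta\phi'''+i\alpha\phi''-\delta\phi'$ on $D(A)=\{\phi\in H^3(0,L):\phi(0)=\phi(L)=\phi'(L)=0\}$ and states that $A$ generates a $C_0$-semigroup on $L^2(0,L)$. Your argument fills in exactly what that remark asserts.

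Your approach also matches, essentially verbatim, what the paper does in detail for the analogous result with the second set of boundary conditions (Proposition~\ref{wtildeprop_obc} together with Lemma~\ref{A_InfGen}): dissipativity of $A$ and $A^*$ to get a contraction semigroup via the Lumer--Phillips/Pazy corollary, the multiplier $\overline{\tilde w}$ for the $L^\infty_tL^2_x$ bound and the trace identity, and the weighted multiplier $x\overline{\tilde w}$ (equivalently $(1+x)\overline{\tilde w}$) with $\epsilon$-Young to extract the $L^2_tH^1_x$ gain. One cosmetic remark: the paper keeps the factor $i$ in front of $\tilde w_t$ and therefore takes \emph{imaginary} parts after multiplying by $x\overline{\tilde w}$ (see, e.g., the identities \eqref{xlin1iden2}); you rewrote the equation as $\tilde w_t=A\tilde w$ and took real parts, which is equivalent. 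Your identification of the resolvent/range condition as the only nonroutine step is accurate; the paper handles this (for the other boundary conditions) by showing dissipativity of both $A$ and $A^*$ and invoking \cite[Cor.~4.4]{Pazy}, which you could use here as well instead of solving the third order ODE directly.
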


\begin{rem} Note that the initial value problem \eqref{targettilde} can be written in the operator theoretic form $$\frac{d}{dt}\tilde{w}(t)=A\tilde{w}(t), \tilde{w}(0)=\tilde{w}_0,$$ where $A\phi = -\beta \phi'''+i\alpha \phi''-\delta \phi'$ with $D(A)\equiv \{\phi\in H^3(0,L)\,|\, \phi(0)=\phi(L)=\phi'(L)=0\}$.  It is not difficult to show that $A$ generates a $C_0$-semigroup $S(t)$ in the underlying space $L^2(0,L)$.  Then, for any $\tilde{w}_0\in L^2(0,L)$, $\tilde{w}(t)=S(t)\tilde{w}_0$ defines a function in the space $C([0,T];L^2(0,L))$ which is referred to as the \emph{mild} solution of \eqref{targettilde} (see e.g., \cite{Pazy}).
	
\end{rem}

Proposition \ref{wtildeprop} is also valid for $\tilde{w}$ replaced by $w$ since $w(x,t)=e^{-rt}\tilde{w}(x,t)$, which implies $|w(t)|_2\le |\tilde{w}(t)|_2$ and $|w_x(t)|_2\le |\tilde{w}_x(t)|_2$.  The wellposedness of the original linearized plant \eqref{heatlin} can now be obtained via the bounded invertibility of the backstepping transformation given in Lemma \ref{inverselem} and we have the following proposition.
\begin{prop}\label{wplin}
	Let $T>0$, $u_0\in L^2(0,L)$, and $g_0$ be as in \eqref{controller}, where $k$ is the backstepping kernel constructed in Lemma \ref{lemkernel}.  Then \eqref{heatlin} has a unique mild solution $u\in X_T^0$ which satisfies
	\begin{equation}\label{linearestimate}
	|u|_{L^\infty(0,T;L^2(0,L))}+ |u|_{L^2(0,T;H^1(0,L))}\le  c_k(1+\sqrt{T})|u_0|_2
	\end{equation} and the trace regularity $u_x(0,\cdot)\in L^2(0,T)$.
\end{prop}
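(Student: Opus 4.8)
The plan is to transfer the already-known wellposedness of the target system to the plant \eqref{heatlin} via the bounded invertibility of the backstepping transformation $I-\Upsilon_k$ established in Lemma~\ref{inverselem}, which is exactly what the construction \eqref{backstepping}--\eqref{target}--\eqref{kernela} was designed to allow. Set $w_0:=(I-\Upsilon_k)u_0$; since $|\Upsilon_k u_0|_2\le |k|_{L^2(\Delta_{x,y})}|u_0|_2$ one has $w_0\in L^2(0,L)$ with $|w_0|_2\le(1+|k|_{L^2(\Delta_{x,y})})|u_0|_2$. Apply Proposition~\ref{wtildeprop} with this datum and undo the rescaling, i.e. put $w(x,t):=e^{-rt}\tilde w(x,t)$; this yields a unique mild solution $w$ of \eqref{target} in $X_T^0$, with $w_x(0,\cdot)\in L^2(0,T)$ and $|w|_{X_T^0}\lesssim(1+\sqrt T)|w_0|_2$. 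Then define the candidate $u:=(I-\Upsilon_k)^{-1}w=w+\Phi w$, using the decomposition $(I-\Upsilon_k)^{-1}=I+\Phi$ of Lemma~\ref{inverselem}.

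Membership and the estimate then come essentially for free: $\Phi$ maps $L^2(0,L)$ boundedly into $H^2(0,L)$ and $w\in C([0,T];L^2(0,L))$, so $\Phi w\in C([0,T];H^2(0,L))\subset X_T^0$, hence $u\in X_T^0$ with $|u|_{X_T^0}\le c_k(1+\sqrt T)|u_0|_2$ after absorbing $|(I-\Upsilon_k)^{-1}|$, $|k|_{L^2(\Delta_{x,y})}$ and the absolute constants into $c_k$. The initial condition holds since $u(\cdot,0)=(I-\Upsilon_k)^{-1}w_0=u_0$. For the boundary conditions, differentiating \eqref{backstepping} and using $k(x,x)=0$ (hence $k(L,L)=0$) gives $w(L,t)=u(L,t)$ and $w_x(L,t)=u_x(L,t)$, which turns the homogeneous conditions at $x=L$ in \eqref{target} into $u(L,t)=u_x(L,t)=0$; and $w(0,t)=u(0,t)-\int_0^Lk(0,y)u(y,t)\,dy$, so $w(0,t)=0$ is precisely $u(0,t)=g_0(t)$ with $g_0$ as in \eqref{controller}. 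That $w$ solving the target PDE is equivalent to $u$ solving the plant PDE is the algebraic identity forced by the kernel problem \eqref{kernela}, derived in Appendix~\ref{kerneldeduct}. Finally, $u_x(0,\cdot)=w_x(0,\cdot)+(\Phi w)_x(0,\cdot)$: the first term is in $L^2(0,T)$ by Proposition~\ref{wtildeprop}, and since $\Phi w\in C([0,T];H^2(0,L))\hookrightarrow C([0,T];C^1[0,L])$ the second satisfies $|(\Phi w)_x(0,t)|\lesssim|\Phi w(t)|_{H^2}\lesssim|w(t)|_2\in L^2(0,T)$; thus $u_x(0,\cdot)\in L^2(0,T)$. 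Uniqueness is inherited: any two mild solutions of \eqref{heatlin} map under $I-\Upsilon_k$ to two mild solutions of \eqref{target} with the same initial datum, which coincide by Proposition~\ref{wtildeprop}, so the originals coincide by injectivity of $I-\Upsilon_k$.

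The step I expect to be the main obstacle is justifying rigorously that the transformation intertwines \eqref{heatlin} and \eqref{target} at the level of \emph{mild} solutions: the plant carries the time- and state-dependent boundary input $g_0(t)=g_0(u(\cdot,t))$ and only $L^2$ data, so the formal computation of Appendix~\ref{kerneldeduct} is not directly available for $u_0\in L^2(0,L)$. I would handle this by a density/approximation argument — first solving the plant for smooth data satisfying the natural compatibility conditions, where the intertwining computation is valid classically, then propagating the identity to general $u_0\in L^2(0,L)$ through the uniform estimates above, the bounds \eqref{Phiwest1}--\eqref{Phiwest3}, and the strong continuity of the semigroup $S(t)$ generated by $A\phi=-\beta\phi'''+i\alpha\phi''-\delta\phi'$. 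Everything else — the operator bounds, the translation of the boundary conditions, and the hidden trace regularity — is routine given Lemma~\ref{inverselem} and Proposition~\ref{wtildeprop}.
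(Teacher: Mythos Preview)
Your proposal is correct and follows essentially the same approach as the paper: solve the target system \eqref{target} via Proposition~\ref{wtildeprop}, transfer regularity and estimates to $u=(I-\Upsilon_k)^{-1}w$ through Lemma~\ref{inverselem}, and recover the trace regularity from the explicit relation between $u_x(0,t)$ and $w_x(0,t)$. Your treatment is in fact more careful than the paper's --- the paper's proof is terse and does not explicitly address the density argument you flag for passing from classical to mild solutions; your use of the decomposition $(I-\Upsilon_k)^{-1}=I+\Phi$ with $\Phi:L^2\to H^2$ is a clean way to handle the $H^1$ bound and the trace, whereas the paper writes the slightly informal estimate $|u_x(t)|_2\le |(I-\Upsilon_{k_x})^{-1}|_{2\to 2}|w_x(t)|_2$ and obtains the trace via the identity $w_x(0,t)=u_x(0,t)-\int_0^L k_x(0,y)u(y,t)\,dy$.
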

\begin{proof}The proof follows from Proposition \ref{wtildeprop} (where $\tilde{w}$ replaced with $w$), Lemma \ref{inverselem}, the observations
	\begin{align*}\label{uwrel1}
	|u(t)|_{2} &\le |(I-\Upsilon_k)^{-1}|_{2\rightarrow 2}|w(t)|_2,\\
	|u_x(t)|_{2} &\le |(I-\Upsilon_{k_x})^{-1}|_{2\rightarrow 2}|w_x(t)|_2,
	\end{align*} and
	\begin{equation}\label{uwrel2}
	|w_0|_{2} \le |(I-\Upsilon_k)|_{2\rightarrow 2}|u_0|_2.
	\end{equation}
	Note that the trace regularity $u_x(0,\cdot)\in L^2(0,T)$ follows from
	\begin{equation}\label{tracereg}
	w_x(0,t) = u_x(0,t) - \int_0^Lk_x(0,y)u(y,t)dy.
	\end{equation}
	Indeed, from \eqref{tracereg}, we have
	\begin{equation}
	|u_x(0,\cdot)|_{L^2(0,T)} \le |w_x(0,\cdot)|_{L^2(0,T)}+\sqrt{T}\left|k_x(0,\cdot)\right|_{2}|u|_{L^2(0,T;L^2(0,L))}<\infty.\nonumber
	\end{equation}\end{proof}
\subsubsection{Nonlinear model}\label{nonlinwp}
By using the backstepping transformation in \eqref{backstepping}, we obtain the following pde from \eqref{heat} and the properties of the kernel $k$:
\begin{equation}\label{ch414}
iw_t + i\beta w_{xxx} +\alpha w_{xx} +i\delta w_x + ir w
= -(I-\Upsilon_{k})[|{w}+ v|^p\left({w} + v\right)]
\end{equation}
with homogeneous boundary conditions
\begin{equation}\label{ch414bdry}
{w}(0,t) = 0 \; , \; {w}(L,t) = 0, \quad \textrm{and} \quad {w}_{x}(L,t) = 0,
\end{equation} where $v(x,t)=[\Phi{w}](x,t)$, with $\Phi$ being the linear operator defined in Section \ref{secinv} in Lemma \ref{inverselem}.  Indeed, for the nonlinear case, the right hand side of \eqref{backstepping-t} has the additional term $$-i\int_x^Lk(x,y)|u(y,t)|^pu(y,t)dy=-i\Upsilon_k[|u|^pu](x,t).$$ Moreover, we have the nonlinear analogue of \eqref{udenklin}:
\begin{multline}\label{udenknonlin}
iu_t(x,t)+i\beta u_{xxx}(x,t)+\alpha u_{xx}(x,t)+i\delta u_x(x,t) \\ =-|u(x,t)|^pu(x,t) =-I[|u|^pu](x,t).
\end{multline}
Combining these with the assumed properties of the kernel which solves \eqref{kernela}, we see that the right hand side of \eqref{wtoplam1} becomes
\begin{equation}\label{rhswtoplamnon}
\Upsilon_k[|u|^pu](x,t)-I[|u|^pu](x,t)=-(I-\Upsilon_k)[|u|^pu](x,t).
\end{equation} In order to represent the above term in $w$, we can use the fact that $w=(I-\Upsilon_k)u$, which implies $(I-\Upsilon_k)^{-1}w=(I+\Phi)w=u$.  Therefore, the right hand side of \eqref{rhswtoplamnon} can be rewritten as
$$-(I-\Upsilon_k)[|w+\Phi(w)|^p(w+\Phi(w))](x,t),$$ which gives us the right hand side term in \eqref{ch414}.

In order to prove the wellposedness of \eqref{ch414}, we first consider the linear nonhomogeneous model below:
\begin{eqnarray}\label{targetf}
\begin{cases}
iw_t + i\beta w_{xxx} +\alpha w_{xx} +i\delta w_x + ir w= f, x\in (0,L), t\in (0,T),\\
w(0,t)=0, w(L,t)=0, w_x(L,t)=0,\\
w(x,0)=w_0(x),
\end{cases}
\end{eqnarray}where $f\in L^1(0,T;L^2(0,L))$. Again, changing variables via $\tilde{w}(x,t)\doteq e^{rt}w(x,t)$, we obtain
\begin{eqnarray}\label{targettildef}
\begin{cases}
i\tilde{w}_t + i\beta \tilde{w}_{xxx} +\alpha \tilde{w}_{xx} +i\delta \tilde{w}_x= \tilde{f}, x\in (0,L), t\in (0,T),\\
\tilde{w}(0,t)=0, \tilde{w}(L,t)=0, \tilde{w}_x(L,t)=0,\\
\tilde{w}(x,0)=\tilde{w}_0(x)\doteq w_0(x),
\end{cases}
\end{eqnarray} where $\tilde{f}(x,t) =e^{rt}f(x,t)$. The following result is known.
\begin{prop}[\cite{chen2018}]\label{wtildepropf}
	Let $T>0$, $\tilde{w}_0\in L^2(0,L)$, $\tilde{f}\in L^1(0,T;L^2(0,L))$.  Then \eqref{targettildef} has a unique mild solution $\tilde{w}\in X_T^0$ which satisfies
	\begin{equation*}\label{linearestimate}
	|\tilde{w}|_{L^\infty(0,T;L^2(0,L))}+ |\tilde{w}|_{L^2(0,T;H^1(0,L))}\le  C(1+\sqrt{T})\left(|\tilde{w}_0|_2 +|\tilde{f}|_{L^1(0,T;L^2(0,L))}\right)
	\end{equation*} and the trace regularity $\tilde{w}_x(0,\cdot)\in L^2(0,T)$.
\end{prop}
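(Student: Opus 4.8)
The plan is to construct the mild solution of \eqref{targettildef} via Duhamel's formula built on the semigroup $S(t)$ from the remark following Proposition~\ref{wtildeprop}, and then transfer the a priori bounds of that proposition from the homogeneous to the forced equation by superposition. Dividing the main equation by $i$ recasts it as $\tilde w_t=A\tilde w-i\tilde f$, $\tilde w(0)=\tilde w_0$, with $A\phi=-\beta\phi'''+i\alpha\phi''-\delta\phi'$ on the domain in that remark; since $A$ generates a $C_0$-semigroup $S(t)$ on $L^2(0,L)$, the mild solution is by definition
\[
\tilde w(t)=S(t)\tilde w_0-i\int_0^t S(t-\tau)\tilde f(\tau)\,d\tau.
\]
That $\tilde w\in C([0,T];L^2(0,L))$ and is unique is routine semigroup theory: $t\mapsto S(t)\tilde w_0$ is strongly continuous, the convolution term is $L^2$-valued and continuous because $\tilde f\in L^1(0,T;L^2)$, and the difference of two mild solutions solves \eqref{targettilde} with zero data, hence is zero. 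Multiplying the homogeneous equation by $\overline{\tilde w}$, integrating over $(0,L)$ and taking imaginary parts yields the energy identity $\frac{d}{dt}|\tilde w|_2^2=-\beta|\tilde w_x(0,t)|^2\le 0$; thus $S(t)$ is a contraction on $L^2(0,L)$, so $|S(t)\tilde w_0|_2\le|\tilde w_0|_2$ and $\big|\int_0^tS(t-\tau)\tilde f(\tau)\,d\tau\big|_2\le|\tilde f|_{L^1(0,T;L^2)}$, giving the $L^\infty(0,T;L^2(0,L))$ part of the bound, while the same identity gives $\int_0^T|\tilde w_x(0,t)|^2\,dt\le\beta^{-1}|\tilde w_0|_2^2$ for the homogeneous flow.

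For the $L^2(0,T;H^1(0,L))$ bound and the trace regularity $\tilde w_x(0,\cdot)\in L^2(0,T)$ I would argue by superposition. By time-translation invariance of \eqref{targettilde}, Proposition~\ref{wtildeprop} applied on $[\tau,T]$ gives, for every $\tau\in(0,T)$ and $\phi\in L^2(0,L)$,
\[
\big|S(\cdot-\tau)\phi\big|_{L^2(\tau,T;H^1(0,L))}+\big|(S(\cdot-\tau)\phi)_x(0,\cdot)\big|_{L^2(\tau,T)}\le C(1+\sqrt{T-\tau})\,|\phi|_2\le C(1+\sqrt T)\,|\phi|_2.
\]
Choosing $\phi=\tilde f(\tau)$, pulling the space-time $L^2$ norm and the boundary-trace $L^2$ norm inside the $\tau$-integral by the generalized Minkowski inequality, and integrating in $\tau$ produces
\[
\Big|\int_0^{\cdot}S(\cdot-\tau)\tilde f(\tau)\,d\tau\Big|_{L^2(0,T;H^1(0,L))}+\Big|\Big(\int_0^{\cdot}S(\cdot-\tau)\tilde f(\tau)\,d\tau\Big)_x(0,\cdot)\Big|_{L^2(0,T)}\le C(1+\sqrt T)\,|\tilde f|_{L^1(0,T;L^2)}.
\]
Adding the corresponding estimate for $S(\cdot)\tilde w_0$ taken straight from Proposition~\ref{wtildeprop} (together with the homogeneous trace bound noted above) gives the full statement after enlarging $C$.

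The one genuinely technical point — the \emph{main obstacle} — is to legitimize the Minkowski interchange and the multiplier identities, since $\phi\mapsto S(\cdot)\phi$ is not bounded into $C([0,T];H^1(0,L))$ and the Duhamel integral cannot be differentiated in $x$ term by term at this regularity. I would handle this in the customary way: first establish everything for $\tilde w_0\in D(A)$ and $\tilde f\in C^1([0,T];L^2(0,L))\cap C([0,T];D(A))$, where $\tilde w$ is a classical solution and all integrations by parts and applications of Fubini's theorem are valid with constants independent of the regularization, and then pass to the limit using the density of such data in $L^2(0,L)\times L^1(0,T;L^2(0,L))$ and the continuity on $C([0,T];L^2(0,L))$ of the linear solution map established above. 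The remaining assertions — the $C([0,T];L^2)$ regularity, uniqueness, and the contraction estimate — are classical.
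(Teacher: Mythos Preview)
The paper does not actually prove this proposition: it is stated as a known result, attributed to \cite{chen2018}, with the sentence ``The following result is known'' preceding it, and no argument is given. So there is no ``paper's own proof'' to compare against.

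Your proposed argument is the standard and correct way to derive such a nonhomogeneous estimate from the homogeneous one (Proposition~\ref{wtildeprop}, likewise cited without proof): build the mild solution by Duhamel, use the contraction property of $S(t)$ for the $L^\infty_tL^2_x$ bound, and use Minkowski's integral inequality together with the homogeneous Kato-smoothing and trace bounds applied to each $S(\cdot-\tau)\tilde f(\tau)$ for the $L^2_tH^1_x$ and boundary-trace estimates. The density/regularization step you flag is exactly what is needed to justify the multiplier computations and the interchange, and the constants are uniform in the approximation. Nothing is missing.
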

Again, from the relationship between $w$ and $\tilde{w}$ as well as $f$ and $\tilde{f}$, we can say that \eqref{targetf} has a unique mild solution ${w}\in X_T^0$ which satisfies
\begin{multline}
|{w}|_{L^\infty(0,T;L^2(0,L))}+ |{w}|_{L^2(0,T;H^1(0,L))} \\ \le  C(1+\sqrt{T})\left(|{w}_0|_2 +e^{rT}|f|_{L^1(0,T;L^2(0,L))}\right)  \label{linearestimate}
\end{multline} and the trace regularity ${w}_x(0,\cdot)\in L^2(0,T)$.

Assuming given initial and boundary data are smoother, one can prove that the solution is also smoother.  More precisely, we have the following higher regularity result.

\begin{prop}\label{wtildehigherreg}
	Let $T>0$, $\tilde{w}_0\in H^3(0,L)$, $\tilde{f}\in W^{1,1}(0,T;L^2(0,L))$.  Assume further that the compatibility conditions $\tilde{w}_0(0)=\tilde{w}_0(L)=0$ hold.  Then \eqref{targettildef} has a unique solution $\tilde{w}\in X_T^3$.
\end{prop}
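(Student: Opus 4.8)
The plan is to reduce the higher regularity claim to the $L^2$-theory for \eqref{targettildef} already recorded in Propositions~\ref{wtildeprop}--\ref{wtildepropf}, by differentiating the equation in time and then recovering the extra spatial regularity of $\tilde w$ from the equation itself. Write $A\varphi=-\beta\varphi'''+i\alpha\varphi''-\delta\varphi'$ with $D(A)=\{\varphi\in H^3(0,L):\varphi(0)=\varphi(L)=\varphi'(L)=0\}$ as above, so that \eqref{targettildef} reads $\tilde w_t=A\tilde w-i\tilde f$, $\tilde w(0)=\tilde w_0$, with $A$ generating the $C_0$-semigroup $S(\cdot)$ on $L^2(0,L)$. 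The compatibility hypotheses on $\tilde w_0$ are understood to place $\tilde w_0$ in $D(A)$; this membership is in any case forced, since the trace $\tilde w_x(L,\cdot)$ of an $X_T^3$-function is continuous in $t$ and must vanish for a solution of \eqref{targettildef}.

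\textbf{Time differentiation.} Put $v:=\tilde w_t$. Differentiating the equation and the (time independent) boundary conditions, $v$ formally solves
\[
iv_t+i\beta v_{xxx}+\alpha v_{xx}+i\delta v_x=\tilde f_t,\qquad v(0,t)=v(L,t)=v_x(L,t)=0,
\]
with initial datum $v(\cdot,0)=\tilde w_t(\cdot,0)=A\tilde w_0-i\tilde f(\cdot,0)$. Here $A\tilde w_0\in L^2(0,L)$ since $\tilde w_0\in H^3(0,L)$, and $\tilde f(\cdot,0)\in L^2(0,L)$ since $W^{1,1}(0,T;L^2(0,L))\hookrightarrow C([0,T];L^2(0,L))$; thus $v(\cdot,0)\in L^2(0,L)$ and $\tilde f_t\in L^1(0,T;L^2(0,L))$. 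Proposition~\ref{wtildepropf} applied to $v$ then gives $v\in X_T^0$, i.e. $\tilde w_t\in C([0,T];L^2(0,L))\cap L^2(0,T;H^1(0,L))$, together with the trace $\tilde w_{xt}(0,\cdot)\in L^2(0,T)$. This computation is formal; I would justify it by approximation --- replace $\tilde w_0$ by $\tilde w_0^n\in D(A)$ converging in $H^3(0,L)$ and $\tilde f$ by smooth $\tilde f^n$ converging in $W^{1,1}(0,T;L^2(0,L))$, so the corresponding solutions $\tilde w^n$ are classical and the manipulations rigorous, and then pass to the limit using the linearity of the problem and the uniform bounds of Propositions~\ref{wtildeprop}--\ref{wtildepropf}.

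\textbf{Elliptic regularity and conclusion.} The auxiliary fact needed is that, for $\varphi\in D(A)$,
\[
\|\varphi\|_{H^3(0,L)}\lesssim\|\varphi\|_{L^2(0,L)}+\|A\varphi\|_{L^2(0,L)},
\]
and if moreover $A\varphi\in H^1(0,L)$ then $\varphi\in H^4(0,L)$ with $\|\varphi\|_{H^4(0,L)}\lesssim\|\varphi\|_{L^2(0,L)}+\|A\varphi\|_{H^1(0,L)}$. These follow from the closed graph theorem applied to the inclusion $D(A)\hookrightarrow H^3(0,L)$, supplemented for the $H^4$ statement by the bootstrap that $\varphi\in D(A)$ and $A\varphi\in H^1(0,L)$ imply $\varphi\in H^4(0,L)$ (solve $\beta\varphi'''=i\alpha\varphi''-\delta\varphi'-A\varphi$ and iterate), or directly by writing $A\varphi=h$ as a first order linear system for $(\varphi,\varphi',\varphi'')$ and using variation of parameters together with the three boundary conditions; the possible non-invertibility of $A$ for critical parameters (compare the stationary solution exhibited in \S\ref{statement}) is harmless, since the homogeneous boundary value problem then has only a finite dimensional solution space on which all norms are equivalent. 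Now from the equation $A\tilde w=\tilde w_t+i\tilde f=v+i\tilde f\in C([0,T];L^2(0,L))$, so the $H^3$-estimate applied to $\tilde w(t)-\tilde w(s)\in D(A)$ gives $\tilde w\in C([0,T];H^3(0,L))$. For $\tilde w\in L^2(0,T;H^4(0,L))$ I would split $\tilde w=S(\cdot)\tilde w_0+\int_0^{\cdot}S(\cdot-s)(-i\tilde f(s))\,ds$: the first summand satisfies $AS(\cdot)\tilde w_0=S(\cdot)A\tilde w_0\in X_T^0$ by Proposition~\ref{wtildeprop}, hence lies in $L^2(0,T;H^1(0,L))$, and the $H^4$-estimate upgrades it to $L^2(0,T;H^4(0,L))$; the forced summand is the delicate one (see below). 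Uniqueness is inherited from Proposition~\ref{wtildepropf}, since any $X_T^3$-solution is in particular the $X_T^0$ mild solution.

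\textbf{Main obstacle.} I expect the crux to be twofold. First, the rigorous execution of the time-differentiation step: constructing an approximation by genuinely boundary-compatible smooth data, checking that $v$ inherits exactly the homogeneous boundary conditions, and transporting all bounds to the limit. Second, and more quantitatively delicate, the $L^2(0,T;H^4)$ bound for the contribution of the forcing $\tilde f$: here one must combine the Kato-type smoothing of the time-differentiated equation with the elliptic estimate through the identity $A\tilde w=\tilde w_t+i\tilde f$ without losing regularity, which is where the full strength of $\tilde f\in W^{1,1}(0,T;L^2(0,L))$ --- via integration by parts in time, recasting $\tilde f(\cdot,0)$ as an $L^2$ datum and $\tilde f_t$ as an $L^1(0,T;L^2)$ forcing --- and, in the applications of this proposition, whatever additional spatial regularity $\tilde f$ happens to carry, is used.
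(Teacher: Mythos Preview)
Your approach is essentially the paper's own: differentiate \eqref{targettildef} in time, apply Proposition~\ref{wtildepropf} to $\tilde w_t$, and then read off the $H^3$ (resp.\ $H^4$) spatial regularity of $\tilde w$ from the equation $i\beta\tilde w_{xxx}=-i\tilde w_t-\alpha\tilde w_{xx}-i\delta\tilde w_x+\tilde f$ via Gagliardo--Nirenberg interpolation. The paper's proof is a single line and does not discuss the issues you flag (the implicit need for $\tilde w_0'(L)=0$, the approximation argument, the $L^2_tH^4_x$ bound for the forced part), so your expanded sketch is in fact more careful than the original.
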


\begin{proof} Follows by differentiating \eqref{targettildef} in time and applying Proposition \eqref{wtildepropf} to $\tilde{w}_t.$
\end{proof}

In order to obtain the local solution of the nonlinear model \eqref{ch414}, we will use the contraction argument.  To this end, we first set the space $Y_T \doteq X_T^0$ and the solution map
\begin{equation}\label{YT}
[\Gamma z](t) \doteq S(t)w_0 + \int_0^tS(t-s)Fz(s)ds,
\end{equation} where $Fz\doteq -(I-\Upsilon_{k})[|{z}+ \Phi(z)|^p\left({z} + \Phi(z)\right)]$ and $S(t)w_0$ denotes the solution of the corresponding linear equation \eqref{target}.
By using the linear homogeneous and nonhomogeneous estimates, we obtain that for any $z\in Y_T$, one has
\begin{multline}\label{maptoitself}
\left|\Gamma z\right|_{Y_T} \le C(1+\sqrt{T}) \\
\times\left [|{w}_0|_2 +e^{rT}\left(\int_0^T\left|(I-\Upsilon_{k})[|{z}+ \Phi(z)|^p\left({z} + \Phi(z)\right)]\right|_{2}dt\right)\right].
\end{multline} Using Gagliardo-Nirenberg's inequality, the inequalities \eqref{Phiwest2}-\eqref{Phiwest3}, the last term at the right hand side of \eqref{maptoitself} can be estimated as follows.
\begin{equation}\label{maptoitself2}
\begin{split}
&\int_0^T\left|(I-\Upsilon_{k})[|{z}+ \Phi(z)|^p\left({z} + \Phi(z)\right)]\right|_{2}dt \\
\le& c_k\int_0^T\left||{z}+ \Phi(z)|^p\left({z} + \Phi(z)\right)]\right|_{2}dt\\
=& c_k\int_0^T\left|z+ \Phi(z)\right|_{2p+2}^{p+1}dt \\
\le& c_k\int_0^T\left(\left|z+ \Phi(z)\right|_{2}^{\frac{p+2}{2}}\left|z_x+ \partial_x\Phi(z)\right|_{2}^{\frac{p}{2}}+\left|z+ \Phi(z)\right|_{2}^{p+1}\right)dt\\
\le&c_{k}\int_0^T\left(\left|z\right|_{2}^{\frac{p+2}{2}}\left|z_x\right|_{2}^{\frac{p}{2}}+|z|_2^{p+1}\right)dt \\
\le&c_k|z|_{C([0,T];L^2(0,L))}^{\frac{p+2}{2}}\int_0^T\left|z_x\right|_{2}^{\frac{p}{2}}dt+c_kT|z|_{C([0,T];L^2(0,L))}^{p+1}\\
\le&c_{k}T^{1-\frac{p}{4}}|z|_{C([0,T];L^2(0,L))}^{\frac{p+2}{2}}|z_x|_{L^2(0,T;L^2(0,L))}^\frac{p}{2}+c_kT|z|_{Y_T}^{p+1} \\
\le& c_kT^{1-\frac{p}{4}}|z|_{Y_T}^{p+1}+c_kT|z|_{Y_T}^{p+1}.
\end{split}
\end{equation}
Combining \eqref{maptoitself} and \eqref{maptoitself2}, we deduce that
\begin{equation}\label{YTontoitself}
\left|\Gamma z\right|_{Y_T} \le C(1+\sqrt{T})\left[|{w}_0|_2+c_ke^{rT}T^{1-\frac{p}{4}}|z|_{Y_T}^{p+1}+c_kT|z|_{Y_T}^{p+1}\right]
\end{equation} for $z\in Y_T$.

Without loss of generality, we will assume that $0<T<1$ since it is enough to prove the local existence of for one sufficiently small $T$.

\begin{enumerate}
	\item[Case (i):] If $0<p<4$, then $\theta\doteq1-\frac{p}{4}>0$, and letting $R\doteq4C|{w}_0|_2$ and $z\in B_R^T \doteq \{z\in Y_T|\,|z|_{Y_T}\le R\},$ from \eqref{YTontoitself} we get
	\begin{equation}\label{Brest1}
	\left|\Gamma z\right|_{Y_T} \le \frac{R}{2}+c_ke^{rT}T^{\theta}R^{p+1}+c_kTR^{p+1}.
	\end{equation}
	Now, we can choose $T$ small enough that $c_ke^{rT}T^{\theta}R^{p}+c_kTR^{p} < \frac{1}{2},$ so that we can guarantee $\left|\Gamma z\right|_{Y_T} \le R.$
	\item[Case (ii):] If $p=4$, we observe that $\theta=0$. Suppose $|w_0|_{2}\le \epsilon$ and set $R=R(\epsilon)\doteq4\epsilon C$. Then, from \eqref{YTontoitself} we have
	\begin{equation}\label{Brest2}
	\left|\Gamma z\right|_{Y_T} \le \frac{R}{2}+c_ke^{rT}R^{p+1}+c_kTR^{p+1}.
	\end{equation} Note that $\displaystyle c_ke^{rT}R^{p}+c_kTR^{p} <\frac{1}{2}$ for small $\epsilon$ and small $T>0$. Therefore, we again have $\left|\Gamma z\right|_{Y_T} \le R$ under a smallness condition on $w_0$.
\end{enumerate}
By cases (i)-(ii) above, we conclude that $\Gamma$ maps the closed ball $B_{R}^T$ onto itself for $p\in (0,4]$.

Next, we would like to show that $\Gamma$ is indeed a contraction on $B_{R}^T$ for sufficiently small $T$.  To prove this let $z_1$ and $z_2$ be two elements in $Y_T$.  Then,
\begin{equation}\label{contraction}
\begin{split}
\left|\Gamma z_1-\Gamma z_2\right|_{Y_T} =& \left|\int_0^\cdot S(\cdot-s)[Fz_1(s)-Fz_2(s)]ds\right|_{Y_T} \\
\le& ce^{rT}|Fz_1-Fz_2|_{L^1(0,T;L^2(0,L))} \\
\le& ce^{rT} \left( \int_0^T\left|(I-\Upsilon_{k})[|{z}_1+ \Phi(z_1)|^p\left({z}_1 + \Phi(z_1)\right) \right.\right. \\
&\left.\left. -|{z}_2+ \Phi(z_2)|^p\left({z}_2 + \Phi(z_2)\right)]\right|_{2}dt \right) \\
\le& c_k\int_0^T\left||{z}_1+ \Phi(z_1)|^p\left({z}_1 + \Phi(z_1)\right)-|{z}_2+ \Phi(z_2)|^p\left({z}_2 + \Phi(z_2)\right)]\right|_{2}dt\\
\le& c_k\int_0^T\big||{z}_1+ \Phi(z_1)-z_2-\Phi(z_2)|(|{z}_1+ \Phi(z_1)|^p+|{z}_2+ \Phi(z_2)|^p)\big|_{2}dt\\
\le& c_k\int_0^T|{z}_1+ \Phi(z_1)-z_2-\Phi(z_2)|_{2}\big(|{z}_1+ \Phi(z_1)|_{2p}^p+|{z}_2+ \Phi(z_2)|_{2p}^p\big)dt.
\end{split}
\end{equation}

Note that due to \eqref{Phiwest2}, we have
\begin{equation}\label{zPgi12}
|{z}_1+ \Phi(z_1)-z_2-\Phi(z_2)|_{2}\le c_k|{z}_1-z_2|_{2}.
\end{equation}
We divide the analysis of the nonlinear part in two cases.
\begin{enumerate}
	\item[Case (i):] If $0<p\le 1$, then using Hölder's inequality (if $p\in (0,1)$) and \eqref{Phiwest2}, we get \begin{equation}\label{z1z2phi}
	|{z}_i+ \Phi(z_i)|_{2p}^p\le c|{z}_i+ \Phi(z_i)|_2^p\le c_k|{z}_i|_2^p
	\end{equation} for $i=1,2$. Applying \eqref{zPgi12} and \eqref{z1z2phi} to the right hand side of \eqref{contraction}, we obtain
	\begin{align}
	\left|\Gamma z_1-\Gamma z_2\right|_{Y_T} \le& c_k\int_0^T|{z}_1-z_2|_{2}(|z_1|_2^p+|z_2|_2^p)dt\nonumber\\
	\le& c_kT|{z}_1-z_2|_{Y_T}(|z_1|_{Y_T}^p+|z_2|_{Y_T}^p)\le c_kTR^p|{z}_1-z_2|_{Y_T}.\label{contraction2}
	\end{align} For sufficiently small $T$, we can guarantee that $c_kTR^p<1$ so that $\Gamma$ becomes a contraction on $B_R^T.$
	\item[Case (ii):] If $4\ge p>1$, then we use Gagliardo-Nirenberg's inequality and \eqref{Phiwest2} to get
	\begin{align}
	|{z}_i+ \Phi(z_i)|_{2p}^p\le& c\left(|{z}_i+ \Phi(z_i)|_2^\frac{p+1}{2}|\partial_x{z}_i+ \partial_x\Phi(z_i)|_2^{\frac{p-1}{2}}+|z_i|_2^p\right)\nonumber\\
	\le& c_k\left(|{z}_i|_2^\frac{p+1}{2}|\partial_x{z}_i|_2^{\frac{p-1}{2}}+|z_i|_2^p\right)\label{z1z2phigag}
	\end{align} for $i=1,2$.  Applying \eqref{zPgi12} and \eqref{z1z2phigag} to the right hand side of \eqref{contraction} and using Hölder's inequality, we obtain
	\begin{equation} \label{contraction3}
	\begin{split}
	\left|\Gamma z_1-\Gamma z_2\right|_{Y_T}
	\le&c_k\int_0^T|{z}_1-z_2|_{2}\left(|{z}_1|_2^\frac{p+1}{2}|\partial_x{z}_1|_2^{\frac{p-1}{2}}+|z_1|_2^p \right. \\
	&\left. +|{z}_2|_2^\frac{p+1}{2}|\partial_x{z}_2|_2^{\frac{p-1}{2}}+|z_2|_2^p\right)dt\\
	\le&c_k(T^{\frac{5-p}{4}}+T)|{z}_1-z_2|_{Y_T}(|z_1|_{Y_T}^{p}+|z_2|_{Y_T}^{p}) \\
	\le&c_k(T^{\frac{5-p}{4}}+T)R^p|{z}_1-z_2|_{Y_T}.
	\end{split}
	\end{equation} For sufficiently small $T$, we can guarantee that $c_k(T^{\frac{5-p}{4}}+T)R^p<1$ so that $\Gamma$ becomes a contraction on $B_R^T.$
\end{enumerate}
By cases (i)-(ii) above, we conclude that $\Gamma$ is a contraction on the closed ball $B_{R}^T$, and therefore has a unique fixed point, say $w\in B_{R}^T$. By choosing $T$ small enough, we can further claim that the solution is indeed unique in $Y_T$.  In order to see this, suppose to the contrary that there are two solutions $z_1=\Gamma z_1,z_2=\Gamma z_2\in Y_T$. Then in the first case where $0<p\le 1$, from \eqref{contraction2}, we see that for small enough $T$
\begin{equation}\label{contrac001}
\left|z_1-z_2\right|_{Y_T} = \left|\Gamma z_1-\Gamma z_2\right|_{Y_T} \le  c_kT|{z}_1-z_2|_{Y_T}(|z_1|_{Y_T}^p+|z_2|_{Y_T}^p)\le \frac{1}{2}|{z}_1-z_2|_{Y_T},
\end{equation} which can only hold if $z_1-z_2=0$.  Similarly, in the second case where $1<p\le 4$, from \eqref{contraction3}, we see that for small enough $T$
\begin{equation}
\begin{split}
\left|z_1-z_2\right|_{Y_T} =& \left|\Gamma z_1-\Gamma z_2\right|_{Y_T} \\
\le&  c_k\left(T^{\frac{5-p}{4}}+T\right)|{z}_1-z_2|_{Y_T}(|z_1|_{Y_T}^{p}+|z_2|_{Y_T}^{p})\le \frac{1}{2}|{z}_1-z_2|_{Y_T},\label{contrac001}
\end{split}
\end{equation} which can only hold if $z_1-z_2=0$.
We have just proved the following local wellposedness result for the target system.
\begin{prop}\label{condepdat}
	Let $T>0$, $p\in (0,4]$, $w_0\in L^2(0,L)$ (small if $p=4$), then \eqref{ch414}-\eqref{ch414bdry} admits a unique solution $w\in X_{T_0}^0$ for some $T_0\in (0,T]$. Moreover, the flow $w_0\mapsto w$ is continuous from $L^2(0,L)$ into $X_{T_0}^0$.
\end{prop}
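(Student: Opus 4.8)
The plan is to solve the target system \eqref{ch414}--\eqref{ch414bdry} by Banach's fixed point theorem applied to the Duhamel formulation. As derived above, \eqref{ch414} is equivalent to the fixed point equation $w=\Gamma w$, where $\Gamma$ is the map in \eqref{YT}, namely $\Gamma z=S(\cdot)w_0+\int_0^\cdot S(\cdot-s)Fz(s)\,ds$ with $Fz=-(I-\Upsilon_k)\bigl[|z+\Phi z|^p(z+\Phi z)\bigr]$, $S(t)$ the $C_0$-semigroup of \eqref{target}, and $\Phi=(I-\Upsilon_k)^{-1}-I$ the operator of Lemma \ref{inverselem}. Working in $Y_T\doteq X_T^0$, the first step is to record the linear homogeneous and nonhomogeneous bounds: applying Proposition \ref{wtildepropf} to $\tilde w=e^{rt}w$ and transferring back to $w$ gives the a priori estimate that leads to \eqref{YTontoitself}.

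Second, I would establish the self-mapping property on a closed ball $B_R^T\subset Y_T$. The nonlinear contribution is dominated by $\int_0^T|z+\Phi z|_{2p+2}^{p+1}\,dt$, and the Gagliardo--Nirenberg inequality of Lemma \ref{gag} (applied with exponent $2p+2$), combined with the bounds \eqref{Phiwest2}--\eqref{Phiwest3} on $\Phi$ and $\partial_x\Phi$ and Hölder's inequality in $t$, converts this into a bound of the form $c_k(T^{1-p/4}+T)|z|_{Y_T}^{p+1}$. When $0<p<4$ the exponent $\theta\doteq1-p/4$ is positive, so with $R\doteq4C|w_0|_2$ one shrinks $T$ so that $c_k(e^{rT}T^\theta+T)R^p<\tfrac12$ and obtains $\Gamma(B_R^T)\subset B_R^T$. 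When $p=4$ the time gain disappears ($\theta=0$), and here the smallness hypothesis on $|w_0|_2$ is essential: taking $R\doteq4C\epsilon$ with $|w_0|_2\le\epsilon$, one instead makes $c_k(e^{rT}+T)R^p<\tfrac12$ by choosing $\epsilon$, hence $R$, small.

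Third comes the contraction estimate. Using the pointwise inequality $\bigl||a|^pa-|b|^pb\bigr|\lesssim|a-b|\bigl(|a|^p+|b|^p\bigr)$, the bound \eqref{Phiwest2} for $|\Phi z_1-\Phi z_2|_2$, and then Hölder's inequality (when $0<p\le1$) or Gagliardo--Nirenberg (when $1<p\le4$) to control $|z_i+\Phi z_i|_{2p}^p$, one arrives at $|\Gamma z_1-\Gamma z_2|_{Y_T}\le c_k(T+T^{(5-p)/4})R^p|z_1-z_2|_{Y_T}$ on $B_R^T$; shrinking $T$ to some $T_0\in(0,T]$ makes this constant strictly less than $1$, so $\Gamma$ has a unique fixed point $w\in B_R^{T_0}$. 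Uniqueness in all of $Y_{T_0}$ follows because any two solutions there satisfy the same Lipschitz inequality on a (possibly shorter) subinterval, forcing them to agree and then, by stepping forward, on $[0,T_0]$. Continuous dependence $w_0\mapsto w$ is the standard uniform-contraction consequence: the linear estimate gives $|\Gamma_{w_{0,1}}z-\Gamma_{w_{0,2}}z|_{Y_{T_0}}\le C|w_{0,1}-w_{0,2}|_2$ for every $z$, and pairing this with the contraction constant $<1$ yields $|w^1-w^2|_{Y_{T_0}}\lesssim|w_{0,1}-w_{0,2}|_2$.

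The main obstacle is the nonlinear estimate at the endpoint $p=4$: there the smoothing-type time factor $T^{1-p/4}$ degenerates to a constant, so shrinking $T$ no longer absorbs the nonlinearity, and one must trade a small time interval for small data --- this is why the statement assumes $|w_0|_2$ small precisely when $p=4$. A recurring technical burden, rather than a conceptual one, is that the backstepping transformation distorts the monotone power nonlinearity, so the operators $(I-\Upsilon_k)$ and $\Phi$ (and $\partial_x\Phi$) must be carried through every step; this is exactly where the boundedness statements of Lemma \ref{inverselem} and the bounds \eqref{Phiwest2}--\eqref{Phiwest3} are used, and one must check that the Gagliardo--Nirenberg exponents in the $L^{2p}$ and $L^{2p+2}$ norms remain compatible with controlling $z_x$ only through the $X_T^0$-norm, i.e. in $L^2(0,T;L^2(0,L))$.
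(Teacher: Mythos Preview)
Your proposal is correct and follows essentially the same approach as the paper: the same Duhamel fixed-point map $\Gamma$, the same Gagliardo--Nirenberg/H\"older estimate yielding the $c_k(T^{1-p/4}+T)|z|_{Y_T}^{p+1}$ bound, the same case splits ($p<4$ vs.\ $p=4$ for self-mapping, $0<p\le1$ vs.\ $1<p\le4$ for the contraction), and the same uniqueness-by-shrinking-$T$ argument. The only addition is your explicit treatment of continuous dependence via the uniform-contraction principle, which the paper states but does not spell out.
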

Thanks to the bounded invertibility of $I-\Upsilon_k$ given in Lemma \ref{inverselem} on the $L^2$-based Sobolev spaces,  we conclude that the original nonlinear plant \eqref{heat} is also locally wellposed as stated in the proposition below.
\begin{prop}\label{nonlinprop}Let $T>0$, $p\in (0,4]$, $u_0\in L^2(0,L)$ (small if $p=4$), and $g_0$ be as in \eqref{controller}, where $k$ is the backstepping kernel constructed in Lemma \ref{lemkernel}.  Then \eqref{heat} admits a unique solution $u\in X_{T_0}^0$ for some $T_0\in (0,T]$.
\end{prop}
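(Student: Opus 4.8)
The plan is to deduce this statement from the local wellposedness of the target system (Proposition \ref{condepdat}) together with the bounded invertibility of $I-\Upsilon_k$ (Lemma \ref{inverselem}), in complete analogy with the linear case handled in Proposition \ref{wplin}. First I would fix $u_0\in L^2(0,L)$ (small if $p=4$) and set $w_0\doteq(I-\Upsilon_k)u_0$; by Lemma \ref{inverselem} this lies in $L^2(0,L)$ with $|w_0|_2\le|I-\Upsilon_k|_{2\to 2}|u_0|_2$, so $w_0$ is small whenever $u_0$ is. Proposition \ref{condepdat} then supplies a unique $w\in X_{T_0}^0$ solving \eqref{ch414}-\eqref{ch414bdry} with initial datum $w_0$, for some $T_0\in(0,T]$ that can be taken to depend only on the fixed parameters, on $k$, and on $|u_0|_2$. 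I would then define the candidate solution by $u\doteq(I-\Upsilon_k)^{-1}w=(I+\Phi)w$.

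Next I would verify that $u$ has the claimed regularity: since $I+\Phi$ is bounded on $H^l(0,L)$ for $l=0,1$ by Lemma \ref{inverselem}, applying it for each fixed $t$ shows $u\in C([0,T_0];L^2(0,L))\cap L^2(0,T_0;H^1(0,L))=X_{T_0}^0$, with $|u|_{X_{T_0}^0}\lesssim|w|_{X_{T_0}^0}$. (Continuous dependence of $u$ on $u_0$, if needed, would follow by composing the continuous data-to-solution map of Proposition \ref{condepdat} with the bounded linear maps $u_0\mapsto w_0$ and $w\mapsto u$.)

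The core step is to check that $u$ actually solves \eqref{heat} with $g_0$ as in \eqref{controller}. Because $I-\Upsilon_k$ is invertible and $k$ solves the kernel problem \eqref{kernela}, the backstepping identity is two-directional: $w=(I-\Upsilon_k)u$ solves \eqref{ch414}-\eqref{ch414bdry} if and only if $u$ solves \eqref{heat} with feedback $g_0(t)=\int_0^Lk(0,y)u(y,t)\,dy$. At the level of mild solutions I would obtain this by applying $I+\Phi$ to the Duhamel formula for $w$ and rewriting the nonlinear forcing by means of the identity \eqref{rhswtoplamnon}; the boundary conditions transform correctly since $k(x,x)=k(x,L)=0$ force $w(L,t)=u(L,t)$ and $w_x(L,t)=u_x(L,t)$, while $w(0,t)=u(0,t)-\int_0^Lk(0,y)u(y,t)\,dy$ produces exactly $g_0$. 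If one prefers to avoid manipulating the nonlinear Duhamel representation directly, the same computation can first be run classically for smooth, compatible $u_0$ — where the calculations of Appendix \ref{kerneldeduct} are rigorous — and then extended to general $u_0\in L^2(0,L)$ by the continuous dependence just mentioned. Finally, uniqueness in $X_{T_0}^0$ is immediate: two solutions $u_1,u_2$ of \eqref{heat} produce $w_i=(I-\Upsilon_k)u_i\in X_{T_0}^0$ solving \eqref{ch414}-\eqref{ch414bdry} with the same initial datum, so $w_1=w_2$ by the uniqueness in Proposition \ref{condepdat}, hence $u_1=u_2$.

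I expect the only genuine obstacle to be this last point — confirming that the notion of mild solution is preserved in both directions under the nonlocal transformation $I-\Upsilon_k$ and its inverse; the approximation argument sketched above (smooth compatible data plus passage to the limit, exploiting the continuity of the transformation on the relevant $L^2$-based spaces) is the safest way to handle it. Everything else is a routine consequence of Proposition \ref{condepdat} and Lemma \ref{inverselem}.
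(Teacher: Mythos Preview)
Your proposal is correct and follows exactly the approach of the paper: the proposition is deduced directly from Proposition \ref{condepdat} (local wellposedness of the target system) together with the bounded invertibility of $I-\Upsilon_k$ on the $L^2$-based Sobolev spaces from Lemma \ref{inverselem}. In fact you supply considerably more detail than the paper, which states only this sentence as the entire justification.
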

\begin{rem}
	It will turn out in the next section that the local solution of the target system \eqref{ch414}-\eqref{ch414bdry} as well as the local solution of the original plant \eqref{heat} are global (i.e., $T_0=T$) and also exponentially decay in time with respect to $L^2$ norm in space provided that $|u_0|_2$ is not too large.
\end{rem}

Finally, we will need a higher regularity result to justify the multiplier calculations for local solutions of \eqref{ch414} in the next section. We prove the following proposition.

\begin{prop}\label{higerreg}
	Let $T>0$, $p\in (0,4]$, $w_0\in H^3(0,L)$ ($|w_0|_2$ small if $p=4$) and satisfy the compatibility conditions $w_0(0)=w_0(L)=0$. Then \eqref{ch414}-\eqref{ch414bdry} admits a unique solution $w\in X_{T_0}^3$ for some $T_0\in (0,T].$
\end{prop}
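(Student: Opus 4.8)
The plan is to bootstrap the local $L^{2}$-solution produced by Proposition~\ref{condepdat} up to the regularity class $X_{T_0}^{3}$ by running a contraction argument one derivative level higher. I would work in the closed ball $B_R$ of radius $R\simeq|w_0|_{H^3}+|w_0|_{H^3}^{p+1}$ inside the complete metric space
\[
\mathcal Z_{T}\doteq\left\{z\in X_T^3\ :\ z_t\in X_T^0,\ z(0,\cdot)=z(L,\cdot)=0,\ z(\cdot,0)=w_0\right\},\qquad |z|_{\mathcal Z_T}\doteq|z|_{X_T^3}+|z_t|_{X_T^0},
\]
and show that the map $\Gamma$ of \eqref{YT} — which, recall, produces the mild solution of the linear inhomogeneous target system \eqref{targetf} with forcing $f=Fz$ — is a contraction on $B_R$ once $T$ is small. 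Imposing the initial trace $z(\cdot,0)=w_0$ in the definition of $\mathcal Z_T$ is not cosmetic: it forces the difference $(\Gamma z_1-\Gamma z_2)_t$ to have vanishing initial datum, which is what supplies the $T$-smallness needed in the Lipschitz estimate.

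The bound on $|\Gamma z|_{X_T^3}$ will come from the higher-regularity linear estimate of Proposition~\ref{wtildehigherreg}, transferred from $\tilde w$ to $w$ through $w=e^{-rt}\tilde w$ exactly as after Proposition~\ref{wtildepropf}; the bound on $|(\Gamma z)_t|_{X_T^0}$ will come from applying the $L^{2}$-level linear estimate of Proposition~\ref{wtildepropf} to the pde satisfied by $(\Gamma z)_t$, whose forcing is $\partial_t Fz$ and whose initial datum is the \emph{fixed} $L^2(0,L)$-function $-\beta w_0'''+i\alpha w_0''-\delta w_0'-rw_0-iFw_0$ (this lies in $L^2$ since $w_0\in H^3\hookrightarrow L^\infty$ while $I+\Phi$ and $I-\Upsilon_k$ preserve $L^\infty$, so $Fw_0\in L^\infty$). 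Finally, the $L^2(0,T;H^4)$ component of $|\Gamma z|_{X_T^3}$ is recovered by solving the main equation for $w_{xxx}$, and once more for $w_{xxxx}$, in terms of $w_t$, $\partial_x w_t$, $Fz$, $\partial_x Fz$ and the lower-order $x$-derivatives of $w$, all of which are by then under control.

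The one genuinely new analytic ingredient is the nonlinear bound
\[
|Fz|_{W^{1,1}(0,T;L^2(0,L))}+|\partial_x Fz|_{L^2(0,T;L^2(0,L))}\ \le\ c_k\bigl(T^{1-\frac p4}+\sqrt T+T\bigr)\,|z|_{\mathcal Z_T}^{p+1}
\]
together with its Lipschitz analogue. The $L^1(0,T;L^2)$-part of $Fz$ is handled by the Gagliardo--Nirenberg chain already used in \eqref{maptoitself2}. For the time derivative the crucial point is the pointwise inequality
\[
\bigl|\partial_t\!\left(|u|^{p}u\right)\bigr|\ \le\ (p+2)\,|u|^{p}\,|u_t|,\qquad u=z+\Phi z,\quad u_t=z_t+\Phi z_t,
\]
which is immediate on writing $u=|u|e^{i\varphi}$ and remains valid, with the convention $0=0$, where $u$ vanishes (there $|u|^p|u_t|\to0$ since $p>0$). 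Hence $|\partial_t(|u|^pu)(\cdot,t)|_2\le(p+2)|u(\cdot,t)|_\infty^{p}|u_t(\cdot,t)|_2$, and by $H^1(0,L)\hookrightarrow L^\infty(0,L)$ and the boundedness of $I+\Phi$ one gets $|u(\cdot,t)|_\infty\le c_k|z(\cdot,t)|_{H^1}$ and $|u_t(\cdot,t)|_2\le c_k|z_t(\cdot,t)|_2$, so $|\partial_t Fz|_{L^1(0,T;L^2)}\le c_kT|z|_{\mathcal Z_T}^{p+1}$; the estimate for $\partial_x Fz$ is identical, using $|\partial_x(|u|^pu)|\le(p+2)|u|^p|u_x|$ and $\Phi\colon L^2\to H^1$. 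Feeding these into the two linear bounds and choosing $R$ as above, the self-mapping of $\Gamma$ on $B_R$ and its contractivity follow for $T_0$ small exactly as in the proof of Proposition~\ref{condepdat}: for $0<p<4$ the positive exponent $1-\frac p4$ makes every nonlinear contribution small with $T_0$, and for $p=4$, where the factor $T^{1-p/4}=1$ no longer helps, the smallness hypothesis plays the same role as for \eqref{Brest2}. The fixed point $w\in\mathcal Z_{T_0}\subset X_{T_0}^3$ is in particular an element of $X_{T_0}^0$ solving \eqref{ch414}--\eqref{ch414bdry}, so by the uniqueness part of Proposition~\ref{condepdat} it coincides with the local $L^{2}$-solution, which is therefore in $X_{T_0}^3$.

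I expect the main obstacle to be the limited spatial smoothness of the nonlinearity: for small $p$ the map $|u|^pu$ is not twice $x$-differentiable near the zeros of $u$, so the regularity loop cannot be closed by differentiating the equation in $x$; the argument must be driven by a single time derivative, and it closes only because $\partial_t(|u|^pu)$ carries the clean bound above with no singular prefactor. A secondary, bookkeeping-type difficulty is verifying that the compatibility conditions $w_0(0)=w_0(L)=0$ — precisely those required by Proposition~\ref{wtildehigherreg} — suffice for $\Gamma z$ to be continuous into $H^3(0,L)$ up to $t=0$ and for the constructed $(\Gamma z)_t$ to coincide with the genuine time derivative; this is disposed of by the usual density argument within the linear theory.
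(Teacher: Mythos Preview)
Your approach differs from the paper's in a way that matters. The paper does \emph{not} run the full nonlinear contraction in a higher-regularity space. Instead it first fixes the $L^2$-level solution $w\in Y_{T_0}$ supplied by Proposition~\ref{condepdat}, and then sets up a \emph{linear} fixed-point problem for an auxiliary unknown $z$ (which a posteriori equals $w_t$), with forcing
\[
F(w,z)=-(I-\Upsilon_k)\Bigl[\tfrac{p+2}{2}|w+\Phi w|^{p}(z+\Phi z)+\tfrac{p}{2}|w+\Phi w|^{p-2}(w+\Phi w)^2\,(\overline{z+\Phi z})\Bigr].
\]
Because $w$ is frozen, this is linear in $z$; the coefficients $|w+\Phi w|^{p}$ and $|w+\Phi w|^{p-2}(w+\Phi w)^2$ are fixed $L^\infty$ functions, and the Lipschitz estimate for the contraction is immediate for every $p\in(0,4]$. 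Once $z\in Y_{T_0}$ is found, one sets $w=w_0+\int_0^t z\,ds$, checks $z=w_t$, and reads off $w\in X_{T_0}^3$ from the equation.

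Your direct route works for $p\ge 1$ but has a genuine gap for $0<p<1$. The ``Lipschitz analogue'' you invoke for $\partial_t Fz$ does not exist in that range: writing $u_i=z_i+\Phi z_i$,
\[
\partial_t\bigl(|u_1|^p u_1-|u_2|^p u_2\bigr)
\]
contains the term $\bigl(|u_1|^p-|u_2|^p\bigr)\partial_t u_2$ (and its conjugate companion), and for $p<1$ one only has $\bigl||u_1|^p-|u_2|^p\bigr|\le c\,|u_1-u_2|^p$, with no linear bound available near the zeros of $u_2$. Hence $|\partial_t(Fz_1-Fz_2)|_{L^1(0,T;L^2)}$ is controlled by $|z_1-z_2|_{\mathcal Z_T}^{\,p}$, not $|z_1-z_2|_{\mathcal Z_T}$, and Banach's fixed point theorem does not apply. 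The paper's linearization trick is precisely what circumvents this: by freezing $w$ one never has to differentiate the map $u\mapsto|u|^p$ in the contraction step. If you want to keep your architecture, you would need to first produce $w$ at the $L^2$ level and then run your time-derivative argument with $|w+\Phi w|^p$ treated as a known coefficient, which is essentially what the paper does.
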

\begin{proof}
	Let $w_0\in H^3(0,L)$ with $w_0(0)=w_0(L)=0$, and $w\in Y_{T_0}$ be the corresponding fixed solution of \eqref{ch414}. Now, we consider the problem
	\begin{equation}\label{ch414dt}
	iz_t + i\beta z_{xxx} +\alpha z_{xx} +i\delta z_x + ir z = F(w,z),
	\end{equation} where
	\begin{equation}\label{reg1}
	\begin{split}
	F(w,z)=&-(I-\Upsilon_{k})\left[\frac{p+2}{2}|{w}+ \Phi(w)|^p\left(z + \Phi(z)\right)\right.\\ &\left.+\frac{p}{2}|{w}+\Phi(w)|^{p-2}(w+\Phi(w))^2\left({\bar{z}} + \overline{\Phi(z)}\right)\right].
	\end{split}
	\end{equation} Moreover, we associate $z$ with initial and boundary conditions given by
	\begin{equation}\label{ch414initdt}
	{z}(0) = z_0 = -\beta w_0''' +i\alpha w_0''-\delta w'-rw_0  +i(I-\Upsilon_{k})[|w_0+ \Phi(w_0)|^p\left(w_0 + \Phi(w_0)\right)],
	\end{equation}
	\begin{equation}\label{ch414bdrydt}
	{z}(0,t) = 0 \; , \; {z}(L,t) = 0, \quad \textrm{and} \quad {z}_{x}(L,t) = 0.
	\end{equation}
	We define the solution map
	\begin{equation}\label{YTdt}
	[\Gamma z](t) \doteq S(t)z_0 + \int_0^tS(t-s)F(w(s),z(s))ds
	\end{equation} on $Y_{T_0}$. Then, similar to \eqref{maptoitself}, we have the estimate
	\begin{equation}\label{maptoitselfdt}
	\begin{split}
	\left|\Gamma z\right|_{Y_{T_0}} \le& C(1+\sqrt{{T_0}})\left[|{z}_0|_2 +e^{r{T_0}}|F(w,z)|_{L^1(0,{T_0};L^2(0,L))}\right] \\ =&C(1+\sqrt{{T_0}})\left[|{z}_0|_2+e^{r{T_0}}\int_0^T\left|(I-\Upsilon_{k})\left[\frac{p+2}{2}|{w}+ \Phi(w)|^p\left(z + \Phi(z)\right)\right.\right.\right.\\
	&\left.\left.\left.+\frac{p}{2}|{w}+ \Phi(w)|^{p-2}(w+\Phi(w))^2\left({\bar{z}} + \overline{\Phi(z)}\right)\right]\right|_{2}dt\right].
	\end{split}
	\end{equation}
	If $0<p\le 1$, then using the same argument in \eqref{z1z2phi}, we obtain
	\begin{equation}
	\left||{w}+ \Phi(w)|^p\left(z + \Phi(z)\right)\right|_2 \le |{w}+ \Phi(w)|_{2p}^p|z + \Phi(z)|_2\le c_k|w|_2|z|_2.
	\end{equation} Using this in \eqref{maptoitselfdt} and the boundedness of $I-\Upsilon_k$, we obtain
	\begin{equation}\label{maptoitselfdt1}
	\left|\Gamma z\right|_{Y_{T_0}} \le  C(1+\sqrt{{T_0}})\left[|{z}_0|_2 + T_0e^{r{T_0}}|w|_{Y_{T_0}}|z|_{Y_{T_0}}\right].
	\end{equation}
	If $4\ge p> 1$, using the idea in \eqref{z1z2phigag}, we have
	\begin{equation*}
	\begin{split}
	\left||{w}+ \Phi(w)|^p\left(z + \Phi(z)\right)\right|_2 \le& |{w}+ \Phi(w)|_{2p}^p|z + \Phi(z)|_2 \\
	\le& c_k\left(|w|_2^\frac{p+1}{2}|\partial_xw|_2^{\frac{p-1}{2}}+|w|_2^p\right)|z|_2.
	\end{split}
	\end{equation*} Therefore, we have
	\begin{equation}\label{maptoitselfdt1}
	\left|\Gamma z\right|_{Y_{T_0}} \le  C(1+\sqrt{{T_0}})\left[|{z}_0|_2 + c_k(T_0^\frac{5-p}{4}+T_0)e^{r{T_0}}|w|_{Y_{T_0}}^p|z|_{Y_{T_0}}\right].
	\end{equation}
	Recalling that $w$ is fixed, the differences can be handled exactly in the same way, and
	if  $0<p\le 1$, then we have
	\begin{equation}\label{maptoitselfdt1dif}
	\left|\Gamma z_1-\Gamma z_2\right|_{Y_{T_0}} \le C(1+\sqrt{{T_0}})T_0e^{r{T_0}}|w|_{Y_{T_0}}|z_1-z_2|_{Y_{T_0}},
	\end{equation} and
	if $4\ge p> 1$, then we obtain
	\begin{equation}\label{maptoitselfdt1dif}
	\left|\Gamma z_1-\Gamma z_2\right|_{Y_{T_0}} \le  c_k(1+\sqrt{{T_0}})(T_0^\frac{5-p}{4}+T_0)e^{r{T_0}}|w|_{Y_{T_0}}^p|z_1-z_2|_{Y_{T_0}}.
	\end{equation}
	Again we can find suitable $R$ and $T_0$, such that $\Gamma$ becomes a contraction on the closed ball $B_R^{T_0}$ of $Y_{T_0}$ which implies $z\in Y_{T_0}$. Note that $w=w_0+\int_0^tz(s)ds$, thanks to the compatibility conditions of $w_0$ and boundary conditions of $z$. In particular, $z=w_t$ and $w\in H^1(0,T_0;H^1(0,L))\subset C([0,T_0]\times [0,L]).$
	Note that $$i\beta w_{xxx} = -iw_t -\alpha w_{xx} -i\delta w_x - ir w-(I-\Upsilon_{k})[|{w}+ \Phi(w)|^p\left({w} + \Phi(w)\right)].$$ Now, it follows from Gagliardo-Nirenberg inequalities and the boundedness properties of $I-\Upsilon_k$ and $\Phi$, that $w\in X_{T_0}^3.$
\end{proof}

\subsection{Stabilization of linear and nonlinear plants}\label{stabsec}
\subsubsection{Linearized model}
Taking $L^2(0,L)$ norms of both sides of \eqref{backstepping}, we get the following
\begin{equation}
\left|w(\cdot,t)\right|_2\le \left|u(\cdot,t)\right|_2 + \left|\int_\bullet^Lk(\cdot,y)u(y,t)dy\right|_2.\label{L2backstepping}
\end{equation}
By using the Cauchy-Schwarz inequality the last term at the right hand side of \eqref{L2backstepping} is estimated as
\begin{equation}
\left|\int_\bullet^Lk(\cdot,y)u(y,t)dy\right|_2\le \left|k\right|_{L^2(\Delta_{x,y})}\left|u(\cdot,t)\right|_2.\label{L2backstepping2}
\end{equation}
Combining \eqref{L2backstepping} and \eqref{L2backstepping2}, we conclude that
\begin{equation}\label{L2backstepping3}
\left|w(\cdot,t)\right|_2 \le \left(1+\left|k\right|_{L^2(\Delta_{x,y})}\right)\left|u(\cdot,t)\right|_2.
\end{equation}
Evaluating the above inequality at $t=0$, we get
\begin{equation}\label{L2backstepping3init}
\left|w_0\right|_2 \le \left(1+\left|k\right|_{L^2(\Delta_{x,y})}\right)\left|u_0\right|_2.
\end{equation}
On the other hand, we know from Lemma \ref{inverselem} that
\begin{equation}\label{L2backstepping4}
\left|u(\cdot,t)\right|_2 = \left|[(I-\Upsilon_{k})^{-1}w](\cdot,t)\right|_2\le \left|(I-\Upsilon_{k})^{-1}\right|_{2\rightarrow 2}\left|w(\cdot,t)\right|_2.
\end{equation}
Now, \eqref{targetdecay}, \eqref{L2backstepping3init}, and \eqref{L2backstepping4} yield
\begin{equation}\label{L2backstepping5}
\left|u(\cdot,t)\right|_2 \le \left|(I-\Upsilon_{k})^{-1}\right|_{2\rightarrow 2}\left(1+\left|k\right|_{L^2(\Delta_{x,y})}\right)\left|u_0\right|_2e^{-rt}
\end{equation} for $t\ge 0$.
We just proved the following proposition.
\begin{prop}\label{stablin}
	Let $r> 0$, $k$ be the smooth backstepping kernel that solves \eqref{kernela} and $u$ be the solution of \eqref{heatlin} where the feedback controller acting at the left Dirichlet boundary condition is chosen as in \eqref{controller}. Then, $\left|u(\cdot,t)\right|_2 \le c_k\left|u_0\right|_2e^{-rt}, t\ge 0,$  where $c_k \ge 0$ depending only on $k$ given by $ c_k=\left|(I-\Upsilon_{k})^{-1}\right|_{2\rightarrow 2}\left(1+\left|k\right|_{L^2(\Delta_{x,y})}\right).$
\end{prop}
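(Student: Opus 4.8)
The plan is to obtain the estimate as a direct consequence of the backstepping transformation \eqref{backstepping} combined with the exponential decay of the target system \eqref{target}. First I would use the fact, established through the kernel equations \eqref{kernela} and the computation in Appendix \ref{kerneldeduct}, that if $u$ is the (mild) solution of \eqref{heatlin} with $g_0$ chosen as in \eqref{controller} and $k$ the smooth kernel from Lemma \ref{lemkernel}, then $w \doteq (I-\Upsilon_k)u$ solves \eqref{target}. Since $\beta,r>0$, multiplying the $w$-equation by $\bar w$, integrating over $(0,L)$ and taking imaginary parts gives $\tfrac12\tfrac{d}{dt}|w(\cdot,t)|_2^2 = -\tfrac{\beta}{2}|w_x(0,t)|^2 - r|w(\cdot,t)|_2^2 \le -r|w(\cdot,t)|_2^2$, whence $|w(\cdot,t)|_2 \le |w_0|_2 e^{-rt}$ for $t\ge 0$, which is \eqref{targetdecay}.

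Next I would control the two directions of the transformation quantitatively. Taking $L^2(0,L)$ norms in \eqref{backstepping} and applying the Cauchy--Schwarz inequality to the integral term yields $|w(\cdot,t)|_2 \le (1+|k|_{L^2(\Delta_{x,y})})|u(\cdot,t)|_2$; evaluated at $t=0$ this gives $|w_0|_2 \le (1+|k|_{L^2(\Delta_{x,y})})|u_0|_2$. For the opposite direction, Lemma \ref{inverselem} shows that $I-\Upsilon_k$ is boundedly invertible on $L^2(0,L)$, so $|u(\cdot,t)|_2 = |(I-\Upsilon_k)^{-1}w(\cdot,t)|_2 \le |(I-\Upsilon_k)^{-1}|_{2\rightarrow 2}\,|w(\cdot,t)|_2$. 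Chaining these three inequalities, $|u(\cdot,t)|_2 \le |(I-\Upsilon_k)^{-1}|_{2\rightarrow 2}|w(\cdot,t)|_2 \le |(I-\Upsilon_k)^{-1}|_{2\rightarrow 2}|w_0|_2 e^{-rt} \le |(I-\Upsilon_k)^{-1}|_{2\rightarrow 2}(1+|k|_{L^2(\Delta_{x,y})})|u_0|_2 e^{-rt}$, which is exactly the asserted bound with $c_k = |(I-\Upsilon_k)^{-1}|_{2\rightarrow 2}(1+|k|_{L^2(\Delta_{x,y})})$, a constant depending only on $k$.

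The proposition is essentially a soft corollary of three facts proved elsewhere --- existence of the smooth kernel (Lemma \ref{lemkernel}), the $L^2$ dissipativity of the target system, and the bounded invertibility of $I-\Upsilon_k$ (Lemma \ref{inverselem}) --- so I do not expect a genuine analytical obstacle. The one place warranting a little care is that the dissipation identity for the target system is \emph{a priori} only formal (it uses boundary traces and integration by parts); strictly one should justify it for the mild solution, either by approximating with smooth data and passing to the limit, or by appealing to the higher-regularity statements and the semigroup formulation already used in the wellposedness section so that the multiplier computation is legitimate. The remaining steps are routine manipulations with operator norms.
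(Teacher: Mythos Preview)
Your proposal is correct and follows essentially the same argument as the paper: both use the Cauchy--Schwarz bound on the backstepping transformation to get $|w_0|_2 \le (1+|k|_{L^2(\Delta_{x,y})})|u_0|_2$, invoke the decay \eqref{targetdecay} of the target system, and then apply the bounded invertibility from Lemma \ref{inverselem} to return to $u$. Your remark about justifying the dissipation identity for mild solutions via approximation is a valid point of rigor that the paper handles implicitly through the semigroup framework.
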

\begin{rem}
	The constant $c_k$ implicitly depends on $r$ since $k$ depends on $r$.
\end{rem}
\subsubsection{Nonlinear model}We do this only formally. A more rigorous proof can be given in view of Proposition \ref{condepdat} and Proposition \ref{higerreg} through a density argument. More precisely, one can  first work with a sequence of initial data $w_{0n}$ taken from $H^3(0,L)$ satisfying compatibility conditions $w_{0n}(0)=w_{0n}(L)=0$ such that $w_{0n}\rightarrow w_0$ in $L^2(0,L)$.

To this end, we first multiply \eqref{ch414} by $\bar{w}+x\bar{w}$, integrate over $(0,L)$, and take the imaginary parts.  One can easily see that
\begin{equation} \label{xlin1iden2}
\begin{split}
\text{Im}\int_0^Liw_tx\bar{w}dx, &=\frac{1}{2}\frac{d}{dt}\left|x^\frac{1}{2}w(\cdot,t)\right|_2^2, \\
\text{Im}\int_0^Li\beta w_{xxx}x\bar{w}dx &= \frac{3\beta}{2}|w_x(\cdot,t)|_2^2, \\
\text{Im}\int_0^L\alpha  w_{xx}x\bar{w}dx &=-\text{Im}\int_0^L\alpha  w_{x}\bar{w}dx, \\
\text{Im}\int_0^Li\delta w_xx\bar{w}dx &=-\frac{\delta}{2}|w(\cdot,t)|_2^2, \\
\text{Im}\int_0^Lir wx\bar{w}dx &=r\left|x^{\frac{1}{2}}w(\cdot,t)\right|_2^2.
\end{split}
\end{equation}
Using \eqref{xlin1iden2} and the identities that can be obtained due to the multiplier $\bar{w}$, we get
\begin{equation}\label{nonw1iden}
\begin{split}
&\frac{d}{dt}\left(\left|w(\cdot,t)\right|_2^2+\left|x^{\frac{1}{2}}w(\cdot,t)\right|_2^2\right) + (2r-\delta)\left|w(\cdot,t)\right|_2^2+2r\left|x^{\frac{1}{2}}w(\cdot,t)\right|_2^2 \\
=&-{3\beta}|w_x(\cdot,t)|_2^2 -{\beta}|w_x(0,t)|^2 \\
&+2\text{Im}\int_0^L\alpha  w_{x}\bar{w}dx-2\text{Im}\int_0^L[(I-\Upsilon_{k})|{w}+ v|^p\left({w} + v\right)](1+x)\bar{w}dx.
\end{split}
\end{equation}
Let us analyze the last term in \eqref{nonw1iden}.  We can rewrite this term as
\begin{equation}\label{lastterm1a}
\begin{split}
&-2\text{Im}\int_0^L[(I-\Upsilon_{k})|{w}+ v|^p\left({w} + v\right)](1+x)\bar{w}dx \\
=& -2\text{Im}\int_0^L(1+x)|{w}+ v|^pv\bar{w}dx
+ 2\text{Im}\int_0^L\Upsilon_{k}[|{w}+ v|^p\left({w} + v\right)](1+x)\bar{w}dx.
\end{split}
\end{equation}
The first term at the right hand side of \eqref{lastterm1a} is estimated as
\begin{equation}\label{lastterm1a1}
-2\text{Im}\int_0^L(1+x)|{w}+ v|^pv\bar{w}dx\le c_{p,L}\int_0^L(|w|^{p+1}|v|+|v|^{p+1}|w|)dx.
\end{equation}
The second term at the right hand side of \eqref{lastterm1a} is estimated as
\begin{equation}
\begin{split}
&2\text{Im}\int_0^L\Upsilon_{k}[|{w}+ v|^p\left({w} + v\right)](1+x)\bar{w}dx\\
=&2\text{Im}\int_0^L(1+x)\bar{w}(x,t) \\
&\times\left(\int_x^Lk(x,y)|{w}(y,t)+ v(y,t)|^p\left({w}(y,t) + v(y,t)\right)dy\right)dx\\
\le& c_{L}|k|_{L^\infty(\Delta_{x,y})}\left(\int_0^L|{w}+ v|^{p+1}dx\right)\int_0^L|{w}|dx\\
\le& c_{k,p,L}\left(|w|_{p+1}^{p+1}+|v|_{p+1}^{p+1}\right)|w|_{2}\\
\le& c_{k,p,L}\left(|w|_{p+1}^{p+1}+|v|_{\infty}^{p+1}\right)|w|_{2}. \label{lastterm1a2}
\end{split}
\end{equation}
\textbf{Case 1} ($4\ge p>1$):
In this case, we use the Gagliardo-Nirenberg's and $\epsilon-$Young's inequalities together with \eqref{Phiwest1} to find out that the right hand side of \eqref{lastterm1a1} can be estimated by
\begin{equation}\label{lastterm1a1c1}
\begin{split}
c_{p,L}|w|_{p+1}^{p+1}|v|_\infty+c_{p,L}|v|_\infty^{p+1}|w|_2 &\le c_{k,p,L}|w|_{2}^{\frac{p+5}{2}}|w_x|_{2}^{\frac{p-1}{2}}+c_{k,p,L}|w|_2^{p+2}\\
&\le c_{k,p,L,\epsilon}|w|_{2}^{\frac{2(p+5)}{5-p}}+\epsilon|w_x|_{2}^{2}+c_{k,p,L}|w|_2^{p+2}.
\end{split}
\end{equation}  Similarly, the right hand side of \eqref{lastterm1a2} can be estimated by
\begin{equation}
\begin{split}
\label{lastterm1a1c1}c_{k,p,L}\left(|w|_{p+1}^{p+1}+|v|_{\infty}^{p+1}\right)|w|_{2} &\le c_{k,p,L}|w|_{2}^{\frac{p+5}{2}}|w_x|_{2}^{\frac{p-1}{2}}+c_{k,p,L}|w|_2^{p+2}\\
&\le c_{k,p,L,\epsilon}|w|_{2}^{\frac{2(p+5)}{5-p}}+\epsilon|w_x|_{2}^{2}+c_{k,p,L}|w|_2^{p+2}.
\end{split}
\end{equation}
We conclude that if $4\ge p>1$, then \eqref{nonw1iden} can be estimated as
\begin{equation}\label{nonw1idenest}
\begin{split}
&\frac{d}{dt}\left(\left|w(\cdot,t)\right|_2^2+\left|x^{\frac{1}{2}}w(\cdot,t)\right|_2^2\right) + (2r-\delta-c_{\alpha,\epsilon})\left[\left|w(\cdot,t)\right|_2^2+\left|x^{\frac{1}{2}}w(\cdot,t)\right|_2^2\right]  \\
\le& -(\delta+c_{\alpha,\epsilon})\left|x^{\frac{1}{2}}w(\cdot,t)\right|_2^2+3(\epsilon- {\beta})|w_x(\cdot,t)|_2^2 \\
&-{\beta}|w_x(0,t)|^2+c_{k,p,L,\epsilon}|w|_{2}^{\frac{2(p+5)}{5-p}}+c_{k,p,L}|w|_2^{p+2}.
\end{split}
\end{equation}
Setting $y(t)=\left|w(\cdot,t)\right|_2^2+\left|x^{\frac{1}{2}}w(\cdot,t)\right|_2^2$, for sufficiently small and fixed $\epsilon>0$, we obtain
\begin{equation}\label{yeq1}
\frac{d}{dt}y(t)+(2r-\delta-c_{\alpha,\epsilon})y(t)-c_{k,p,L,\epsilon}y^{\frac{p+5}{5-p}}(t)-c_{k,p,L}y^{\frac{p+2}{2}}(t)\le 0
\end{equation} for $t\ge 0$.
We have the following lemma.
\begin{lem} \label{stab_lem}
	If $y$ satisfies \eqref{yeq1} and $y_0:=y(0)$ is sufficiently small and $r$ is sufficiently large, then there exists some $\gamma=\gamma(r,\delta,L,\alpha,\epsilon)>0$ such that $y(t)\lesssim y_0e^{-\gamma t}$ for $t\ge 0.$ Moreover, $\gamma$ can be made arbitrarily large by choosing $r$ large enough.
\end{lem}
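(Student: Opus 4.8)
\emph{Proof plan.} First I would recast \eqref{yeq1} in the compact form
\[
\dot y \;\le\; -a\,y + b_1\,y^{q_1} + b_2\,y^{q_2},\qquad
a\doteq 2r-\delta-c_{\alpha,\epsilon},\quad b_1\doteq c_{k,p,L,\epsilon},\quad b_2\doteq c_{k,p,L},
\]
with exponents $q_1\doteq\frac{p+5}{5-p}$ and $q_2\doteq\frac{p+2}{2}$. The decisive elementary fact is that, for $p\in(1,4]$,
\[
q_2>1\qquad\text{and}\qquad q_1-q_2=\frac{p(p-1)}{2(5-p)}>0,
\]
so $q_1>q_2>1$. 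Hence \emph{whenever $0\le y\le 1$} one has $y^{q_1}\le y^{q_2}$, and the two-nonlinearity (Chini-type) inequality collapses to the single-power Bernoulli-type inequality
\[
\dot y\;\le\;-a\,y+(b_1+b_2)\,y^{q_2}\;=\;y\bigl(-a+(b_1+b_2)\,y^{q_2-1}\bigr).
\]
This is precisely the ``which nonlinear term dominates'' mechanism advertised in the introduction: in the small-data regime the power with the smaller exponent always controls the other.

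With this reduction in hand I would run a barrier/continuation argument. Take $r$ large enough that $a>0$, and then $y_0\doteq y(0)$ small enough that $y_0<1$ and $(b_1+b_2)\,y_0^{q_2-1}\le\frac{a}{2}$; these inequalities are the quantitative content of ``$r$ sufficiently large'' and ``$y_0$ sufficiently small''. I claim $y(t)\le y_0$ for all $t\ge0$. If not, choose $t_1$ with $y(t_1)>y_0$ and put $t_0\doteq\sup\{t\in[0,t_1]:\ y(t)\le y_0\}$; by continuity $y(t_0)=y_0$ and $y>y_0$ on $(t_0,t_1]$, so the right derivative satisfies $\dot y(t_0^+)\ge0$. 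On the other hand $y_0<1$ gives $y_0^{q_1}\le y_0^{q_2}$, whence $\dot y(t_0)\le y_0\bigl(-a+(b_1+b_2)y_0^{q_2-1}\bigr)\le-\frac{a}{2}\,y_0<0$, a contradiction. Therefore $0\le y\le y_0<1$ on $[0,\infty)$, the Bernoulli inequality holds globally, and $(b_1+b_2)\,y^{q_2-1}\le(b_1+b_2)\,y_0^{q_2-1}\le\frac{a}{2}$ forces $\dot y\le-\frac{a}{2}\,y$ for all $t\ge0$. Integrating this yields
\[
y(t)\le y_0\,e^{-\gamma t},\qquad t\ge0,\qquad \gamma\doteq\frac{a}{2}=r-\frac{\delta+c_{\alpha,\epsilon}}{2},
\]
which is the asserted bound (with constant $1$, so in particular the stated $\lesssim$). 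Since $\gamma$ grows linearly in $r$, it is positive once $r$ is large and can be driven to $+\infty$ by increasing $r$, which is the last assertion of the lemma. (Equivalently one may compare $y$ with the explicit solution of the Bernoulli ODE $\dot{\bar y}=-a\bar y+(b_1+b_2)\bar y^{q_2}$, $\bar y(0)=y_0$, which stays finite on $[0,\infty)$ exactly when $y_0^{q_2-1}<a/(b_1+b_2)$ and then decays like $e^{-at}$; the barrier version avoids even this computation, and the same argument run on the maximal interval of existence of $y$ shows that interval to be all of $[0,\infty)$.)

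The only genuinely delicate step is the first one --- dealing with two competing nonlinearities --- and it evaporates the moment the ordering $q_1>q_2$ is noticed; everything afterwards is a routine a priori estimate. Two bookkeeping remarks: the manipulations presuppose that $y$ is $C^1$ (or at least absolutely continuous), which is legitimate since the energy identities leading to \eqref{yeq1} are first justified for the smooth solutions furnished by Proposition~\ref{higerreg} (together with the continuous dependence of Proposition~\ref{condepdat}) and then passed to the limit by density; and the constants $b_1=c_{k,p,L,\epsilon}$, $b_2=c_{k,p,L}$ depend on the kernel $k=k(\cdot;r)$, hence on $r$, so the threshold defining ``$y_0$ small'' is itself $r$-dependent --- this is immaterial to the ODE argument above, but must be respected when Lemma~\ref{stab_lem} is combined with the selection of $r$ in Theorem~\ref{thm2}.
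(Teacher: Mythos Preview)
Your argument is correct and, in fact, cleaner than the paper's. Both proofs exploit the same pivotal observation, namely the ordering $q_1=\frac{p+5}{5-p}>\frac{p+2}{2}=q_2$, so that in a fixed regime of $y$ one nonlinearity dominates the other and the Chini-type inequality collapses to a single Bernoulli inequality. The difference lies in how this is used. The paper first rescales $y$ by a factor $a>1$ (which preserves \eqref{yeq1} since both exponents exceed $1$) to force $y(0)>1$, then splits the analysis into the region $y>1$ (where the larger exponent dominates) and the region $y<1$ (where the smaller exponent dominates), solving the corresponding Bernoulli inequalities explicitly in each regime and tracking the crossing time $t^*$ at which $y=1$. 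You instead work directly in the small-data regime $y\le y_0<1$, use a barrier/continuation argument to show $y$ never leaves it, and extract the linear bound $\dot y\le -\tfrac{a}{2}y$ without ever writing down the Bernoulli solution. Your approach avoids the rescaling, the explicit integration, and the case analysis; the paper's approach, on the other hand, yields slightly more explicit decay constants (e.g.\ $2^{2/p}$) and makes visible what happens if one starts just below the critical threshold. Your closing remarks on the regularity justification and on the $r$-dependence of the constants $b_1,b_2$ are apt and match how the paper uses the lemma downstream.
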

\begin{proof}
	We divide the proof of the lemma in two parts.  At first let us consider the case \begin{equation}\label{Assm1}\displaystyle \frac{2r-\delta-c_{\alpha,\epsilon}}{2(c_{k,p,L,\epsilon}+c_{k,p,L})}>1.\end{equation}  There is no harm to assume that $y(0)\neq 0$ because if $y(0)=0$ but $y\not\equiv 0$, then there would be a time $t'>0$ s.t. $y(t')\neq 0$, and we can argue starting from time $t'$. Since $\frac{p+5}{5-p},\frac{p+2}{2}>\frac{3}{2}$, the inequality \eqref{yeq1} is satisfied also by $ay$ for any $a>1$.  Therefore, without loss of generality we can further assume that $y(0)>1$.  Let $t^*\equiv\inf(\{t>0 |\; y(t)=1\}\cup\{\infty\}).$ Observe that $\frac{p+5}{5-p}>\frac{p+2}{2}$ since $p>1$.
	Therefore, \eqref{yeq1} implies
	\begin{equation}\label{yeq1a1}
	\frac{d}{dt}y(t)+(2r-\delta-c_{\alpha,\epsilon})y(t)-(c_{k,p,L,\epsilon}+c_{k,p,L})y^{\frac{p+5}{5-p}}(t)\le 0
	\end{equation} for $0<t<t^*$ since in this interval $y(t)>1$.
	Now, solving the inequality \eqref{yeq1a1} we obtain
	\begin{equation}\label{ayinq}
	y(t)^{\frac{2p}{5-p}}\leq \frac{1}{\left(\frac{1}{y(0)^{\frac{2p}{5-p}}}-\frac{c_{k,p,L,\epsilon}+c_{k,p,L}}{2r-\delta-c_{\alpha,\epsilon}}\right)e^{\frac{2pt(2r-\delta-c_{\alpha,\epsilon})}{5-p}}+\frac{c_{k,p,L,\epsilon}+c_{k,p,L}}{2r-\delta-c_{\alpha,\epsilon}}}
	\end{equation}for $0<t<t^*$.
	Assumming $y(0)< \left(\frac{2r-\delta-c_{\alpha,\epsilon}}{2(c_{k,p,L,\epsilon}+c_{k,p,L})}\right)^{\frac{5-p}{2p}},$ \eqref{ayinq} implies
	\begin{equation}
	y(t)\leq 2^{\frac{5-p}{2p}}e^{-(2r-\delta-c_{\alpha,\epsilon})t}y(0)
	\end{equation}for $0<t<t^*$.
	This shows that $t^*<\infty$. Since $y(t^*)=1$ and we are assuming \eqref{Assm1}, from the inequality \eqref{yeq1} we see that $y'(t^*)<0$. Hence there exists a maximal interval $(t^*,t_s)$ such that $y(t)<1$ for $t^*<t<t_s$. However \eqref{yeq1} still implies $y'(t)\leq 0$ for $t^*<t<t_s$ hence $t_s=\infty$. In other words $y(t)<1$ for all $t>t^*$. Hence \eqref{yeq1} implies
	\begin{equation}\label{yeq1a2}
	\frac{d}{dt}y(t)+(2r-\delta-c_{\alpha,\epsilon})y(t)-(c_{k,p,L,\epsilon}+c_{k,p,L})y^{\frac{p+2}{2}}\le 0
	\end{equation}
	for $t>t^*$. In this case solving \eqref{yeq1a2} we obtain
	\begin{equation}
	y(t)\leq 2^{\frac{2}{p}}e^{-(2r-\delta-c_{\alpha,\epsilon})(t-t^*)}
	\end{equation}
	for $t>t^*$.
	
	In the second case, where $\displaystyle \frac{2r-\delta-c_{\alpha,\epsilon}}{2(c_{k,p,L,\epsilon}+c_{k,p,L})}\leq 1,$ we assume $$y(0)< \min\left\{\left(\frac{2r-\delta-c_{\alpha,\epsilon}}{2(c_{k,p,L,\epsilon}+c_{k,p,L})}\right)^{\frac{2}{p}},{2^{-\frac{2}{p}}}\right\}.$$ Then $y(t)$ satisfies \eqref{yeq1a2} for $t<t^*$, which implies
	\begin{equation}
	y(t)\leq 2^{\frac{2}{p}}e^{-(2r-\delta-c_{\alpha,\epsilon})t}y(0)< e^{-(2r-\delta-c_{\alpha,\epsilon})t}
	\end{equation}
	for $t<t^*$. Hence $t^*=\infty$ and $y$ decays exponentially.
\end{proof}
\textbf{Case 2} ($0<p\le 1$): In this case, we use the Cauchy-Schwarz inequality and \eqref{Phiwest1}, and estimate the right hand side of \eqref{lastterm1a1} by \begin{equation}\label{lastterm1a1c2}c_{p,L}|w|_{2}^{p+1}|v|_\infty+c_{p,L}|v|_\infty^{p+1}|w|_2\le c_{p,L}|w|_2^{p+2}.\end{equation} Similarly, the right hand side of \eqref{lastterm1a2} is estimated by
\begin{equation}\label{lastterm1a1c2}c_{k,p,L}|w|_{2}^{p+2}+c_{k,p,L}|v|_\infty^{p+1}|w|_2\le c_{k,p,L}|w|_2^{p+2}.\end{equation}

Therefore, if $0<p\le 1$, then \eqref{nonw1iden} can be estimated as
\begin{equation}\label{nonw1idenest2}
\begin{split}
&\frac{d}{dt}\left(\left|w(\cdot,t)\right|_2^2+\left|x^{\frac{1}{2}}w(\cdot,t)\right|_2^2\right) + (2r-\delta-c_{\alpha,\epsilon})\left[\left|w(\cdot,t)\right|_2^2+\left|x^{\frac{1}{2}}w(\cdot,t)\right|_2^2\right]  \\  \le& -(\delta+c_{\alpha,\epsilon})\left|x^{\frac{1}{2}}w(\cdot,t)\right|_2^2+(\epsilon- {3\beta})|w_x(\cdot,t)|_2^2-{\beta}|w_x(0,t)|^2+c_{k,p,L}|w|_2^{p+2}.
\end{split}
\end{equation} Then, for sufficiently small $\epsilon>0$, we obtain
\begin{equation}\label{yeq2}
\frac{d}{dt}y(t)+(2r-\delta-c_{\alpha,\epsilon})y(t)-c_{k,p,L}y^{\frac{p+2}{2}}(t)\le 0
\end{equation} for $t\ge0$. It is not difficult to show that the solution \eqref{yeq2} decays exponentially for small $y(0).$

Hence, we just proved the following proposition.
\begin{prop}\label{stabnonlin}
	Let $r'> 0$, then there corresponds some suitable $r>0$ and a smooth backstepping kernel $k$ which solves \eqref{kernela} such that the solution  $u$ of \eqref{heat}, where the feedback controller acting at the left Dirichlet boundary condition is chosen as in \eqref{controller}, satisfies  $\left|u(\cdot,t)\right|_2 \lesssim \left|u_0\right|_2e^{-r't}, t\ge 0$ provided that $\left|u_0\right|_2$ is sufficiently small.
\end{prop}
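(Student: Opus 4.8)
The plan is to reduce everything to the decay of the nonlinear target system \eqref{ch414}--\eqref{ch414bdry}, for which the scalar ODE machinery has already been assembled above, and then to transport that decay back to the original plant \eqref{heat} through the bounded invertibility of the backstepping transformation. \textbf{Step 1 (reduction).} Apply \eqref{backstepping}, i.e. $w=(I-\Upsilon_k)u$, so that $w$ solves \eqref{ch414}--\eqref{ch414bdry} with $w_0=(I-\Upsilon_k)u_0$. Exactly as in \eqref{L2backstepping3}--\eqref{L2backstepping4}, Cauchy--Schwarz together with Lemma \ref{inverselem} give
\begin{equation*}
|u(\cdot,t)|_2\le\big|(I-\Upsilon_k)^{-1}\big|_{2\rightarrow 2}\,|w(\cdot,t)|_2,\qquad |w_0|_2\le\big(1+|k|_{L^2(\Delta_{x,y})}\big)|u_0|_2 .
\end{equation*}
Hence it is enough to produce $r>0$ (with associated kernel $k=k(\cdot,\cdot;r)$ from Lemma \ref{lemkernel}) and a threshold $\epsilon_0>0$ such that $|w_0|_2\le\epsilon_0$ forces the solution of \eqref{ch414}--\eqref{ch414bdry} to be global with $|w(\cdot,t)|_2\lesssim|w_0|_2 e^{-r't}$; the smallness of $|u_0|_2$ then implies $|w_0|_2\le\epsilon_0$ and the decay of $u$ follows.

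\textbf{Step 2 (Lyapunov inequality for the target system).} First take $w_0\in H^3(0,L)$ with $w_0(0)=w_0(L)=0$, so that Proposition \ref{higerreg} provides enough regularity to justify the multiplier computations; the general $L^2$ case then follows by approximating $w_0$ in $L^2(0,L)$ by such data, since all the bounds below depend only on $|w_0|_2$. Multiplying \eqref{ch414} by $(1+x)\bar w$, integrating over $(0,L)$, taking imaginary parts, and combining with the standard $L^2$ identity coming from the multiplier $\bar w$, yields the energy identity \eqref{nonw1iden}. Splitting the nonlinear term as in \eqref{lastterm1a} and estimating its two pieces with the Gagliardo--Nirenberg inequality (Lemma \ref{gag}), the bound \eqref{Phiwest1} for $v=\Phi w$, and $\epsilon$--Young's inequality, every newly produced $|w_x(\cdot,t)|_2^2$ carries a small coefficient and is absorbed into the dissipative term $-3\beta|w_x(\cdot,t)|_2^2$ of \eqref{nonw1iden}. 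Setting $y(t)=|w(\cdot,t)|_2^2+|x^{1/2}w(\cdot,t)|_2^2$, this leaves the scalar differential inequality \eqref{yeq1} when $1<p\le4$ and the simpler \eqref{yeq2} when $0<p\le1$.

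\textbf{Step 3 (decay, globalization, transfer).} For $0<p\le1$, \eqref{yeq2} has a single nonlinearity; provided $2r-\delta-c_{\alpha,\epsilon}>0$ and $y(0)$ is below the explicit threshold obtained by integrating it, a comparison argument gives $y(t)\le C\,y_0\,e^{-(2r-\delta-c_{\alpha,\epsilon})t}$. For $1<p\le4$, Lemma \ref{stab_lem} applies: if $y_0$ is small and $r$ is large, then $y(t)\lesssim y_0 e^{-\gamma t}$ with $\gamma=\gamma(r,\delta,L,\alpha,\epsilon)>0$ that can be made arbitrarily large by enlarging $r$. In either regime, choosing $r$ so large that the decay exponent exceeds $2r'$ gives $|w(\cdot,t)|_2^2\le y(t)\lesssim y_0 e^{-2r't}$, i.e. $|w(\cdot,t)|_2\lesssim\sqrt{y_0}\,e^{-r't}\lesssim|w_0|_2 e^{-r't}$, using $y_0\le C\big(1+|k|_{L^\infty(\Delta_{x,y})}\big)^2|w_0|_2^2$. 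The uniform-in-time bound on $y(t)$ bounds $|w(\cdot,t)|_2$, so the local solution of Proposition \ref{condepdat} does not blow up and extends to $[0,\infty)$; by the bounded invertibility of $I-\Upsilon_k$ the same is true for $u$, and combining with Step 1 completes the argument.

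\textbf{Expected main obstacle.} The delicate part is the bookkeeping of parameters, compounded for $1<p\le4$ by the fact that \eqref{yeq1} is a Chini-type inequality with \emph{two} competing nonlinear powers $y^{(p+5)/(5-p)}$ and $y^{(p+2)/2}$. Concretely one must: (i) first fix $r$ large enough that the linear exponent $2r-\delta-c_{\alpha,\epsilon}$ beats $2r'$, keeping in mind that the coefficients $c_{k,p,L,\epsilon},c_{k,p,L}$ in \eqref{yeq1}--\eqref{yeq2} themselves depend on $k=k(\cdot,\cdot;r)$, hence on $r$; then (ii), with $r$ and $k$ frozen, read off the admissible smallness of $y_0$ from Lemma \ref{stab_lem} --- the heart of which is the case split between $y>1$, where the $y^{(p+5)/(5-p)}$ term dominates, and $y<1$, where $y^{(p+2)/2}$ dominates, together with the verification that the crossing time is finite and that $y$ stays below $1$ thereafter; and finally (iii) convert this into a smallness condition on $|u_0|_2$ through $|w_0|_2\lesssim|u_0|_2$. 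Step (ii) is the genuinely new difficulty, specific to the higher order Schrödinger structure, but it has been isolated in Lemma \ref{stab_lem}, so here it only needs to be invoked.
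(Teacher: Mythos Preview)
Your proposal is correct and follows essentially the same route as the paper: reduce to the target system via $w=(I-\Upsilon_k)u$, run the $(1+x)\bar w$ multiplier to obtain \eqref{nonw1iden}, estimate the nonlinear remainder via Gagliardo--Nirenberg and \eqref{Phiwest1} to reach the scalar inequalities \eqref{yeq1}--\eqref{yeq2}, invoke Lemma \ref{stab_lem} (or the direct Bernoulli argument when $0<p\le1$), and transfer back through Lemma \ref{inverselem}. One small inaccuracy: in Step~3 you write $y_0\le C(1+|k|_{L^\infty(\Delta_{x,y})})^2|w_0|_2^2$, but $y_0=|w_0|_2^2+|x^{1/2}w_0|_2^2\le(1+L)|w_0|_2^2$ involves no kernel norm; the $k$-dependence enters only through $|w_0|_2\lesssim|u_0|_2$ and the constants in \eqref{yeq1}--\eqref{yeq2}.
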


\section{Observer design}\label{obssec}  In this section, our goal is to prove the wellposedness and the exponential stabilization for each component of the observer design.  The components of this system are the plant, observer, and the error system. To this end, we first choose an exponentially stable target error system given by
\begin{equation}\label{tildew} \left\{ \begin{array}{ll}
i\tilde{w}_t + i\beta \tilde{w}_{xxx} +\alpha \tilde{w}_{xx} +i\delta \tilde{w}_x +ir\tilde{w} = 0, \text { in } (0,L)\times (0,T),\\
\tilde{w}(0,t)=0,\,\tilde{w}(L,t)=0,\,\tilde{w}_x(L,t)=0, \text { in } (0,T),\\
\tilde{w}(x,0)=\tilde{w}_0(x), \text { in } (0,L). \end{array} \right.
\end{equation}
Calculating the spatial and temporal derivatives of both sides of \eqref{transtildew}, integrating by parts by using the given boundary conditions, we deduce that the desired target error system \eqref{tildew} is obtained if $p_1(x):=-i\beta p(x,L)$ and $p=p(x,y)$ solves the following kernel pde model on $\Delta_{x,y}$ (see Appendix \eqref{dedkerp} for details):
\begin{equation}\label{p} \left\{ \begin{array}{ll}
p_{xxx}+p_{yyy}-i\tilde{\alpha}(p_{xx}-p_{yy})+\tilde{\delta}(p_x+p_y)-\tilde{r}p=0, \\
p(0,y)=0, \; p(x,x)=0, \\
\frac{d}{dx}p_x(x,x)=\frac{\tilde{r}}{3}.\end{array} \right.
\end{equation}
In order to solve \eqref{p}, we change variables and write $\tilde{p}(\tilde{x},\tilde{y})\doteq p(x,y)$, where $\tilde{x}=L-y$ and $\tilde{y}=L-x$. Then, $p$ is a solution of \eqref{p} if and only if $\tilde{p}$ solves the pde model below on $\Delta_{\tilde{x},\tilde{y}}=\Delta_{x,y}$:
\begin{equation}\label{ptilde} \left\{ \begin{array}{ll}
\tilde{p}_{\tilde{x}\tilde{x}\tilde{x}}+\tilde{p}_{\tilde{y}\tilde{y}\tilde{y}}-i\tilde{\alpha}(\tilde{p}_{\tilde{x}\tilde{x}}-\tilde{p}_{\tilde{y}\tilde{y}})+\tilde{\delta}(\tilde{p}_{\tilde{x}}+\tilde{p}_{\tilde{y}})-\tilde{r}p=0, \\
\tilde{p}(\tilde{x},L)=0, \; \tilde{p}(\tilde{x},\tilde{x})=0, \\
\frac{d}{d\tilde{x}}\tilde{p}_{\tilde{x}}(\tilde{x},\tilde{x})=\frac{\tilde{r}}{3}.\end{array} \right.
\end{equation} But the solution of \eqref{ptilde} is simply $p(x,y)=\tilde{p}(\tilde{x},\tilde{y})=k(\tilde{x},\tilde{y};-r)=k(L-y,L-x;-r),$ where $k$ is the solution of \eqref{kernela} obtained in Lemma \ref{lemkernel}.
\subsection{Wellposedness of plant-observer-error system}
In order to prove the wellposedness of the plant-observer-error system, we first study the error target system \eqref{tildew} and the error system \eqref{error}. To this end, suppose $y_0\in H^3(0,L)$ and it satisfies the compatibility conditions $y_0(0)=y_0(L)=0$. Now,
consider the linear homogeneous model below:
\begin{eqnarray}\label{targetfq}
\begin{cases}
iq_t + i\beta q_{xxx} +\alpha q_{xx} +i\delta q_x = 0, x\in (0,L), t\in (0,T),\\
q(0,t)=0, q(L,t)=0, q_x(L,t)=0,\\
q(x,0)=q_0(x),
\end{cases}
\end{eqnarray} where $q_0\doteq-\beta y_{0}''' +i\alpha y_{0}' -\delta y_0'\in L^2(0,L)$.

Let us set $y\doteq y_0+\int_0^t qds$. Then, $y$ solves the following pde model:
\begin{eqnarray}\label{targetfy}
\begin{cases}
iy_t + i\beta y_{xxx} +\alpha y_{xx} +i\delta y_x= iq+i\beta y_{0}''' +\alpha y_{0}'' +i\delta y_0'\\
+\int_0^t(i\beta q_{xxx}+\alpha q_{xx} +i\delta q_x)ds=0, x\in (0,L), t\in (0,T),\\
y(0,t)=0, y(L,t)=0, y_x(L,t)=0,\\
y(x,0)=y_0(x),
\end{cases}
\end{eqnarray}where the boundary conditions are satisfied due to the compatibility conditions satisfied by $y_0$. We note that integrating the main equation in \eqref{targetfq} in $t$, we get
\begin{equation*}
iq-q_0 = iq+i\beta y_{0}''' +\alpha y_{0}'' +i\delta y_0'=-\int_0^t(i\beta q_{xxx}+\alpha q_{xx} +i\delta q_x)ds,
\end{equation*} which allows us to conclude that the right hand side of the main equation in \eqref{targetfy} is zero.
We know from Proposition \ref{wtildeprop} that $y,q\in X_{T}^0$. It follows from the main equation in \eqref{targetfy} that
\begin{equation}\label{qyrel1}
y_{xxx}=-\frac{1}{\beta} q +i\tilde{\alpha} y_{xx} -\tilde{\delta} y_x.
\end{equation}
Recall that we have the Gargliardo-Nirenberg inequalities $$|\partial_x y(t)|_{2}\lesssim |y|_2^\frac{2}{3}|\partial_x^3y|_2^\frac{1}{3}\text{ and }|\partial_x^2 y(t)|_2\lesssim |y|_2^\frac{1}{3}|\partial_x^3y|_2^\frac{2}{3}.$$
Using these estimates, we get $|\partial_x^3 y(t)|_2\lesssim |q(t)|_2+|y(t)|_2.$ By taking the sup norm with respect to the temporal variable, we deduce that $y\in C([0,T];H^3(0,L)).$
Similarly, writing out
\begin{equation}\label{qyrel2}
y_{xxxx}=-\frac{1}{\beta} q_x +i\tilde{\alpha} y_{xxx} -\tilde{\delta} y_{xx},
\end{equation} and using the fact that the right hand side belongs to $L^2(0,T;L^2(0,L))$, we conclude that $y\in L^2(0,T;H^4(0,L))$. Hence, we proved the following lemma.
\begin{lem}\label{H3lem}
	Let $y_0\in H^3(0,L)$ and satisfy the compatibility conditions $y_0(0)=y_0(L)=0$.  Then, \eqref{targetfy} has a unique solution $y\in X_{T}^3.$
\end{lem}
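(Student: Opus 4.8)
The plan is to upgrade the $L^2$-theory of Proposition~\ref{wtildeprop} to $H^3$ by the ``differentiate in time, then integrate back'' device already begun in the discussion above: one must avoid assuming $y_0\in D(A)$, since only $y_0(0)=y_0(L)=0$ is hypothesized, not $y_0'(L)=0$. Since $y_0\in H^3(0,L)$, the datum $q_0:=-\beta y_0'''+i\alpha y_0''-\delta y_0'$ lies in $L^2(0,L)$, so Proposition~\ref{wtildeprop} provides a unique mild solution $q\in X_T^0$ of \eqref{targetfq} with trace regularity $q_x(0,\cdot)\in L^2(0,T)$ and vanishing boundary traces $q(0,\cdot)=q(L,\cdot)=q_x(L,\cdot)=0$.

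Next I would set $y:=y_0+\int_0^t q(\cdot,s)\,ds$. Since $q\in C([0,T];L^2(0,L))\cap L^2(0,T;H^1(0,L))$, one checks directly that $y\in C^1([0,T];L^2(0,L))\cap L^\infty(0,T;H^1(0,L))\subset X_T^0$ with $y_t=q$, and that the initial and boundary conditions of \eqref{targetfy} hold thanks to $y_0(0)=y_0(L)=0$ together with the boundary traces of $q$. Integrating the equation of \eqref{targetfq} in time gives $\int_0^t(i\beta q_{xxx}+\alpha q_{xx}+i\delta q_x)\,ds=-i(q-q_0)$, so the right-hand side of \eqref{targetfy} collapses to $iq_0+i\beta y_0'''+\alpha y_0''+i\delta y_0'=0$ by the choice of $q_0$; hence $y$ solves \eqref{targetfy}, which after this simplification is exactly the homogeneous problem \eqref{targettilde} with datum $y_0\in L^2(0,L)$. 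In particular $y$ is \emph{the} mild solution of \eqref{targettilde}, by the uniqueness part of Proposition~\ref{wtildeprop}.

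It remains to bootstrap the spatial regularity of $y$ off the identity \eqref{qyrel1}, i.e. $y_{xxx}=-\frac1\beta q+i\tilde{\alpha}y_{xx}-\tilde{\delta}y_x$ (using $y_t=q$). From $y,q\in L^2(0,T;H^1(0,L))$ the right-hand side lies in $L^2(0,T;H^{-1}(0,L))$, whence $y\in L^2(0,T;H^2(0,L))$; reinserting this, the right-hand side is in $L^2(0,T;L^2(0,L))$, so $y\in L^2(0,T;H^3(0,L))$; differentiating once more in $x$ to get \eqref{qyrel2} and using $q_x\in L^2(0,T;L^2(0,L))$ yields $y\in L^2(0,T;H^4(0,L))$. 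Now $y(\cdot,t)\in H^3(0,L)\cap H^1_0(0,L)$ for a.e.\ $t$, so Lemma~\ref{gag2} (in its $c_2=0$ form) applies pointwise in $t$; substituting $|y_x|_2\lesssim|y|_2^{2/3}|y_{xxx}|_2^{1/3}$ and $|y_{xx}|_2\lesssim|y|_2^{1/3}|y_{xxx}|_2^{2/3}$ into \eqref{qyrel1} and absorbing via Young's inequality gives $|y_{xxx}(\cdot,t)|_2\lesssim|q(\cdot,t)|_2+|y(\cdot,t)|_2$ for a.e.\ $t$. Since $q,y\in C([0,T];L^2(0,L))$ the right-hand side is bounded on $[0,T]$, so $y\in L^\infty(0,T;H^3(0,L))$; interpolating with $y\in C([0,T];L^2(0,L))$ gives $y\in C([0,T];H^s(0,L))$ for every $s<3$, hence $y_x,y_{xx}\in C([0,T];L^2(0,L))$, and then \eqref{qyrel1} forces $y_{xxx}\in C([0,T];L^2(0,L))$, i.e.\ $y\in C([0,T];H^3(0,L))$. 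Combined with the $L^2(0,T;H^4)$ bound, $y\in X_T^3$. Uniqueness is immediate: an $X_T^3$ solution of \eqref{targetfy} is a fortiori an $X_T^0$ mild solution of \eqref{targettilde} with datum $y_0$, and there is only one.

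I expect the last paragraph to be the main obstacle. The Gagliardo--Nirenberg/absorption step delivering the $L^\infty(0,T;H^3)$ bound is circular unless one \emph{first} knows $y(\cdot,t)\in H^3(0,L)$ for a.e.\ $t$, which is precisely why the preliminary ``gain one spatial derivative at a time'' iteration off \eqref{qyrel1} must precede any appeal to Lemma~\ref{gag2}; and it is why introducing $q$ as the $X_T^0$ solution with datum $q_0=-\beta y_0'''+i\alpha y_0''-\delta y_0'$ — rather than differentiating $y$ in time directly, which is only legitimate once $y$ is already known to be smooth — is the correct entry point.
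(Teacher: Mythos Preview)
Your proof is correct and follows essentially the same route as the paper: solve \eqref{targetfq} for $q\in X_T^0$ with datum $q_0=-\beta y_0'''+i\alpha y_0''-\delta y_0'$, set $y=y_0+\int_0^t q\,ds$, and then read off the $H^3$/$H^4$ regularity of $y$ from the relations \eqref{qyrel1}--\eqref{qyrel2} together with the Gagliardo--Nirenberg inequalities of Lemma~\ref{gag2}. Your version is in fact more careful than the paper's, since you insert the preliminary $L^2_t H^k_x$ bootstrap to justify that $y(\cdot,t)\in H^3$ before invoking Lemma~\ref{gag2}, and you supply the interpolation argument that upgrades $L^\infty_t H^3_x\cap C_t L^2_x$ to genuine $C_t H^3_x$.
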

If $y_0\in H^6(0,L)$ and satisfies the higher order compatibility conditions \eqref{compa}, then we can differentiate \eqref{targetfy} in time and apply Lemma \ref{H3lem} to $y_t$ and infer that $y\in X_{T}^6$ and $y_t\in X_{T}^3$.  Now, suppose $\tilde{u}_0\in H^6(0,L)$ such that $\tilde{w}_0=(I-\Upsilon_{p})^{-1}\tilde{u}_0$, which belongs to $H^6(0,L)$, satisfies the compatibility conditions \eqref{compa}. Choosing $y_0\doteq \tilde{w}_0$, solving \eqref{targetfy}, and setting $\tilde{w}(x,t)\doteq e^{-rt}{y}(x,t)$, we see that  $\tilde{w}$ satisfies the main equation as well as the initial and boundary conditions of the error target system \eqref{tildew}. Moreover, $\tilde{w}\in X_{T}^6$ such that $\tilde{w}_t\in X_{T}^3.$ Now, the wellposedness of the error system \eqref{error} follows by the bounded invertibility Lemma \eqref{inverselem}. Hence, we have the following proposition.
\begin{prop}Let $\tilde{u}_0\in H^6(0,L)$ such that $\tilde{w}_0=(I-\Upsilon_{p})^{-1}\tilde{u}_0$ satisfies the compatibility conditions \eqref{compa}.  Then, the error system \eqref{error} has a unique solution $\tilde{u}\in X_{T}^6.$
\end{prop}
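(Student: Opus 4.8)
The plan is to obtain $\tilde u$ by transporting, through the bounded invertible backstepping transformation \eqref{transtildew}, the solution of the target error system \eqref{tildew} that was already built in the discussion preceding this proposition. Concretely, given $\tilde u_0\in H^6(0,L)$ with $\tilde w_0\doteq(I-\Upsilon_p)^{-1}\tilde u_0\in H^6(0,L)$ satisfying \eqref{compa}, I take $y_0\doteq\tilde w_0$ in \eqref{targetfy}, solve it, and set $\tilde w\doteq e^{-rt}y$; by that discussion $\tilde w\in X_T^6$, $\tilde w_t\in X_T^3$, and $\tilde w$ solves \eqref{tildew}. I then define $\tilde u\doteq(I-\Upsilon_p)\tilde w$, i.e., via \eqref{transtildew}, and claim $\tilde u$ is the unique $X_T^6$ solution of \eqref{error}.

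For the regularity, recall that $p(x,y)=k(L-y,L-x;-r)$, where $k$ is the $C^\infty$ kernel produced by Lemma \ref{lemkernel} (whose construction is valid for $\tilde r$ of either sign), so $p$ is $C^\infty$ on $\overline{\Delta_{x,y}}$. By Lemma \ref{inverselem} both $\Upsilon_p$ and $(I-\Upsilon_p)^{-1}$ are then bounded on $H^l(0,L)$ for every $l\ge 0$; since these operators are independent of $t$, the function $\tilde u=(I-\Upsilon_p)\tilde w$ inherits the regularity of $\tilde w$, so $\tilde u\in C([0,T];H^6(0,L))\cap L^2(0,T;H^7(0,L))=X_T^6$, with $\tilde u_t=(I-\Upsilon_p)\tilde w_t\in X_T^3$.

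Next I would check that $\tilde u$ solves \eqref{error} with $p_1(x)=-i\beta p(x,L)$. The side conditions are immediate from \eqref{p} and the data of $\tilde w$: $\tilde u(0,t)=\tilde w(0,t)-\int_0^Lp(0,y)\tilde w(y,t)\,dy=0$ since $\tilde w(0,t)=0$ and $p(0,\cdot)=0$; $\tilde u(L,t)=\tilde w(L,t)=0$; $\tilde u_x(L,t)=\tilde w_x(L,t)+p(L,L)\tilde w(L,t)=0$ using $p(x,x)=0$ and $\tilde w_x(L,t)=0$; and $\tilde u(\cdot,0)=(I-\Upsilon_p)\tilde w_0=\tilde u_0=\hat u_0-u_0$. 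The interior equation is precisely the computation set out in Appendix \eqref{dedkerp}: differentiating \eqref{transtildew} once in $t$ and three times in $x$, integrating by parts and using the homogeneous traces of $\tilde w$, all the extra terms collapse into the single integral $\int_x^L\big[p_{xxx}+p_{yyy}-i\tilde\alpha(p_{xx}-p_{yy})+\tilde\delta(p_x+p_y)-\tilde r p\big](x,y)\,\tilde w(y,t)\,dy$, which vanishes because $p$ solves \eqref{p}, leaving the single surviving trace term $-i\beta p(x,L)\tilde w_{xx}(L,t)=-i\beta p(x,L)\tilde u_{xx}(L,t)$. I expect this verification to be the main obstacle: one must see that exactly the three conditions $p(0,y)=0$, $p(x,x)=0$, $\tfrac{d}{dx}p_x(x,x)=\tilde r/3$ imposed in \eqref{p} are what annihilate every spurious boundary and interior contribution produced by the Volterra transformation, so that only the prescribed interior feedback survives; everything else is bookkeeping with Lemmas \ref{lemkernel} and \ref{inverselem}.

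For uniqueness, let $\tilde u\in X_T^6$ be any solution of \eqref{error} and put $\tilde w\doteq(I-\Upsilon_p)^{-1}\tilde u\in X_T^6$; running the above computation in reverse (equivalently, through the inverse Volterra kernel and the PDE it satisfies) shows that $\tilde w$ solves \eqref{tildew}. But \eqref{tildew} has at most one solution in $X_T^6$: setting $W\doteq e^{rt}\tilde w$ turns it into \eqref{targettilde} with datum $\tilde w_0$, whose mild solution is unique by Proposition \ref{wtildeprop}, and any $X_T^6$ solution is a fortiori that mild solution. Hence $\tilde w$, and with it $\tilde u=(I-\Upsilon_p)\tilde w$, is unique.
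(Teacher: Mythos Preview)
Your proposal is correct and follows essentially the same route as the paper: construct $\tilde w\in X_T^6$ from the discussion preceding the proposition (via $y_0=\tilde w_0$, Lemma \ref{H3lem}, and the time-differentiated bootstrap), then transport regularity to $\tilde u=(I-\Upsilon_p)\tilde w$ using the bounded invertibility of Lemma \ref{inverselem}. You simply spell out in more detail what the paper compresses into the single clause ``the wellposedness of the error system \eqref{error} follows by the bounded invertibility Lemma \eqref{inverselem},'' including the verification of the side conditions, the identity $\tilde w_{xx}(L,t)=\tilde u_{xx}(L,t)$ (which the paper records later in Section \ref{SecNumResults}), and the uniqueness argument via Proposition \ref{wtildeprop}.
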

Next, we wish to prove the wellposedness of the observer and its target model.  These two models are related through the backstepping transformation $I-\Upsilon_k$, where $k$ is the kernel which solves \eqref{kernela}. Namely, we have
\begin{equation}\label{transhatw}
\hat{w}(x,t)=\hat{u}(x,t)-\int_x^L k(x,y)\hat{u}(y,t)dy.
\end{equation}
It follows that the target observer system is
\begin{eqnarray}\label{targetobs}
\begin{cases}
i\hat{w}_t + i\beta \hat{w}_{xxx} +\alpha \hat{w}_{xx} +i\delta \hat{w}_x + ir \hat{w}\\
- [(I-\Upsilon_k)p_1](x)\tilde{w}_{xx}(L,t)= 0, x\in (0,L), t\in (0,T),\\
\hat{w}(0,t)=0, \hat{w}(L,t)=0, \hat{w}_x(L,t)=0,\\
\hat{w}(x,0)=\hat{w}_0(x)\doteq \hat{u}_0-\int_x^Lk(x,y)\hat{u}_0(y)dy.
\end{cases}
\end{eqnarray}
We will first prove the wellposedness of the target observer system \eqref{targetobs} and then the wellposedness of observer system \eqref{observer}.
We observe that $f(x,t)\doteq[(I-\Upsilon_k)p_1](x)\tilde{w}_{xx}(L,t)$ defines a function that belongs to $W^{1,1}(0,T;L^2(0,L))$ because
\begin{equation}\label{fL1L2}
\begin{split}
|f|_{W^{1,1}(0,T;L^2(0,L))} \equiv& \int_0^T \left(\int_{0}^{L}\left(|f(x,t)|^2+|f_t(x,t)|^2\right)dx\right)^\frac{1}{2}dt\\
=& \left(\int_{0}^{L}\left|[(I-\Upsilon_k)p_1](x)\right|^2dx\right)^\frac{1}{2} \\
&\times\int_0^T\left(\left|\tilde{w}_{xx}(L,t)\right|+\left|\tilde{w}_{xxt}(L,t)\right|\right) dt\\
\le& c_k T \left(|\tilde{w}|_{C([0,T];H^3(0,L))}+|\tilde{w}_t|_{C([0,T];H^3(0,L))}\right)<\infty.
\end{split}
\end{equation}

Now for $\hat{w}_0\in H^3(0,L)$ satisfying the compatibility conditions $\hat{w}_0(0)=\hat{w}_0(L)=0$, the wellposedness of \eqref{targetobs} follows from Proposition \ref{wtildehigherreg}. Hence, we have $\hat{w} \in X_{T}^3.$ The wellposedness of the observer system \eqref{observer} follows thanks to the bounded invertibility Lemma \eqref{inverselem}, again. Hence, we have the following proposition.
\begin{prop}\label{propobs}
	Let $u_0,\hat{u}_0\in H^6(0,L)$, $u_0(0)=\int_0^Lk(0,y)\hat{u}_0(y)dy$, $u_0(L)=0$, and $\tilde{w}_0$ satisfy the compatibility in \eqref{compa}, then the plant-oberver-error system has a solution $(u,\hat{u},\tilde{u})\in X_T^3\times X_T^3\times X_T^6$.
\end{prop}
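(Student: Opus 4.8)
The plan is to assemble the triple $(\tilde u,\hat u,u)$ in that order, transporting regularity through the two backstepping transformations $I-\Upsilon_p$ and $I-\Upsilon_k$, and then to recover the plant as $u:=\hat u-\tilde u$. All the building blocks are already in place in the excerpt, so the proof is mostly an assembly; the only step with real content is the first one.

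\textbf{Step 1 (error system).} Put $\tilde u_0:=\hat u_0-u_0\in H^6(0,L)$ and $\tilde w_0:=(I-\Upsilon_p)^{-1}\tilde u_0$, which lies in $H^6(0,L)$ by Lemma \ref{inverselem} and satisfies \eqref{compa} (in particular $\tilde w_0(0)=\tilde w_0(L)=0$) by hypothesis. I would solve the target error system \eqref{tildew} through the device of \eqref{targetfy}: with $y_0:=\tilde w_0$, Lemma \ref{H3lem} gives $y\in X_T^3$, and then differentiating \eqref{targetfy} in time and applying Lemma \ref{H3lem} to $y_t$ (whose compatibility at $x=0,L$ is precisely \eqref{compa}) upgrades this to $y\in X_T^6$ with $y_t\in X_T^3$. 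Setting $\tilde w(x,t):=e^{-rt}y(x,t)$ then solves \eqref{tildew} with $\tilde w\in X_T^6$ and $\tilde w_t\in X_T^3$, and $\tilde u:=(I-\Upsilon_p)\tilde w$ solves \eqref{error} with $\tilde u\in X_T^6$, again by Lemma \ref{inverselem}. Along the way I would record, using $p(0,y)=0$ and $\tilde w_0(0)=\tilde w_0(L)=0$, that $\tilde u_0(0)=\tilde u_0(L)=0$; together with the hypotheses $u_0(L)=0$ and $u_0(0)=\int_0^Lk(0,y)\hat u_0(y)dy$ this gives $\hat u_0(L)=0$ and $\hat u_0(0)=\int_0^Lk(0,y)\hat u_0(y)dy$.

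\textbf{Step 2 (observer).} The forcing $f(x,t):=[(I-\Upsilon_k)p_1](x)\,\tilde w_{xx}(L,t)$ in the target observer system \eqref{targetobs} is now a \emph{known} function lying in $W^{1,1}(0,T;L^2(0,L))$: its spatial factor is a fixed $H^3$ function, and since $\tilde w,\tilde w_t\in C([0,T];H^3(0,L))$ the boundary traces $\tilde w_{xx}(L,\cdot)$ and $\tilde w_{xxt}(L,\cdot)$ are bounded in $t$, so the computation \eqref{fL1L2} applies. Taking $\hat w_0:=(I-\Upsilon_k)\hat u_0\in H^6(0,L)\subset H^3(0,L)$, the compatibility $\hat w_0(0)=\hat w_0(L)=0$ required in Proposition \ref{wtildehigherreg} follows from $k(x,L)=0$ and from the relations $\hat u_0(L)=0$, $\hat u_0(0)=\int_0^Lk(0,y)\hat u_0(y)dy$ established in Step 1. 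Hence, after absorbing the damping $ir\hat w$ by the usual exponential substitution, Proposition \ref{wtildehigherreg} gives $\hat w\in X_T^3$, and reversing the backstepping transformation (Lemma \ref{inverselem}, using that $k$ solves \eqref{kernela}) we obtain $\hat u:=(I-\Upsilon_k)^{-1}\hat w\in X_T^3$ solving \eqref{observer} with the controller \eqref{controller2}.

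\textbf{Step 3 (plant), and the main obstacle.} Finally set $u:=\hat u-\tilde u$; since $X_T^6\subset X_T^3$ we get $u\in X_T^3$. Subtracting \eqref{error} from \eqref{observer}, where $y(t)=u_{xx}(L,t)$ and $\tilde u=\hat u-u$ force $-p_1(x)(y(t)-\hat u_{xx}(L,t))=p_1(x)\tilde u_{xx}(L,t)$, the interior feedback cancels and $u$ solves \eqref{heatlin}; the boundary values $u(0,t)=\hat u(0,t)=g_0(t)$, $u(L,t)=u_x(L,t)=0$ and the initial datum $u(\cdot,0)=\hat u_0-\tilde u_0=u_0$ are immediate. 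This delivers $(u,\hat u,\tilde u)\in X_T^3\times X_T^3\times X_T^6$, as claimed. The genuine obstacle is concentrated in Step 1: the $H^6$ bootstrap for the target error system, and the bookkeeping by which the hypotheses on $u_0,\hat u_0$ turn into the compatibility conditions feeding Lemma \ref{H3lem} and Proposition \ref{wtildehigherreg}. Steps 2 and 3 are then routine once that regularity and those compatibility relations are secured.
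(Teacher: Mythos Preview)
Your proposal is correct and follows essentially the same route as the paper: first solve the target error system at the $H^6$ level via Lemma \ref{H3lem} and a bootstrap in $t$, then feed the resulting trace $\tilde w_{xx}(L,\cdot)\in W^{1,1}(0,T)$ into the target observer system and apply Proposition \ref{wtildehigherreg}, using Lemma \ref{inverselem} to transport regularity through both backstepping transformations. Your explicit verification that the hypotheses on $u_0,\hat u_0$ yield the compatibility conditions $\hat w_0(0)=\hat w_0(L)=0$, and your Step 3 recovering $u:=\hat u-\tilde u$ and checking it solves the plant, are details the paper leaves implicit but which fit exactly into its argument.
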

\begin{rem}
	It is important to notice that the boundary feedback controller $g_0(t)\doteq \int_0^Lk(0,y)\hat{u}(y,t)dy$ uses only the states of the observer but not states of the original plant.
\end{rem}
\subsection{Stabilization of plant-observer-error system}

We first prove the following lemma.
\begin{lem}\label{wtildelem}
	Let $\tilde{w}$ be a sufficiently smooth solution of \eqref{tildew}, then  for $t\ge 0$
	\begin{itemize}
		\item[(i)] $|\tilde{w}(\cdot,t)|_2\le |\tilde{w}_0|_2e^{-rt}$,
		\item[(ii)] $|\tilde{w}_{xx}(L,t)|+|\tilde{w}(\cdot,t)|_{H^3(0,L)}\lesssim |\tilde{w}_0|_{H^3(0,L)}e^{-r t}$.
	\end{itemize}
\end{lem}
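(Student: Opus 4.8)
The plan is to derive (i) from the basic $L^2$ energy identity for \eqref{tildew} and then bootstrap to (ii) by differentiating the equation in time and recovering the missing spatial regularity from the equation itself.

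For (i), I would multiply the first equation in \eqref{tildew} by $\bar{\tilde{w}}$, integrate over $(0,L)$, and take imaginary parts. Exactly as in the formal energy computation recorded in Section \ref{statement} for the undamped equation, the $\alpha$- and $\delta$-terms drop out (their boundary contributions vanish because $\tilde{w}(0,t)=\tilde{w}(L,t)=0$, and the surviving interior integrals are real), the dispersive term contributes the boundary term $\tfrac{\beta}{2}|\tilde{w}_x(0,t)|^2$ thanks to $\tilde{w}(0,t)=\tilde{w}(L,t)=\tilde{w}_x(L,t)=0$, the time derivative gives $\tfrac12\tfrac{d}{dt}|\tilde{w}(\cdot,t)|_2^2$, and the only new term, coming from $ir\tilde{w}$, is $\text{Im}\int_0^L ir\tilde{w}\bar{\tilde{w}}\,dx=r|\tilde{w}(\cdot,t)|_2^2$. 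This yields
\begin{equation*}
\tfrac12\tfrac{d}{dt}|\tilde{w}(\cdot,t)|_2^2 + \tfrac{\beta}{2}|\tilde{w}_x(0,t)|^2 + r|\tilde{w}(\cdot,t)|_2^2 = 0,
\end{equation*}
so $\tfrac{d}{dt}|\tilde{w}(\cdot,t)|_2^2\le -2r|\tilde{w}(\cdot,t)|_2^2$ and Grönwall's inequality gives (i).

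For (ii), I would exploit that \eqref{tildew} is linear with constant coefficients: $v\doteq\tilde{w}_t$ again solves \eqref{tildew}, with the homogeneous boundary conditions obtained by differentiating those of $\tilde{w}$ in $t$ and with initial datum $v_0=\tilde{w}_t(\cdot,0)=-\beta\tilde{w}_0'''+i\alpha\tilde{w}_0''-\delta\tilde{w}_0'-r\tilde{w}_0$, which satisfies $|v_0|_2\lesssim|\tilde{w}_0|_{H^3(0,L)}$ when $\tilde{w}$ is sufficiently smooth. Applying part (i) to $v$ then gives $|\tilde{w}_t(\cdot,t)|_2\lesssim|\tilde{w}_0|_{H^3(0,L)}e^{-rt}$. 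Next I would rewrite the equation as $\beta\tilde{w}_{xxx}=-\tilde{w}_t+i\alpha\tilde{w}_{xx}-\delta\tilde{w}_x-r\tilde{w}$ and, since $\tilde{w}(\cdot,t)\in H_0^1(0,L)$, invoke Lemma \ref{gag2} in the forms $|\tilde{w}_x|_2\lesssim|\tilde{w}|_2^{2/3}|\tilde{w}_{xxx}|_2^{1/3}$ and $|\tilde{w}_{xx}|_2\lesssim|\tilde{w}|_2^{1/3}|\tilde{w}_{xxx}|_2^{2/3}$; an $\epsilon$-Young absorption of the top-order term yields $|\tilde{w}_{xxx}(\cdot,t)|_2\lesssim|\tilde{w}_t(\cdot,t)|_2+|\tilde{w}(\cdot,t)|_2$. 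Combining this with (i), the bound on $|\tilde{w}_t(\cdot,t)|_2$, and the two displayed interpolation inequalities gives $|\tilde{w}(\cdot,t)|_{H^3(0,L)}\lesssim|\tilde{w}_0|_{H^3(0,L)}e^{-rt}$, and then the trace is controlled by $|\tilde{w}_{xx}(L,t)|\le|\tilde{w}_{xx}(\cdot,t)|_\infty\lesssim|\tilde{w}_{xx}(\cdot,t)|_{H^1(0,L)}\le|\tilde{w}(\cdot,t)|_{H^3(0,L)}$ via $H^1(0,L)\hookrightarrow L^\infty(0,L)$, which completes (ii).

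None of this is deep; the only point needing some care is the absorption step in (ii). One must note that the constants in Lemma \ref{gag2} depend only on $L$ (hence are uniform in $t$) and choose the $\epsilon$-Young coefficient small enough to move the $|\tilde{w}_{xxx}|_2$ contributions to the left-hand side without degrading the exponential rate — which is harmless, since the resulting bound on $|\tilde{w}_{xxx}|_2$ is governed entirely by $|\tilde{w}_t|_2$ and $|\tilde{w}|_2$, both of which already decay like $e^{-rt}$. As elsewhere in the paper, a fully rigorous version would first perform these manipulations for smooth data and then pass to the limit; since the lemma assumes $\tilde{w}$ ``sufficiently smooth'', this step is not needed here.
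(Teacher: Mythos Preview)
Your proof is correct and follows essentially the same approach as the paper: the $L^2$ energy identity for (i), then for (ii) differentiating in time to obtain the same decay for $\tilde{w}_t$, expressing $\tilde{w}_{xxx}$ from the equation, and absorbing the intermediate derivatives via Gagliardo--Nirenberg and $\epsilon$-Young, with the trace controlled by the $H^3$ norm. The only cosmetic difference is that the paper redoes the multiplier computation directly for $\tilde{w}_t$ rather than invoking part (i) for $v=\tilde{w}_t$, but this amounts to the same calculation.
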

\begin{proof}
	(i) follows by multiplying \eqref{tildew} by $\overline{\tilde{w}}$, integrating over $(0,L)$, and taking imaginary parts.  In order to prove (ii), we differentiate \eqref{tildew} in $t$, then multiply by $\overline{\tilde{w}}_{t}$, integrate over $(0,L)$, and take the imaginary parts. Using integration by parts and boundary conditions as well, we obtain
	\begin{equation}\label{wildetcal1}
	\frac{d}{dt}\left|\tilde{w}_t(\cdot,t)\right|_2^2 + 2r\left|\tilde{w}_t(\cdot,t)\right|_2^2 = -{\beta}|w_{xt}(0,t)|^2 \le 0,
	\end{equation} which implies
	\begin{equation}\label{wtdecay}
	|\tilde{w}_t(\cdot,t)|_{2}\leq |\tilde{w}_t(0)|_{2}e^{-rt}\leq |\tilde{w}_0|_{H^3(0,L)}e^{-r t}
	\end{equation}
	since $|\tilde{w}_t(0)|_{2}=|-\beta\tilde{w}_0'''+i\alpha\tilde{w}_0''-\delta\tilde{w}_0'+ir\tilde{w}_0|_2\leq |\tilde{w}_0|_{H^3(0,L)}.$
	On the other hand, by \eqref{tildew} we have
	\begin{equation}\label{wxxxest} |\tilde{w}_{xxx}(\cdot,t)|_2^2\leq\big(\tilde{\alpha}|\tilde{w}_{xx}(\cdot,t)|_2^2+\tilde{\delta}|\tilde{w}_{x}(\cdot,t)|_2^2+\tilde{r}|\tilde{w}(\cdot,t)|_2^2+\frac{1}{\beta}|\tilde{w}_{t}(\cdot,t)|_2^2\big).
	\end{equation}
	Applying $\epsilon$-Young's inequality to the right hand side of the Gagliardo-Nirenberg type inequalities
	\begin{align*}
	|\tilde{w}_{x}(\cdot,t)|_2 &\lesssim |\tilde{w}_{xxx}(\cdot,t)|^{\frac{1}{3}}_2|\tilde{w}(\cdot,t)|^{\frac{2}{3}}_2, \\
	|\tilde{w}_{xx}(\cdot,t)|_2&\lesssim |\tilde{w}_{xxx}(\cdot,t)|^{\frac{2}{3}}_2|\tilde{w}(\cdot,t)|^{\frac{1}{3}}_2,
	\end{align*}
	we obtain
	\begin{equation}\label{wxest}
	\tilde{\delta}|\tilde{w}_{x}(\cdot,t)|_2^2\lesssim \epsilon |\tilde{w}_{xxx}(\cdot,t)|_2^2+c_\epsilon|\tilde{w}(\cdot,t)|_2^2
	\end{equation} and
	\begin{equation}\label{newwxest}
	\tilde{\alpha}|\tilde{w}_{xx}(\cdot,t)|_2^2\lesssim \epsilon |\tilde{w}_{xxx}(\cdot,t)|_2^2+c_\epsilon|\tilde{w}(\cdot,t)|_2^2
	\end{equation}
	for $\epsilon>0$. Combining \eqref{wxxxest}-\eqref{newwxest}, we deduce that
	\begin{equation}\label{newwxest2}
	|\tilde{w}_{xxx}(\cdot,t)|_2^2 \lesssim \frac{\tilde{r}+2\epsilon}{1-2\epsilon}|\tilde{w}(\cdot,t)|_2^2+\frac{1}{\beta(1-2\epsilon)}|\tilde{w}_{t}(\cdot,t)|_2^2
	\end{equation} and therefore
	\begin{equation}\label{newwxest2}
	|\tilde{w}(\cdot,t)|_{H^3(0,L)} \lesssim |\tilde{w}(\cdot,t)|_2+|\tilde{w}_{t}(\cdot,t)|_2.
	\end{equation} On the other hand, from Sobolev trace theory we have
	\begin{equation}\label{wxxestatL2}
	|\tilde{w}_{xx}(L,t)| \lesssim |\tilde{w}(\cdot,t)|_{H^3(0,L)}.
	\end{equation}
	Now, Lemma \ref{wtildelem}-(ii) follows from Lemma \ref{wtildelem}-(i), \eqref{wtdecay}, \eqref{newwxest2}, and \eqref{wxxestatL2}.
\end{proof}
By specially constructing $p_1$, we ensured that the term $p_1(x)\tilde{u}_{xx}(L,t)$ in the main equation of the error system \eqref{error} behaves like a damping.  This means that the solution of the original plant and the observer system will tend to each other in the long run.  The second goal is to achieve the decay of solutions of the original plant.  This is equivalent to controlling the observer since the error tends to zero.  Let us now show that the target observer system's solution exponentially decays to zero.  Multiplying \eqref{targetobs} by $\hat{w}$, integrating on $(0,L)$, taking imaginary parts, using $\epsilon-$Young's inequality we get
\begin{equation*}
\begin{split}
\frac{1}{2}\frac{d}{dt}|\hat{w}(\cdot,t)|_2^2+\frac{\beta}{2}|\hat{w}_x(0,t)|^2+r\left|\hat{w}(\cdot,t)\right|_2^2 &= \tilde{w}_{xx}(L,t)\int_0^L[(I-\Upsilon_k)p_1](x)\hat{w}(x,t)dx\\
&\le \epsilon|(I-\Upsilon_k)p_1|_2^2\left|\hat{w}(\cdot,t)\right|_2^2+c_\epsilon|\tilde{w}_{xx}(L,t)|^2.\label{what01}
\end{split}
\end{equation*}
for $\epsilon > 0$. It follows from Lemma \ref{wtildelem}-(ii) and \eqref{what01} that
\begin{equation}\label{what02}
\frac{1}{2}\frac{d}{dt}|\hat{w}(\cdot,t)|_2^2+\left(r- \epsilon|(I-\Upsilon_k)p_1|_2^2\right)\left|\hat{w}(\cdot,t)\right|_2^2\le c_\epsilon |\tilde{w}_0|_{H^3(0,L)}^2e^{-2r t}.
\end{equation}
for $\epsilon > 0$. Integrating the above inequality, we obtain the decay estimate
\begin{equation}\label{whatdecay}
|\hat{w}(\cdot,t)|_2\le \left(|\hat{w}_0|_2+\frac{c_\epsilon |\tilde{w}_0|_{H^3(0,L)}}{2\epsilon|(I-\Upsilon_k)p_1|_2}\right)e^{-(r- \epsilon|(I-\Upsilon_k)p_1|_2^2)t},
\end{equation} where $\epsilon>0$ is fixed but can be arbitrarily small.
Similar to \eqref{L2backstepping4} and \eqref{L2backstepping3init}, we have
\begin{equation}\label{L2backstepping4hat}
\left|\hat u(\cdot,t)\right|_2 \le \left|(I-\Upsilon_{k})^{-1}\right|_{2\rightarrow 2}\left|\hat w(\cdot,t)\right|_2
\end{equation} and
\begin{equation}\label{L2backstepping3inithat}
\left|\hat w_0\right|_2 \le \left(1+\left|k\right|_{L^2(\Delta_{x,y})}\right)\left|\hat u_0\right|_2,
\end{equation}respectively.

From \eqref{transtildew}, we know that
\begin{equation}\label{tildew0u0}
\left|\tilde{w}_0\right|_{H^3(0,L)} \le \left|(I-\Upsilon_{p})^{-1}\right|_{{H^3(0,L)}\rightarrow {H^3(0,L)}}\left|\tilde u_0\right|_{H^3(0,L)}
\end{equation} and
\begin{equation}\label{L2backstepping3hat}
\left|\tilde{u}(\cdot,t)\right|_{H^3(0,L)} \le c_p\left|\tilde{w}(\cdot,t)\right|_{H^3(0,L)}.
\end{equation} It follows from \eqref{whatdecay}, \eqref{L2backstepping4hat}, \eqref{L2backstepping3inithat}, and \eqref{tildew0u0} that
\begin{equation}\label{uhatdecay}
\left|\hat u(\cdot,t)\right|_2 \le c_{\epsilon,k,p,\hat u_0,\tilde u_0}e^{-(r- \epsilon|(I-\Upsilon_k)p_1|_2^2)t},
\end{equation} where
\begin{equation*}
\begin{split}
c_{\epsilon,k,p,\hat u_0,\tilde u_0}
=&\left|(I-\Upsilon_{k})^{-1}\right|_{2\rightarrow 2}
\times\left(\left(1+\left|k\right|_{L^2(\Delta_{x,y})}\right)\left|\hat u_0\right|_2 \right. \\
&\left. +\frac{c_\epsilon \left|(I-\Upsilon_{p})^{-1}\right|_{{H^3(0,L)}\rightarrow {H^3(0,L)}}\left|\tilde u_0\right|_{H^3(0,L)}}{2\epsilon|(I-\Upsilon_k)p_1|_2}\right).
\end{split}
\end{equation*} Moreover, as in \eqref{L2backstepping3}, we have
\begin{equation}\label{L2backstepping3hat0}
\left|\tilde{u}(\cdot,t)\right|_2 \le \left(1+\left|p\right|_{L^2(\Delta_{x,y})}\right)\left|\tilde{w}(\cdot,t)\right|_2,
\end{equation} which implies due to Lemma \ref{wtildelem}-(i) that the error is exponentially decaying to zero at $L^2-$level with the decay rate estimate given by
\begin{equation}\label{errordecay}
\left|\tilde{u}(\cdot,t)\right|_2 \le \left(1+\left|p\right|_{L^2(\Delta_{x,y})}\right)|\tilde{w}_0|_2e^{-rt}.
\end{equation}
Combining \eqref{L2backstepping3hat} and Lemma \ref{wtildelem}-(ii), we get the following  decay rate estimate for the error system at $H^3-$ level
\begin{equation}\label{L2backstepping3hat2}
\left|\tilde{u}(\cdot,t)\right|_{H^3(0,L)} \le c_p|\tilde{w}_0|_{H^3(0,L)}e^{-r t}.
\end{equation}
The following proposition follows from the discussion above.
\begin{prop}\label{obsprop2}
	Let $\epsilon>0$ be fixed and small, $r>0$, and $(u,\hat{u},\tilde u)$ be the solution of the linear plant-observer-error system.  Then, the components of $(u,\hat{u},\tilde u)$ satisfies
	\begin{itemize}
		\item[(i)] $\left|u(\cdot,t)\right|_2 \le c_{\epsilon,k,p,\hat u_0,\tilde u_0}e^{-(r- \epsilon c_{k,p})t}+c_p\left|\tilde u_0\right|_{H^3(0,L)}e^{-rt}$,
		\item[(ii)] $\left|\hat u(\cdot,t)\right|_2 \le c_{\epsilon,k,p,\hat u_0,\tilde u_0}e^{-(r- \epsilon c_{k,p})t}$, and
		\item[(iii)] $\left|\tilde{u}(\cdot,t)\right|_{H^3(0,L)} \le c_p\left|\tilde u_0\right|_{H^3(0,L)}e^{-r t},$ respectively,
	\end{itemize} where $c_{\epsilon,k,p,\hat u_0,\tilde u_0}$, $c_{k,p}$, and $c_p$ are nonnegative constants depending on their sub-indices.
\end{prop}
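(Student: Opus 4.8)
The statement is in essence a consolidation of the estimates assembled in the paragraphs immediately preceding it, so the plan is simply to collect them in the right order and identify the constants. I would begin with item (iii). Writing $\tilde u=(I-\Upsilon_p)\tilde w$ and invoking Lemma \ref{inverselem} (so that $I-\Upsilon_p$ and its inverse are bounded on $H^3(0,L)$) together with Lemma \ref{wtildelem}-(ii) gives
\[
|\tilde u(\cdot,t)|_{H^3(0,L)}\le c_p\,|\tilde w(\cdot,t)|_{H^3(0,L)}\lesssim c_p\,|\tilde w_0|_{H^3(0,L)}e^{-rt},
\]
and $|\tilde w_0|_{H^3(0,L)}\le |(I-\Upsilon_p)^{-1}|_{H^3\to H^3}|\tilde u_0|_{H^3(0,L)}$ absorbs the remaining operator norm into $c_p$; this is exactly \eqref{L2backstepping3hat2}.

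For item (ii) the key input is the target observer system \eqref{targetobs}. Multiplying it by $\overline{\hat w}$, integrating over $(0,L)$, taking imaginary parts and applying $\epsilon$-Young's inequality to the source term $\tilde w_{xx}(L,t)\int_0^L[(I-\Upsilon_k)p_1](x)\hat w(x,t)\,dx$ produces the Lyapunov inequality
\[
\tfrac12\tfrac{d}{dt}|\hat w(\cdot,t)|_2^2+\bigl(r-\epsilon|(I-\Upsilon_k)p_1|_2^2\bigr)|\hat w(\cdot,t)|_2^2\le c_\epsilon|\tilde w_{xx}(L,t)|^2 .
\]
Bounding the right-hand side by $c_\epsilon|\tilde w_0|_{H^3(0,L)}^2 e^{-2rt}$ via Lemma \ref{wtildelem}-(ii) and integrating yields \eqref{whatdecay}, i.e. exponential decay of $\hat w$ with rate $r-\epsilon|(I-\Upsilon_k)p_1|_2^2$. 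Then Lemma \ref{inverselem} applied to $I-\Upsilon_k$ on $L^2(0,L)$, combined with $|\hat w_0|_2\le(1+|k|_{L^2(\Delta_{x,y})})|\hat u_0|_2$ and \eqref{tildew0u0}, converts this into \eqref{uhatdecay}, which is (ii) with $c_{k,p}:=|(I-\Upsilon_k)p_1|_2^2$ and $c_{\epsilon,k,p,\hat u_0,\tilde u_0}$ the explicit constant displayed just after \eqref{uhatdecay}. All these multiplier manipulations are legitimate because Proposition \ref{propobs} guarantees $\tilde w\in X_T^6$ and $\hat w\in X_T^3$, so no density argument is needed here.

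Finally, item (i) is a triangle-inequality step: since $\tilde u=\hat u-u$ we have $u=\hat u-\tilde u$, whence
\[
|u(\cdot,t)|_2\le|\hat u(\cdot,t)|_2+|\tilde u(\cdot,t)|_2\le|\hat u(\cdot,t)|_2+|\tilde u(\cdot,t)|_{H^3(0,L)},
\]
and substituting (ii) and (iii) gives (i); one may instead use the sharper bound \eqref{errordecay} for $|\tilde u(\cdot,t)|_2$. The only place requiring genuine care --- and hence the closest thing to an obstacle --- is the bookkeeping of constants so that the decay exponent in (i) and (ii) is truly $r-\epsilon c_{k,p}$ with $\epsilon>0$ free, which is what lets the rate be pushed arbitrarily close to the prescribed $r$; beyond that, the proof is a direct assembly of the displayed estimates.
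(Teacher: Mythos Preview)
Your proposal is correct and follows essentially the same route as the paper: the paper's proof is precisely the discussion in the paragraphs preceding the proposition, namely Lemma~\ref{wtildelem} for the decay of $\tilde w$, the $\epsilon$-Young multiplier estimate on \eqref{targetobs} yielding \eqref{whatdecay} and then \eqref{uhatdecay} for (ii), the bounded invertibility estimate \eqref{L2backstepping3hat2} for (iii), and the triangle inequality $u=\hat u-\tilde u$ for (i). Your ordering (iii)$\to$(ii)$\to$(i) and identification of the constants matches the paper's derivation.
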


\section{Numerical results} \label{SecNumResults} In this section, we present the numerical algorithms and give several numerical experiments verifying the theoretical results found in Section 2 and Section 3.
\subsection{Controller design}
\subsubsection{Linear case} \label{SecContLin}
Our numerical scheme consists of three steps.

\begin{itemize}
	\item [\textbf{Step i.}] In the first step we derive an approximation to the backstepping kernel. More precisely we solve
	\begin{equation}\label{GepsIntRec}
	G^{j+1}(s,t)=-\frac{r}{3\beta}st+\int_0^t\int_0^s\int_0^\omega[DG^j](\xi,\eta)d\xi d\omega d\eta,
	\end{equation}
	for $j = 1, 2, \dotsc$ iteratively with
	\begin{eqnarray}\label{GIntRec_1}
	G^1(s,t) = -\frac{r}{3\beta}st.
	\end{eqnarray}
	Then we change variables by setting $x = L - (s + t)$, $y = L - t$ to get an approximation for $k(x,y) = G(y - x,L -y)= G(s,t)$ which is the solution of the kernel pde model \eqref{kernela}. We first observe that at each step of the iteration we get a polynomial in the variables $s$ and $t$. This is a great convenience for performing algebraic operations as well as differentiation and integration. To this end, let us express a general $n$-th degree polynomial in two variables with complex coefficients
	\begin{equation} \label{PolRep}
	\begin{split}
	P(s,t) =& \alpha_{0,0} + \alpha_{1,0}s + \alpha_{0,1}t + \alpha_{2,0}s^2 + \alpha_{1,1} st + \alpha_{0,2} t^2 + \dotsm \\
	&+ \alpha_{n,0}s^n + \alpha_{n-1,1}s^{n-1}t + \alpha_{n-2,2}s^{n-2}t^2 + \dotsm + \alpha_{0,n}t^n
	\end{split}
	\end{equation}
	in a matrix form as
	\begin{eqnarray} \label{MatRep}
	\left[\mathrm{P}\right] =
	\begin{bmatrix}
	\alpha_{0,0} & \alpha_{0,1} & \cdots     & \alpha_{0,n-1}    & \alpha_{0,n} \\
	\alpha_{1,0} & \alpha_{1,1} & \cdots     & \alpha_{1,n-1} & \\
	\vdots  &  \vdots & \reflectbox{$\ddots$} & \\
	\alpha_{n-1,0} & \alpha_{n-1,1} && \mbox{\Huge 0}\\
	\alpha_{n,0} & &  &&
	\end{bmatrix}.
	\end{eqnarray}
	Considering the fact that the set of $(n +1)\times (n+1)$ square matrices form an abelian group and they satisfy multiplication with a scalar, we can perform the algebraic operations inside the integral \eqref{GepsIntRec} using the form \eqref{MatRep}. Moreover, using the elementary row and column operations, we can perform differentiation and integration. For instance, in order to differentiate $P(s,t)$ with respect to $s$, one needs to multiply $j$-th row of $\left[\mathrm{P}\right]$ by $j-1$ and write the result to the $(j-1)$-th row for each $j$, $j = 2, 3 \dotsc,  n + 1$. See Algorithm \ref{alg:difs} and Algorithm \ref{alg:ints} for pseudo codes of differentiation and integration operations with respect to the variable $s$. Differentiation and integration with respect to the variable $t$ can be performed similarly by doing analogous column operations.
	\begin{algorithm}[H]
		\begin{algorithmic}[1]
			\REQUIRE  $(n+1) \times (n+1)$ coefficient matrix $C$.
			\FOR{$j=2 \to n+1$}
			\STATE $C(j-1,:) \gets (j-1) C(j,:)$
			\ENDFOR
		\end{algorithmic}
		\caption{Differentiation with respect to $s$. }
		\label{alg:difs}
	\end{algorithm}
	\begin{algorithm}[H]
		\begin{algorithmic}[1]
			\REQUIRE  $(n+2) \times (n+2)$ coefficient matrix $C$.
			\FOR{$j=n+1 \to 1$}
			\STATE $C(j+1,:) \gets \frac{C(j,:)}{j}$
			\ENDFOR
			\STATE $C(1,:) \gets 0$
		\end{algorithmic}
		\caption{Integration with respect to $s$. }
		\label{alg:ints}
	\end{algorithm}
	\begin{rem}
		The above approach allows us to make only algebraic computations for computing derivatives and integrals of a given polynomial. Thus, by using the form \eqref{MatRep} and performing the iteration \eqref{GepsIntRec} sufficiently many times, we derive a nearly exact result for $G(s,t)$ quite fast. We do not use a discretization based numerical technique due to the error involved especially for higher order derivatives. We also refrain from using a symbolic toolbox because due to performance issues.
	\end{rem}
	
	\item [\textbf{Step ii.}] As a second step, we obtain a numerical solution to the weakly damped target system \eqref{target}. To this end, let $M \ge 3$ be an integer and $\left\{x_m\right\}_{m = 1}^M$ be the set of $M$ distinct nodes of $[0,L]$ given by $x_m = (m - 1) h$ where $h = \frac{L}{M - 1}$ is the uniform spatial grid spacing. Consider the vector space
	\begin{equation}
	\mathrm{X}^M := \left\{\mathbf{w} = [w_1 \cdots w_M]^T \in \mathbb{C}^M \right\}
	\end{equation}
	with the property
	\begin{align}
	\label{num_bc1} w_1(t) = w_M(t) &= 0, \\
	\label{num_bc2} \frac{w_{M - 2}(t) - 4  w_{M - 1}(t) + 3 w_{M}(t)}{2h} &= 0
	\end{align}
	for $t > 0$ and with the understanding that $w_m(t)$ approximates $w(x,t)$ at the point $x = x_m$. Note that \eqref{num_bc1} corresponds to Dirichlet boundary conditions, whereas \eqref{num_bc2} is one sided second order finite difference approximation to the first order derivative at the point $x_M$ and stands for the Neumann boundary condition.
	
	Consider the standard forward difference and backward difference operators $\mathbf{\Delta}_+ : \mathrm{X}^M \to \mathrm{X}^M$ and $\mathbf{\Delta}_-: \mathrm{X}^M \to \mathrm{X}^M$, respectively, and let us also introduce the following finite difference operators
	\begin{equation} \mathbf{\Delta} := \frac{1}{2} \left(\mathbf{\Delta}_+ + \mathbf{\Delta}_-\right), \quad \mathbf{\Delta}^2 := \mathbf{\Delta}_{+} \mathbf{\Delta}_{-}, \quad	\mathbf{\Delta}^3 := \mathbf{\Delta}_+ \mathbf{\Delta}_+ \mathbf{\Delta}_-.
	\end{equation}
	Then, the semi-discrete form is
	\begin{equation} \label{targetsemidis}
	\frac{d \mathbf{w}(t)}{dt} + \left\{\beta \mathbf{\Delta}^3 - i\alpha \mathbf{\Delta}^2 + \delta \mathbf{\Delta} + r \mathbf{I}^M\right\} \left[\mathbf{w}\right](t) = 0
	\end{equation}
	where $\mathbf{I}^M$ is the identity matrix defined on $\mathrm{X}^M$.
	
	Next, let $N$ be a positive integer and $T$ be the final time, and define the time step $k = \frac{T}{N - 1}$. Let $n = 1, \dotsc , N$ be the time index so that $t_n = (n - 1)k$. Let $\mathbf{w^n} = [w_1^n \cdots w_m^n]^T$ be an approximation of the solution at the $n$-th time step. Discretizing \eqref{targetsemidis} by the Crank-Nicolson time stepping and defining
	\begin{equation}
	\mathbf{A} := \beta \mathbf{\Delta}^3 - i\alpha \mathbf{\Delta}^2 + \delta \mathbf{\Delta} + r \mathbf{I}^M,
	\end{equation}
	one obtains the following fully discrete scheme: Given $\mathbf{w}^n \in \mathrm{X}^M$,
	find $\mathbf{w}^{n+1} \in \mathrm{X}^M$ such that
	\begin{eqnarray}
	\left(\mathbf{I}^M + \frac{k}{2}\mathbf{A}\right)\left[\mathbf{w}^{n+1}\right] = \mathbf{F}^n_l, \quad n = 1,2, \dotsc,
	\end{eqnarray}
	where
	$
	\mathbf{F}^n_l := \left(\mathbf{I}^M - \frac{k}{2}\mathbf{A}\right)\left[\mathbf{w}^{n}\right].
	$
	
	\item [\textbf{Step iii.}] To go back to the original plant, we consider the transformation \eqref{backstepping} and substitute $u(x,t)$ by $w(x,t) + v(x,t)$ where $v(x,t) := \int_x^L k(x,y) u(y,t)dy$. Then, we end up with the following problem: Find $v(x,t)$ such that
	\begin{eqnarray}
	v(x,t) = \int_x^L k(x,y) v(y,t) dy + \int_x^L k(x,y)w(y,t) dy.
	\end{eqnarray}
	Here numerical results for $k(x,y)$ and $w(x,t)$ are known from the previous steps, therefore solving the above problem recursively and using the relation $u(x,t) = w(x,t) + v(x,t)$ again, we deduce the numerical solution to the original plant.
\end{itemize}

\subsubsection{Nonlinear case.} In the nonlinear case, the first and the third steps given in the linear case remain the same, and the only difference occurs in the second step, i.e. solving the target system. Applying the same discretization given in the linear case for the nonliner target equation \eqref{ch414}, one obtains the following fully discrete form:

\begin{multline} \label{discrete_nl1}
\left(\mathbf{I}^M + \frac{k}{2}\mathbf{A}\right)\left[\mathbf{w}^{n+1}\right] - \frac{i k}{2}\left(\mathbf{I}^M - \mathbf{\Upsilon}_k^M\right) \\
\times \left[ \left|\left(\mathbf{I}^M - \mathbf{\Upsilon}_k^M\right)^{-1} \left[\mathbf{w}^{n+1} \right]\right|^p \left(\mathbf{I}^M - \mathbf{\Upsilon}_k^M\right)^{-1} \left[\mathbf{w}^{n+1} \right]   \right]
= \mathbf{F}^n_l + \mathbf{F}^n_{nl},
\end{multline}
where
$\mathbf{F}^n_{nl} := \frac{i k}{2}\left(\mathbf{I}^M - \mathbf{\Upsilon}_k^M\right)  \left[\left|\left(\mathbf{I}^M - \mathbf{\Upsilon}_k^M\right)^{-1} \left[\mathbf{w}^{n} \right]\right|^p \left(\mathbf{I}^M - \mathbf{\Upsilon}_k^M\right)^{-1} \left[\mathbf{w}^{n} \right] \right]$
and for a given $\mathbf{w}^n$, the system is to be solved for the $(n+1)$-th time step. The matrix $\mathbf{\Upsilon}_k^M$ is the discrete counterpart of the integral operator $\mathbf{\Upsilon}_k$. Note that this matrix can be explicitly expressed by applying a suitable numerical integration technique to the integral $\mathbf{\Upsilon}_k$. For instance applying the composite trapezoidal rule, one obtains
\begin{equation}\label{IntOp}
\mathbf{\Upsilon}_k^M = h
\begin{bmatrix}
\frac{1}{2}k(x_1,x_1) & k(x_1,x_2) & \cdots & k(x_1,x_{M-1}) & \frac{1}{2}k(x_1,x_M) \\
0 & \frac{1}{2}k(x_2,x_2) & \cdots & k(x_2,x_{M-1}) & \frac{1}{2}k(x_2,x_M) \\
\vdots  & \vdots  & \ddots & \vdots & \vdots  \\
0 & 0 & \cdots & \frac{1}{2}k(x_{M-1},x_{M-1}) & \frac{1}{2}k(x_{M-1},x_{M}) \\
0 & 0 & \cdots & 0 & 0
\end{bmatrix}.
\end{equation}

We divide the linearization of the nonlinear part in two cases.
\begin{enumerate}
	\item[Case (i):] If $p \geq 1$, then we employ the Taylor linearization method. More precisely, let $\mathbf{w}^{n,k}$, $k = 0, 1, \dotsc$ be an approximation of the unknown $\mathbf{w}^{n+1}$. We start the iteration $\mathbf{w}^{n,k+1} = \mathbf{w}^{n,k} + \mathbf{dw}$
	with $\mathbf{w}^{n,0} = \mathbf{w}^{n}$ to derive a better approximation until the correction $\mathbf{dw}$ is small enough. For this purpose, we insert $\mathbf{w}^{n,k} + \mathbf{dw}$ for $\mathbf{w}^{n+1}$ in \eqref{discrete_nl1}. Then Taylor expand the $p$-th powered term, keeping only the linear terms in $\mathbf{dw}$ and therefore the nonlinear term at the left hand side of \eqref{discrete_nl1} becomes
	\begin{equation*}
	\begin{split}
	&\left|\left(\mathbf{I}^M - \mathbf{\Upsilon}_k^M\right)^{-1} \left[\mathbf{w}^{n+1} \right]\right|^p \left(\mathbf{I}^M - \mathbf{\Upsilon}_k^M\right)^{-1} \left[\mathbf{w}^{n+1} \right] \\
	=& \left|\left(\mathbf{I}^M - \mathbf{\Upsilon}_k^M\right)^{-1} \left[\mathbf{w}^{n,k} + \mathbf{dw}\right]\right|^p \left(\mathbf{I}^M - \mathbf{\Upsilon}_k^M\right)^{-1} \left[\mathbf{w}^{n,k} + \mathbf{dw}\right]\\
	\approx& \left\{\left|\left(\mathbf{I}^M - \mathbf{\Upsilon}_k^M\right)^{-1} \left[\mathbf{w}^{n,k} \right]\right|^p + p \left|\left(\mathbf{I}^M - \mathbf{\Upsilon}_k^M\right)^{-1} \left[\mathbf{w}^{n,k} \right]\right|^{p-1} \left(\mathbf{I}^M - \mathbf{\Upsilon}_k^M\right)^{-1} \left[\mathbf{dw}\right]   \right\} \\
	&\times\left\{\left(\mathbf{I}^M - \mathbf{\Upsilon}_k^M\right)^{-1} \left[\mathbf{w}^{n,k}\right] + \left(\mathbf{I}^M - \mathbf{\Upsilon}_k^M\right)^{-1} \left[\mathbf{dw}\right]\right\}\\
	\approx& \left(\mathbf{I}^M - \mathbf{\Upsilon}_k^M\right)^{-1}\left[\mathbf{w}^{n,k} \right] \left|\left(\mathbf{I}^M - \mathbf{\Upsilon}_k^M\right)^{-1} \left[\mathbf{w}^{n,k} \right]\right|^p \\
	& + \left|\left(\mathbf{I}^M - \mathbf{\Upsilon}_k^M\right)^{-1} \left[\mathbf{w}^{n,k} \right]\right|^p \left(\mathbf{I}^M - \mathbf{\Upsilon}_k^M\right)^{-1}\left[\mathbf{dw} \right] \\
	& + p \left(\mathbf{I}^M - \mathbf{\Upsilon}_k^M\right)^{-1} \left[\mathbf{w}^{n,k}\right] \left|\left(\mathbf{I}^M - \mathbf{\Upsilon}_k^M\right)^{-1} \left[\mathbf{w}^{n,k} \right]\right|^{p-1} \left(\mathbf{I}^M - \mathbf{\Upsilon}_k^M\right)^{-1} \left[\mathbf{dw}\right].
	\end{split}
	\end{equation*}
	Inserting the last expression into \eqref{discrete_nl1} gives
	\begin{equation} \label{discrete_nl2_1}
	\begin{split}
	&\left(\mathbf{I}^M + \frac{k}{2}\mathbf{A}\right)\left[\mathbf{dw}\right] - \frac{ik}{2} \left(\mathbf{I}^M - \mathbf{\Upsilon}_k^M\right) \left[\left|\left(\mathbf{I}^M - \mathbf{\Upsilon}_k^M\right)^{-1} \left[\mathbf{w}^{n,k} \right]\right|^p
	\right. \\
	&\left.+ p \left(\mathbf{I}^M - \mathbf{\Upsilon}_k^M\right)^{-1} \left[\mathbf{w}^{n,k}\right] \left|\left(\mathbf{I}^M - \mathbf{\Upsilon}_k^M\right)^{-1} \left[\mathbf{w}^{n,k} \right]\right|^{p-1}\right] \left(\mathbf{I}^M - \mathbf{\Upsilon}_k^M\right)^{-1} \left[\mathbf{dw} \right] \\
	=& \mathbf{F}^n_l + \mathbf{F}^n_{nl} - \left\{\mathbf{I}^M + \frac{k}{2}\mathbf{A}\right\}\left[\mathbf{w}^{n,k}\right]\\
	&+ \frac{ik}{2} \left(\mathbf{I}^M - \mathbf{\Upsilon}_k^M\right) \left[ \left(\mathbf{I}^M - \mathbf{\Upsilon}_k^M\right)^{-1}\left[\mathbf{w}^{n,k} \right] \left|\left(\mathbf{I}^M - \mathbf{\Upsilon}_k^M\right)^{-1} \left[\mathbf{w}^{n,k} \right]\right|^p\right].
	\end{split}
	\end{equation}
	
	\item[Case (ii):] For $0 < p < 1$, we employ the Picard linearization method. To this end, we simply use the previously computed approximation $\mathbf{w}^{n,k}$ for the unknown $\mathbf{w}^{n+1}$ in the term $\left|\left(\mathbf{I}^M - \mathbf{\Upsilon}_k^M\right)^{-1} \left[\mathbf{w}^{n+1} \right]\right|^p$. Next, we again set
	$
	\mathbf{w}^{n,k+1} = \mathbf{w}^{n,k} + \mathbf{dw}, \quad \mathbf{w}^{n,0} = \mathbf{w}^n, \quad k = 0, 1, \dotsc,
	$
	and use for the rest of the terms which belongs to the $(n+1)$-th time step. Then, the nonlinear term becomes
	\begin{equation*}
	\begin{split}
	&\left|\left(\mathbf{I}^M - \mathbf{\Upsilon}_k^M\right)^{-1} \left[\mathbf{w}^{n+1} \right]\right|^p \left(\mathbf{I}^M - \mathbf{\Upsilon}_k^M\right)^{-1} \left[\mathbf{w}^{n+1} \right] \\
	\approx& \left|\left(\mathbf{I}^M - \mathbf{\Upsilon}_k^M\right)^{-1} \left[\mathbf{w}^{n,k} \right]\right|^p \left(\mathbf{I}^M - \mathbf{\Upsilon}_k^M\right)^{-1} \left[\mathbf{w}^{n,k+1} \right] \\
	=& \left|\left(\mathbf{I}^M - \mathbf{\Upsilon}_k^M\right)^{-1} \left[\mathbf{w}^{n,k} \right]\right|^p \left(\mathbf{I}^M - \mathbf{\Upsilon}_k^M\right)^{-1} \left[\mathbf{w}^{n,k} \right] \\
	&+ \left|\left(\mathbf{I}^M - \mathbf{\Upsilon}_k^M\right)^{-1} \left[\mathbf{w}^{n,k} \right]\right|^p \left(\mathbf{I}^M - \mathbf{\Upsilon}_k^M\right)^{-1} \left[\mathbf{dw} \right].
	\end{split}
	\end{equation*}
	Inserting this into \eqref{discrete_nl1} yields
	\begin{equation} \label{discrete_nl2_2}
	\begin{split}
	&\left(\mathbf{I}^M + \frac{k}{2}\mathbf{A}\right)\left[\mathbf{dw}\right] - \frac{ik}{2} \left(\mathbf{I}^M - \mathbf{\Upsilon}_k^M\right) \left[\left|\left(\mathbf{I}^M - \mathbf{\Upsilon}_k^M\right)^{-1} \left[\mathbf{w}^{n,k} \right]\right|^p \right] \\
	&\times \left(\mathbf{I}^M - \mathbf{\Upsilon}_k^M\right)^{-1} \left[\mathbf{dw} \right] \\
	=& \mathbf{F}^n_l + \mathbf{F}^n_{nl} - \left(\mathbf{I}^M + \frac{k}{2}\mathbf{A}\right)\left[\mathbf{w}^{n,k}\right]\\
	&+ \frac{ik}{2} \left(\mathbf{I}^M - \mathbf{\Upsilon}_k^M\right) \left[ \left(\mathbf{I}^M - \mathbf{\Upsilon}_k^M\right)^{-1}\left[\mathbf{w}^{n,k} \right] \left|\left(\mathbf{I}^M - \mathbf{\Upsilon}_k^M\right)^{-1} \left[\mathbf{w}^{n,k} \right]\right|^p\right].
	\end{split}
	\end{equation}	
\end{enumerate}

Observe that both \eqref{discrete_nl2_1} and \eqref{discrete_nl2_2} are linear in $\mathbf{dw}$, therefore solving these linear systems for $\mathbf{dw}$ iteratively yields the numerical solution for the target system at the $(n+1)$-th time step.

\begin{rem}
	The Picard linearization method also works for the case $p \geq 1$ case. The reason we prefer the Taylor linearization method over the Picard linearization method is that the former does much better than the latter. More precisely, the first one requires less iteration at each time step. Nevertheless, for both methods, choosing sufficiently small time step size implies a better starting value for the iteration and faster convergence to the upper time step. In our numerical experiments, we choose sufficiently small time steps so that both methods require at most $3$ iterations per time step.
\end{rem}

\subsection{Observer design}
In order to solve the plant-observer-error system numerically, we perform the following steps.
\begin{itemize}
	\item [\textbf{Step i.}] In this step, we derive the numerical solution for the pde model \eqref{p} first by solving \eqref{GepsIntRec} iteratively with \eqref{GIntRec_1} and then considering the change of variables $s = -x + y$, $t = x$ to get
	\begin{eqnarray*}
		G(s,t;-r) = G(-x+y,x;-r) = k(L-y,L-x;-r) = p(x,y).
	\end{eqnarray*}
	See Figure \ref{fig:kernel_p} for the contour plot of $|p(x,y)|$ and the real and imaginary parts of $p_1(x)$.
	\begin{figure}[h]
		\centering
		\begin{subfigure}[b]{0.5\textwidth}
			\includegraphics[width=\textwidth]{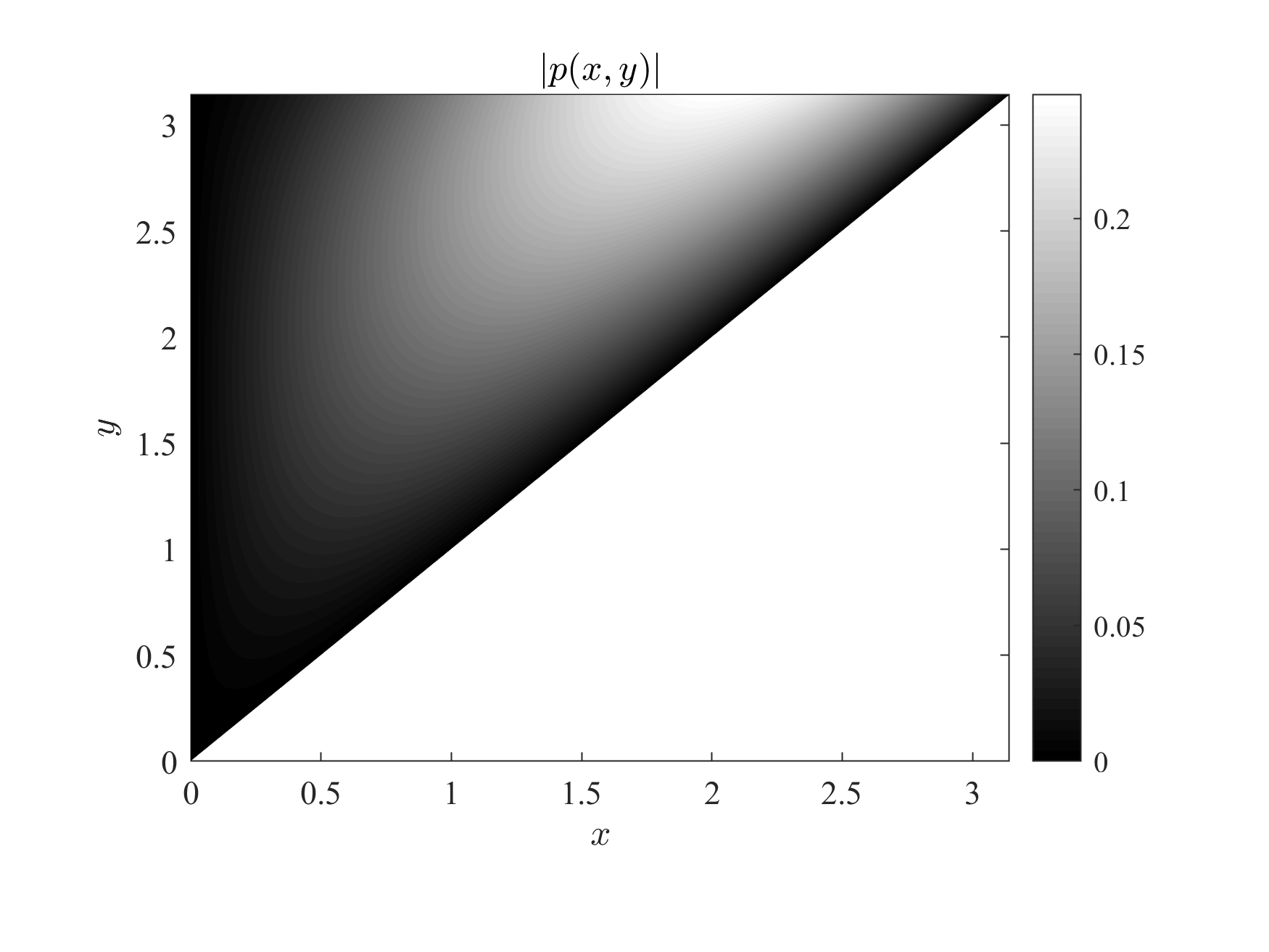}
			\label{fig:ker_p}
		\end{subfigure}
		~ 
		\begin{subfigure}[b]{0.5\textwidth}
			\includegraphics[width=\textwidth]{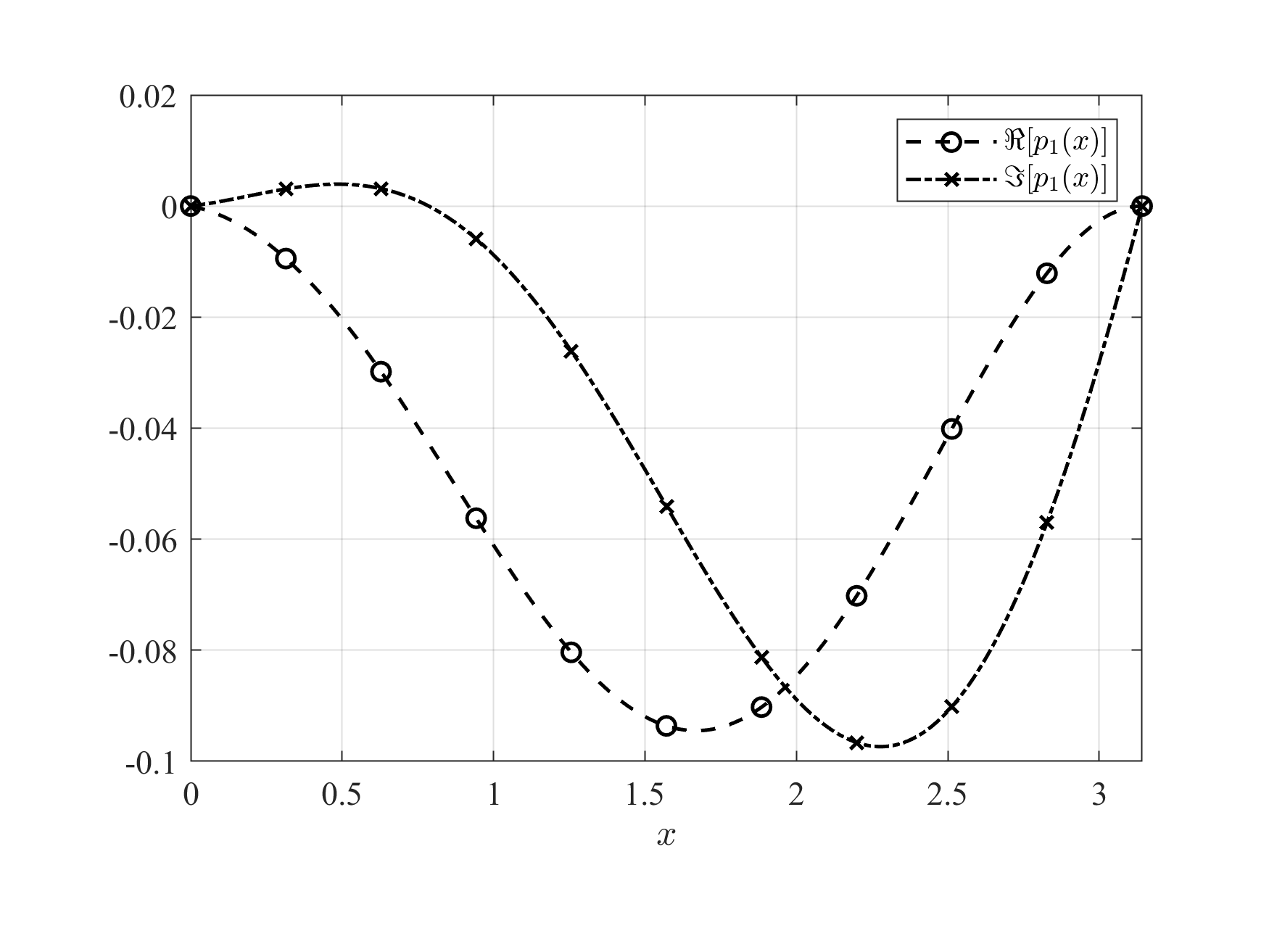}
			\label{fig:p1}
		\end{subfigure}
		\vspace*{-15mm}
		\caption{Left: Contour plot of $|p(x,y)|$ on $\Delta_{x,y}$ for $L=\pi$, $\beta = 0.5$, $\alpha = 1$, $\delta = 0.5$ and $r = 0.2$. Right: Real and imaginary parts of $p_1(x) = -i \beta p(x,L)$.}
		\label{fig:kernel_p}
	\end{figure}
	
	\item [\textbf{Step ii.}] As a second step, we solve the error system \eqref{error} numerically. The discretization procedure is the same as solving the target system in Section \ref{SecContLin}. In addition, we approximate second order spatial derivative of the trace term $\tilde{u}_{xx}(L,t)$ by using the following one sided second order finite difference scheme:
	\begin{eqnarray}
	\tilde{u}_{xx}(L,t) \approx \frac{-\tilde{u}_{M-3}(t) + 4 \tilde{u}_{M-2}(t) - 5\tilde{u}_{M-1}(t) + 2 \tilde{u}_{M}(t)}{h^2}.
	\end{eqnarray}
	
	\item [\textbf{Step iii.}] As a third step, we solve the target-observer system \eqref{targetobs} numerically. We perform the same discretization as we did in the previous step. Additionally we take $\mathbf{\Upsilon}_k^M$ as in \eqref{IntOp}. Note that taking $x = L$ in calculations given in \eqref{backstepping-x2} and using the boundary conditions $\tilde{w}(L,t) = 0$, $p(x,x) = 0$ for the corresponding pde models, we obtain $\tilde{w}_{xx}(L,t) = \tilde{u}_{xx}(L,t)$. Therefore, instead of writing $\tilde{w}_{xx}(L,t)$, we write $\tilde{u}_{xx}(L,t)$ in the numerical scheme.
	
	\item [\textbf{Step iv.}] The next step is solving the observer system \eqref{observer}. In order to achieve this, we use the invertibility of the backstepping transformation. More precisely, for given $\hat{w}$, we find the inverse image $\hat{u}$ as we did in the third step in Section \ref{SecContLin}.
	
	\item [\textbf{Step v.}] Finally we set
	\begin{eqnarray}
	u(x,t) := \hat{u}(x,t) + \tilde{u}(x,t)
	\end{eqnarray}
	to deduce the numerical solution of the original plant.
\end{itemize}

\subsection{Numerical experiments}
In this part we give numerical simulations. The results are obtained by taking $M = 1001$ spatial nodes and $N = 5000$ time steps. The backstepping kernel is derived by performing the iteration \eqref{GepsIntRec} several times until the error goes below $10^{-12}$.

\paragraph{\textbf{Experiment 1:} Linear controller} Let us consider the following linear system
\begin{eqnarray}
\begin{cases}
iu_t + i u_{xxx} + 2u_{xx} +8i u_x = 0, \quad x\in (0,\pi), t\in (0,T),\\
u(0,t)=g_0(t), u(\pi,t)=0, u_x(\pi,t)=0,\\
u(x,0)= u_0(x).
\end{cases}
\end{eqnarray}
In the absence of the controller, the stationary function $\left(3 - e^{4ix} - 2e^{-2ix}\right)$, shown in Figure \ref{fig:plant_l_wo_cont}, satisfies the above system.
\begin{figure}[h]
	\centering
	\includegraphics[width=9cm]{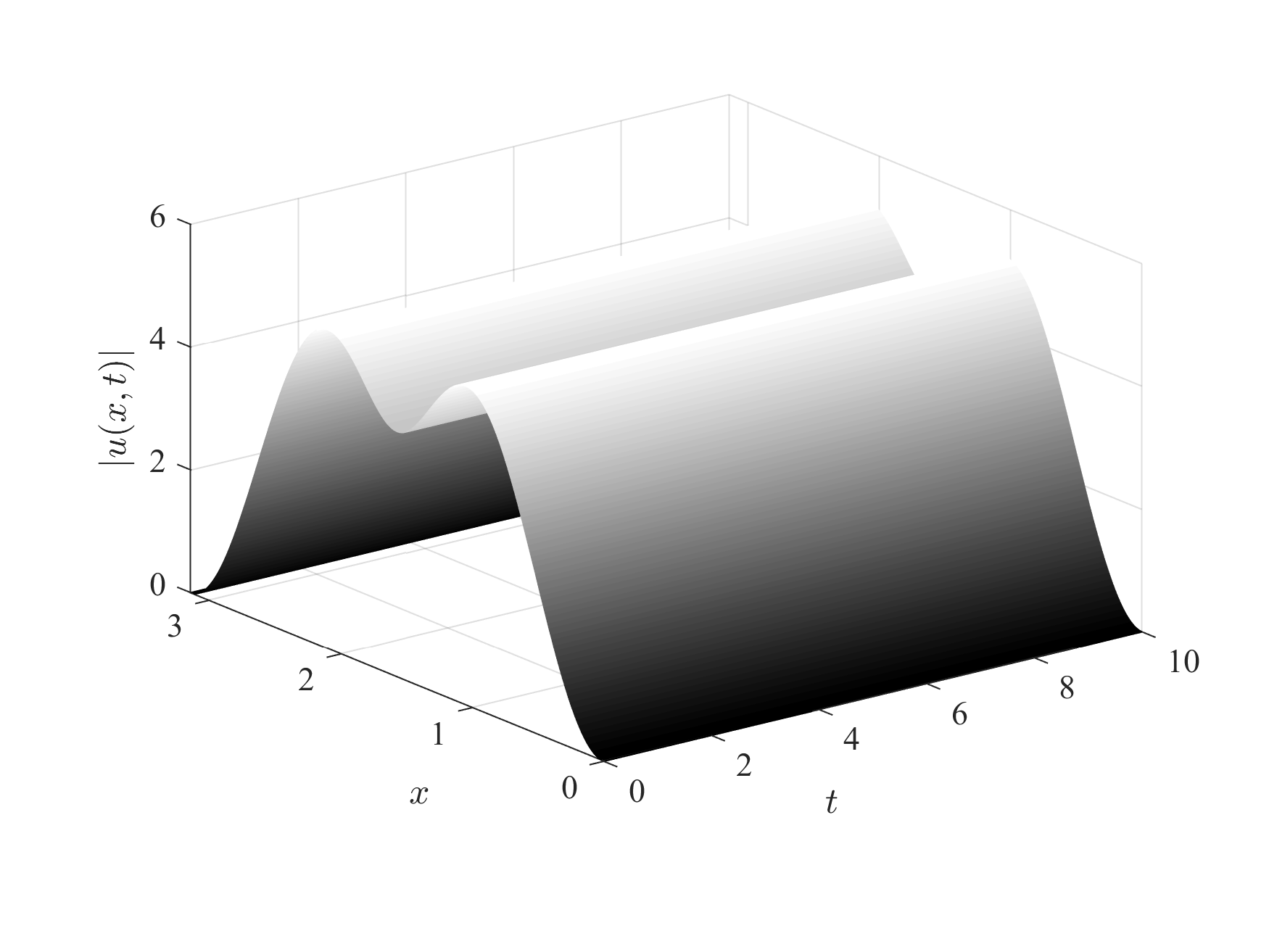}
	\vspace*{-5mm}
	\caption{Uncontrolled solution for linear case.}
	\label{fig:plant_l_wo_cont}
\end{figure}
So let us take the initial condition as
\begin{equation}
u_0(x) = 3 - e^{4ix} - 2e^{-2ix}.
\end{equation}
We choose the damping coefficient as $r = 1$. Figure \ref{fig:plant_1} represents the corresponding numerical results in the presence of the controller.
\begin{figure}[h]
	\centering
	\begin{subfigure}[b]{0.5\textwidth}
		\includegraphics[width=\textwidth]{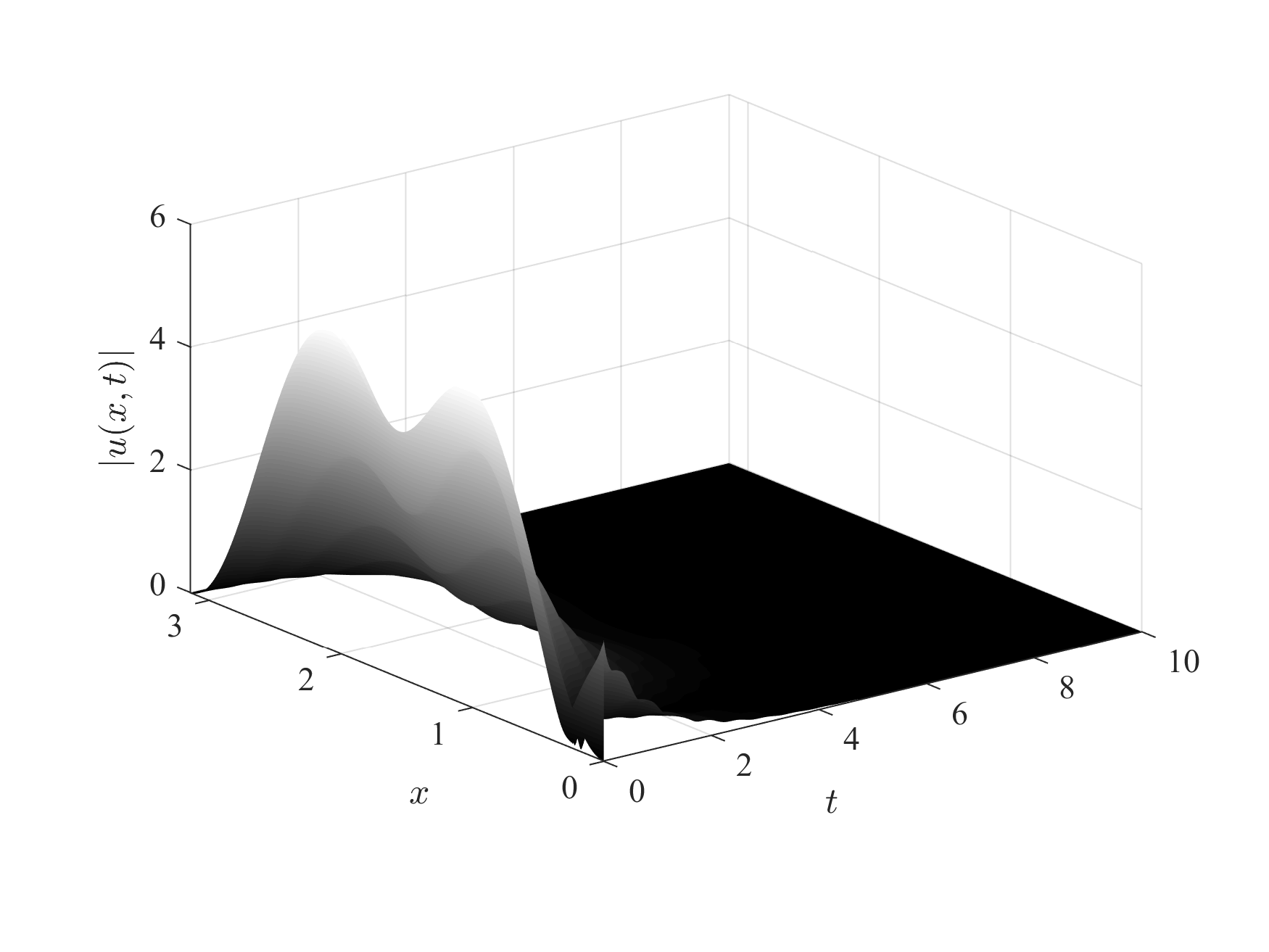}
		\label{fig:plant_3d_1}
	\end{subfigure}
	~ 
	\begin{subfigure}[b]{0.5\textwidth}
		\includegraphics[width=\textwidth]{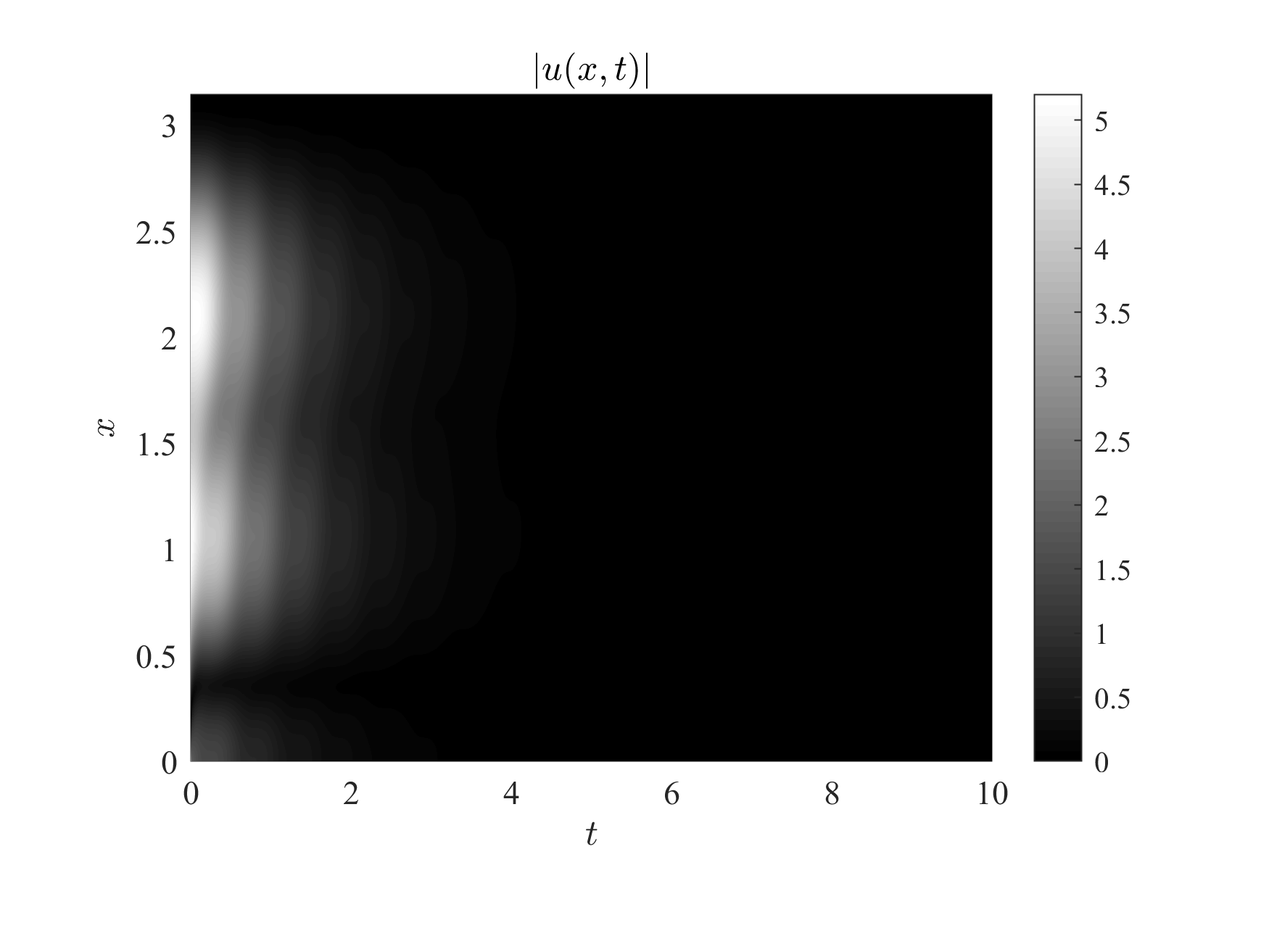}
		\label{fig:plant_contour_1}
	\end{subfigure}
	\vspace*{-15mm}
	\caption{Numerical results for the linear controller case. Left: Time evolution of $|u(x,t)|$. Right: Contour plot of $|u(x,t)|$.}
	\label{fig:plant_1}
\end{figure}
In Figure \ref{fig:compare} at the left, we give the plots of $L^2$-norms with respect to different values of $r$. Obviously a larger value of $r$ is required if one desires a more rapid decay of the solution. On the other hand, looking at the right hand side of Figure \ref{fig:compare}, a significant damping effect is achieved through a bigger control effort.
\begin{figure}[h]
	\centering
	\begin{subfigure}[b]{0.5\textwidth}
		\includegraphics[width=\textwidth]{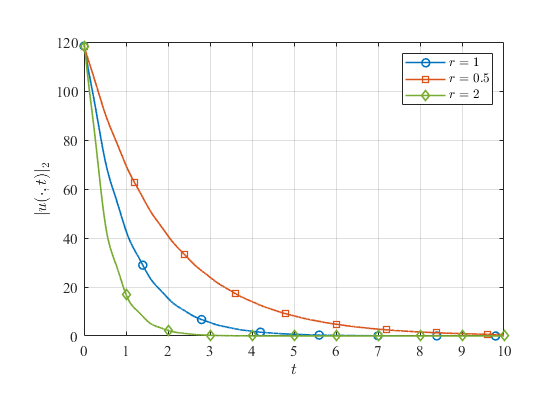}
		\label{fig:L2_compare}
	\end{subfigure}
	~ 
	\begin{subfigure}[b]{0.5\textwidth}
		\includegraphics[width=\textwidth]{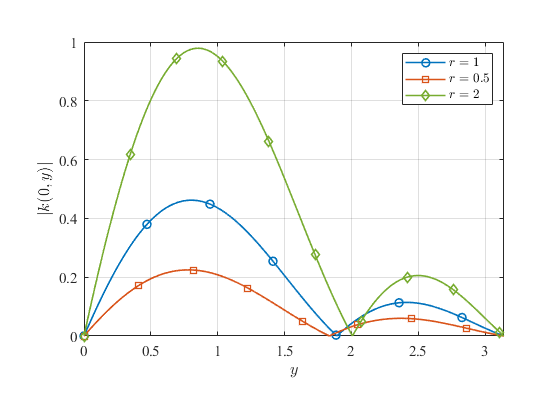}
		\label{fig:kernel_compare}
	\end{subfigure}
	\vspace*{-15mm}
	\caption{Left: Time evolution of $|u(\cdot,t)|_2$ for different values of $r$. Right: Control gain $|k(0,y)|$ for different values of $r$.}
	\label{fig:compare}
\end{figure}

\paragraph{\textbf{Experiment 2:} Nonlinear model, $p \geq 1$} Let us consider the nonlinear problem
\begin{eqnarray}
\begin{cases}
iu_t + i u_{xxx} + 2u_{xx} +8i u_x + u |u|^2 = 0, \quad x\in (0,\pi), t\in (0,T),\\
u(0,t)=g_0(t), u(\pi,t)=0, u_x(\pi,t)=0,\\
u(x,0)= u_0(x),
\end{cases}
\end{eqnarray}
where the initial datum is chosen as in the previous example. See Figure \ref{fig:plant_2} for the uncontrolled case. In the absence of the controller, it seems numerically that the energy decays but with a slower rate.
\begin{figure}[h]
	\centering
	\includegraphics[width=9cm]{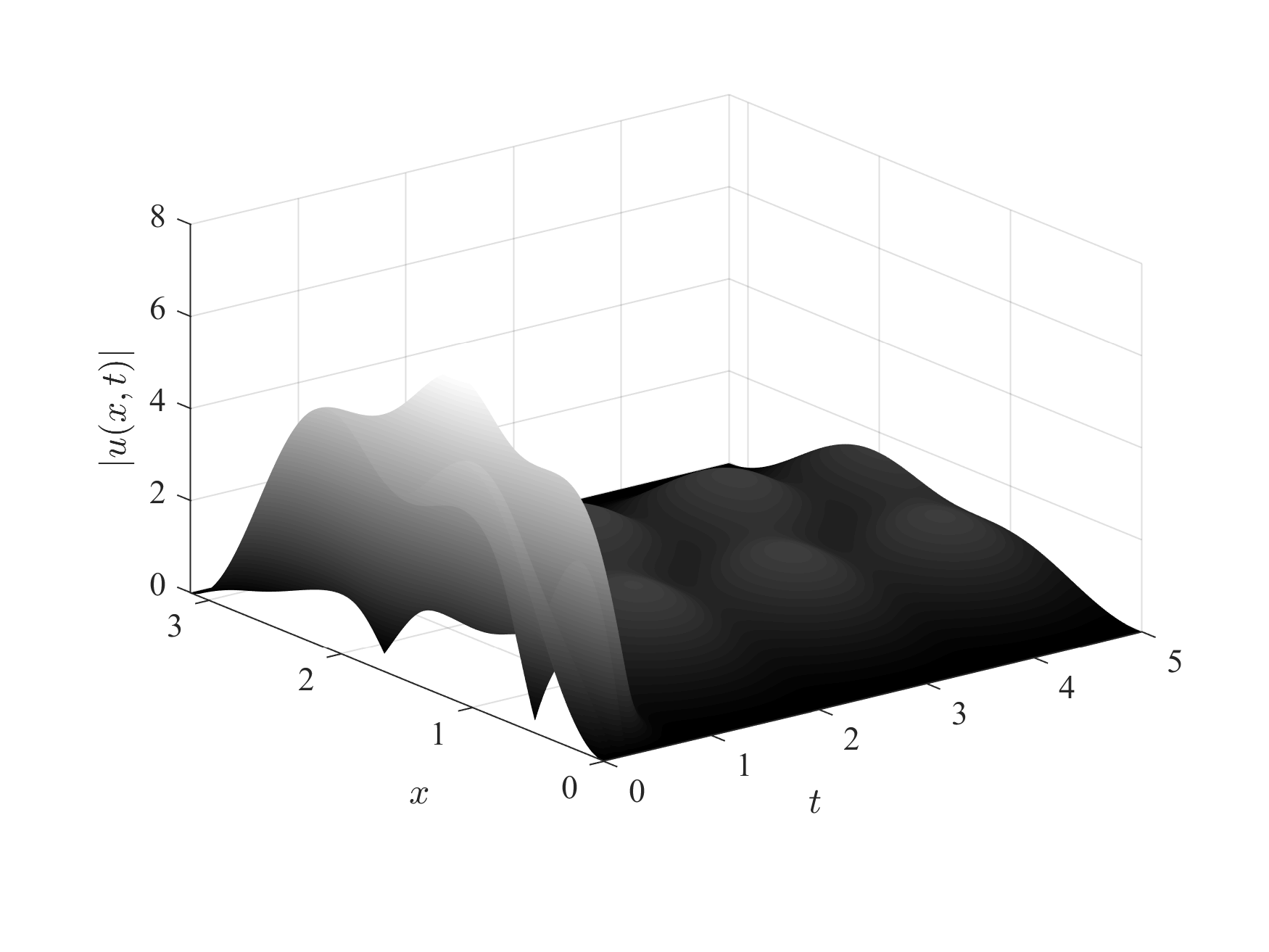}
	\vspace*{-5mm}
	\caption{Uncontrolled solution for nonlinear case, $p \geq 1$}
	\label{fig:plant_2}
\end{figure}
Recall that our aim is actually to gain a rapid decay. For this purpose, we choose the damping coefficient $r = 8$. This choice with the coefficients $\beta = 1$, $\alpha = 2$ and $\delta = 8$ are sufficient to gain exponential decay since these values fulfill the conditions in Lemma \ref{stab_lem}. Indeed by a detailed calculation on the coefficient $c_{\alpha,\epsilon}$ which comes from $\epsilon$-Young's inequality, one obtains $c_{\alpha,\epsilon} = \frac{\alpha^2}{\epsilon} = \frac{4}{\epsilon}$ where $\epsilon - \beta \leq 0$ or equivalently $\epsilon \leq 1$ must be satisfied in order for \eqref{yeq1} to hold.
So, one can find an $\epsilon > 0$ such that $2r - \delta - c_{\alpha,\epsilon} > 0$ holds true.

See Figure \ref{fig:plant_3} for corresponding numerical results.
\begin{figure}[h]
	\centering
	\begin{subfigure}[b]{0.5\textwidth}
		\includegraphics[width=\textwidth]{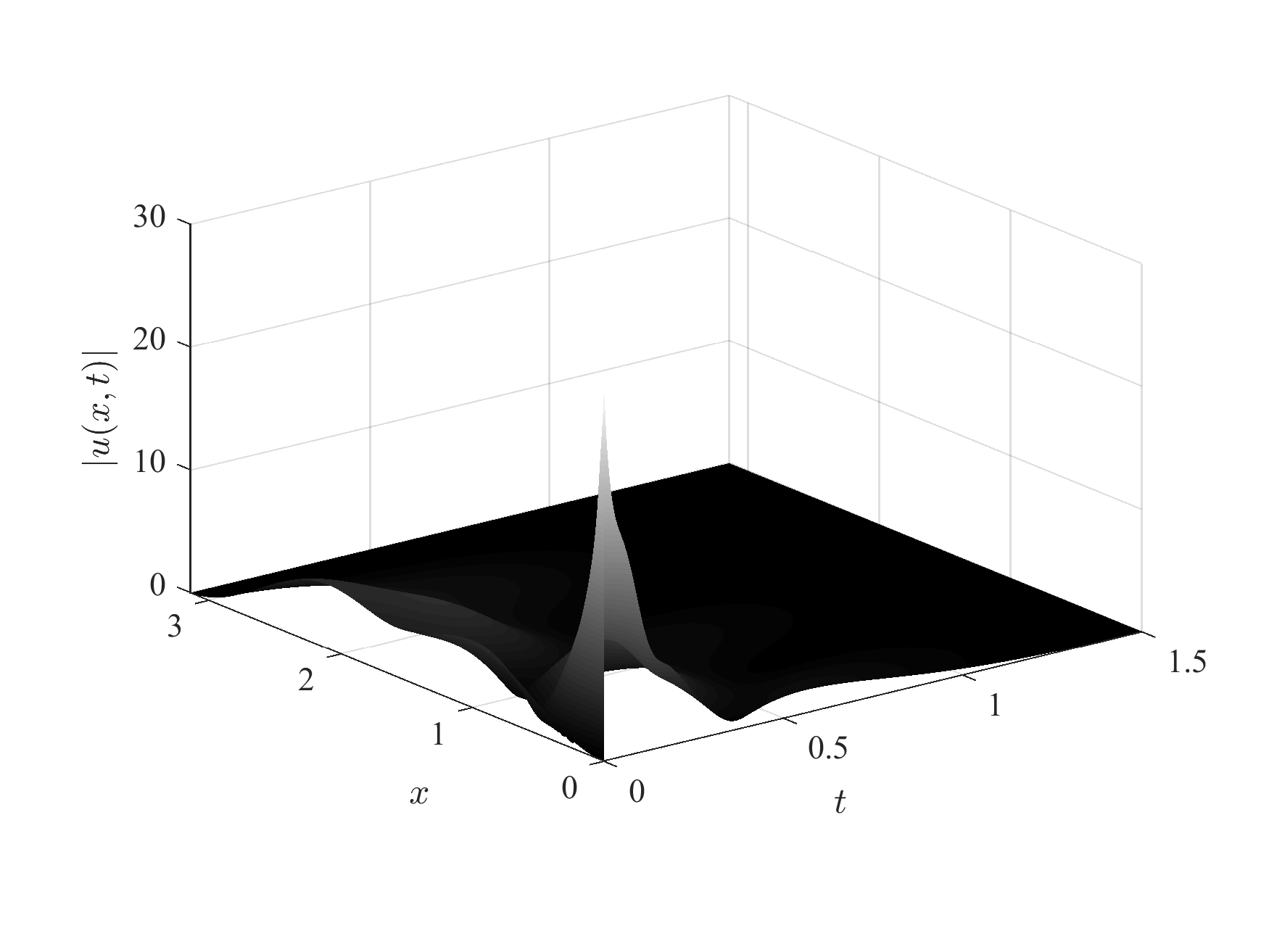}
		\label{fig:plant_3d_3}
		\vspace{-5 mm}
		\caption{3d plot of $|u(x,t)|$.}
	\end{subfigure}
	~ 
	\begin{subfigure}[b]{0.5\textwidth}
		\includegraphics[width=\textwidth]{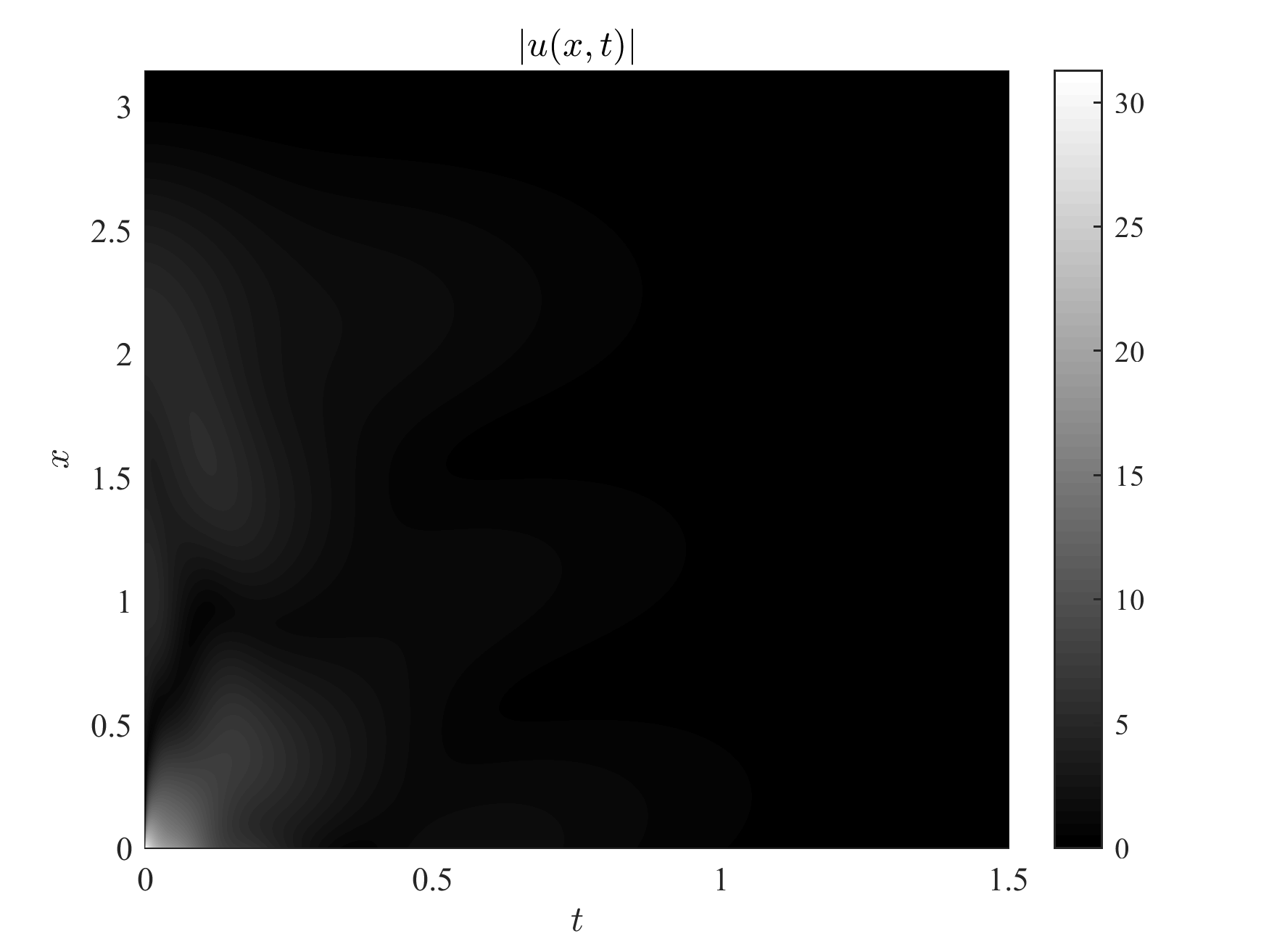}
		\label{fig:plant_contour_3}
		\vspace{-5 mm}
		\caption{Contour plot of $|u(x,t)|$.}
	\end{subfigure}
	\\
	\begin{subfigure}[b]{0.5\textwidth}
		\includegraphics[width=\textwidth]{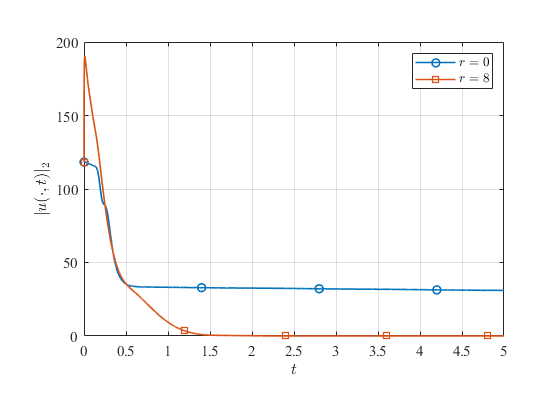}
		\label{fig:L2norm_3}
		\vspace{-5 mm}
		\caption{Comparison of $L^2$ norms of $|u(x,t)|$ in the absence (circle) and presence (square) of control.}
	\end{subfigure}
	\caption{Numerical results of the controlled nonlinear model for $p \geq 1$.}
	\label{fig:plant_3}
\end{figure}

\paragraph{\textbf{Experiment 3:} Nonlinear model, $0 < p < 1$.} We now consider the case $0 < p < 1$ for the nonlinear problem. We take the same parameters and initial datum as in the previous example, except that now we take $p = \frac{1}{2}$ and the damping coefficient $r = 5$.
\begin{eqnarray}
\begin{cases}
iu_t + i u_{xxx} + 2u_{xx} +8i u_x + u \sqrt{|u|} = 0, \quad x\in (0,\pi), t\in (0,T),\\
u(0,t)=g_0(t), u(\pi,t)=0, u_x(\pi,t)=0,\\
u(x,0)= u_0(x).
\end{cases}
\end{eqnarray}
We have again $c_{\alpha,\epsilon} = \frac{\alpha^2}{\epsilon} = \frac{4}{\epsilon}$ where $\epsilon - 3\beta \leq 0$ must hold in order for \eqref{yeq2} to be satisfied. This implies we can find an $\epsilon > 0$ such that $2r - \delta - c_{\alpha,\epsilon} > 0$ holds true. Hence, this selection of problem parameters is sufficient in order to gain exponential decay.

See Figure \ref{fig:plant_4} for corresponding numerical results.
\begin{figure}[h]
	\centering
	\begin{subfigure}[b]{0.5\textwidth}
		\includegraphics[width=\textwidth]{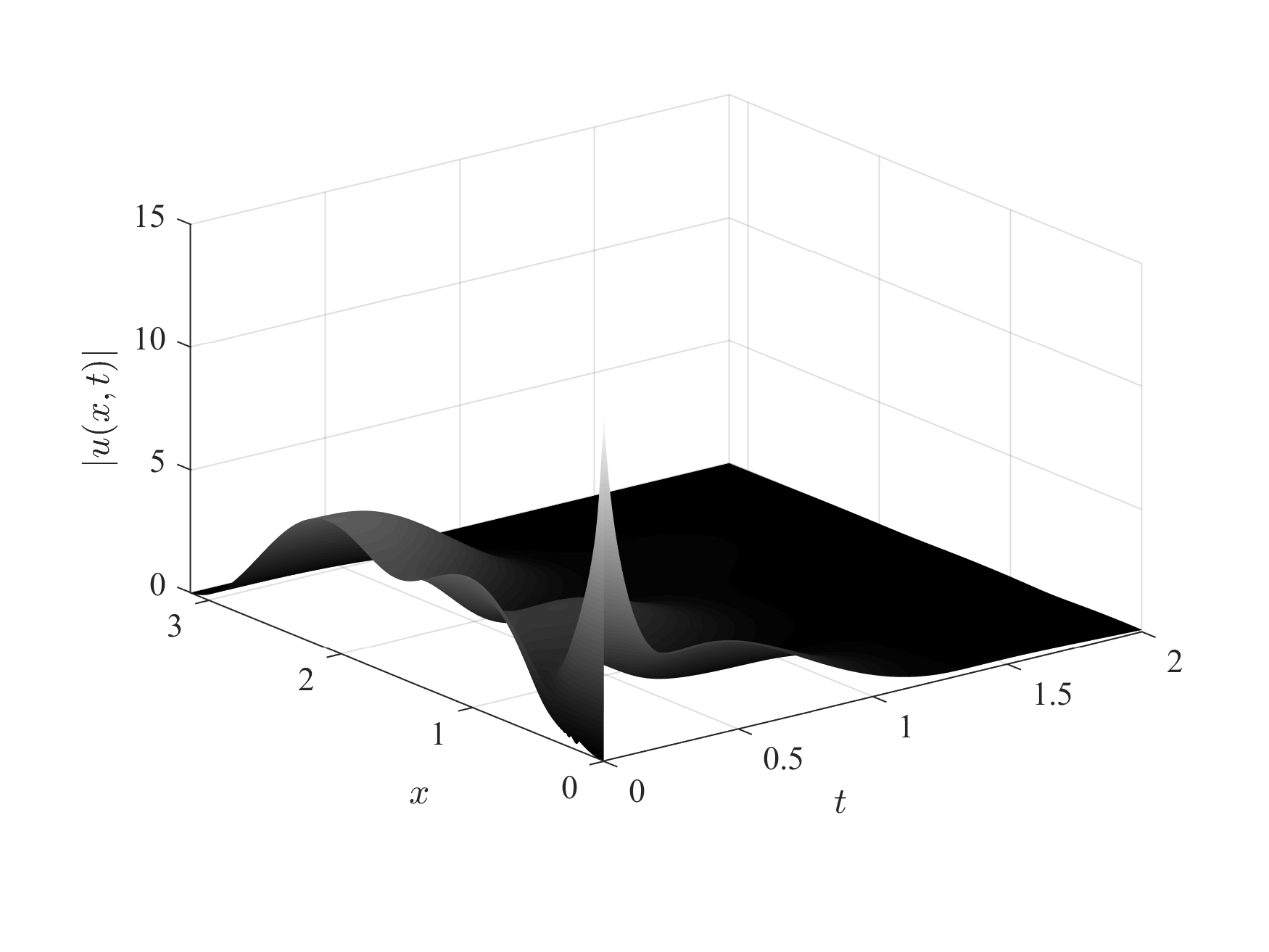}
		\vspace{-5 mm}
		\caption{3d plot of $|u(x,t)|$.}
		\label{fig:plant_3d_4}
	\end{subfigure}
	~ 
	\begin{subfigure}[b]{0.5\textwidth}
		\includegraphics[width=\textwidth]{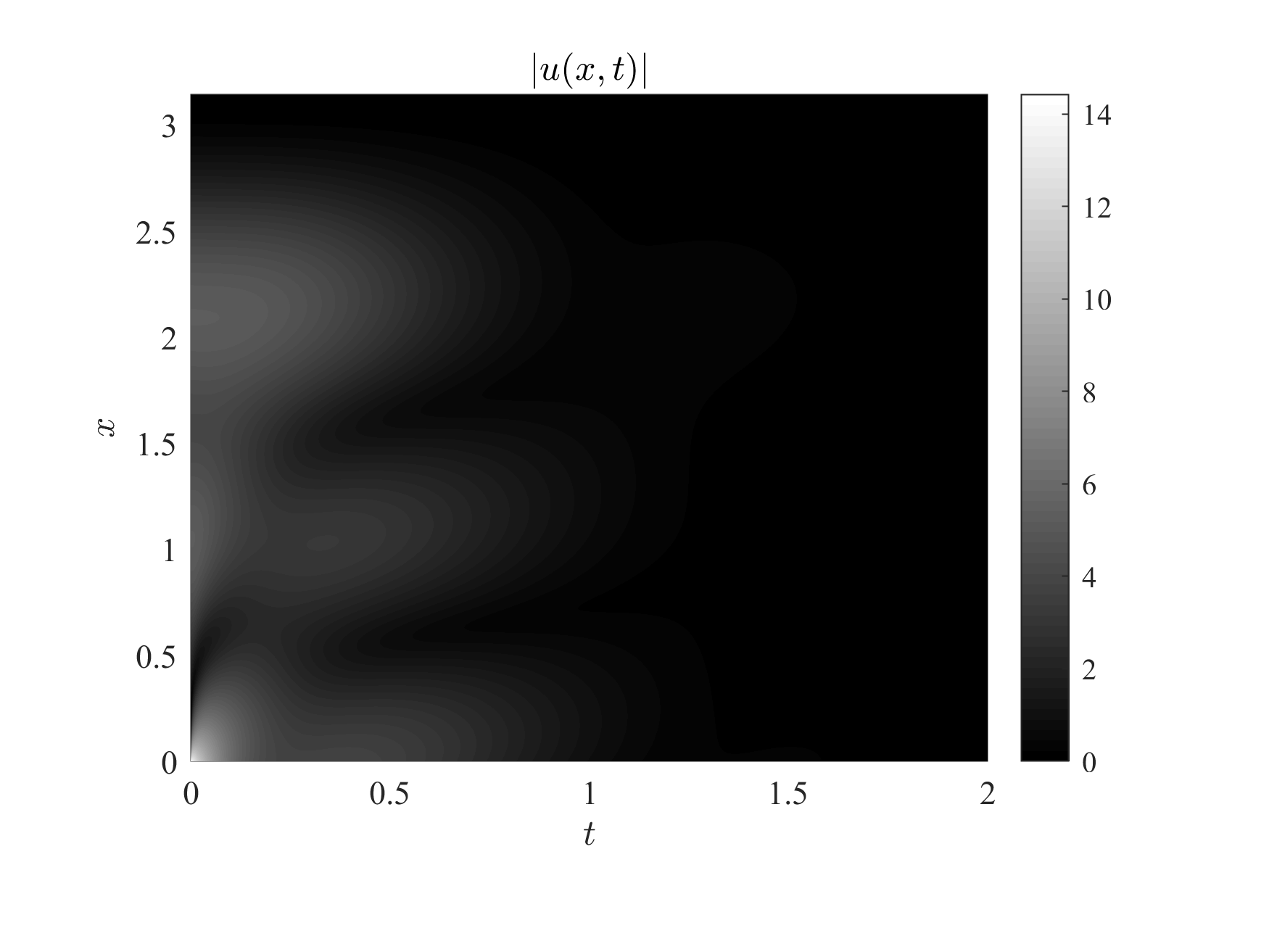}
		\vspace{-5 mm}
		\caption{Contour plot of $|u(x,t)|$.}
		\label{fig:plant_contour_4}
	\end{subfigure}
	\\
	\begin{subfigure}[b]{0.5\textwidth}
		\includegraphics[width=\textwidth]{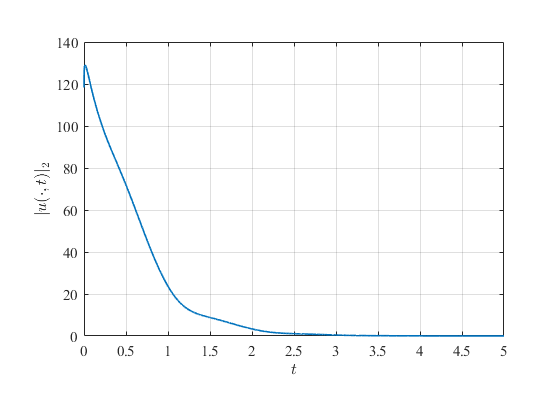}
		\vspace{-5 mm}
		\caption{$L^2$ norm of $|u(x,t)|$ in the presence of control.}
		\label{fig:L2norm_4}
	\end{subfigure}
	\caption{Numerical results of the controlled nonlinear model for $0 < p < 1$.}
	\label{fig:plant_4}
\end{figure}

\paragraph{\textbf{Experiment 4:} Linear observer} Let us consider the following model:
\begin{eqnarray}
\begin{cases}
iu_t + 0.5i u_{xxx} + u_{xx} +0.5i u_x = 0, \quad x\in (0,\pi), t\in (0,T),\\
u(0,t)=g_0(t), u(\pi,t)=0, u_x(\pi,t)=0,\\
u(x,0)= u_0(x).
\end{cases}
\end{eqnarray}
We initialize the error system by setting
\begin{eqnarray}
\tilde{u}_0(x) = e^{-20\left(x - \frac{\pi}{2}\right)^2}e^{5i \left(x - \frac{\pi}{2}\right)},
\end{eqnarray}
and take the damping coefficient as $r = 0.2$. See Figure \ref{fig:plant_o_1} for the numerical results associated with the original plant. At the right hand side of Figure \ref{fig:plant_o_1}, we give the contour plot of the solution up to $t = 0.5$ in order to get a better intuition on the early behaviour of the evolution of the solution.
\begin{figure}[h]
	\centering
	\begin{subfigure}[b]{0.5\textwidth}
		\includegraphics[width=\textwidth]{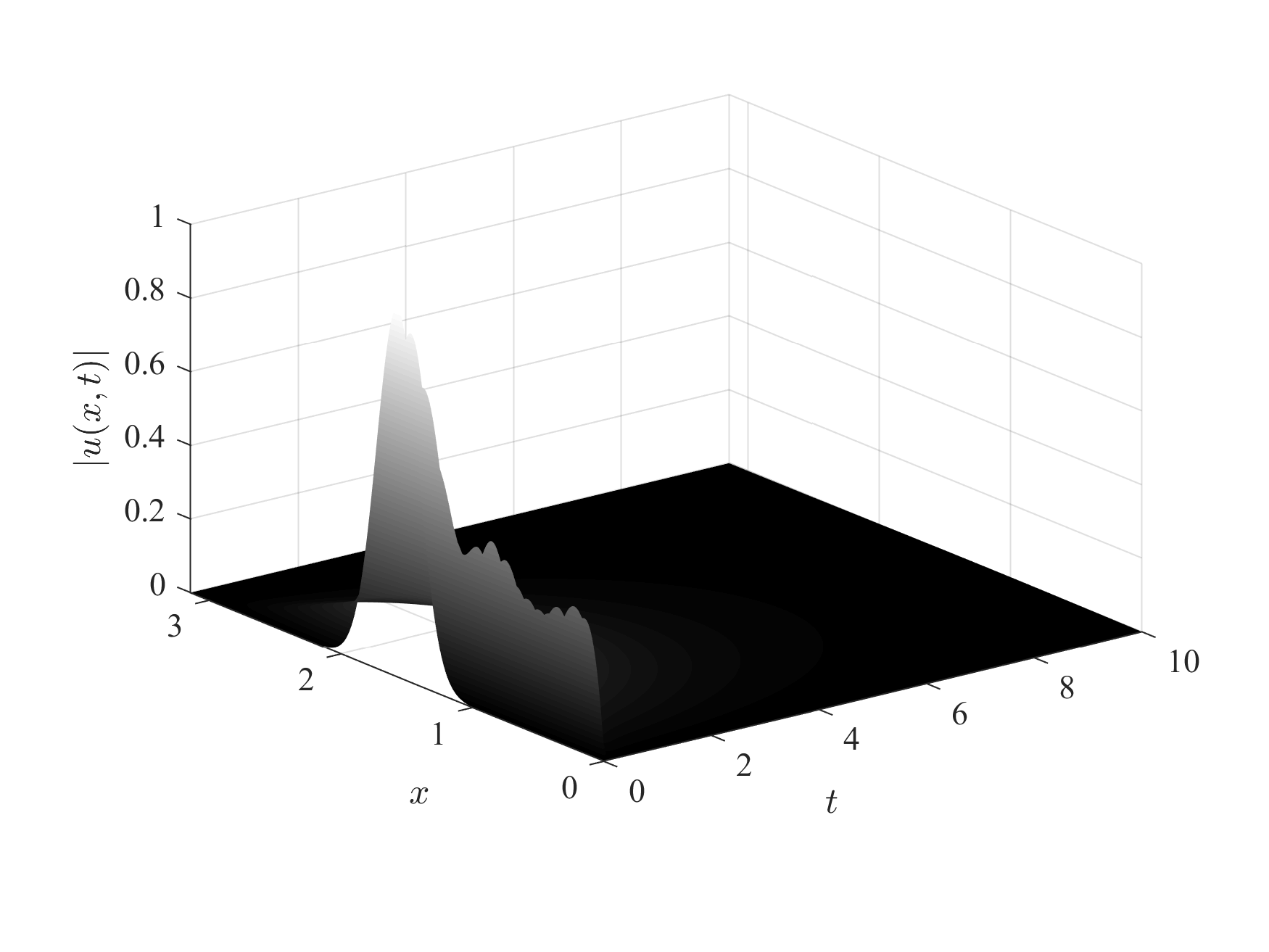}
		\label{fig:plant_o_3d_1}
	\end{subfigure}
	~ 
	\begin{subfigure}[b]{0.5\textwidth}
		\includegraphics[width=\textwidth]{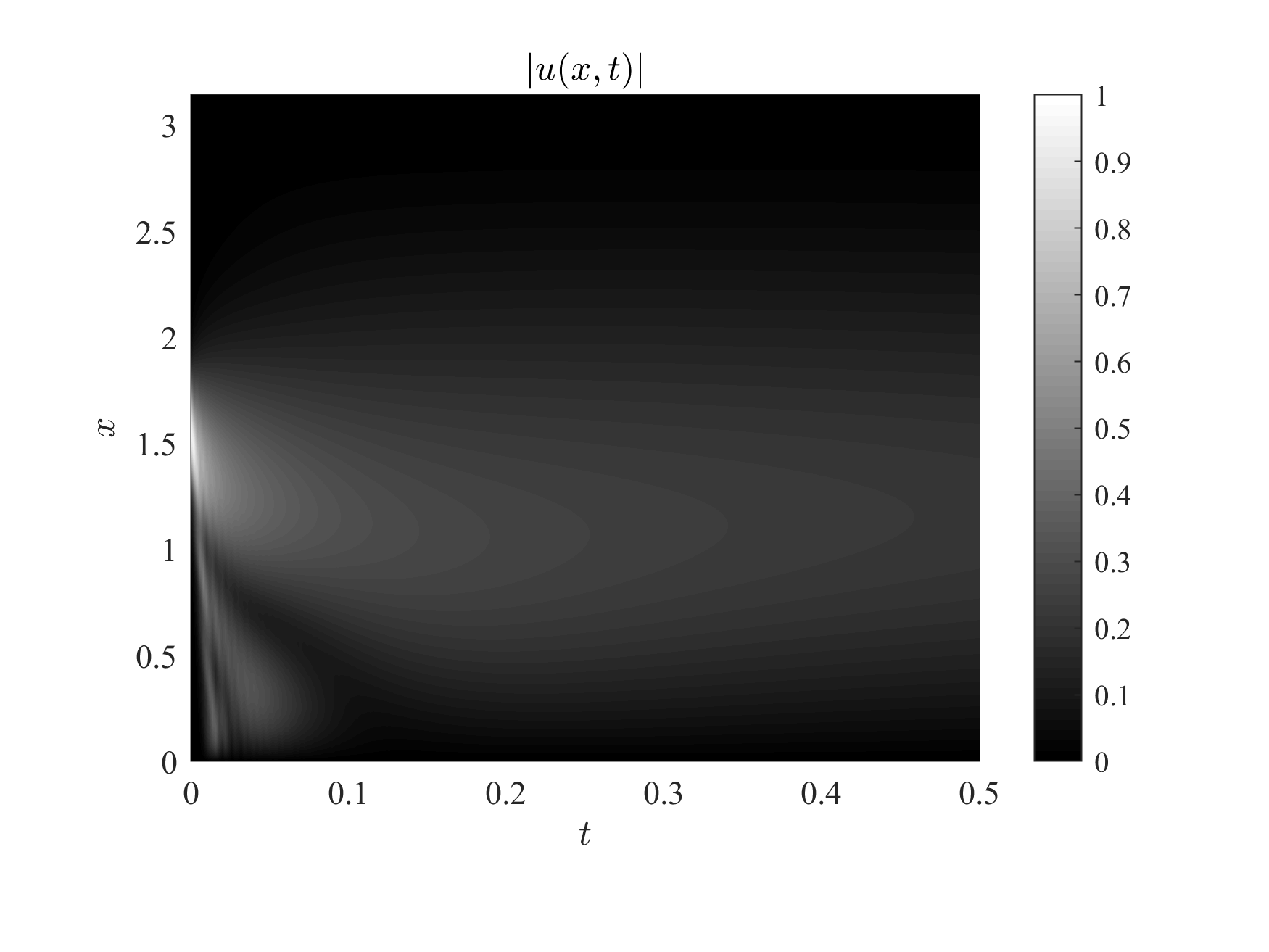}
		\label{fig:plant_o_contour_1}
	\end{subfigure}
	\vspace*{-15mm}
	\caption{Numerical results for the observer case. Left: Time evolution of $|u(x,t)|$. Right: Contour plot of $|u(x,t)|$.}
	\label{fig:plant_o_1}
\end{figure}
In Figure \ref{fig:L2N_plant_obs_error}, we show how graphs of $L^2$ norms of the original plant, observer model and error model behave in time.

\begin{figure}[h]
	\centering
	\begin{subfigure}[b]{0.5\textwidth}
		\includegraphics[width=\textwidth]{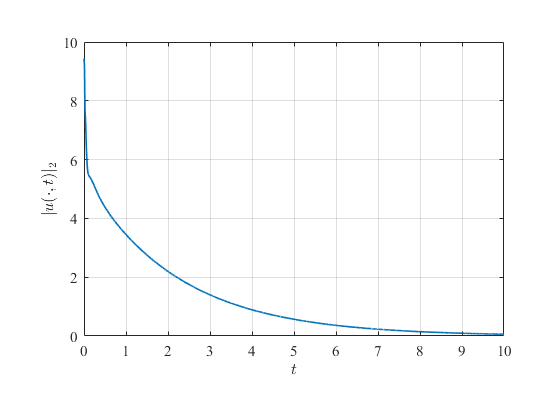}
		\vspace{-5 mm}
		\caption{Original plant.}
		\label{fig:L2_p}
	\end{subfigure}
	~ 
	\begin{subfigure}[b]{0.5\textwidth}
		\includegraphics[width=\textwidth]{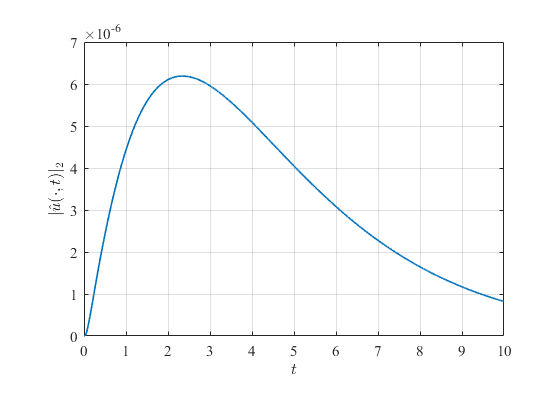}
		\caption{Observer model.}
		\label{fig:L2_o}
	\end{subfigure}
	\\
	\begin{subfigure}[b]{0.5\textwidth}
		\includegraphics[width=\textwidth]{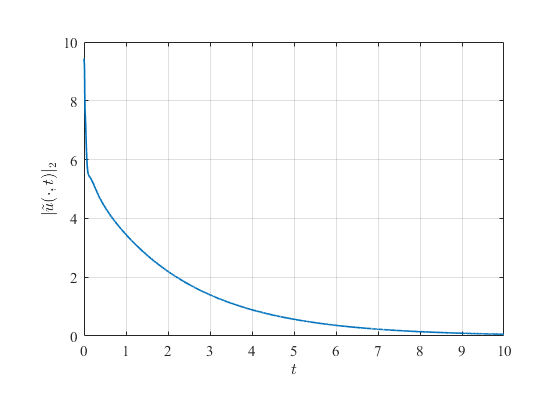}
		\caption{Error model.}
		\label{fig:L2_e}
	\end{subfigure}
	\caption{Time evolution of $L^2$ norms.}
	\label{fig:L2N_plant_obs_error}
\end{figure}

\section{Other boundary conditions}\label{obcsec} In this section, our goal is to extend the results of Section 2, Section 3, and Section 4 to another set of boundary conditions given by \eqref{bdryconB}, where the right hand Dirichlet boundary condition is replaced by a second order boundary condition.
\subsection{Controller design} We consider the linearized model \eqref{heatlin_obc}.  In order to stabilize \eqref{heatlin_obc} we follow the same strategy, that is, we use a backstepping transformation given by \eqref{backstepping_obc}, where $\ell$ satisfies a suitable pde model given in \eqref{kernela_obc} and $w$ is the solution of a pde model which is known to be exponentially stable with the given prescribed decay rate. The following is a suitable target model:
\begin{eqnarray}\label{target_obc}
\begin{cases}
iw_t + i\beta w_{xxx} +\alpha w_{xx} +i\delta w_x + ir w= 0, x\in (0,L), t\in (0,T),\\
w(0,t)=0, w_x(L,t)=0, w_{xx}(L,t)=0,\\
w(x,0)=w_0(x)\doteq u_0-\int_x^L\ell(x,y)u_0(y)dy.
\end{cases}
\end{eqnarray}
Multiplying the main equation above with $\overline{w}$, integrating over $(0,L)$ and taking the imaginary parts, we get
$|w(\cdot,t)|_2\lesssim |w_0|_2e^{-rt}, t\ge 0.$
Recall that the backstepping transformation is bounded invertible and therefore, we will have the same decay rate as for the solution of the original plant once we prove the existence of a smooth kernel $\ell$ satisfying \eqref{kernela_obc}.
\subsubsection{Backstepping kernel}
We find that \eqref{heatlin_obc} implies \eqref{target_obc} if $\ell(x,y)$ satisfies \eqref{kernela_obc} (see Appendix \ref{appkernel2} for details).  We have the following lemma.
\begin{lem}\label{lemkernel_obc} The boundary value problem \eqref{kernela_obc} has a smooth solution.
\end{lem}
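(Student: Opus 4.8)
The plan is to mirror, step by step, the construction of Lemma~\ref{lemkernel}. First I would change variables exactly as there: set $G(s,t)\doteq\ell(x,y)$ with $s=y-x$, $t=L-y$, so that $\Delta_{x,y}$ is carried onto $\Delta_{s,t}$. Since the identities \eqref{Gk1}--\eqref{Gk3} do not depend on the particular kernel, the governing equation in \eqref{kernela_obc} becomes the \emph{same} third order equation $G_{sst}=DG$ of \eqref{ktoG}. The two conditions on the diagonal translate cleanly: $\ell(x,x)=0\Leftrightarrow G(0,t)=0$, and, using the induced identity $\ell_y(x,x)=-\ell_x(x,x)$, the condition $\ell_x(x,x)=-\tfrac{\tilde r(L-x)}{3}$ becomes $G_s(0,t)=\tfrac{\tilde r}{3}\,t$. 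The condition at $y=L$ turns into the second order relation
\[
\bigl(G_{ss}-2G_{st}+G_{tt}+i\tilde\alpha(G_s-G_t)+\tilde\delta G\bigr)(s,0)=0
\]
along the edge $t=0$.

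Next I would convert this into an integral equation. Integrating $G_{sst}=DG$ twice in the first variable and once in $t$ and using the two diagonal conditions gives a representation
\[
G(s,t)=g(s)+\tfrac{\tilde r}{3}\,st+(PG)(s,t),\qquad g(s)\doteq G(s,0),
\]
with $P$ the triple integral operator of \eqref{aP}. The only genuinely new feature relative to \eqref{kernela} is that the top-edge trace $g$ is not prescribed — there one had $G(s,0)=0$ and nothing further was needed — and must instead be pinned down by the Robin relation. Feeding the representation above, together with the trace identities for $G_t(\cdot,0)$ and $G_{tt}(\cdot,0)$ that it generates, into that relation produces a companion linear equation determining $g$ from $G$; I would solve it as a well posed ODE/Volterra problem, obtaining $g=\mathcal L G$ for a bounded operator $\mathcal L$. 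Substituting back closes everything into a single fixed point equation $G=G^1+\mathcal P G$ on a space of smooth functions on $\overline{\Delta_{s,t}}$, with $G^1=\tfrac{\tilde r}{3}\,st$ and $\mathcal P$ structurally the sum of $P$ and the extra operator coming from $\mathcal L$.

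Then I would run the method of successive approximations exactly as in Lemma~\ref{lemkernel}: set $G^0\equiv0$, $G^{n+1}=G^1+\mathcal P G^n$ and analyze the differences $H^n$. The operator $P$ decomposes into the six elementary operators $P_{i,j}$, which act on monomials by \eqref{aPi}--\eqref{acm2}, each raising the $(s,t)$-degree by a bounded amount while gaining a coefficient factor of size $O((\mathrm{degree})^{-1})$ or $O((\mathrm{degree})^{-2})$; the operator coming from $\mathcal L$, being an iterated integral against a bounded kernel on $[0,L]$, has the same degree-raising, coefficient-gaining behaviour. An induction of the type of \eqref{aclaim} then bounds the relevant coefficients of $H^n$ by $M^n/((n+1)!\,(\sigma+1)!)$, which yields $|H^n|_\infty\lesssim (CM)^n L^{3n+2}/(n+1)!$ and, for every $a,b\ge0$, a bound for $|\partial_s^a\partial_t^bH^n|_\infty$ of the same form up to polynomial-in-$n$ prefactors. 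Hence $H^n$ and all its partial derivatives are absolutely summable, $G^n$ converges in $C^\infty(\overline{\Delta_{s,t}})$, and its limit solves the integral equation and, reversing the equivalences, the problem \eqref{kernela_obc}; setting $\ell(x,y)=G(y-x,L-y)$ concludes the proof.

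The step I expect to be the main obstacle is the treatment of the Robin condition at $y=L$. Unlike the Dirichlet condition $k(x,L)=0$ exploited in \eqref{kernela}, it does not by itself supply the trace needed to collapse the PDE to a scalar integral equation, so one must (i) close the system by solving a separate well posed problem for $g$, and, more delicately, (ii) verify that the resulting operator $\mathcal L$ is compatible with the monomial/series bookkeeping of Lemma~\ref{lemkernel} — that it still raises degrees by a bounded amount and contributes the same $O((\mathrm{degree})^{-1})$ coefficient gain — so that the factorial decay of $|H^n|_\infty$, and with it convergence of the scheme \emph{independently of the domain length $L$}, is preserved. One also has to keep track of the corner compatibility among $G(0,0),G_s(0,0),g(0),g'(0)$ that legitimizes each of the integrations; this is where the bulk of the technical bookkeeping will sit.
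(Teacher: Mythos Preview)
Your overall plan follows the right template, but you have missed a key simplification that the paper exploits and that entirely sidesteps the ``main obstacle'' you flag. The paper does \emph{not} introduce an auxiliary unknown $g(s)=G(s,0)$ and then solve a coupled ODE/Volterra problem for it. Instead, it uses the Robin condition directly inside the integration: integrating $G_{sst}=DG$ once in $t$ gives $G_{ss}(s,t)=G_{ss}(s,0)+\int_0^t DG\,d\eta$, and the Robin condition says precisely that $G_{ss}(s,0)=(2G_{st}-G_{tt}-i\tilde\alpha(G_s-G_t)-\tilde\delta G)(s,0)$. That boundary combination is then rewritten, via the fundamental theorem of calculus in $t$, as the same combination evaluated at $(s,t)$ minus its integral from $0$ to $t$. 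After this algebraic manoeuvre one integrates twice in $s$ and obtains a closed fixed-point equation $G=-\tfrac{\tilde r}{3}st+PG$ with \emph{no} undetermined trace. The new $P$ is again a sum of six operators $P_{i,j}$ --- now built from single, double, and triple integrals rather than only triple ones --- but each still sends $s^m t^k$ to a constant multiple of $s^{m+i}t^{k+j}$, so the induction of Lemma~\ref{lemkernel} carries over verbatim.

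The gap in your route lies exactly where you anticipated it. Solving $g''+i\tilde\alpha g'+\tilde\delta g=F$ with $F$ built from traces of $G$ does give a Volterra representation $g(s)=\int_0^s K(s,\sigma)F(\sigma)\,d\sigma$, but the output is \emph{not} a polynomial when $F$ is a monomial (the homogeneous solutions are exponentials for generic $\tilde\alpha,\tilde\delta$), so the monomial bookkeeping of \eqref{aPi}--\eqref{aclaim} breaks at the first appearance of $\mathcal L$ in any product $R_{r,n}$. Your assertion that $\mathcal L$ has ``the same degree-raising, coefficient-gaining behaviour'' is therefore not justified as stated; one would need a genuinely different convergence argument (weighted sup-norm estimates that do not rely on the polynomial structure), and the factorial decay you invoke is not automatic from a bounded Volterra kernel alone. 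The paper's trick of absorbing the Robin condition algebraically is precisely what preserves the polynomial structure and lets the original induction go through unchanged.
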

\begin{proof}
	Using the change of variables $s \equiv y - x$, $t \equiv L - y$, we see that $\ell$ is a solution of \eqref{kernela_obc} if $G(s,t)  \equiv \ell(x,y)$ solves
	\begin{eqnarray}\label{kernelGa_obc}
	\begin{cases}
	3G_{sst}-3G_{tts}+G_{ttt}+i\tilde{\alpha}(2G_{ts}-G_{tt})+\tilde{\delta }G_t-\tilde{r} G=0\\
	\left(G_{ss}-2G_{st} + G_{tt} + i\tilde{\alpha}\left(G_s - G_t\right) + \tilde{\delta}G\right)(s,0) = 0 \\
	G(0,t)=0,\\
	G_s(0,t)=-\frac{\tilde{r}t}{3},
	\end{cases}
	\end{eqnarray}
	where $(s,t) \in \Delta_{s,t}$.
	
	In order to find a solution of \eqref{kernelGa_obc}, we convert it into an integral equation.  To this end, we first write
	\begin{equation*}
	G_{sst} = DG \doteq \frac{1}{3} \left(3G_{tts} - G_{ttt} - i\tilde{\alpha} \left(2G_{ts} - G_{tt}\right) -\tilde{\delta} G_t + \tilde{r}G\right)
	\end{equation*} using the main equation. We integrate in $t$ and use the boundary conditions to obtain
	\begin{equation*}
	G_{ss}(s,t) = \left(2G_{st} - G_{tt} - i\tilde{\alpha} \left(G_s - G_t\right) - \tilde{\delta} G\right)(s,0) +\int_0^t [DG](s,\eta) d\eta.
	\end{equation*}
	Now observing that
	\begin{equation*}
	\begin{split}
	&\left(2G_{st} - G_{tt} - i\tilde{\alpha} \left(G_s - G_t\right) - \tilde{\delta} G\right)(s,t)\Bigr|_{t = 0} \\
	=& - \int_0^t \left(2G_{stt}-G_{ttt} -i\tilde{\alpha} (G_{st} -G_{tt}) - \tilde{\delta} G_t\right)(s,\eta)d\eta \\
	&+ \left(2G_{st} - G_{tt} - i\tilde{\alpha} \left(G_s - G_t\right) - \tilde{\delta} G\right)(s,t),
	\end{split}
	\end{equation*}
	and combining this with the previous expression, we obtain
	\begin{equation*}
	\begin{split}
	G_{ss}(s,t) =& \left(2G_{st} - G_{tt} - i\tilde{\alpha} \left(G_s - G_t\right) - \tilde{\delta} G\right)(s,t) \\ &+ \frac{1}{3} \int_0^t \left(-3G_{tts} + 2G_{ttt} + i\tilde{\alpha} \left(G_{ts} - 2G_{tt}\right) + 2\tilde{\delta} G_t + \tilde{r}G\right)(s,\eta) d\eta.
	\end{split}
	\end{equation*}
	Next we integrate the last expression with respect to $s$ and use $G(0,t) = 0$ to obtain
	\begin{equation*}
	\begin{split}
	G_s(s,t) =& G_s(0,t) +  \int_0^s \left(2G_{st} - G_{tt} - i\tilde{\alpha} \left(G_s - G_t\right) - \tilde{\delta} G\right)(\xi,t)d\xi \\
	&+ \frac{1}{3} \int_0^s \int_0^t \left(-3G_{tts} + 2G_{ttt} + i\tilde{\alpha} \left(G_{ts} - 2G_{tt}\right) + 2\tilde{\delta} G_t + \tilde{r}G\right)(\xi,\eta) d\eta d\xi \\
	=& - \frac{\tilde{r}t}{3} +  \left(2G_t - i\tilde{\alpha} G\right)(s,t) + \int_0^s \left(-G_{tt} + i\tilde{\alpha} G_t -\tilde{\delta} G\right)(\xi,t)d\xi \\
	&+ \frac{1}{3} \int_0^s \int_0^t \left(-3G_{tts} + 2G_{ttt} + i\tilde{\alpha} \left(G_{ts} - 2G_{tt}\right) + 2\tilde{\delta} G_t + \tilde{r}G\right)(\xi,\eta) d\eta d\xi.
	\end{split}
	\end{equation*}
	Finally integrating with respect to $s$  and using $G(0,t) = 0$ we obtain that the corresponding integral equation for the pde model \eqref{kernelGa_obc} is
	\begin{equation*}
	\begin{split}
	G(s,t) =&-\frac{\tilde{r}}{3}st + \int_0^s \left(2G_t - i\tilde{\alpha}G\right)(\omega,t)d\omega \\
	&+ \int_0^s\int_0^\omega \left(-G_{tt} + i\tilde{\alpha}G_t - \tilde{\delta}G\right)(\xi,t)d\xi d\omega \\
	&+ \frac{1}{3}\int_0^s\int_0^\omega\int_0^t\left(-3G_{tts}+2G_{ttt} \right. \\
	&\left.+i\tilde{\alpha}(G_{ts}-2G_{tt})+2\tilde{\delta}G_t+\tilde{r}G\right)(\xi,\eta)d\eta d\xi d\omega
	\end{split}
	\end{equation*}
	Hence, finding a smooth solution to the boundary value problem \eqref{kernela_obc} reduces to proving that the above integral equation has a smooth solution. The proof of the latter claim is similar to the proof of Lemma \ref{lemkernel}. Indeed if we define the operators
	\begin{equation*}
	\begin{split}
	P_{2,-2}f &= \frac{2}{3}\int_0^t\int_0^s\int_0^\omega f_{ttt}(\xi,\eta) d\xi d\omega d\eta- \int_0^s\int_0^\omega f_{tt}(\xi,\eta) d\xi d\omega, \\
	P_{1,-1}f &=-\int_0^t\int_0^s\int_0^\omega f_{tts}(\xi,\eta) d\xi d\omega d\eta+2\int_0^s f_t(\xi,\eta) d\xi,\\
	P_{2,-1}f &=-\frac{2i\tilde{\alpha}}{3}\int_0^t\int_0^s\int_0^\omega f_{tt}(\xi,\eta) d\xi d\omega d\eta+i\tilde{\alpha}\int_0^s\int_0^\omega f_{t}(\xi,\eta) d\xi d\omega, \\
	P_{1,0}f &= \frac{i\tilde{\alpha}}{3}\int_0^t\int_0^s\int_0^\omega f_{ts}(\xi,\eta) d\xi d\omega d\eta-i\tilde{\alpha}\int_0^s f(\xi,\eta) d\xi, \\
	P_{2,0}f &= \frac{2\tilde{\delta}}{3}\int_0^t\int_0^s\int_0^\omega f_{t}(\xi,\eta) d\xi d\omega d\eta-\tilde{\delta}\int_0^s\int_0^\omega f(\xi,\eta) d\xi d\omega,\\
	P_{2,1}f&= \frac{\tilde{r}}{3}\int_0^t\int_0^s\int_0^\omega f(\xi,\eta) d\xi d\omega d\eta,
	\end{split}
	\end{equation*}
	then the equation \eqref{GPG} is still satisfied where $G$ is replaced by the solution of the current integral equation and $P= P_{2,-2}+P_{1,-1}+P_{2,-1}+P_{1,0}+P_{2,0}+P_{2,1}.$ Moreover, equalities \eqref{aPi} still hold true up to a constant factor. So existence of the smooth solution of the current integral equation follows from the same arguments as in the proof of Lemma \ref{lemkernel}.
\end{proof}

\subsubsection{Wellposedness}
Introducing the notation $\tilde{w}(x,t)\doteq e^{rt}w(x,t)$, we first investigate the wellposedness of the following model:
\begin{eqnarray}\label{targettilde_obc}
\begin{cases}
i\tilde{w}_t + i\beta \tilde{w}_{xxx} +\alpha \tilde{w}_{xx} +i\delta \tilde{w}_x= 0, x\in (0,L), t\in (0,T),\\
\tilde{w}(0,t)=0, \tilde{w}_x(L,t)=0, \tilde{w}_{xx}(L,t)=0,\\
\tilde{w}(x,0)=\tilde{w}_0(x)\doteq w_0(x).
\end{cases}
\end{eqnarray}
To this end, let us introduce the operator $A$ given by
$
A\varphi := -\beta \varphi^{\prime\prime\prime} + i\alpha \varphi^{\prime\prime} - \delta \varphi^\prime
$
with domain
$
D(A) = \{\varphi \in H^3(0,L): \varphi(0) = \varphi^\prime(L) = \varphi^{\prime\prime}(L) = 0\}.
$
\begin{lem} \label{A_InfGen}
	$A$ generates a strongly continuous semigroup of contractions on \\ $L^2(0,L)$.
\end{lem}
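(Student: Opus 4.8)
The plan is to verify the hypotheses of the Lumer--Phillips theorem. The main structural observation is that $A$ is a first-order perturbation of the skew-adjoint operator $A_0\varphi := -\beta\varphi'''$ with the same domain, so the core of the argument is a dissipativity estimate followed by a range (maximality) argument. First I would show that $A$ is dissipative on $L^2(0,L)$: for $\varphi\in D(A)$, integrating by parts using the boundary conditions $\varphi(0)=\varphi'(L)=\varphi''(L)=0$, one computes
\begin{equation*}
\operatorname{Re}\langle A\varphi,\varphi\rangle = -\frac{\beta}{2}\,|\varphi'(0)|^2 - \frac{\delta}{2}\,|\varphi(L)|^2 \le 0,
\end{equation*}
where the $i\alpha\varphi''$ term contributes purely imaginary parts and therefore drops out of the real part, and the sign requires the standing assumption $\delta\ge 0$, $\beta>0$. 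The same computation applied to the adjoint expression $A^*\psi = \beta\psi''' + i\alpha\psi'' + \delta\psi'$ on its natural domain shows $A^*$ is dissipative as well, which by the standard corollary to Lumer--Phillips upgrades generation of a contraction semigroup once $A$ is shown to be closed and densely defined.

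Next I would establish maximal dissipativity, i.e.\ that $\lambda I - A$ is surjective for some (equivalently all) $\lambda>0$; it suffices to solve $\varphi - A\varphi = f$ for $f\in L^2(0,L)$, that is the linear third-order ODE $\beta\varphi''' - i\alpha\varphi'' + \delta\varphi' + \varphi = f$ subject to $\varphi(0)=\varphi'(L)=\varphi''(L)=0$. This is a two-point boundary value problem for a linear ODE with constant coefficients and a nonhomogeneous $L^2$ right-hand side; existence and uniqueness of a solution $\varphi\in H^3(0,L)$ follows from the fact that the corresponding homogeneous problem has only the trivial solution — and the latter is immediate from the dissipativity identity above, since a homogeneous solution $\varphi$ satisfies $\operatorname{Re}\langle\varphi - A\varphi,\varphi\rangle = |\varphi|_2^2 + (\text{nonneg. boundary terms}) = 0$, forcing $\varphi\equiv 0$. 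Invertibility of the $3\times 3$ boundary matrix attached to the fundamental system then gives the solution operator, and elliptic-type regularity for the ODE (bootstrapping $\varphi''' = \beta^{-1}(f + i\alpha\varphi'' - \delta\varphi' - \varphi)\in L^2$) places $\varphi\in D(A)$. Density of $D(A)$ in $L^2(0,L)$ is clear since $D(A)$ contains $C_c^\infty(0,L)$, and closedness of $A$ follows either from $A$ being the inverse of a bounded operator on a closed subspace or directly from the graph-norm estimate $|\varphi|_{H^3}\lesssim |A\varphi|_2 + |\varphi|_2$ obtained by the same bootstrap.

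I expect the only mildly delicate point to be bookkeeping the boundary terms in the integration by parts so that exactly the dissipative combination $-\tfrac{\beta}{2}|\varphi'(0)|^2 - \tfrac{\delta}{2}|\varphi(L)|^2$ survives and nothing with an indefinite sign remains — in particular one must check that all contributions involving $\varphi''(L)$, $\varphi'(L)$, $\varphi(L)$ except the last either vanish by the boundary conditions or cancel between the $\varphi'''$ and lower-order terms, and that the $\alpha$-term is genuinely absorbed into the imaginary part. Everything else is routine Lumer--Phillips machinery. With dissipativity of $A$ and $A^*$ and density of the domain in hand, the Lumer--Phillips theorem yields that $A$ generates a $C_0$-semigroup of contractions on $L^2(0,L)$, which is the assertion of the lemma.
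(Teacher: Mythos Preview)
Your proposal is correct and follows essentially the same approach as the paper: dissipativity of $A$ via the identity $\operatorname{Re}\langle A\varphi,\varphi\rangle = -\tfrac{\beta}{2}|\varphi'(0)|^2 - \tfrac{\delta}{2}|\varphi(L)|^2$, dissipativity of $A^*$, density and closedness of $A$, and then the standard Lumer--Phillips corollary. Two minor remarks: the correct adjoint expression is $A^*\psi = \beta\psi''' - i\alpha\psi'' + \delta\psi'$ (your sign on the $\alpha$-term is off, though this is immaterial for dissipativity since that term contributes only to the imaginary part), and your separate surjectivity argument for $\lambda I - A$ is redundant once $A^*$ is shown dissipative --- the paper omits it.
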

\begin{proof}
	$A$ is densely defined and closed. It is clear that $D(A)$ is dense in $L^2(0,L)$. To show closedness, let $A\varphi_n \to v$ in $L^2(0,L)$ with $\varphi_n \to \varphi$ in $L^2(0,L)$, $\varphi_n \in D(A)$. Then, $\varphi_n$ and $A\varphi_n$ are bounded in $L^2(0,L)$. From Gagliardo-Nirenberg's inequality (Lemma \ref{gag2}), we can bound the first and second order derivatives in terms of $L^2$ norms of $\varphi_n$ and $\varphi_n'''$:
	\begin{align} \label{closedgag1}
	|\varphi_n^{\prime}|_{2}&\le c|\varphi_n^{\prime\prime\prime}|_{2}^\frac{1}{3}|\varphi_n|_{2}^{\frac{2}{3}}+c|\varphi_n|_2, \\
	\label{closedgag2}
	|\varphi_n^{\prime\prime}|_{2}&\le c|\varphi_n^{\prime\prime\prime}|_{2}^\frac{2}{3}|\varphi_n|_{2}^{\frac{1}{3}}+c|\varphi_n|_2.
	\end{align}
	Using triangle's inequality, the assumptions that $\beta,\delta>0$, and $\epsilon-$Young's inequality, we can write
	$$\beta |\varphi_n^{\prime\prime\prime}|_2 - |\alpha| |\varphi_n^{\prime\prime}|_2 - \delta |\varphi_n^\prime|_2\le |A\varphi_n|_2\le c<\infty,$$ which implies
	\begin{equation*}
	\begin{split}|\varphi_n^{\prime\prime\prime}|_2 \le& c+|\tilde{\alpha}| |\varphi_n^{\prime\prime}|_2 + \tilde{\delta} |\varphi_n^\prime|_2\le c+c|\varphi_n^{\prime\prime\prime}|_{2}^\frac{2}{3}|\varphi_n|_{2}^{\frac{1}{3}}+c|\varphi_n|_2 + c|\varphi_n^{\prime\prime\prime}|_{2}^\frac{1}{3}\\
	\le& c+c_\epsilon|\varphi_n|_2+\epsilon|\varphi_n^{\prime\prime\prime}|_{2}\le  c_\epsilon+\epsilon|\varphi_n^{\prime\prime\prime}|_{2}.
	\end{split}
	\end{equation*}	
	It follows from the above inequality that $\varphi_n^{\prime\prime\prime}$ is bounded in $L^2(0,L)$. This fact together with the boundedness of $\varphi_n$ in $L^2(0,L)$ and the Gagliardo-Nirenberg inequalities \eqref{closedgag1}-\eqref{closedgag2} imply that $\varphi_n^{(j)}$ is bounded in $L^2(0,L)$ for each $j=1,2,3$.  Then, we can pass to a subsequence of $\varphi_n$ (still denoted same) such that $\varphi_n^{(j)}$ weakly converges to some $w_j\in L^2(0,L)$ for each $j=1,2,3$. We claim that (in the weak sense) $w_j=\varphi^{(j)}$, $j=1,2,3$. Indeed for any $\psi\in C_c^\infty(0,L)$, we have
	$(\varphi_n^{(j)},\psi)_2=(-1)^{j}(\varphi_n,\psi^{(j)})_2\rightarrow (-1)^{j}(\varphi,\psi^{(j)})_2.$ On the other hand, $(\varphi_n^{(j)},\psi)_2\rightarrow (w_j,\psi)_2.$ Therefore, $(-1)^{j}(\varphi,\psi^{(j)})_2=(w_j,\psi)_2,$ which proves the claim. We just showed that in particular $\varphi\in H^3(0,L)$. It is well known that $H^{3}(0,L)$ continuously embeds in $C^2([0,L])$. This means (a subsequence of) $\varphi_n$ converges in $C^2([0,L])$ to $\varphi$ and therefore the boundary conditions $\varphi(0) = \varphi^\prime(L) = \varphi^{\prime\prime}(L) = 0$ are satisfied.  Thus, $\varphi\in D(A)$. Finally, recall that $A\varphi_n$ weakly converges to $-\beta w_3 + i\alpha w_2 + \delta w_1=A\varphi.$ Since we also have $A\varphi_n\rightarrow v$ (in particular weakly), from uniqueness of weak limit, we conclude that $A\varphi=v$.
	
	Next we show that $A$ is dissipative, that is for $\varphi \in D(A)$ we show $\text{Re} (A\varphi,\varphi) \leq 0$. Using integration by parts, we have
	$
	\text{Re}\int_0^L \varphi^\prime \bar{\varphi} dx = \frac{|\varphi(L)|^2}{2}
	$
	and
	$
	\text{Re}\int_0^L \varphi^{\prime\prime} \bar{\varphi} dx  = - |\varphi^\prime|_2^2
	$
	and
	$
	\text{Re}\int_0^L  \varphi^{\prime\prime\prime}\bar{\varphi} dx  = \frac{|\varphi^\prime(0)|^2}{2}
	$
	which yields
	$$
	\text{Re} (A\varphi,\varphi) =\text{Re} \left(- \frac{\delta |\varphi(L)|^2}{2} - i\alpha|\varphi^\prime|^2_2 - \frac{\beta  |\varphi^\prime(0)|^2}{2}\right) \leq 0.
	$$
	As a last step, we observe that $A^*$ given by
	$
	A^*\varphi := \beta \varphi^{\prime\prime\prime} - i\alpha \varphi^{\prime\prime} + \delta \varphi^\prime
	$
	with domain
	$
	D(A^*) = \{\varphi \in H^3: \varphi(0) = \varphi^\prime(0) = \varphi(L) = \varphi^\prime(L) = \varphi^{\prime\prime}(L)=0\}
	$
	is the adjoint operator of $A$. Similar calculations yield
	\begin{equation*}
	\text{Re} (\varphi,A^*\varphi) = \text{Re} \int_0^L \varphi\overline{\left(\beta \varphi^{\prime\prime\prime} - i\alpha \varphi^{\prime\prime} + \delta \varphi^\prime\right)} dx = \text{Re} \left(i\alpha |\varphi^\prime|^2_2\right) = 0,
	\end{equation*}
	so $A^*$ is dissipative. As a conclusion of \cite[Cor 4.4, pg. 15]{Pazy}, $A$ is the infinitesimal generator of a $C_0$-semigroup of contractions on $L^2(0,L)$.
\end{proof}

\begin{prop}\label{wtildeprop_obc}
	Let $T>0$, $\tilde{w}_0\in L^2(0,L)$.  Then \eqref{targettilde_obc} has a unique mild solution $\tilde{w}\in X_{T_0}^0$ which satisfies
	\begin{equation}\label{linearestimatetar_obc}
	|\tilde{w}|_{L^\infty(0,T;L^2(0,L))}+ |\tilde{w}|_{L^2(0,T;H^1(0,L))}\le  C(1+\sqrt{T})|\tilde{w}_0|_2
	\end{equation} and the trace regularity $\tilde{w}_x(0,\cdot)\in L^2(0,T)$.
\end{prop}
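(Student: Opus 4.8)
The plan is to get the mild solution for free from the semigroup constructed in Lemma \ref{A_InfGen}, and then to bootstrap its regularity by the multiplier method, in direct analogy with Proposition \ref{wtildeprop} (which is \cite{chen2018}'s result for the other set of boundary conditions). First I would rewrite \eqref{targettilde_obc} abstractly as $\tilde w_t=A\tilde w$, $\tilde w(0)=\tilde w_0$, with $A,D(A)$ as in Lemma \ref{A_InfGen}. Since $A$ generates a $C_0$-semigroup of contractions $S(t)$ on $L^2(0,L)$, the function $\tilde w(t)=S(t)\tilde w_0$ is the unique mild solution, lies in $C([0,T];L^2(0,L))$, and satisfies $|\tilde w(\cdot,t)|_2\le|\tilde w_0|_2$ for all $t\ge0$; this already yields existence, uniqueness and the $L^\infty(0,T;L^2)$ part of \eqref{linearestimatetar_obc}.

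Next I would establish two energy identities for data $\tilde w_0\in D(A)$, where the solution is classical and all integrations by parts are legitimate, and then extend to general $\tilde w_0\in L^2(0,L)$ by density. Multiplying the equation in \eqref{targettilde_obc} by $\overline{\tilde w}$, integrating over $(0,L)$ and taking imaginary parts, the conditions $\tilde w(0,t)=\tilde w_x(L,t)=\tilde w_{xx}(L,t)=0$ collapse everything to $\tfrac12\tfrac{d}{dt}|\tilde w(\cdot,t)|_2^2+\tfrac{\beta}{2}|\tilde w_x(0,t)|^2+\tfrac{\delta}{2}|\tilde w(L,t)|^2=0$; integrating in time (using $\beta>0$ and the standing assumption $\delta\ge0$) gives the hidden trace bound $\beta\int_0^T|\tilde w_x(0,t)|^2\,dt\le|\tilde w_0|_2^2$, hence $\tilde w_x(0,\cdot)\in L^2(0,T)$. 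Then I would use the multiplier $(1+x)\overline{\tilde w}$ (the same multiplier exploited throughout the paper): taking imaginary parts, the third-order term produces the coercive quantity $\tfrac{3\beta}{2}|\tilde w_x(\cdot,t)|_2^2$ plus a nonnegative boundary remainder $\tfrac{\beta}{2}|\tilde w_x(0,t)|^2$, the term from $\alpha\tilde w_{xx}$ reduces after integration by parts to $-\alpha\,\text{Im}\int_0^L\tilde w_x\overline{\tilde w}\,dx$, which is absorbed by $\epsilon|\tilde w_x|_2^2+c_\epsilon|\tilde w|_2^2$ ($\epsilon$-Young, together with Lemma \ref{gag2} if needed), and the $\delta\tilde w_x$ term contributes a discardable nonnegative boundary term $\tfrac{\delta(1+L)}{2}|\tilde w(L,t)|^2$ and a harmless $|\tilde w|_2^2$ term. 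Choosing $\epsilon$ small, inserting $|\tilde w(\cdot,t)|_2\le|\tilde w_0|_2$ from the first step, and integrating over $(0,T)$ gives $\int_0^T|\tilde w_x(\cdot,t)|_2^2\,dt\lesssim(1+T)|\tilde w_0|_2^2$, i.e. $\tilde w\in L^2(0,T;H^1(0,L))$ with the bound in \eqref{linearestimatetar_obc}.

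For arbitrary $\tilde w_0\in L^2(0,L)$ I would approximate by $\tilde w_{0,n}\in D(A)$ with $\tilde w_{0,n}\to\tilde w_0$ in $L^2(0,L)$; the solutions $\tilde w_n\to\tilde w$ in $C([0,T];L^2(0,L))$ by contractivity of $S(t)$, the bounds of the previous step are uniform in $n$, and passing to a weakly convergent subsequence and using weak lower semicontinuity of the $L^2(0,T;H^1)$- and $L^2(0,T)$-trace norms transfers all estimates to the limit, so that $\tilde w\in X_T^0$, $\tilde w_x(0,\cdot)\in L^2(0,T)$, and \eqref{linearestimatetar_obc} holds. The only step requiring genuine care — and hence the main obstacle — is the $(1+x)\overline{\tilde w}$ multiplier computation with the new right-endpoint conditions: one must verify that replacing $\tilde w(L,t)=0$ by $\tilde w_{xx}(L,t)=0$ still forces every boundary contribution at $x=L$ in that identity to either vanish or carry a favorable sign, so that the coercive term $\tfrac{3\beta}{2}|\tilde w_x|_2^2$ genuinely survives; the remaining arguments parallel \cite{chen2018} and the proof of Proposition \ref{wtildeprop}.
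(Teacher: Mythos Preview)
Your proposal is correct and follows essentially the same route as the paper: existence and uniqueness of the mild solution via the semigroup of Lemma \ref{A_InfGen}, the $\bar{\tilde w}$ multiplier for the $L^\infty_tL^2_x$ bound and the hidden trace $\tilde w_x(0,\cdot)\in L^2(0,T)$, and then the weighted multiplier ($x\bar{\tilde w}$ in the paper, $(1+x)\bar{\tilde w}$ in your version---an immaterial difference) to extract the coercive $\tfrac{3\beta}{2}|\tilde w_x|_2^2$ term and close the $L^2_tH^1_x$ estimate with $\epsilon$-Young. The paper carries out the multiplier computations ``only formally'' whereas you add the density argument from $D(A)$; otherwise the two arguments coincide.
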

\begin{proof} Again we show this only formally.
	By Lemma \ref{A_InfGen}, \eqref{targettilde_obc} admits a unique mild solution. To see that \eqref{linearestimatetar_obc} holds, we multiply \eqref{targettilde_obc} by the conjugate of $\tilde{w}$, integrate over $(0,L) \times (0,T)$ and take the imaginary parts to obtain
	\begin{multline*}
	\text{Im}\int_0^T\int_0^Li\tilde{w}_t\overline{\tilde{w}}dxdt +  \text{Im}\int_0^T\int_0^Li\beta \tilde{w}_{xxx}\overline{\tilde{w}}dxdt \\+\text{Im}\int_0^T\int_0^L\alpha  \tilde{w}_{xx}\overline{\tilde{w}}dxdt + \text{Im}\int_0^T\int_0^Li\delta \tilde{w}_x\overline{\tilde{w}}dxdt
	= 0.
	\end{multline*}
	After some calculations, we find
	\begin{equation}\label{linearestimate1_obc}
	|\tilde{w}|_{L^\infty(0,T;L^2(0,L))} + |\tilde{w}_x(0,t)|_2 + |\tilde{w}(L,t)|_2 \leq C |\tilde{w}_0|_2.
	\end{equation}
	Now, multiplying \eqref{targettilde_obc} by $x\overline{\tilde{w}}$, integrating over $(0,L) \times (0,T)$ and taking the imaginary parts, we get
	\begin{multline*}
	\int_0^L x |\tilde{w}(x,T)|^2 dx + 3\beta \int_0^T \int_0^L |\tilde{w}_x|^2 dx dt + L \delta \int_0^T |\tilde{w}(L,t)|^2 dt \\
	= \int_0^L x|\tilde{w}_0|^2dx + 2\alpha\text{Im}\int_0^T \int_0^L \overline{\tilde{w}}\tilde{w}_x dx dt + \delta\int_0^T \int_0^L |\tilde{w}|^2 dx dt.
	\end{multline*}
	Applying $\epsilon$-Young's inequality to the second term at the right hand side, we have
	\begin{multline*}
	\int_0^L x |\tilde{w}(x,T)|^2 dx + 3\beta \int_0^T \int_0^L |\tilde{w}_x|^2 dx dt + L \delta \int_0^T |\tilde{w}(L,t)|^2 dt \\
	\leq \int_0^L x|\tilde{w}_0|^2dx + \epsilon\int_0^T \int_0^L |\tilde{w}_x|^2 dx dt + (c_\epsilon + \delta)\int_0^T \int_0^L |\tilde{w}|^2 dx dt.
	\end{multline*}
	We infer that
	\begin{equation*}
	(3\beta- \epsilon) \int_0^T \int_0^L |\tilde{w}_x|^2 dx dt
	\leq \int_0^L x|\tilde{w}_0|^2dx + T(\delta + c_\epsilon)|\tilde{w}|_{L^\infty(0,T;L^2(0,L))}^2.
	\end{equation*}
	Now taking $\epsilon$ small enough and using \eqref{linearestimate1_obc}, we obtain the desired result.
\end{proof}
The wellposedness result for \eqref{heatlin_obc} follows from $w(x,t) = e^{-rt} \tilde{w}(x,t)$, the bounded invertibility of the backstepping transformation, and the same arguments as in the proof of Proposition \ref{wplin}. Thus, we have
\begin{prop}\label{obcprop1}
	Let $T>0$, $u_0\in L^2(0,L)$, and $g_0$ be as in \eqref{controller2}, where $\ell$ is the backstepping kernel constructed in Lemma \ref{lemkernel_obc}.  Then \eqref{heatlin_obc} has a unique mild solution $u\in X_{T_0}^0$ which satisfies
	\begin{equation}\label{linearestimate_obc}
	|u|_{L^\infty(0,T;L^2(0,L))}+ |u|_{L^2(0,T;H^1(0,L))}\le  c_\ell(1+\sqrt{T})|u_0|_2
	\end{equation} and the trace regularity $u_x(0,\cdot)\in L^2(0,T)$.
\end{prop}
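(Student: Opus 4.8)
The plan is to transfer the wellposedness of the weakly damped target system \eqref{target_obc} --- which itself follows from Proposition \ref{wtildeprop_obc} after undoing the exponential weight --- to the original plant \eqref{heatlin_obc} through the bounded invertibility of the backstepping transformation \eqref{backstepping_obc}, following closely the structure of the proof of Proposition \ref{wplin}.

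First I would fix $u_0\in L^2(0,L)$ and put $w_0\doteq(I-\Upsilon_\ell)u_0\in L^2(0,L)$, where $\ell$ is the smooth kernel produced by Lemma \ref{lemkernel_obc}. Proposition \ref{wtildeprop_obc} applied with initial datum $w_0$ then gives a unique mild solution $\tilde w\in X_T^0$ of \eqref{targettilde_obc} with $\tilde w_x(0,\cdot)\in L^2(0,T)$ and
\begin{equation*}
|\tilde w|_{L^\infty(0,T;L^2(0,L))}+|\tilde w|_{L^2(0,T;H^1(0,L))}\le C(1+\sqrt T)\,|w_0|_2 .
\end{equation*}
Setting $w(x,t)\doteq e^{-rt}\tilde w(x,t)$ yields the mild solution of the weakly damped target system \eqref{target_obc}; since $e^{-rt}\le 1$ for $t\ge 0$ we have $|w(\cdot,t)|_2\le|\tilde w(\cdot,t)|_2$ and $|w_x(\cdot,t)|_2\le|\tilde w_x(\cdot,t)|_2$, so $w$ satisfies the same estimates (in terms of $|w_0|_2$) and $w_x(0,\cdot)\in L^2(0,T)$.

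Next I would define $u\doteq(I-\Upsilon_\ell)^{-1}w$. By Lemma \ref{inverselem} the operator $I-\Upsilon_\ell$ is boundedly invertible on $H^l(0,L)$ for all $l\ge 0$, so $u\in X_T^0$, and --- using the same observations as in Proposition \ref{wplin} ---
\begin{equation*}
|u(\cdot,t)|_2\le|(I-\Upsilon_\ell)^{-1}|_{2\to2}\,|w(\cdot,t)|_2,\qquad |u_x(\cdot,t)|_2\le|(I-\Upsilon_{\ell_x})^{-1}|_{2\to2}\,|w_x(\cdot,t)|_2,
\end{equation*}
together with $|w_0|_2\le|I-\Upsilon_\ell|_{2\to2}\,|u_0|_2$. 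Combining these with the target bound above gives \eqref{linearestimate_obc} with a constant of the claimed form $c_\ell(1+\sqrt T)$. That this $u$ actually solves \eqref{heatlin_obc} with the feedback \eqref{controllerell} is exactly the computation of Appendix \ref{appkernel2} read in reverse: because $\ell$ solves the kernel boundary value problem \eqref{kernela_obc}, applying $(I-\Upsilon_\ell)^{-1}$ to a solution of \eqref{target_obc} produces a solution of \eqref{heatlin_obc}, and the condition $w(0,t)=0$ forces $u(0,t)=\int_0^L\ell(0,y)u(y,t)\,dy$. Uniqueness follows immediately: two solutions $u_1,u_2$ of \eqref{heatlin_obc} give $w_i\doteq(I-\Upsilon_\ell)u_i$ solving \eqref{target_obc}, which has a unique mild solution, so $w_1=w_2$ and hence $u_1=u_2$.

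For the trace regularity, differentiating \eqref{backstepping_obc} in $x$ and using $\ell(x,x)=0$ from \eqref{kernela_obc} gives $w_x(0,t)=u_x(0,t)-\int_0^L\ell_x(0,y)u(y,t)\,dy$, so that
\begin{equation*}
|u_x(0,\cdot)|_{L^2(0,T)}\le|w_x(0,\cdot)|_{L^2(0,T)}+\sqrt T\,|\ell_x(0,\cdot)|_2\,|u|_{L^2(0,T;L^2(0,L))}<\infty,
\end{equation*}
exactly as in \eqref{tracereg}. I do not expect a genuine obstacle here: the only nontrivial input is the existence of a smooth kernel $\ell$, i.e. Lemma \ref{lemkernel_obc}, which is already available, and the remaining work is bookkeeping --- chiefly keeping track that every operator norm that enters is finite because $\ell$ (and hence $\ell_x$, $\Upsilon_\ell$, $\Upsilon_{\ell_x}$) is smooth on the bounded triangle $\Delta_{x,y}$, and that the boundary conditions on the two sides of the transformation match up correctly.
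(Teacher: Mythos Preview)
Your proposal is correct and follows essentially the same route as the paper: the paper's own proof simply says that the result follows from $w(x,t)=e^{-rt}\tilde w(x,t)$, the bounded invertibility of the backstepping transformation, and the arguments of Proposition~\ref{wplin}, which is precisely what you have written out in detail. Your handling of the trace regularity via $\ell(x,x)=0$ and the analogue of \eqref{tracereg} is exactly the intended bookkeeping.
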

The local wellposedness of the nonlinear plant follows as in Section \ref{nonlinwp} by using a fixed point argument and the bounded invertibility of $I - \Upsilon_{\ell}$. Therefore, we have
\begin{prop}\label{p1}Let $T>0$, $p\in (0,4]$, $u_0\in L^2(0,L)$ (small if $p=4$), and $g_0$ be as in \eqref{controller2}, where $\ell$ is the backstepping kernel constructed in Lemma \ref{lemkernel_obc}.  Then \eqref{heat_obc} admits a unique solution $u\in X_{T_0}^0$ for some $T_0\in (0,T]$.
\end{prop}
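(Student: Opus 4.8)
The plan is to run the argument of Section \ref{nonlinwp} almost verbatim, replacing the kernel $k$ by $\ell$ and the target system \eqref{target} by \eqref{target_obc}. Applying the backstepping transformation \eqref{backstepping_obc} and invoking the properties of $\ell$ from Lemma \ref{lemkernel_obc}, one finds, exactly as in the derivation leading to \eqref{rhswtoplamnon}, that $u$ solves \eqref{heat_obc} with the feedback \eqref{controllerell} if and only if $w=(I-\Upsilon_\ell)u$ solves
\begin{equation*}
iw_t+i\beta w_{xxx}+\alpha w_{xx}+i\delta w_x+irw=-(I-\Upsilon_\ell)\bigl[|w+v|^p(w+v)\bigr],
\end{equation*}
together with the homogeneous boundary conditions $w(0,t)=w_x(L,t)=w_{xx}(L,t)=0$ and $w(\cdot,0)=w_0\doteq(I-\Upsilon_\ell)u_0$, where $v=\Phi w$ and $\Phi=(I-\Upsilon_\ell)^{-1}-I$ is the operator from Lemma \ref{inverselem}. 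Since $I-\Upsilon_\ell$ is boundedly invertible on the $L^2$-based Sobolev scale (Lemma \ref{inverselem}), it is enough to produce a unique $w\in X_{T_0}^0$ solving this target problem and then set $u=(I-\Upsilon_\ell)^{-1}w$; note $|w_0|_2\lesssim|u_0|_2$, so the smallness hypothesis when $p=4$ is inherited.

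First I would establish the nonhomogeneous linear estimate for the new boundary conditions, i.e. the analogue of Proposition \ref{wtildepropf}: for $\tilde w_0\in L^2(0,L)$ and $\tilde f\in L^1(0,T;L^2(0,L))$ the problem
\begin{equation*}
i\tilde w_t+i\beta\tilde w_{xxx}+\alpha\tilde w_{xx}+i\delta\tilde w_x=\tilde f,\quad \tilde w(0,t)=\tilde w_x(L,t)=\tilde w_{xx}(L,t)=0,\quad \tilde w(\cdot,0)=\tilde w_0,
\end{equation*}
has a unique mild solution $\tilde w\in X_T^0$ with $|\tilde w|_{X_T^0}\le C(1+\sqrt T)\bigl(|\tilde w_0|_2+|\tilde f|_{L^1(0,T;L^2)}\bigr)$ and $\tilde w_x(0,\cdot)\in L^2(0,T)$. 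The $\tilde f\equiv0$ case is Proposition \ref{wtildeprop_obc}; the forced case follows from Duhamel's formula with the contraction semigroup generated by $A$ (Lemma \ref{A_InfGen}), the $1$- and $x$-weighted multiplier identities used in the proof of Proposition \ref{wtildeprop_obc} producing the additional $|\tilde f|_{L^1(0,T;L^2)}$ term just as in Proposition \ref{wtildepropf}. Undoing the substitution $\tilde w=e^{rt}w$ then yields the same estimate for \eqref{target_obc} with an extra factor $e^{rT}$ in front of the source term.

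With this ingredient, I would set $Y_{T_0}=X_{T_0}^0$, define $\Gamma z(t)=S(t)w_0+\int_0^tS(t-s)Fz(s)\,ds$ with $Fz=-(I-\Upsilon_\ell)\bigl[|z+\Phi z|^p(z+\Phi z)\bigr]$ and $S(t)$ the semigroup of the linear part of \eqref{target_obc}, and apply Banach's fixed point theorem on a ball $B_R^{T_0}$. The self-mapping and contraction computations are word for word those of \eqref{maptoitself}--\eqref{contrac001}: using the Gagliardo--Nirenberg inequality (Lemma \ref{gag}) and the mapping bounds \eqref{Phiwest1}--\eqref{Phiwest3} for $\Phi$ one gets $\int_0^{T_0}|Fz|_2\,dt\le c_\ell(T_0^{1-p/4}+T_0)|z|_{Y_{T_0}}^{p+1}$, together with a parallel difference estimate, splitting into $0<p\le1$ (Hölder) and $1<p\le4$ (Gagliardo--Nirenberg). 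Choosing $R=4C|w_0|_2$ with $T_0$ small when $0<p<4$, and $R$ and $T_0$ small together with $|w_0|_2$ small when $p=4$, makes $\Gamma$ a strict contraction on $B_R^{T_0}$; a further shrinking of $T_0$ upgrades uniqueness to all of $Y_{T_0}$ exactly as in \eqref{contrac001}. Transferring back, $u=(I-\Upsilon_\ell)^{-1}w\in X_{T_0}^0$, and the hidden trace regularity $u_x(0,\cdot)\in L^2(0,T_0)$ is read off from $w_x(0,t)=u_x(0,t)-\int_0^L\ell_x(0,y)u(y,t)\,dy$ as in \eqref{tracereg}.

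The one genuinely new point — and the step I expect to require the most care — is the nonhomogeneous linear estimate of the second paragraph: one has to check that the $L^2(0,T;H^1)$ Kato-type smoothing and the lateral trace $\tilde w_x(0,\cdot)\in L^2(0,T)$ persist in the presence of an $L^1$-in-time source under the boundary conditions \eqref{bdryconB}. This is handled by the same weighted multiplier identities used for Proposition \ref{wtildeprop_obc}; once it is in place, the remainder is an immediate transcription of Section \ref{nonlinwp}.
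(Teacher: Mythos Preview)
Your proposal is correct and follows essentially the same route as the paper, which simply states that the local wellposedness ``follows as in Section~\ref{nonlinwp} by using a fixed point argument and the bounded invertibility of $I-\Upsilon_\ell$.'' You have spelled out in more detail the one point the paper leaves implicit, namely the nonhomogeneous linear estimate under the boundary conditions~\eqref{bdryconB}, and your suggested proof of it (Duhamel plus the same multipliers as in Proposition~\ref{wtildeprop_obc}) is exactly what is needed.
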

\subsubsection{Stability}
The proof of the following propositions are very similar to that of Proposition \ref{stablin} and Proposition \ref{stabnonlin}, respectively, and is therefore omitted.
\begin{prop}\label{obcprop2}
	Let $r> 0$, $\ell$ be the smooth backstepping kernel which solves \eqref{kernela_obc} and $u$ be the solution of \eqref{heatlin_obc} where the feedback controller acting at the left Dirichlet boundary condition is chosen as in \eqref{controller2}. Then, $\left|u(\cdot,t)\right|_2 \le c_\ell\left|u_0\right|_2e^{-rt},t\ge 0,$ where $c_\ell \geq 0$ depending only on $\ell$ given by $c_\ell=\left|(I-\Upsilon_{\ell})^{-1}\right|_{2\rightarrow 2}\left(1+\left|\ell\right|_{L^2(\Delta_{x,y})}\right).$
\end{prop}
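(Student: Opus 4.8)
The plan is to follow the template of Proposition \ref{stablin} essentially verbatim, transporting the decay of the target system \eqref{target_obc} back to the plant \eqref{heatlin_obc} through the bounded invertibility of the backstepping transformation \eqref{backstepping_obc}. Let $u$ be the mild solution of \eqref{heatlin_obc} provided by Proposition \ref{obcprop1}, and put $w\doteq(I-\Upsilon_{\ell})u$. Since $\ell$ solves the kernel boundary value problem \eqref{kernela_obc} (Lemma \ref{lemkernel_obc}), the computation carried out in Appendix \ref{appkernel2} shows that $w$ is precisely the solution of the weakly damped target system \eqref{target_obc}, with initial datum $w_0=(I-\Upsilon_{\ell})u_0$; in particular, every boundary contribution generated by differentiating \eqref{backstepping_obc} in $x$ and $t$ and integrating by parts is cancelled exactly by the kernel conditions $\ell(x,x)=0$, $\ell_x(x,x)=-\tilde{r}(L-x)/3$, and the Robin-type trace $(\ell_{yy}+i\tilde{\alpha}\ell_y+\tilde{\delta}\ell)(x,L)=0$.

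First I would record the decay of the target. Multiplying the main equation of \eqref{target_obc} by $\overline{w}$, integrating over $(0,L)$, using the homogeneous boundary data $w(0,t)=w_x(L,t)=w_{xx}(L,t)=0$, taking imaginary parts, and recalling $\delta\ge 0$ for the boundary conditions \eqref{bdryconB}, one obtains
\begin{equation*}
\frac{1}{2}\frac{d}{dt}|w(\cdot,t)|_2^2 + r\,|w(\cdot,t)|_2^2 = -\frac{\beta}{2}|w_x(0,t)|^2 - \frac{\delta}{2}|w(L,t)|^2 \le 0,
\end{equation*}
hence $|w(\cdot,t)|_2\le|w_0|_2e^{-rt}$ for all $t\ge 0$. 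As in Section \ref{stabsec}, this identity is first justified for smooth data satisfying the relevant compatibility conditions and then propagated to arbitrary $u_0\in L^2(0,L)$ by a density argument combined with Proposition \ref{obcprop1}.

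Next I would bound the two transformation constants exactly as in the proof of Proposition \ref{stablin}. A Cauchy--Schwarz estimate for the operator $\Upsilon_{\ell}$, identical to \eqref{L2backstepping}--\eqref{L2backstepping3init}, gives
\begin{equation*}
|w_0|_2\le\bigl(1+|\ell|_{L^2(\Delta_{x,y})}\bigr)|u_0|_2,
\end{equation*}
while Lemma \ref{inverselem} yields $u(\cdot,t)=(I-\Upsilon_{\ell})^{-1}w(\cdot,t)$ and therefore $|u(\cdot,t)|_2\le|(I-\Upsilon_{\ell})^{-1}|_{2\rightarrow 2}\,|w(\cdot,t)|_2$. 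Chaining the three displays yields
\begin{equation*}
|u(\cdot,t)|_2\le|(I-\Upsilon_{\ell})^{-1}|_{2\rightarrow 2}\bigl(1+|\ell|_{L^2(\Delta_{x,y})}\bigr)|u_0|_2\,e^{-rt}=c_{\ell}\,|u_0|_2e^{-rt},\qquad t\ge 0,
\end{equation*}
which is the assertion.

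There is no genuine obstacle here: every ingredient is already in place --- the smooth kernel (Lemma \ref{lemkernel_obc}), the kernel/transformation identity (Appendix \ref{appkernel2}), the well-posedness of the plant (Proposition \ref{obcprop1}), and the bounded inverse with its $L^2$ mapping properties (Lemma \ref{inverselem}). The only two points that merit a sentence of care are: (i) verifying that the Robin-type condition $(\ell_{yy}+i\tilde{\alpha}\ell_y+\tilde{\delta}\ell)(x,L)=0$ in \eqref{kernela_obc}, which replaces the simpler $\ell(x,L)=0$ of \eqref{kernela}, is exactly what annihilates the new boundary term arising from the right-end conditions $u_x(L,t)=u_{xx}(L,t)=0$; and (ii) performing the energy identity at a regularity level at which the traces $w_x(0,\cdot)$ and $w(L,\cdot)$ make sense, which is handled by the same smoothing/density procedure already used for Proposition \ref{stablin}. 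Both are routine given the earlier sections, so the argument closely parallels that of Proposition \ref{stablin}.
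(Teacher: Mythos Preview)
Your proposal is correct and follows exactly the approach the paper intends: the authors explicitly omit the proof, stating only that it ``is very similar to that of Proposition \ref{stablin}'', and your write-up reproduces that template faithfully --- transport the exponential decay of the target \eqref{target_obc} back to the plant via the Cauchy--Schwarz bound on $\Upsilon_\ell$ and the bounded inverse from Lemma \ref{inverselem}. There is nothing to add.
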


\begin{prop}\label{p2}
	Let $r'> 0$, then there corresponds some suitable $r>0$ and a smooth backstepping kernel $\ell$ which solves \eqref{kernela_obc} such that the solution  $u$ of \eqref{heat_obc}, where the feedback controller acting at the left Dirichlet boundary condition is chosen as in \eqref{controller2} satisfies  $\left|u(\cdot,t)\right|_2 \lesssim \left|u_0\right|_2e^{-r't}$ for $t\ge 0$, provided that $\left|u_0\right|_2$ is sufficiently small.
\end{prop}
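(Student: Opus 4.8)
The plan is to follow, essentially verbatim, the route taken for Proposition \ref{stabnonlin}, indicating only the places where the boundary conditions \eqref{bdryconB} enter. First I would apply the backstepping transformation \eqref{backstepping_obc} to the nonlinear plant \eqref{heat_obc} and, using the properties of the kernel $\ell$ that solves \eqref{kernela_obc} (Lemma \ref{lemkernel_obc}), derive the target equation for $w$, which -- exactly as in the derivation of \eqref{ch414} -- takes the form
\[
iw_t + i\beta w_{xxx} +\alpha w_{xx} +i\delta w_x + irw = -(I-\Upsilon_{\ell})\bigl[|w+v|^p(w+v)\bigr],
\]
with $v=\Phi w$ ($\Phi$ the operator of Lemma \ref{inverselem} associated with $\ell$) and homogeneous boundary conditions $w(0,t)=0$, $w_x(L,t)=0$, $w_{xx}(L,t)=0$. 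Since $I-\Upsilon_\ell$ is boundedly invertible on $L^2(0,L)$ by Lemma \ref{inverselem}, and $|u(\cdot,t)|_2\le |(I-\Upsilon_\ell)^{-1}|_{2\rightarrow 2}|w(\cdot,t)|_2$ while $|w_0|_2\le (1+|\ell|_{L^2(\Delta_{x,y})})|u_0|_2$, it is enough to prove $|w(\cdot,t)|_2\lesssim |w_0|_2e^{-r't}$ for small data.

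Next I would secure the regularity needed to run the Lyapunov argument: a local solution $w$ of the target system exists by Proposition \ref{p1} together with bounded invertibility, and a higher-regularity statement ($w\in X_{T_0}^3$ for $H^3$ data satisfying $w_0(0)=0$) is established exactly as in Proposition \ref{higerreg}, using the linear theory of Proposition \ref{wtildeprop_obc} in place of Proposition \ref{wtildepropf}. The general $L^2$ case is then reduced to this one by the density argument indicated at the start of the nonlinear part of Section \ref{stabsec} (cf. Proposition \ref{condepdat} and Proposition \ref{higerreg}).

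The core step is the energy identity. I would multiply the target equation by $(1+x)\bar w$, integrate over $(0,L)$, take imaginary parts and multiply by $2$. The only difference from the derivation of \eqref{nonw1iden} lies in the boundary contributions at $x=L$: here $w_x(L,t)=w_{xx}(L,t)=0$ annihilates every boundary term produced by $i\beta w_{xxx}$ and $\alpha w_{xx}$ at $x=L$, exactly as $w(L,t)=0$ did under \eqref{bdryconA}, while the term coming from $i\delta w_x$ now yields an additional summand $(1+L)\delta|w(L,t)|^2$ on the left-hand side, which is $\ge 0$ because $\delta\ge 0$ and may simply be discarded. One thus arrives, with $y(t)=|w(\cdot,t)|_2^2+|x^{1/2}w(\cdot,t)|_2^2$, at precisely the differential inequality \eqref{yeq1} when $1<p\le 4$ and \eqref{yeq2} when $0<p\le 1$; the nonlinear terms on the right are bounded as in \eqref{lastterm1a1}--\eqref{lastterm1a2} and the ensuing case analysis, now with $\Upsilon_\ell$, $\Phi$ relative to $\ell$, and the estimates \eqref{Phiwest1}--\eqref{Phiwest3}, together with Gagliardo--Nirenberg's and $\epsilon$-Young's inequalities.

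Finally I would invoke Lemma \ref{stab_lem} verbatim: for $|u_0|_2$ small and $r$ large, $y(t)\lesssim y_0e^{-\gamma t}$ with $\gamma=\gamma(r,\delta,L,\alpha,\epsilon)>0$ that can be made arbitrarily large by enlarging $r$. Choosing $r$ so that $\gamma\ge 2r'$ gives $|w(\cdot,t)|_2\le y(t)^{1/2}\lesssim |w_0|_2e^{-r't}$, hence $|u(\cdot,t)|_2\lesssim |u_0|_2e^{-r't}$ by the invertibility bounds above; the resulting a priori $L^2$ bound also extends the local solution of Proposition \ref{p1} to a global one (smallness of $|u_0|_2$ being genuinely needed only when $p=4$, as in the statement). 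I do not expect a genuinely new obstacle: the technical heart is the Chini-type ODE analysis of Lemma \ref{stab_lem}, which is already available, so the only point that really has to be checked is that the modified boundary terms at $x=L$ do not spoil the estimate -- and, as observed, they are in fact dissipative thanks to $\delta\ge 0$. The most delicate bookkeeping will be the higher-regularity justification of the multiplier identity under the new boundary conditions, but this is routine given Proposition \ref{wtildeprop_obc}.
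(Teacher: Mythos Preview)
Your proposal is correct and follows essentially the same approach as the paper, which simply states that the proof is ``very similar to that of Proposition \ref{stabnonlin}'' and omits the details. You have accurately identified the only substantive modification: under the boundary conditions $w_x(L,t)=w_{xx}(L,t)=0$ the multiplier $(1+x)\bar w$ produces an extra nonnegative boundary contribution at $x=L$ (proportional to $\delta|w(L,t)|^2$, as already visible in the proof of Proposition \ref{wtildeprop_obc}), which can be dropped thanks to $\delta\ge 0$, after which the Lyapunov analysis and Lemma \ref{stab_lem} apply verbatim.
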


\subsection{Observer design}
We again use a backstepping transformation of the form \eqref{transtildew} and arrive at the following target error system which is exponentially stable with the desired decay rate:
\begin{eqnarray}\label{tildew_obc}
\begin{cases}
i\tilde{w}_t + i\beta \tilde{w}_{xxx} +\alpha \tilde{w}_{xx} +i\delta \tilde{w}_x +ir\tilde{w} = 0, \text { in } (0,L)\times (0,T),\\
\tilde{w}(0,t)=0,\,\tilde{w}_x(L,t)=0,\,\tilde{w}_{xx}(L,t)=0, \text { in } (0,T),\\
\tilde{w}(x,0)=\tilde{w}_0(x), \text { in } (0,L).
\end{cases}
\end{eqnarray}
After some calculations (see Appendix \ref{appen5}), we obtain that the error system \eqref{error_obc} transforms to the target error system \eqref{tildew_obc}, if $p_1(x) := -i\beta p_{yy}(x,L) + \alpha p_y(x,L) - i\delta p(x,L)$ and $p(x,y)$ solves the following pde model:
\begin{equation}\label{p_obc}
\begin{cases}
p_{xxx}+p_{yyy}-i\tilde{\alpha}(p_{xx}-p_{yy})+\tilde{\delta}(p_x+p_y)-\tilde{r}p=0, \\
p(0,y)=0,\\
p(x,x)=0, \\
\frac{d}{dx}p_x(x,x)=-\frac{\tilde{r}}{3} (L - x)
\end{cases}
\end{equation}
where $(x,y) \in \Delta_{x,y}$. This is exactly the same model as we obtained in \eqref{p}. So using the same procedure, a solution of the pde model \eqref{p_obc} can be found by setting $p(x,y) = k(L- y, L - x;-r)$
where $k(x,y)$ is a solution of \eqref{kernela}.

In  the current context, we choose the observer target system below that has the desired exponential stability:
\begin{eqnarray}\label{targetobs_obc}
\begin{cases}
i\hat{w}_t + i\beta \hat{w}_{xxx} +\alpha \hat{w}_{xx} +i\delta \hat{w}_x + ir \hat{w}\\
+ [(I-\Upsilon_k)p_1](x)\tilde{w}(L,t)= 0, x\in (0,L), t\in (0,T),\\
\hat{w}(0,t)=0, \hat{w}_x(L,t)=0, \hat{w}_{xx}(L,t)=0,\\
\hat{w}(x,0)=\hat{w}_0(x)\doteq \hat{u}_0-\int_x^Lk(x,y)\hat{u}_0(y)dy.
\end{cases}
\end{eqnarray}
Now, we can transform the observer model \eqref{observer_obc} into the observer target system above by using the transformation
\begin{equation}
\hat{w}(x,t)=\hat{u}(x,t)-\int_x^L\ell(x,y)\hat{u}(x,y)dy,
\end{equation} where $\ell$ satisfies the pde model \eqref{kernela_obc}.

\subsubsection{Wellposedness of plant-observer-error system}\label{wpobc} For $\tilde{u}_0\in H^3(0,L)$ satisfying the compatibility condition $\tilde{u}_0(0)=0$, we have $\tilde{w}_0\in H^3(0,L)$ and moreover $\tilde{w}_0$ satisfies the same compatibility condition  $\tilde{w}_0(0)=0$ due to the obvious relationship between $\tilde{u}_0$ and $\tilde{w}_0$ and boundary conditions of $p$.  Therefore, \eqref{tildew_obc} has a solution $\tilde{w}\in X_T^3.$ Then, by using the bounded invertibility of the backstepping transformation we infer that $\tilde{u}\in X_T^3.$ Note that the function $f=f(x,t)$ defined by $f(x,t)= [(I-\Upsilon_k)p_1](x)\tilde{w}(L,t)$ belongs to $L^1(0,T;L^2(0,L))$; therefore we have $\tilde{w}\in X_T^0.$ Again by the bounded invertibility we obtain $\hat{u}\in X_T^0.$ But $u=\hat{u}+\tilde{u}$; hence we have $u\in X_T^0.$
\subsubsection{Stabilization of the plant-observer error system}
\begin{lem}\label{wtildelem_obc}
	Let $\tilde{w}$ be the solution of \eqref{tildew_obc}, then (i) $|\tilde{w}(\cdot,t)|_2\le |\tilde{w}_0|_2e^{-rt}$, (ii) $|\tilde{w}(\cdot,t)|_{H^3(0,L)}\lesssim |\tilde{w}_0|_{H^3(0,L)}e^{-r t}$ for $t\ge 0$.
\end{lem}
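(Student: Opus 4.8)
The plan is to reproduce the energy argument of Lemma~\ref{wtildelem}, now adapted to the boundary conditions $\tilde w(0,t)=\tilde w_x(L,t)=\tilde w_{xx}(L,t)=0$ imposed in \eqref{tildew_obc}. For part (i), I would multiply the main equation of \eqref{tildew_obc} by $\overline{\tilde w}$, integrate over $(0,L)$, and take imaginary parts. Integrating by parts and using the imposed boundary conditions to discard the boundary contributions that vanish, exactly as in the dissipation identity recorded above for the uncontrolled version of \eqref{heatlin_obc}, one is left with
$$
\frac{1}{2}\frac{d}{dt}\left|\tilde w(\cdot,t)\right|_2^2 + r\left|\tilde w(\cdot,t)\right|_2^2 = -\frac{\beta}{2}\left|\tilde w_x(0,t)\right|^2 - \frac{\delta}{2}\left|\tilde w(L,t)\right|^2 \le 0,
$$
where the last inequality uses $\delta\ge 0$. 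Gr\"onwall's inequality then yields $\left|\tilde w(\cdot,t)\right|_2 \le \left|\tilde w_0\right|_2 e^{-rt}$, which is (i).

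For part (ii), I would differentiate \eqref{tildew_obc} in time. Since all the boundary conditions are time-independent, $\tilde w_t$ solves the same initial-boundary value problem with initial datum $\tilde w_t(\cdot,0) = -\beta \tilde w_0''' + i\alpha \tilde w_0'' - \delta \tilde w_0' - r\tilde w_0$, so $\left|\tilde w_t(\cdot,0)\right|_2 \lesssim \left|\tilde w_0\right|_{H^3(0,L)}$. Applying the energy identity from part (i) to $\tilde w_t$ gives $\left|\tilde w_t(\cdot,t)\right|_2 \le \left|\tilde w_t(\cdot,0)\right|_2 e^{-rt}\lesssim \left|\tilde w_0\right|_{H^3(0,L)} e^{-rt}$. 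Next, solving the main equation of \eqref{tildew_obc} for the top-order term, $\beta \tilde w_{xxx} = -\tilde w_t + i\alpha \tilde w_{xx} - \delta \tilde w_x - r\tilde w$, one gets $\left|\tilde w_{xxx}(\cdot,t)\right|_2 \lesssim \left|\tilde w_t(\cdot,t)\right|_2 + \left|\tilde w_{xx}(\cdot,t)\right|_2 + \left|\tilde w_x(\cdot,t)\right|_2 + \left|\tilde w(\cdot,t)\right|_2$; then the intermediate-derivative terms are absorbed into $\left|\tilde w_{xxx}(\cdot,t)\right|_2$ via the Gagliardo--Nirenberg inequalities of Lemma~\ref{gag2} ($\left|\tilde w_x\right|_2 \lesssim \left|\tilde w_{xxx}\right|_2^{1/3}\left|\tilde w\right|_2^{2/3}$ and $\left|\tilde w_{xx}\right|_2 \lesssim \left|\tilde w_{xxx}\right|_2^{2/3}\left|\tilde w\right|_2^{1/3}$) together with $\epsilon$-Young, yielding $\left|\tilde w_{xxx}(\cdot,t)\right|_2 \lesssim \left|\tilde w_t(\cdot,t)\right|_2 + \left|\tilde w(\cdot,t)\right|_2$ and hence $\left|\tilde w(\cdot,t)\right|_{H^3(0,L)} \lesssim \left|\tilde w(\cdot,t)\right|_2 + \left|\tilde w_t(\cdot,t)\right|_2$. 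Combining with part (i) and the decay of $\left|\tilde w_t(\cdot,t)\right|_2$ gives (ii).

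The computations here are all routine; the only points requiring a little care are bookkeeping ones. First, one must check that under the new Neumann and second-order conditions at $x=L$ every boundary term produced by the integrations by parts either vanishes or carries a favourable sign --- this is precisely where $\delta\ge 0$ enters, as it does already in the energy identity for \eqref{heatlin_obc}. Second, the time-differentiated problem must be well posed at the regularity level needed to justify these manipulations; this follows by a density argument, approximating $\tilde w_0$ by smooth data satisfying the compatibility conditions, exactly as in the wellposedness discussion of Section~\ref{wpobc}. Note that, in contrast with Lemma~\ref{wtildelem}, no separate estimate on a boundary trace such as $\tilde w_{xx}(L,t)$ is needed here, since the present observer design uses only the Dirichlet measurement $u(L,t)$ and $\left|\tilde w(L,t)\right| \lesssim \left|\tilde w(\cdot,t)\right|_{H^3(0,L)}$ already follows from (ii) by the Sobolev trace embedding.
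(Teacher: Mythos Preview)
Your proposal is correct and follows essentially the same approach as the paper: the $L^2$ energy identity with the boundary terms $-\frac{\beta}{2}|\tilde w_x(0,t)|^2-\frac{\delta}{2}|\tilde w(L,t)|^2$ for part (i), and for part (ii) time-differentiation, the same energy estimate applied to $\tilde w_t$, and then recovery of $|\tilde w|_{H^3}$ from $|\tilde w|_2+|\tilde w_t|_2$ via the main equation and Gagliardo--Nirenberg exactly as in \eqref{newwxest2}. Your closing remark about the trace $\tilde w(L,t)$ is precisely the content of the paper's subsequent Remark~\ref{tracerem}.
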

\begin{proof}
	Taking $L^2(0,L)$ inner product of \eqref{tildew_obc} with $\tilde{w}$ and looking at the imaginary parts, we obtain (i). In order to prove (ii), we differentiate \eqref{tildew_obc} with respect to $t$, take the $L^2(0,L)$ inner product with $\overline{\tilde{w}_t}$ and integrate by parts. We get
	\begin{equation}\label{wildetcal1}
	\frac{d}{dt}\left|\tilde{w}_t(\cdot,t)\right|_2^2 + 2r\left|\tilde{w}_t(\cdot,t)\right|_2^2 = -\left({\beta}|w_{xt}(0,t)|^2 + \delta |w_{t}(L,t)|^2 \right) \le 0,
	\end{equation}
	which implies
	\begin{equation}\label{wtdecay_obc}
	|\tilde{w}_t(\cdot,t)|_2\leq |\tilde{w_t}(\cdot,0)|_{H^3(0,L)}e^{-r t}.
	\end{equation}
	Now, (ii) follows from the fact that $|\tilde{w}(\cdot,t)|_{H^3(0,L)} \lesssim |\tilde{w}(\cdot,t)|_2+|\tilde{w}_{t}(\cdot,t)|_2$, which can be shown as \eqref{newwxest2}, and $|\tilde{w}_t(0)|_{2}=|-\beta\tilde{w}_0'''+i\alpha\tilde{w}_0''-\delta\tilde{w}_0'+ir\tilde{w}_0|_2\leq |\tilde{w}_0|_{H^3(0,L)}.$
\end{proof}
\begin{rem}\label{tracerem}
	Using the Sobolev trace theorem and the above lemma, it follows that $|\tilde{w}(L,t)|\lesssim |\tilde{w}(\cdot,t)|_{H^1(0,L)}\le |\tilde{w}(\cdot,t)|_{H^3(0,L)}\le |\tilde{w}_0|_{H^3(0,L)}e^{-r t}.$
\end{rem}
To show the exponential decay of the solution of the target observer model, we follow same steps given in \eqref{what01}-\eqref{L2backstepping3hat2} by considering the trace estimate given in Remark \ref{tracerem}. Hence, we have the proposition below.
\begin{prop}\label{obcwp}
	Let $\epsilon>0$ be fixed and small, $r>0$, and $(u,\hat{u},\tilde u)$ be the solution of the linear plant-observer-error system.  Then, components of the solution $(u,\hat{u},\tilde u)$ satisfy
	\begin{itemize}
		\item[(i)] $\left|u(\cdot,t)\right|_2 \le c_{\epsilon,k,p,\hat u_0,\tilde u_0}e^{-(r- \epsilon c_{k,p})t}+c_p\left|\tilde u_0\right|_{H^3(0,L)}e^{-rt}$,
		\item[(ii)] $\left|\hat u(\cdot,t)\right|_2 \le c_{\epsilon,k,p,\hat u_0,\tilde u_0}e^{-(r- \epsilon c_{k,p})t}$, and
		\item[(iii)] $\left|\tilde{u}(\cdot,t)\right|_{H^3(0,L)} \le c_p\left|\tilde u_0\right|_{H^3(0,L)}e^{-r t},$ respectively,
	\end{itemize}
	where $c_{\epsilon,k,p,\hat u_0,\tilde u_0}$, $c_{k,p}$, and $c_p$ are nonnegative constants depending on their sub-indices.
\end{prop}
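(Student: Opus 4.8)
The plan is to reproduce, almost verbatim, the argument of Proposition~\ref{obsprop2}, with the Dirichlet trace $\tilde w(L,t)$ now playing the role that the second order trace $\tilde w_{xx}(L,t)$ played there, and with Lemma~\ref{wtildelem_obc} and Remark~\ref{tracerem} replacing Lemma~\ref{wtildelem}. Wellposedness of $(u,\hat u,\tilde u)$ has already been settled in Section~\ref{wpobc}, so only the decay estimates (i)--(iii) remain.

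First I would dispose of the error component. By Lemma~\ref{wtildelem_obc}(ii) we have $|\tilde w(\cdot,t)|_{H^3(0,L)}\lesssim|\tilde w_0|_{H^3(0,L)}e^{-rt}$; combining this with the bounded invertibility of $I-\Upsilon_p$ on $H^3(0,L)$ and on $L^2(0,L)$ (Lemma~\ref{inverselem}), exactly as in \eqref{L2backstepping3hat}--\eqref{L2backstepping3hat2}, yields both $|\tilde u(\cdot,t)|_{H^3(0,L)}\le c_p|\tilde u_0|_{H^3(0,L)}e^{-rt}$, which is (iii), and the auxiliary bound $|\tilde u(\cdot,t)|_2\lesssim c_p|\tilde u_0|_{H^3(0,L)}e^{-rt}$ needed later.

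Next, for the observer, I would take the $L^2(0,L)$ inner product of the target observer equation \eqref{targetobs_obc} with $\hat w$, integrate over $(0,L)$, and pass to imaginary parts. The homogeneous boundary data $\hat w(0,t)=\hat w_x(L,t)=\hat w_{xx}(L,t)=0$ produce the same nonnegative boundary contribution as in the computation preceding \eqref{what01}, while the coupling term is controlled by $\epsilon$-Young's inequality,
\begin{equation*}
\Big|\,\tilde w(L,t)\int_0^L[(I-\Upsilon_k)p_1](x)\hat w(x,t)\,dx\,\Big|\le \epsilon\,|(I-\Upsilon_k)p_1|_2^2\,|\hat w(\cdot,t)|_2^2+c_\epsilon\,|\tilde w(L,t)|^2 .
\end{equation*}
Feeding the trace bound $|\tilde w(L,t)|\lesssim|\tilde w_0|_{H^3(0,L)}e^{-rt}$ of Remark~\ref{tracerem} into the resulting differential inequality and integrating it exactly as in \eqref{what02}--\eqref{whatdecay} gives the exponential decay of $|\hat w(\cdot,t)|_2$ with rate $r-\epsilon c_{k,p}$, $c_{k,p}=|(I-\Upsilon_k)p_1|_2^2$; transporting this back through the bounded invertibility of the backstepping transformation, together with the analogues of \eqref{L2backstepping4hat}--\eqref{tildew0u0} and the bookkeeping of the constant carried out in \eqref{uhatdecay}, yields (ii). Finally (i) follows from $u=\hat u+\tilde u$, the triangle inequality, and the estimates for $\hat u$ and $\tilde u$ already obtained, after bounding $|\tilde u(\cdot,t)|_2\le(1+|p|_{L^2(\Delta_{x,y})})|\tilde w_0|_2e^{-rt}\lesssim c_p|\tilde u_0|_{H^3(0,L)}e^{-rt}$ as in \eqref{L2backstepping3hat0}--\eqref{errordecay}.

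The only point where something genuinely new (relative to a mechanical transcription of Section~\ref{obssec}) is used is the trace estimate $|\tilde w(L,t)|\lesssim|\tilde w(\cdot,t)|_{H^3(0,L)}$ of Remark~\ref{tracerem}: because the feedback injected into the observer \eqref{observer_obc} is the Dirichlet trace rather than $u_{xx}(L,t)$, one differentiates \eqref{tildew_obc} only once in time and needs no compatibility condition beyond $\tilde u_0(0)=0$, whereas the $H^6$ data and the conditions \eqref{compa} were forced in Section~\ref{obssec} precisely by the appearance of a second order trace in the observer equation (cf. the remark following Theorem~\ref{obsthmobc}). Consequently I do not expect any substantive obstacle; the only care required is tracking the operator norms of $(I-\Upsilon_\ell)^{-1}$, $(I-\Upsilon_p)^{-1}$ and $(I-\Upsilon_k)p_1$ so that they assemble into the advertised constants $c_{\epsilon,k,p,\hat u_0,\tilde u_0}$, $c_{k,p}$, $c_p$.
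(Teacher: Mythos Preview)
Your proposal is correct and follows essentially the same approach as the paper: the paper itself simply says to repeat the steps \eqref{what01}--\eqref{L2backstepping3hat2} from Section~\ref{obssec}, replacing the second order trace estimate of Lemma~\ref{wtildelem} by the Dirichlet trace estimate of Remark~\ref{tracerem}, which is exactly what you do. Your write-up is in fact more detailed than the paper's, which gives only the one-line reduction; the only cosmetic point is that the backstepping transformation for $\hat u$ here uses the kernel $\ell$ rather than $k$ (so the constant is really $|(I-\Upsilon_\ell)p_1|_2^2$), a notational inconsistency already present in the paper's own statement of \eqref{targetobs_obc} and Proposition~\ref{obcwp}.
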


\subsection{Numerical results}
In this section we will present our numerical simulations. We use the same numerical design that we give in Section \ref{SecNumResults}. But now, due to the boundary conditions, we make our numerical calculations on the space
\begin{equation}
\mathrm{X}^M := \left\{\mathbf{w} = [w_1 \cdots w_M]^T \in \mathbb{C}^M \right\}
\end{equation}
with the property that
\begin{align}
w_1(t) =& 0, \\
\frac{w_{M - 2}(t) - 4  w_{M - 1}(t) + 3 w_{M}(t)}{2h} =& 0, \\
\frac{-w_{M-3}(t) + 4 w_{M-2}(t) - 5w_{M-1}(t) + w_M(t)}{h^2} =& 0.
\end{align}
Note that the last condition is the one sided second order finite difference scheme that approximates the boundary condition $u_{xx}(L,t) = 0$.

\paragraph{\textbf{Experiment 1:} Linear Controller.} Consider the following linear model
\begin{eqnarray}
\begin{cases}
iu_t + i u_{xxx} + u_{xx} +2i u_x = 0, \quad x\in (0,\pi), t\in (0,T),\\
u(0,t)=g_0(t), u_x(\pi,t)=0, u_{xx}(\pi,t)=0,\\
u(x,0)= u_0(x).
\end{cases}
\end{eqnarray}
with the initial condition
\begin{equation}
u_0(x) = sech\left(8 \left(x - \frac{\pi}{2}\right)^2\right) \exp \left(4i\left(x - \frac{\pi}{2}\right)\right).
\end{equation}
See Figure \ref{fig:plant_1_wo_cont_obc} for the uncontrolled solution.
\begin{figure}[h]
	\centering
	\includegraphics[width=9cm]{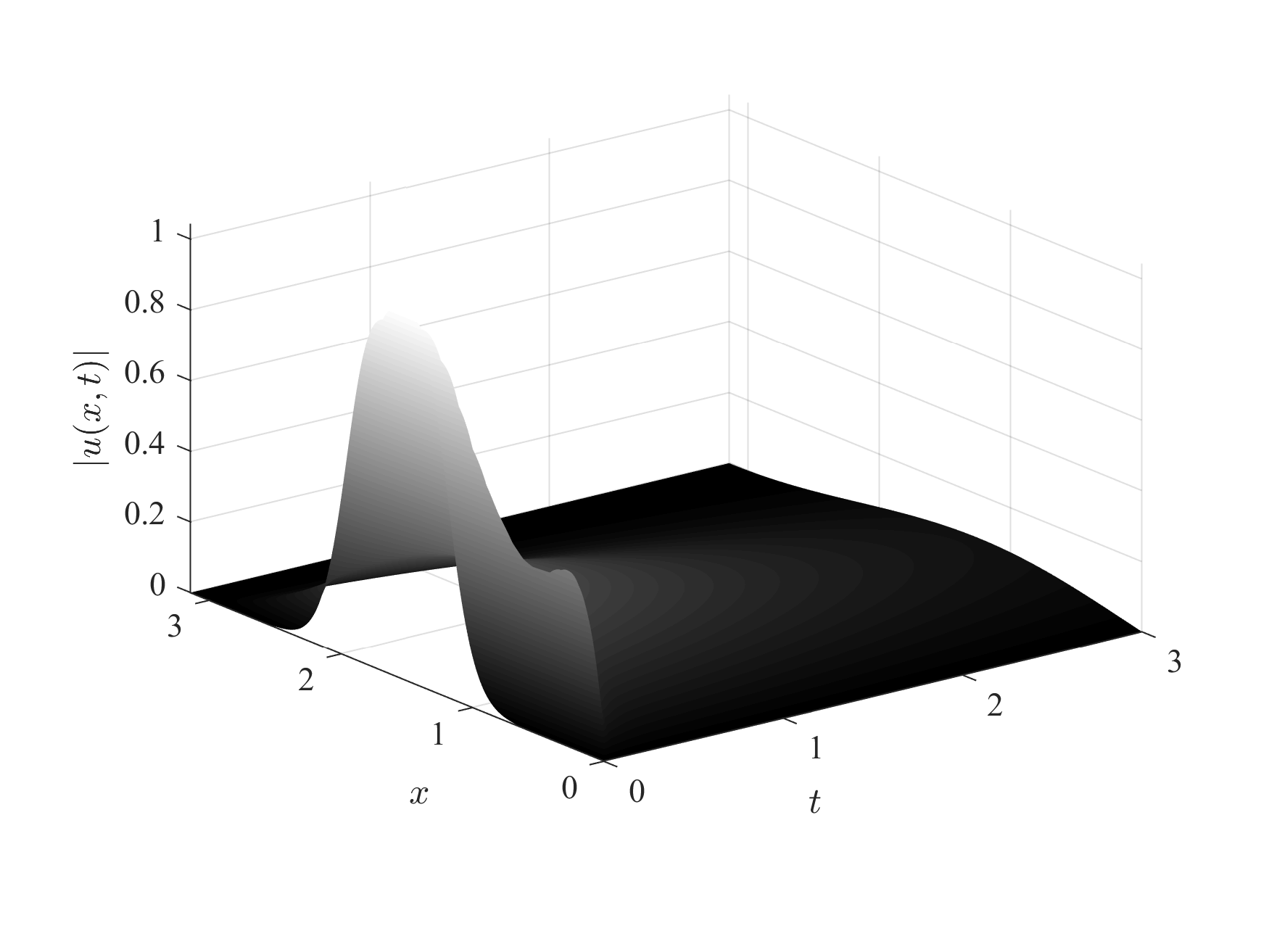}
	\vspace*{-10mm}
	\caption{Uncontrolled solution for the linear case.}
	\label{fig:plant_1_wo_cont_obc}
\end{figure}
Choosing $r = 1$, we obtain the results shown in Figure \ref{fig:plant_1_obc}.
\begin{figure}[h]
	\centering
	\begin{subfigure}[b]{0.5\textwidth}
		\includegraphics[width=\textwidth]{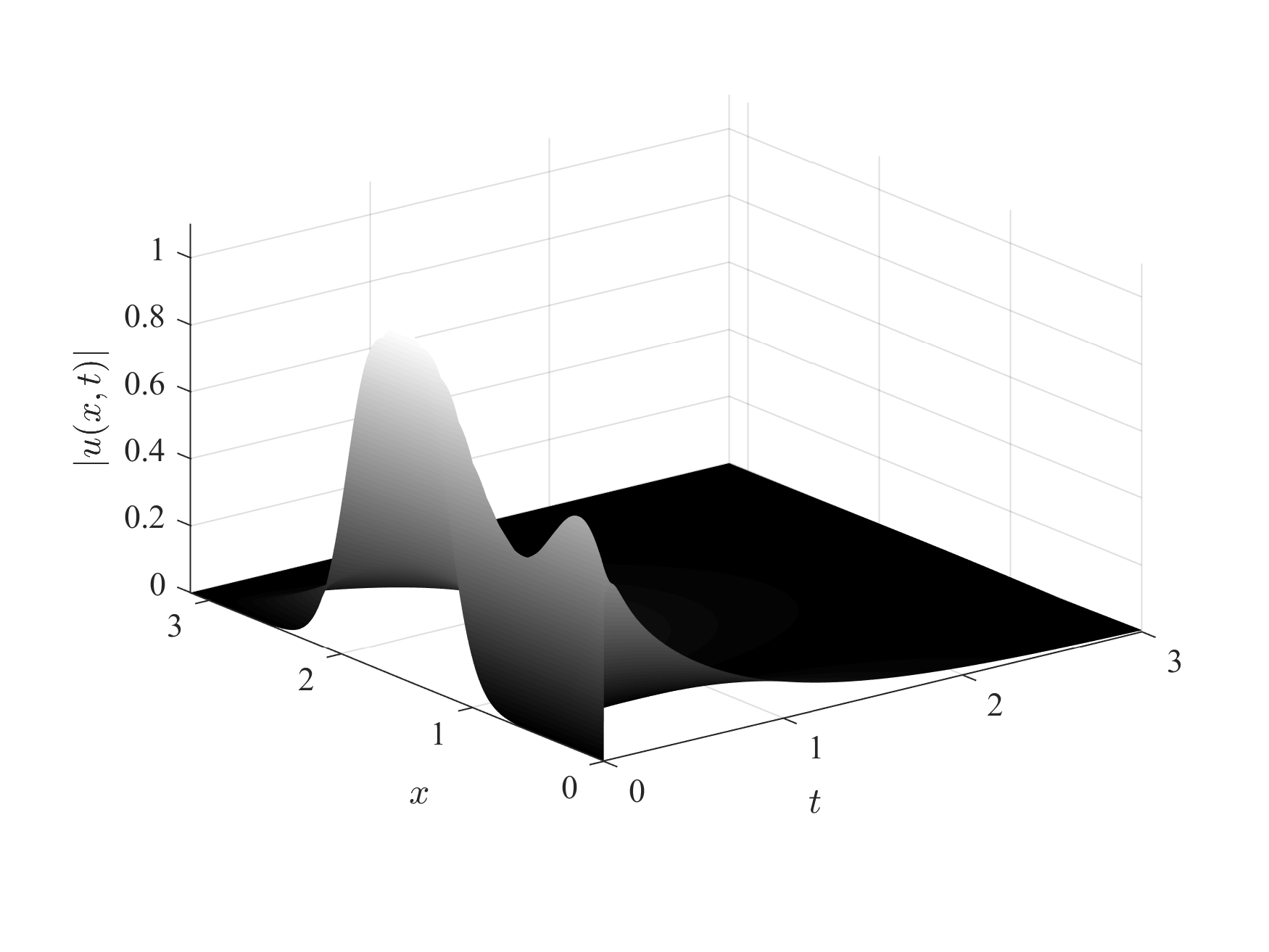}
		\label{fig:plant_3d_1_obc}
		\vspace{-5 mm}
		\caption{3d plot of $|u(x,t)|$.}
	\end{subfigure}
	~
	\begin{subfigure}[b]{0.5\textwidth}
		\includegraphics[width=\textwidth]{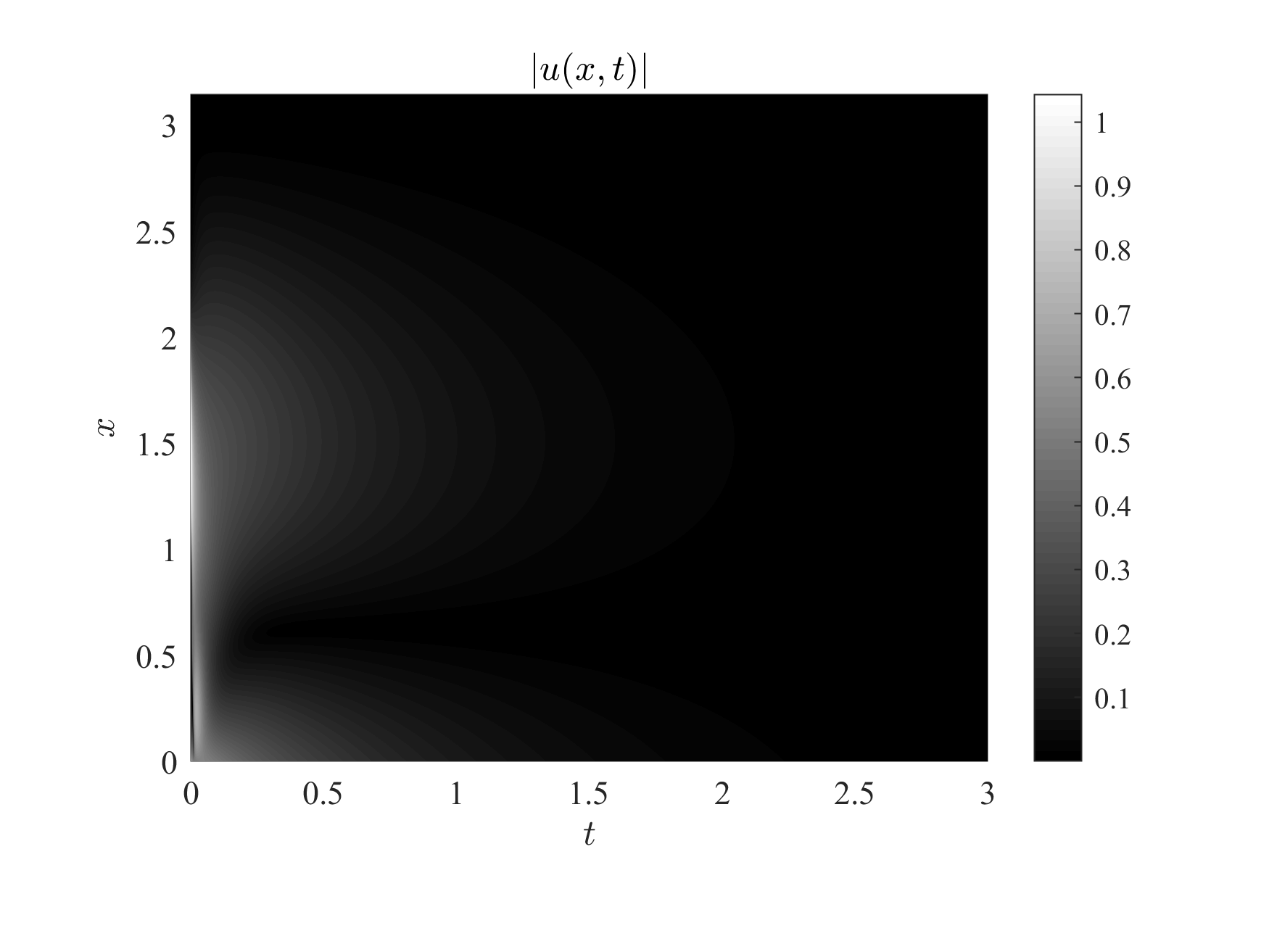}
		\label{fig:plant_contour_1_obc}
		\vspace{-5 mm}
		\caption{Contour plot of $|u(x,t)|$.}
	\end{subfigure}\\
	\begin{subfigure}[b]{0.5\textwidth}
		\includegraphics[width=\textwidth]{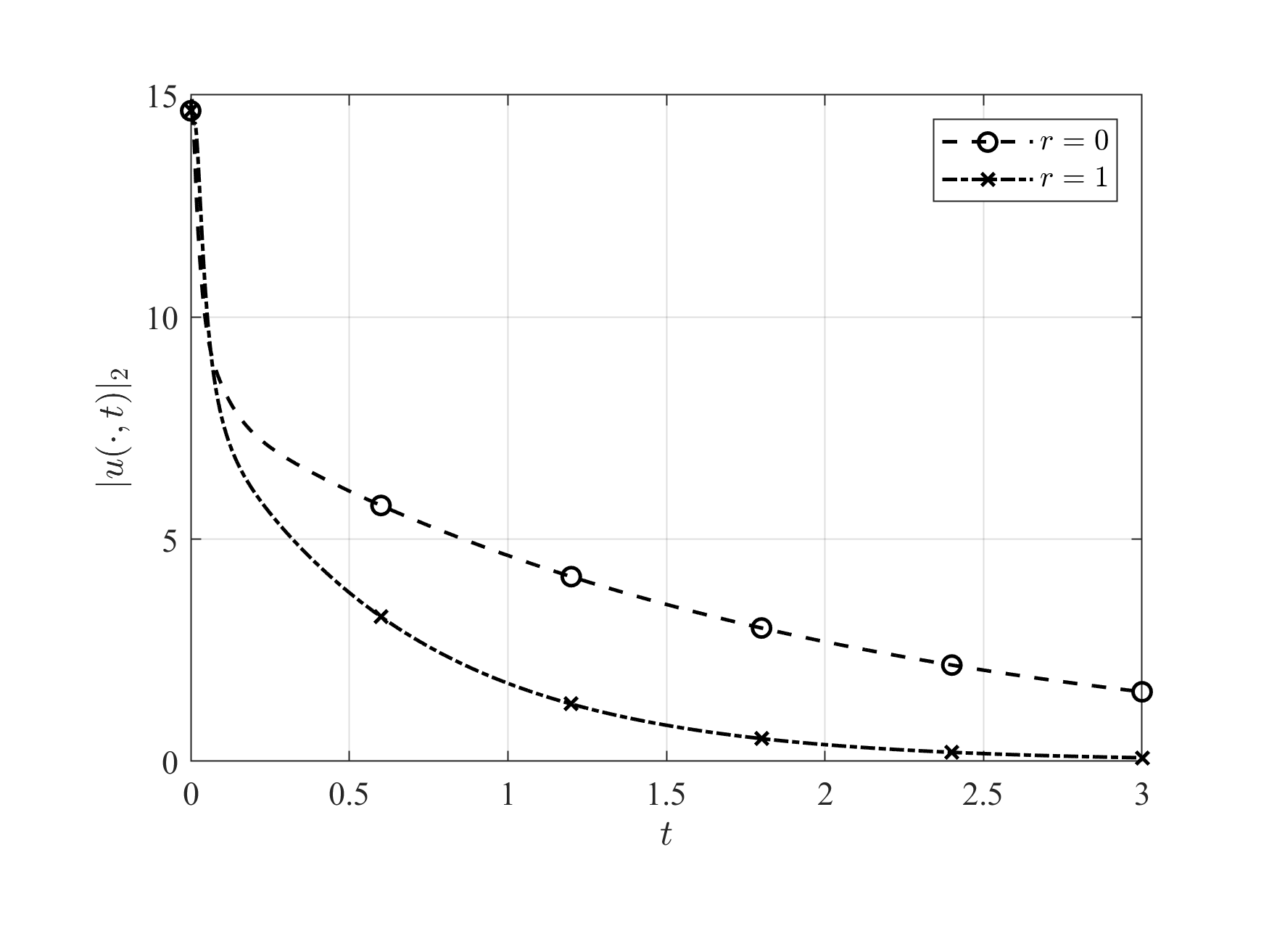}
		\label{fig:L2norm_1_obc}
		\vspace{-5 mm}
		\caption{Comparison of $L^2$ norms of $|u(x,t)|$ in the absence (circle) and presence (star) of control.}
	\end{subfigure}
	\caption{Numerical results of the controlled linear model.}
	\label{fig:plant_1_obc}
\end{figure}
\paragraph{\textbf{Experiment 2: }Nonlinear Controller, $p \geq 1$}
Consider the following lnoninear model
\begin{eqnarray}
\begin{cases}
iu_t + 0.5 i u_{xxx} + u_{xx} + 2 u_x + u |u|^3\sqrt{|u|} = 0, \quad x\in (0,\pi), t\in (0,T),\\
u(0,t)=g_0(t), u_x(\pi,t)=0, u_{xx}(\pi,t)=0,\\
u(x,0)= u_0(x).
\end{cases}
\end{eqnarray}
with the initial condition
\begin{equation}
u_0(x) = 3e^{-16\left(x-\frac{\pi}{2}\right)^2}e^{4i\left(x-\frac{\pi}{2}\right)} + 5e^{-16\left(x-\frac{3\pi}{4}\right)^2}e^{4i\left(x-\frac{3\pi}{4}\right)}.
\end{equation}
The uncontrolled solution is shown in Figure \ref{fig:plant_2_wo_cont_obc}.
\begin{figure}[h]
	\centering
	\includegraphics[width=9cm]{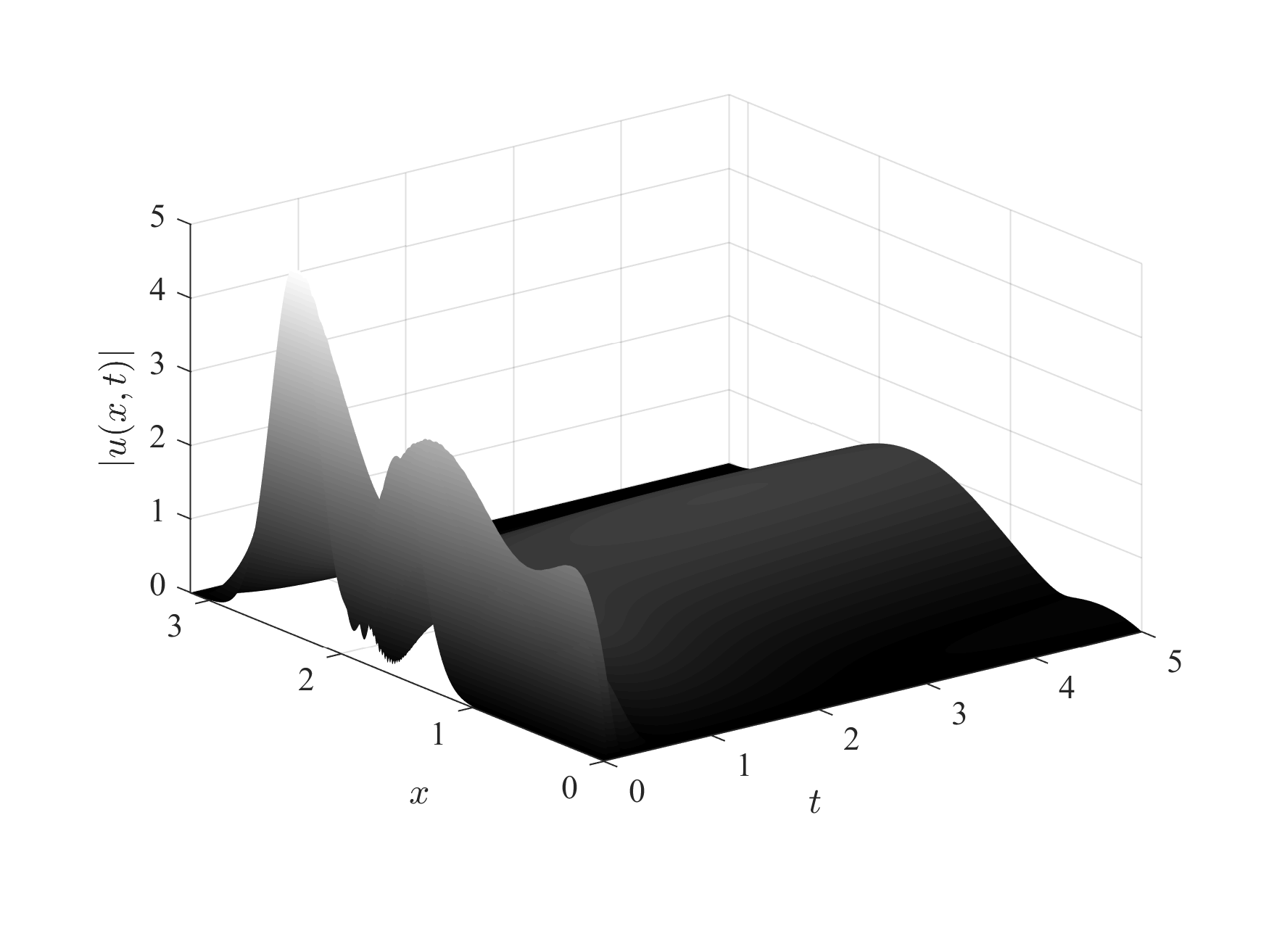}
	\vspace*{-10mm}
	\caption{Uncontrolled solution for nonlinear case, $p \geq 1$.}
	\label{fig:plant_2_wo_cont_obc}
\end{figure}
See Figure \ref{fig:plant_2_obc} for the numerical results in the controlled case. In this example, we take $r = 1.5$.
\begin{figure}[h]
	\centering
	\begin{subfigure}[b]{0.5\textwidth}
		\includegraphics[width=\textwidth]{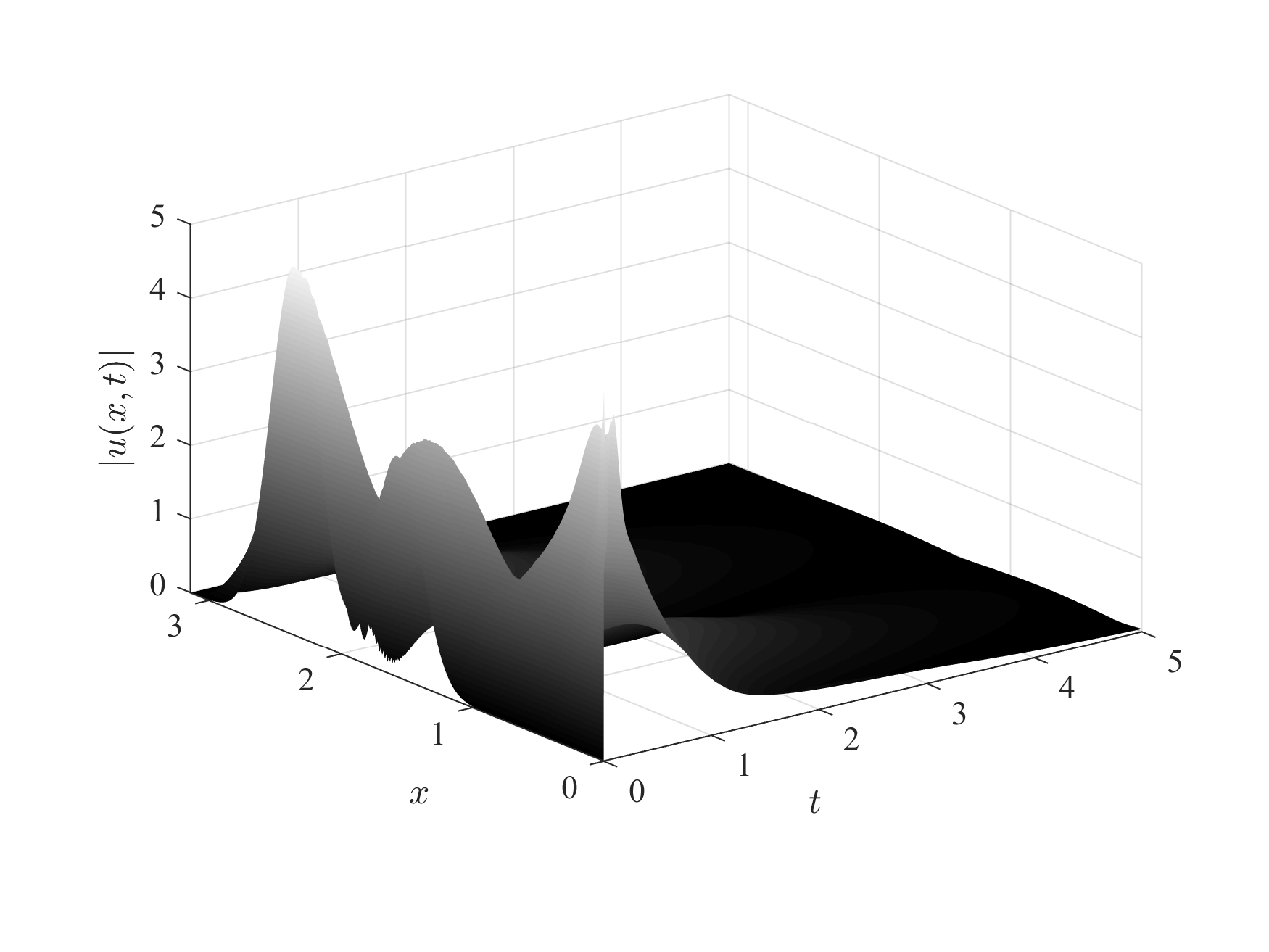}
		\label{fig:plant_3d_2_obc}
		\vspace{-5 mm}
		\caption{3d plot of $|u(x,t)|$.}
	\end{subfigure}
	~
	\begin{subfigure}[b]{0.5\textwidth}
		\includegraphics[width=\textwidth]{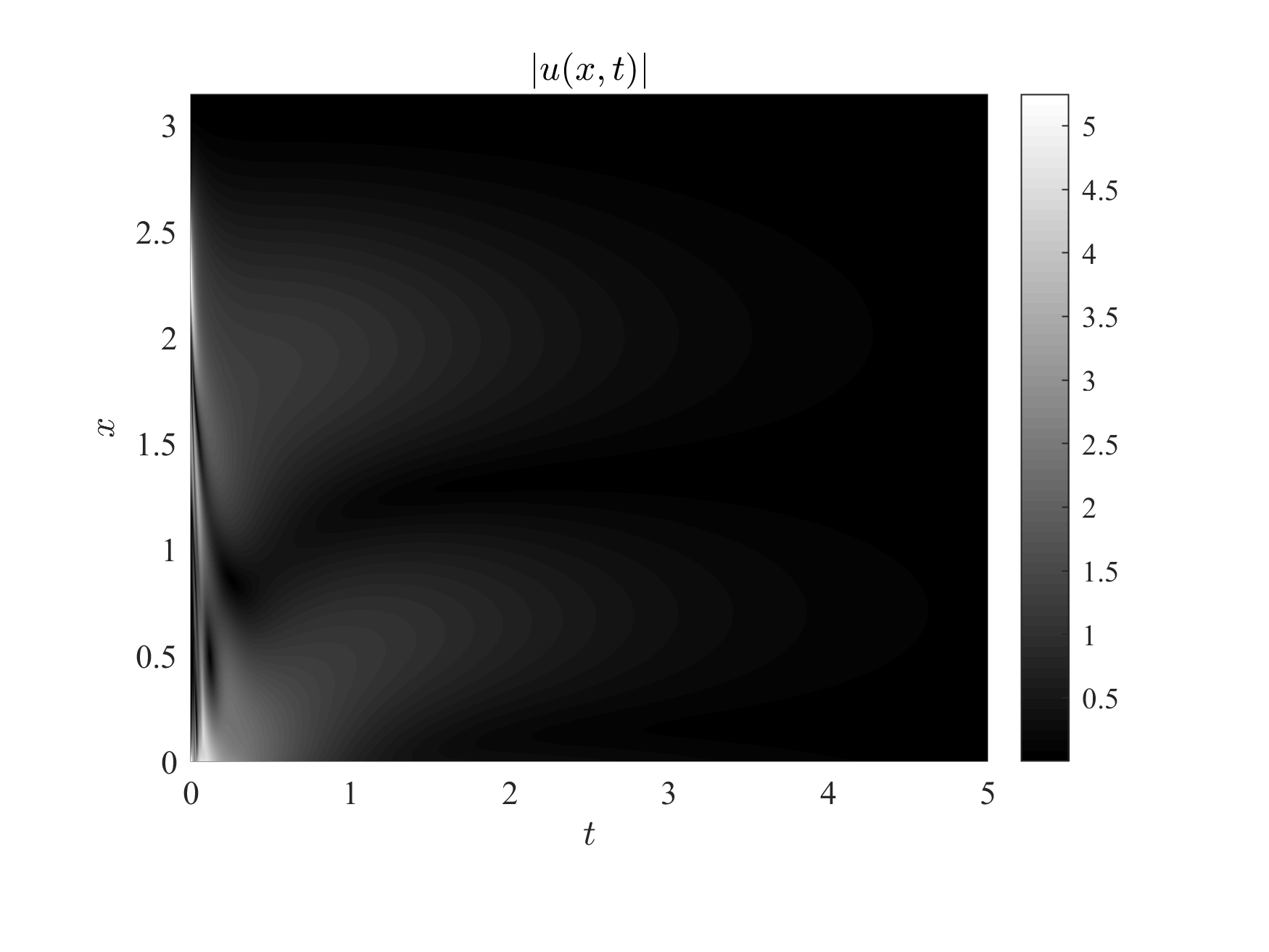}
		\label{fig:plant_contour_2_obc}
		\vspace{-5 mm}
		\caption{Contour plot of $|u(x,t)|$.}
	\end{subfigure}\\
	
	\begin{subfigure}[b]{0.5\textwidth}
		\includegraphics[width=\textwidth]{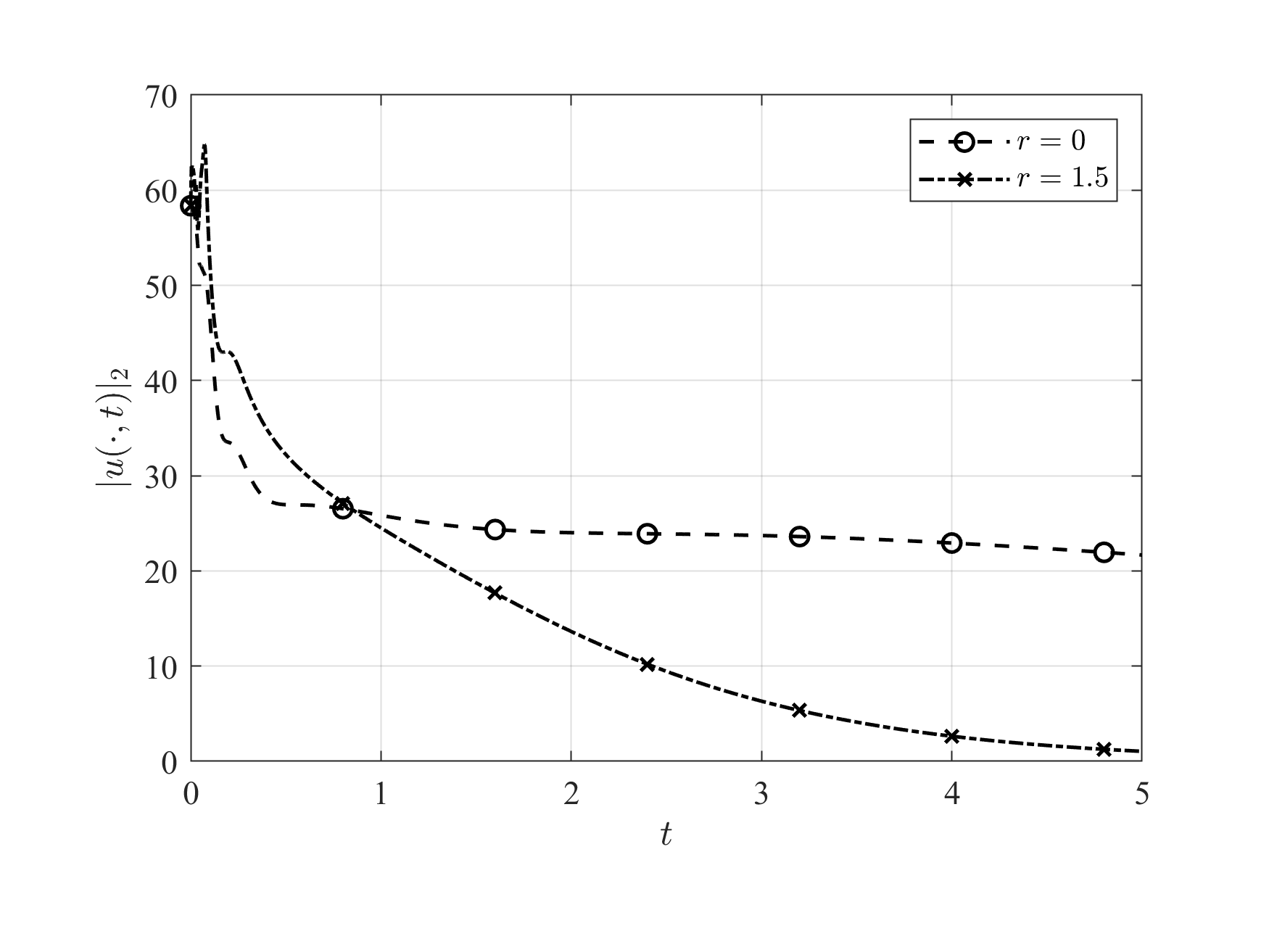}
		\label{fig:L2norm_2_obc}
		\vspace{-5 mm}
		\caption{Comparison of $L^2$ norms of $|u(x,t)|$ in the absence (circle) and presence (star) of control.}
	\end{subfigure}
	\caption{Numerical results of the controlled nonlinear model, $p \geq 1$.}
	\label{fig:plant_2_obc}
\end{figure}

\paragraph{\textbf{Experiment 3:} Nonlinear Controller, $0<p<1$}
Next we consider the following nonlinear model:
\begin{eqnarray}
\begin{cases}
iu_t + i u_{xxx} + u_{xx} + 2i u_x + u \sqrt[4]{|u|} = 0, \quad x\in (0,\pi), t\in (0,T),\\
u(0,t)=g_0(t), u_x(\pi,t)=0, u_{xx}(\pi,t)=0,\\
u(x,0)= u_0(x).
\end{cases}
\end{eqnarray}
with the initial condition
\begin{equation}
u_0(x) = sech\left(8\left(x - \frac{\pi}{2}\right)^2\right) \exp\left(4i \left(x - \frac{\pi}{2}\right)\right).
\end{equation}
The uncontrolled solution is shown in Figure \ref{fig:plant_3_wo_cont_obc}.
\begin{figure}[h]
	\centering
	\includegraphics[width=9cm]{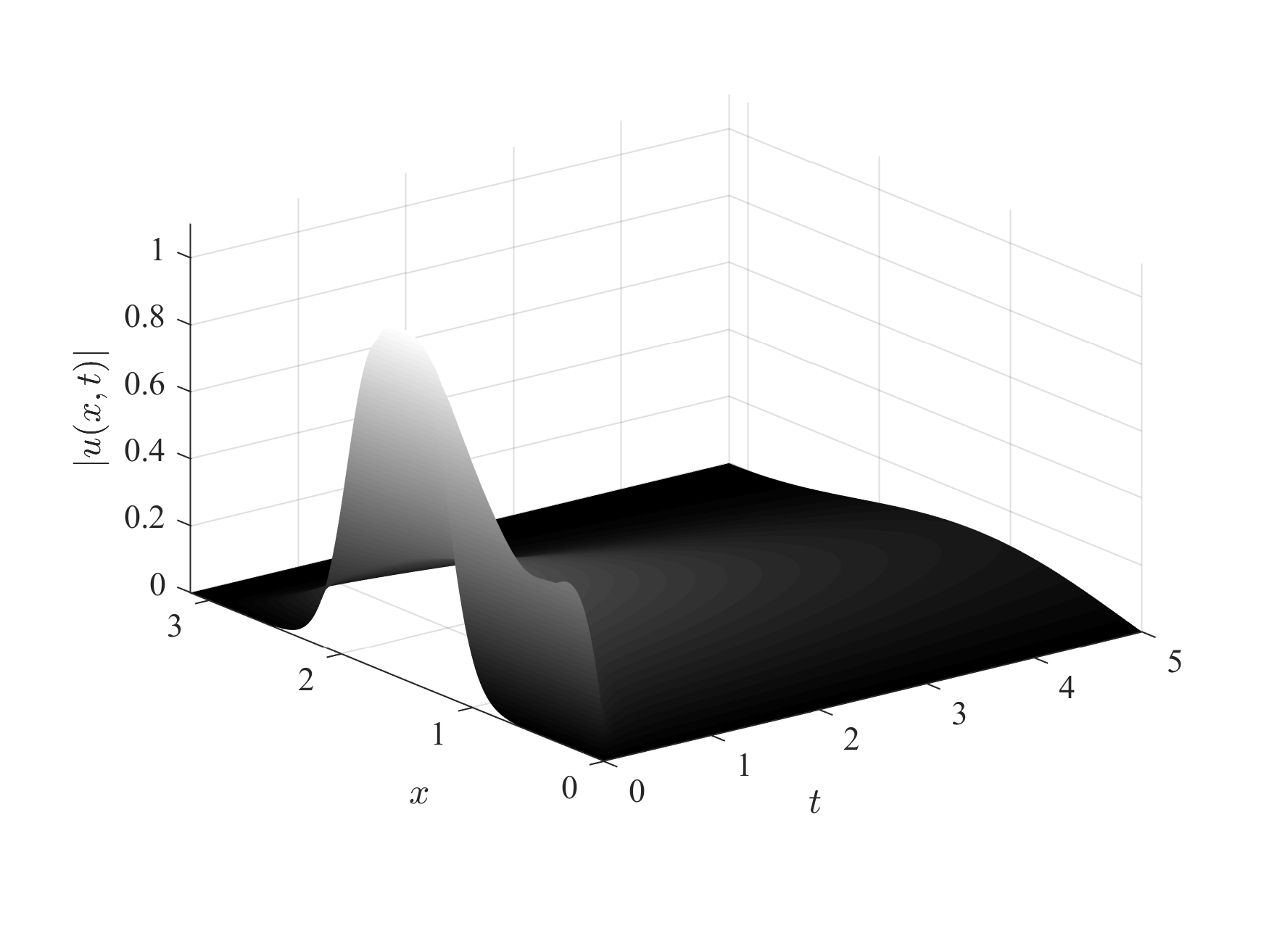}
	\vspace*{-10mm}
	\caption{Uncontrolled solution for the nonlinear case $0 < p < 1$.}
	\label{fig:plant_3_wo_cont_obc}
\end{figure}
See Figure \ref{fig:plant_3_obc} for the controlled case. We take the damping coefficient $r = 1.5$.
\begin{figure}[h]
	\centering
	\begin{subfigure}[b]{0.5\textwidth}
		\includegraphics[width=\textwidth]{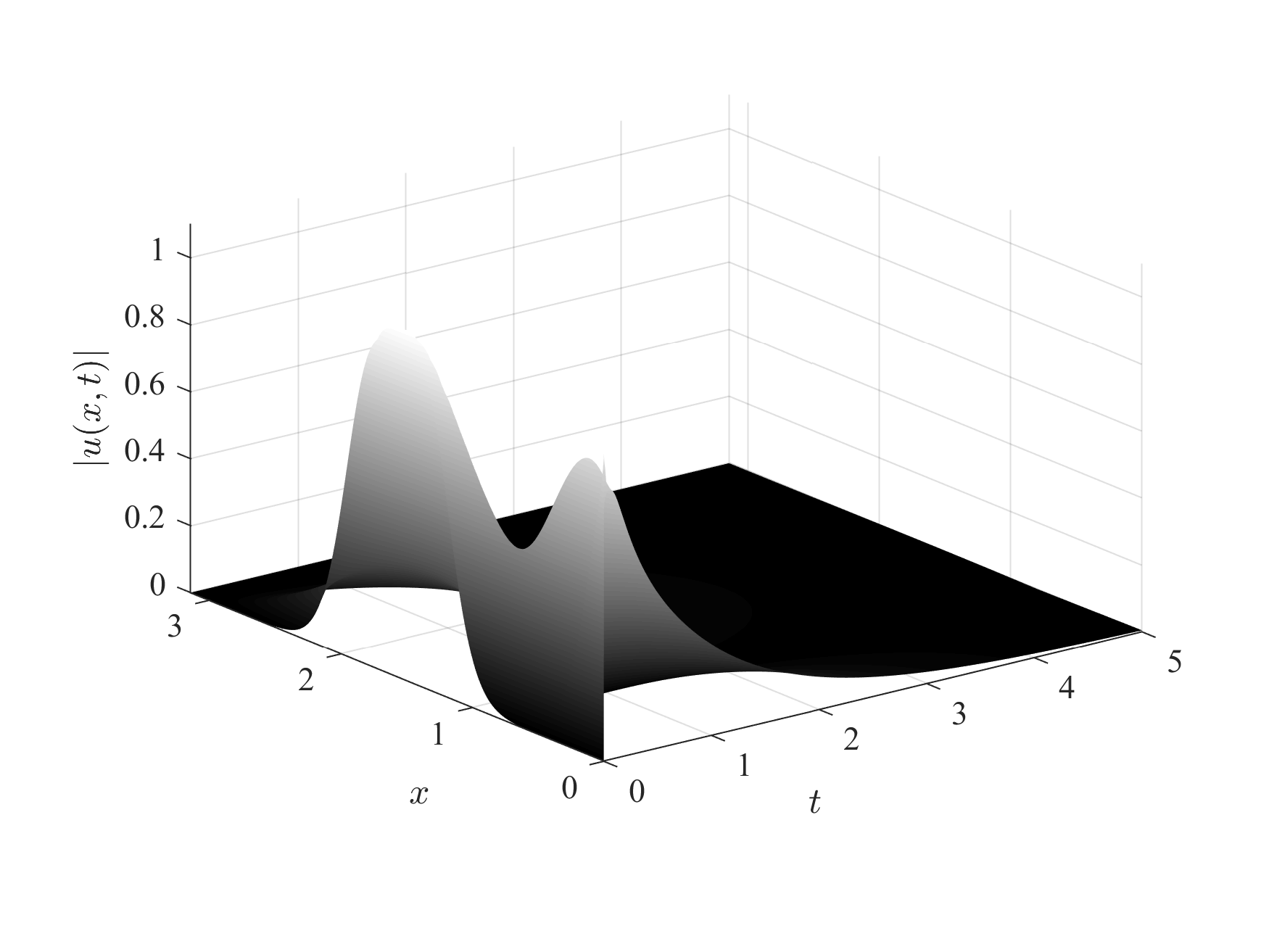}
		\label{fig:plant_3d_3_obc}
		\vspace{-5 mm}
		\caption{3d plot of $|u(x,t)|$.}
	\end{subfigure}
	~
	\begin{subfigure}[b]{0.5\textwidth}
		\includegraphics[width=\textwidth]{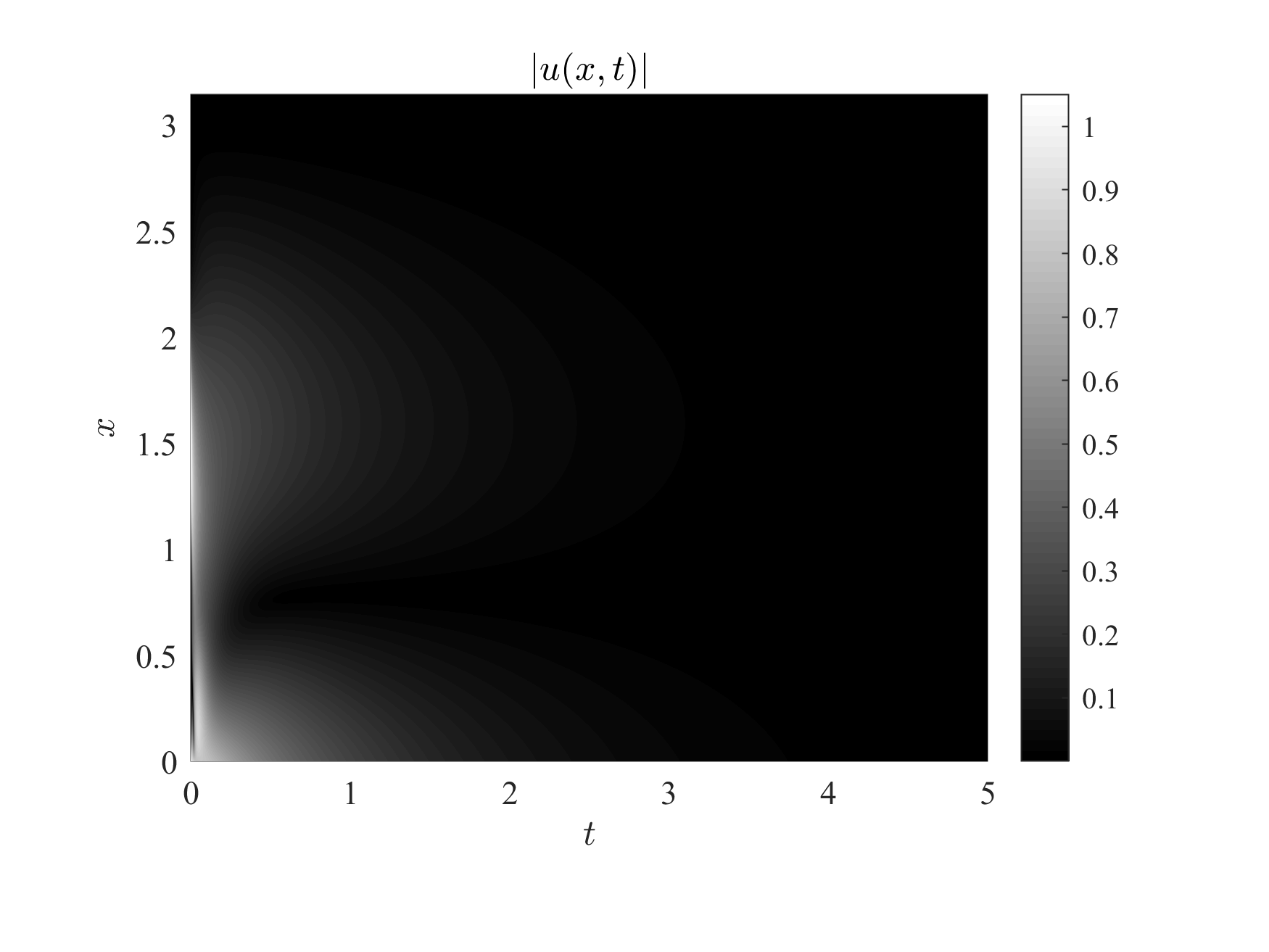}
		\label{fig:plant_contour_3_obc}
		\vspace{-5 mm}
		\caption{Contour plot of $|u(x,t)|$.}
	\end{subfigure}\\
	
	\begin{subfigure}[b]{0.5\textwidth}
		\includegraphics[width=\textwidth]{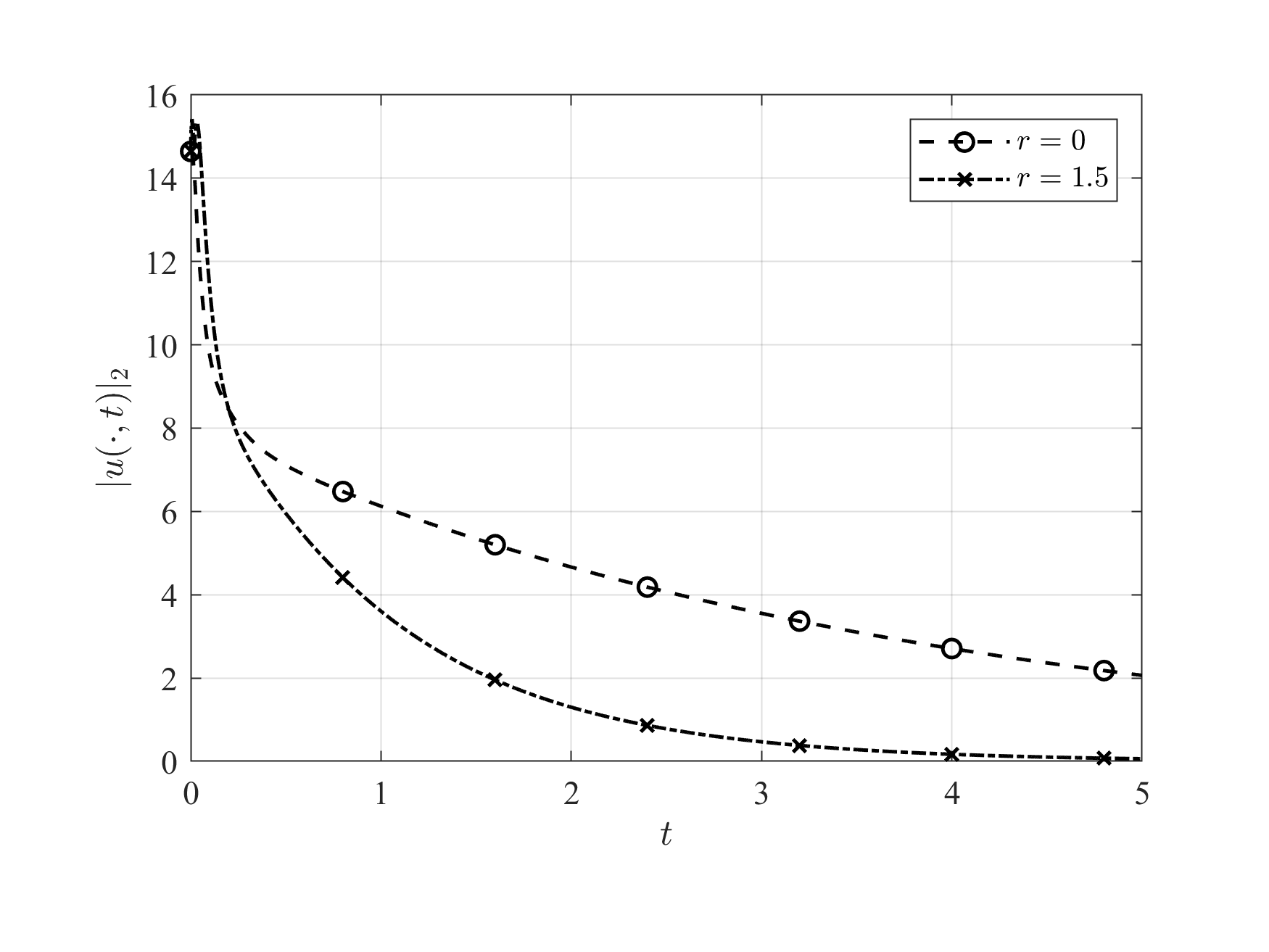}
		\label{fig:L2norm_3_obc}
		\vspace{-5 mm}
		\caption{Comparison of $L^2$ norms of $|u(x,t)|$ in the absence (circle) and presence (star) of control.}
	\end{subfigure}
	\caption{Numerical results of the controlled nonlinear model for $0 < p < 1$.}
	\label{fig:plant_3_obc}
\end{figure}
\section{Concluding remarks}
In this paper, we designed the left endpoint Dirichlet backstepping boundary controller for higher order Schrödinger equations. Our setup was that two homogeneous Dirichlet-Neumann or Dirichlet-second order boundary conditions were imposed at the opposite (i.e. right) endpoint of the boundary. This setup has the advantage that the boundary value problem for the backstepping kernel model becomes wellposed, and moreover the sought after kernel becomes smooth.

On the other hand, if one considers the problem of inserting a controller or two controllers at the right hand side, then it turns out that the pde model for the kernel becomes overdetermined. The same issue also occurs in other third order equations such as the Korteweg-de Vries (KdV) equation \cite{Cerpa2013}. In addition, it is not difficult to show that such a kernel model will not have a smooth solution \cite{BatalOzsari2018-1}. This problem was first treated by \cite{Cor14} via extending the overdetermined kernel model from a triangular domain into a rectangular domain and using the exact (Neumann) boundary controllability property for the underlying dynamics. The drawback was that it only applied to domains of uncritical lengths since it relied on the exact controllability, which only holds for such domains. Most recently, the first two authors introduced another approach in \cite{BatalOzsari2018-1} which is based on using an imperfect kernel by disregarding one of the boundary conditions from the overdetermined kernel model. This approach eliminated the dependence on the type of domain, but the exponential decay rate could not be made as large as possible.

We should remind the reader that this technical issue does not occur if the controller acts from the right endpoint with two boundary conditions specified at the left; see for instance \cite{Tang2013} and \cite{Tang2015}. However, the location and type of boundary conditions are determined by the intrinsic nature of the physical model, and one in general does not
have the chance to choose the number of boundary conditions at a particular endpoint.

We leave the theory of right endpoint controllability and related numerical work to a  future paper, as the length of the current text is getting too long.
\appendix
\addcontentsline{toc}{section}{Appendices}
\addtocontents{toc}{\protect\setcounter{tocdepth}{-1}}
\section{Deduction of kernel PDE model \eqref{kernela}}\label{kerneldeduct}
In this section, we present the details of the calculations for obtaining the kernel model given in \eqref{kernela}. Differentiating \eqref{backstepping} in $t$, replacing $u_t(y,t)$ by $-\beta u_{yyy}(y,t)+i\alpha u_{yy}(y,t)-\delta u_y(y,t),$ integrating by parts in $y$, and using the boundary conditions $u(L,t)=u_x(L,t)=0$, we get
\begin{equation}
\begin{split}
w_t(x,t)=& u_t(x,t)- \int_x^Lk(x,y)u_t(y,t)dy\\
=&u_t(x,t)+ \int_x^Lk(x,y)[\beta u_{yyy}(y,t)-i\alpha u_{yy}(y,t)+\delta u_y(y,t)]dy\\
=&u_t(x,t)+ k(x,y)[\beta u_{xx}(y,t)-i\alpha u_{x}(y,t)+\delta u(y,t)]_x^L\\
&-\int_x^Lk_y(x,y)[\beta u_{yy}(y,t)-i\alpha u_{y}(y,t)+\delta u(y,t)]dy\\
=&u_t(x,t)+ \beta k(x,L)u_{xx}(L,t) \\
&-k(x,x)[\beta u_{xx}(x,t)-i\alpha u_{x}(x,t)+\delta u(x,t)]\\
&-k_y(x,y)[\beta u_x(y,t)-i\alpha u(y,t)]_x^L \\
&+\int_x^Lk_{yy}(x,y)[\beta u_{y}(y,t)-i\alpha u(y,t)]dy -\delta \int_x^Lk_y(x,y)u(y,t)dy\\
=&u_t(x,t)+ \beta k(x,L)u_{xx}(L,t) \\
&-k(x,x)[\beta u_{xx}(x,t)-i\alpha u_{x}(x,t)+\delta u(x,t)]\\
&+k_y(x,x)[\beta u_x(x,t)-i\alpha u(x,t)]-\beta k_{yy}(x,x)u(x,t)\\
&-\int_x^L[\beta k_{yyy}(x,y)+i\alpha k_{yy}(x,y)+\delta k_y(x,y)]u(y,t)dy.\label{backstepping-t}
\end{split}
\end{equation}
Similarly, differentiating \eqref{backstepping} in $x$, up to order three, we obtain
\begin{equation}w_x(x,t)= u_x(x,t)- \int_x^Lk_x(x,y)u(y,t)dy+k(x,x)u(x,t),\label{backstepping-x1}
\end{equation}
\begin{equation}
\begin{split}
w_{xx}(x,t)=& u_{xx}(x,t)-\int_x^Lk_{xx}(x,y)u(y,t)dy+k_{x}(x,x)u(x,t)\\
&+\left[\frac{d}{d x}k(x,x)\right]u(x,t)+k(x,x)u_x(x,t),\label{backstepping-x2}
\end{split}
\end{equation}
and
\begin{equation}
\begin{split}
w_{xxx}(x,t)=&u_{xxx}(x,t)-\int_x^Lk_{xxx}(x,y)u(y,t)dy \\
&+k_{xx}(x,x)u(x,t)+\frac{d}{dx}[k_{x}(x,x)]u(x,t)\\
&+k_{x}(x,x)u_x(x,t)+\frac{d^2}{dx^2}[k(x,x)]u(x,t)\\
&+2\left[\frac{d}{dx}k(x,x)\right]u_x(x,t)+k(x,x)u_{xx}(x,t).\label{backstepping-x3}
\end{split}
\end{equation}
We find that
\begin{equation}
\begin{split}
&iw_t + i\beta w_{xxx} +\alpha w_{xx} +i\delta w_x + ir w\\
=& iu_t(x,t)+ i\beta k(x,L)u_{xx}(L,t)-ik(x,x)[\beta u_{xx}(x,t)-i\alpha u_{x}(x,t)+\delta u(x,t)]\\
&+ik_y(x,x)[\beta u_x(x,t)-i\alpha u(x,t)]-i\beta k_{yy}(x,x)u(x,t))\\
&-i\int_x^L[\beta k_{yyy}(x,y)+i\alpha k_{yy}(x,y)+\delta k_y(x,y)]u(y,t)dy\\
&+i\beta u_{xxx}(x,t)-i\beta\int_x^Lk_{xxx}(x,y)u(y,t)dy+i\beta k_{xx}(x,x)u(x,t)\\
&+i\beta\frac{d}{dx}[k_{x}(x,x)]u(x,t)+i\beta k_{x}(x,x)u_x(x,t)+i\beta\frac{d^2}{dx^2}[k(x,x)]u(x,t)\\
&+2i\beta\left[\frac{d}{dx}k(x,x)\right]u_x(x,t)+i\beta k(x,x)u_{xx}(x,t)\\
&+\alpha u_{xx}(x,t)-\alpha\int_x^Lk_{xx}(x,y)u(y,t)dy+\alpha k_{x}(x,x)u(x,t)\\
&+\alpha\left[\frac{d}{d x}k(x,x)\right]u(x,t)+\alpha k(x,x)u_x(x,t)\\
&+i\delta u_x(x,t)- i\delta \int_x^Lk_x(x,y)u(y,t)dy+i\delta k(x,x)u(x,t)\\
&+ir u(x,t)- ir\int_x^Lk(x,y)u_t(y,t)dy.\label{wtoplam1}
\end{split}
\end{equation}
Recall from \eqref{heatlin} that \begin{equation}\label{udenklin}
iu_t(x,t)+i\beta u_{xxx}(x,t)+\alpha u_{xx}(x,t)+i\delta u_x(x,t)=0.
\end{equation}  Therefore, the right hand side of \eqref{wtoplam1} can be rewritten as
\begin{equation}
\begin{split}
&-i\int_x^L\left[\left(\beta (k_{yyy}+k_{xxx})+i\alpha (k_{yy}-k_{xx})+\delta(k_y+k_x)+rk\right)(x,y)\right]u(y,t)dy\\
&+\left[3i\beta\frac{d}{dx}k_x(x,x)+ir\right]u(x,t)+i\beta k(x,L)u_{xx}(L,t)\\
&+\left[\frac{d}{dx}k(x,x)\right]\left[3i\beta u_x(x,t)+2\alpha u(x,t)\right].\label{wtoplamrhs}
\end{split}
\end{equation}
Assuming $\displaystyle k(x,L)=\frac{d}{dx}k(x,x)=0$ and $\displaystyle \beta\frac{d}{dx}k_x(x,x)=-\frac{r}{3}$, we can make sure that \eqref{wtoplamrhs} vanishes.  Note that the condition $\displaystyle k(x,L)=\frac{d}{dx}k(x,x)=0$ can be rewritten as $\displaystyle k(x,L)=k(x,x)=0$ since $k(x,x)$ is constant and equal to $k(L,L)=0$.
\section{Deduction of kernel PDE model \eqref{p}}\label{dedkerp}
In this section, we present the details of the calculations for obtaining the kernel model given in \eqref{p}. Differentiating \eqref{transtildew} in $t$, replacing $\tilde{w}_t(y,t)$ by $-\beta \tilde{w}_{yyy}(y,t)+i\alpha \tilde{w}_{yy}(y,t)-\delta \tilde{w}_y(y,t)-r\tilde{w},$ integrating by parts in $y$, and using the boundary conditions $\tilde{w}(L,t)=\tilde{w}_x(L,t)=0$, we get
\begin{equation*}\label{backstepping-tb1}
\begin{split}
\tilde{u}_t(x,t)=& \tilde{w}_t(x,t)- \int_x^Lp(x,y)\tilde{w}_t(y,t)dy\\
=&\tilde{w}_t(x,t)+ \int_x^Lp(x,y)[\beta \tilde{w}_{yyy}(y,t)-i\alpha \tilde{w}_{yy}(y,t)+\delta \tilde{w}_y(y,t)+r\tilde{w}]dy\\
=&\tilde{w}_t(x,t)+ p(x,y)[\beta \tilde{w}_{xx}(y,t)-i\alpha \tilde{w}_{x}(y,t)+\delta \tilde{w}(y,t)]_x^L\\
&-\int_x^Lp_y(x,y)[\beta \tilde{w}_{yy}(y,t)-i\alpha \tilde{w}_{y}(y,t)+\delta \tilde{w}(y,t)]dy \\
&+\int_x^Lp(x,y)r\tilde{w}(y,t)dy\\
=&\tilde{w}_t(x,t)+ \beta p(x,L)\tilde{w}_{xx}(L,t) \\
&-p(x,x)[\beta \tilde{w}_{xx}(x,t)-i\alpha \tilde{w}_{x}(x,t)+\delta \tilde{w}(x,t)]\\
&-p_y(x,y)[\beta \tilde{w}_x(y,t)-i\alpha \tilde{w}(y,t)]_x^L \\
&+\int_x^Lp_{yy}(x,y)[\beta \tilde{w}_{y}(y,t)-i\alpha \tilde{w}(y,t)]dy\\
&-\int_x^L(\delta p_y(x,y)-rp(x,y))\tilde{w}(y,t)dy\\
=&\tilde{w}_t(x,t)+ \beta p(x,L)\tilde{w}_{xx}(L,t)\\
&-p(x,x)[\beta \tilde{w}_{xx}(x,t)-i\alpha \tilde{w}_{x}(x,t)+\delta \tilde{w}(x,t)]\\
&+p_y(x,x)[\beta \tilde{w}_x(x,t)-i\alpha \tilde{w}(x,t)]-\beta p_{yy}(x,x)\tilde{w}(x,t)\\
&-\int_x^L[\beta p_{yyy}(x,y)+i\alpha p_{yy}(x,y)+\delta p_y(x,y)-rp(x,y)]\tilde{w}(y,t)dy.
\end{split}
\end{equation*}
Similarly, differentiating \eqref{transtildew} in $x$, up to order three, we obtain
\begin{gather}\tilde{u}_x(x,t)= \tilde{w}_x(x,t)- \int_x^Lp_x(x,y)\tilde{w}(y,t)dy+p(x,x)\tilde{w}(x,t),\label{backstepping-x1}\end{gather}
\begin{equation}
\begin{split}
\tilde{u}_{xx}(x,t) =& \tilde{w}_{xx}(x,t)-\int_x^Lp_{xx}(x,y)\tilde{w}(y,t)dy+p_{x}(x,x)\tilde{w}(x,t)\\
&+\left[\frac{d}{d x}p(x,x)\right]\tilde{w}(x,t)+p(x,x)\tilde{w}_x(x,t),\label{backstepping-x2}
\end{split}
\end{equation} and
\begin{equation}
\begin{split}\tilde{u}_{xxx}(x,t) =& \tilde{w}_{xxx}(x,t)-\int_x^Lp_{xxx}(x,y)\tilde{w}(y,t)dy+p_{xx}(x,x)\tilde{w}(x,t) \\
&+\frac{d}{dx}[p_{x}(x,x)]\tilde{w}(x,t)
+p_{x}(x,x)\tilde{w}_x(x,t)+\frac{d^2}{dx^2}[p(x,x)]\tilde{w}(x,t)\\
&+2\left[\frac{d}{dx}p(x,x)\right]\tilde{w}_x(x,t)+p(x,x)\tilde{w}_{xx}(x,t).\label{backstepping-x3}
\end{split}
\end{equation}
We find that
\begin{equation}
\begin{split}
\label{wtoplam2}
&i\tilde{u}_t + i\beta \tilde{u}_{xxx} +\alpha \tilde{u}_{xx} +i\delta \tilde{u}_x +p_1(x)\tilde{u}_{xx}(L)\\
=& i\tilde{w}_t(x,t)+ i\beta p(x,L)\tilde{w}_{xx}(L,t) \\
&-ip(x,x)[\beta \tilde{w}_{xx}(x,t)-i\alpha \tilde{w}_{x}(x,t)+\delta \tilde{w}(x,t)]\\
&+ip_y(x,x)[\beta \tilde{w}_x(x,t)-i\alpha \tilde{w}(x,t)]-i\beta p_{yy}(x,x)\tilde{w}(x,t)\\
&-i\int_x^L[\beta p_{yyy}(x,y)+i\alpha p_{yy}(x,y)+\delta p_y(x,y)-rp(x,y)]\tilde{w}(y,t)dy\\
&+i\beta \tilde{w}_{xxx}(x,t)-i\beta\int_x^Lp_{xxx}(x,y)\tilde{w}(y,t)dy+i\beta p_{xx}(x,x)\tilde{w}(x,t)\\
&+i\beta\frac{d}{dx}[p_{x}(x,x)]\tilde{w}(x,t)\\
&+i\beta p_{x}(x,x)\tilde{w}_x(x,t)+i\beta\frac{d^2}{dx^2}[p(x,x)]\tilde{w}(x,t)\\
&+2i\beta\left[\frac{d}{dx}p(x,x)\right]\tilde{w}_x(x,t)+i\beta p(x,x)\tilde{w}_{xx}(x,t)\\
&+\alpha \tilde{w}_{xx}(x,t)-\alpha\int_x^Lp_{xx}(x,y)\tilde{w}(y,t)dy+\alpha p_{x}(x,x)\tilde{w}(x,t)\\
&+\alpha\left[\frac{d}{d x}p(x,x)\right]\tilde{w}(x,t)+\alpha p(x,x)\tilde{w}_x(x,t)+i\delta \tilde{w}_x(x,t)\\
&- i\delta\int_x^Lp_x(x,y)\tilde{w}(y,t)dy+i\delta p(x,x)\tilde{w}(x,t)+p_1(x)\tilde{w}_{xx}(L,t).
\end{split}
\end{equation}
Assuming $p(x,x)=0$ and $p_1(x)=-i\beta p(x,L)$, the right hand side of \eqref{wtoplam2} reduces to
\begin{equation}
\begin{split}
&\left(3i\beta \frac{d}{dx}p_{x}(x,x)-ir\right)\tilde{w}(x,t) \\
&-i\int_x^L[\beta p_{yyy}(x,y)+i\alpha p_{yy}(x,y)+\delta p_y(x,y)-rp(x,y)]\tilde{w}(y,t)dy\\
&-i\int_x^L[\beta p_{xxx}(x,y)-i\alpha p_{xx}(x,y)+\delta p_x(x,y)]\tilde{w}(y,t)dy.\label{sonhesap}
\end{split}
\end{equation}
On the other hand, from the boundary condition $\tilde{u}(0)=\tilde{w}(0)=0$, we get
$0=\int_0^Lp(0,y)\tilde{w}(y,t),$ which implies $p(0,y)=0.$

\section{Deduction of the kernel PDE model \eqref{kernela_obc}}\label{appkernel2}
In this section, we present the details of the calculations for obtaining the kernel model given in \eqref{kernela_obc}. Differentiating \eqref{backstepping_obc} in $t$, replacing $u_t(y,t)$ by $-\beta u_{yyy}(y,t)+i\alpha u_{yy}(y,t)-\delta u_y(y,t),$ integrating by parts in $y$, we get
\begin{equation*}
\begin{split}
w_t(x,t) =& u_t(x,t) - \int_x^L \ell(x,y) u_t(y,t)dy \\
=& u_t(x,t) + \int_x^L \ell(x,y) \left[\beta u_{yyy}(y,t) - i\alpha u_{yy}(y,t) + \delta u_y(y,t)\right]dy \\
=& u_t(x,t)  \\
&+\beta \left(\ell(x,y)u_{xx}(y,t) - \ell_y(x,y)u_x(y,t) + \ell_{yy}(x,y)u(y,t)\Biggr|_x^L\right.\\
&\left. - \int_x^L\ell_{yyy}(x,y)u(y,t)dy \right)  \\
& -i\alpha \left(\ell(x,y)u_x(y,t) - \ell_y(x,y) u(y,t)\Biggr|_x^L + \int_x^L \ell_{yy}(x,y)u(y,t)dy\right) \\
&+ \delta \left(\ell(x,y)u(y,t)\Biggr|_x^L - \int_x^L \ell_y(x,y)u(y,t)dy\right).
\end{split}
\end{equation*}
Multiplying the last expression by $i$, using the boundary conditions $u_x(L,t)=u_{xx}(L,t)=0$ and rearranging the terms $u(L,t)$, $u(x,t)$ $u_{x}(x,t)$ and $u_{xx}(x,t)$, we obtain
\begin{equation} \label{backstepping-t_obc}
\begin{split}
iw_t(x,t) =& iu_t(x,t) + \int_x^L \left[-i\beta \ell_{yyy}(x,y) + \alpha \ell_{yy}(x,y) - i\delta \ell_y(x,y)\right]u(y,t)dy \\
& +u(L,t) \left[i\beta \ell_{yy}(x,L) - \alpha \ell_y(x,L) + i\delta \ell(x,L)\right]  \\
& +u(x,t)\left[-i\beta \ell_{yy}(x,x) + \alpha \ell_y(x,x) - i\delta \ell(x,x)\right]  \\
&+u_x(x,t)\left[i\beta \ell_y(x,x) - \alpha \ell(x,x)\right] - i\beta \ell(x,x)u_{xx}(x,t).
\end{split}
\end{equation}
Next we differentiate \eqref{backstepping_obc} up to the order three and multiply the results by $i\delta$, $\alpha$ and $i \beta$, respectively to obtain
\begin{equation} \label{backstepping-x1_obc}
i\delta w_x(x,t) =  i\delta u_x(x,t) - \int_x^L i\delta \ell_x(x,y) u(y,t)dy + i\delta \ell(x,x) u(x,t),
\end{equation}
\begin{equation} \label{backstepping-x2_obc}
\begin{split}
\alpha w_{xx}(x,t)
=& \alpha u_{xx}(x,t) - \int_x^L \alpha \ell_{xx}(x,y)u(y,t)dy
\\
&+\alpha u(x,t) \left(\ell_x(x,x) + \frac{d}{dx} \ell(x,x)\right) + \alpha u_x(x,t)\ell(x,x),
\end{split}
\end{equation}
and
\begin{equation} \label{backstepping-x3_obc}
\begin{split}
i\beta w_{xxx}(x,t) =&  i\beta u_{xxx}(x,t) - \int_x^L i \beta \ell_{xxx}(x,y)u(y,t)dy \\
&+ i\beta u(x,t)\left(\ell_{xx}(x,x) + \frac{d}{dx}\ell_x(x,x) + \frac{d^2}{dx^2} \ell(x,x)\right) \\
&+i\beta u_x(x,t) \left(\ell_x(x,x) + 2\frac{d}{dx} \ell(x,x)\right) + i\beta u_{xx}(x,t) \ell(x,x).
\end{split}
\end{equation}
Adding \eqref{backstepping-t_obc}, \eqref{backstepping-x1_obc}, \eqref{backstepping-x2_obc} and \eqref{backstepping-x3_obc} side by side together with $$irw(x,t) = iru(x,t) - ir \int_x^L \ell(x,y)u(y,t)dy$$ we obtain
\begin{equation*}
\begin{split}
&iw_t + i\beta w_{xxx} + \alpha w_{xx} + i\delta w_x + ir w \\
=& iu_t + i\beta u_{xxx} + \alpha u_{xx} + i\delta u_x \\
&+ \int_x^L u(y,t)\left[-i\beta\left(\ell_{xxx}+\ell_{yyy}\right)-\alpha\left(\ell_{xx}-\ell_{yy}\right)-i\delta\left(\ell_x + \ell_y\right) - ir\ell\right](x,y)dy  \\
&+u(L,t)\left[i\beta \ell_{yy}(x,L) - \alpha \ell_y(x,L) + i\delta \ell(x,L)\right] \\
&+u(x,t)\left[i\beta \left(\ell_{xx}(x,x) - \ell_{yy}(x,x) + \frac{d}{dx}\ell_x(x,x) + \frac{d^2}{dx^2} \ell(x,x)\right) \right.\\
&\left. +\alpha \left(\ell_x(x,x)+ \ell_y(x,x) + \frac{d}{dx}\ell(x,x)\right) + ir \right]  \\
&+u_x(x,t) \left[i\beta\left(\ell_x(x,x)+\ell_y(x,x)+2\frac{d}{dx}\ell(x,x)\right)\right].
\end{split}
\end{equation*}
Using the relation $\frac{d}{dx}\ell(x,y) = \ell_x(x,x) + \ell_y(x,x)$ we see that if
\begin{equation} \label{kernel3_bc2_obc}
\frac{d}{dx}\ell(x,x) = 0,
\end{equation} then the term in front of $u_x(x,t)$ inside the square brackets is zero.
From this assumption and $\frac{d}{dx}\ell(x,y) = \ell_x(x,x) + \ell_y(x,x)$, the term in front of $u(x,t)$ inside the square brackets is equivalent to
\begin{equation} \label{kernel3_bc3_obc}
3 \beta \frac{d}{dx}\ell_x(x,x) + r = 0.
\end{equation}
On the other hand, letting $x = L$ on \eqref{backstepping-x1_obc}, we see that it is enough to assume that $\ell(L,L) = 0$ in order to ensure $u_x(L,t) = w_x(L,t)$ which, from \eqref{kernel3_bc2_obc}, implies $\ell(x,x) = 0$. Again letting $L = 0$ in \eqref{backstepping-x3_obc}, we force $\ell_x(L,L) = 0$ in order to ensure $u_{xx}(L,t) = w_{xx}(L,t),$ and from \eqref{kernel3_bc3_obc} this implies $\ell_x(x,x) = \frac{r(L - x)}{3\beta}$.

\section{Deduction of the kernel PDE model \eqref{p_obc}}\label{appen5}
In this section, we present the details of the calculations for obtaining the kernel model given in \eqref{p_obc}. Differentiating \eqref{p_obc} in $t$, replacing $\tilde{w}_t(y,t)$ by $-\beta \tilde{w}_{yyy}(y,t)+i\alpha \tilde{w}_{yy}(y,t)-\delta \tilde{w}_y(y,t),$ integrating by parts in $y$, we get
\begin{align*}
\tilde{u}_t(x,t) &= \tilde{w}_t(x,t) - \int_x^L p(x,y) \tilde{w}_t(y,t)dy \\
&= \tilde{w}_t(x,t) + \int_x^L p(x,y) \left[\beta \tilde{w}_{yyy}(y,t) - i\alpha \tilde{w}_{yy}(y,t) + \delta \tilde{w}_y(y,t) + r \tilde{w}(y,t)\right]dy \\
&= \tilde{w}_t(x,t) \\
&\quad +\beta \left(k(x,y)\tilde{w}_{xx}(y,t) - p_y(x,y)\tilde{w}_x(y,t) + p_{yy}(x,y)\tilde{w}(y,t)\Biggr|_x^L\right.\\
&\quad\left. - \int_x^Lp_{yyy}(x,y)\tilde{w}(y,t)dy \right)  \\
&\quad -i\alpha \left(p(x,y)\tilde{w}_x(y,t) - p_y(x,y) \tilde{w}(y,t)\Biggr|_x^L + \int_x^L p_{yy}(x,y)\tilde{w}(y,t)dy\right)  \\
&\quad+\delta \left(p(x,y)\tilde{w}(y,t)\Biggr|_x^L - \int_x^L p_y(x,y)\tilde{w}(y,t)dy\right)  \\
&\quad +r \int_x^L p(x,y)\tilde{w}(y,t) dy.
\end{align*}
Multiplying the last expression by $i$, using the boundary conditions $\tilde{w}_x(L,t)=\tilde{w}_{xx}(L,t)=0$, and rearranging the terms $\tilde{w}(L,t)$, $\tilde{w}(x,t)$ $\tilde{w}_{x}(x,t)$ and $\tilde{w}_{xx}(x,t)$, we obtain
\begin{equation} \label{backstepping_p-t_obc}
\begin{split}
i\tilde{u}_t(x,t) =& i\tilde{w}_t(x,t) \\
& + \int_x^L \left[-i\beta p_{yyy}(x,y) + \alpha p_{yy}(x,y) - i\delta p_y(x,y) + irp(x,y)\right]\tilde{w}(y,t)dy \\
& +\tilde{w}(L,t) \left[i\beta p_{yy}(x,L) - \alpha p_y(x,L) + i\delta p(x,L)\right]  \\
& +\tilde{w}(x,t)\left[-i\beta p_{yy}(x,x) + \alpha p_y(x,x) - i\delta p(x,x)\right] \\
& +\tilde{w}_x(x,t)\left[i\beta p_y(x,x) - \alpha p(x,x)\right] - i\beta p(x,x)\tilde{w}_{xx}(x,t)
\end{split}
\end{equation}
Next we differentiate \eqref{p_obc} up to order three and multiply the results by $i\delta$, $\alpha$ and $i \beta$, respectively, to obtain
\begin{align} \label{backstepping_p-x1_obc}
i\delta \tilde{u}_x(x,t) &=  i\delta \tilde{w}_x(x,t) - \int_x^L i\delta p_x(x,y) \tilde{w}(y,t)dy + i\delta p(x,x) \tilde{w}(x,t) ,
\end{align}
\begin{align} \label{backstepping_p-x2_obc}
\alpha \tilde{u}_{xx}(x,t) &= \alpha \tilde{w}_{xx}(x,t) - \int_x^L \alpha p_{xx}(x,y)\tilde{w}(y,t)dy  \nonumber\\
&\quad +\alpha \tilde{w}(x,t) \left(p_x(x,x) + \frac{d}{dx} p(x,x)\right) + \alpha \tilde{w}_x(x,t)p(x,x),
\end{align}
and
\begin{equation} \label{backstepping_p-x3_obc}
\begin{split}
i\beta \tilde{u}_{xxx}(x,t) &= i\beta \tilde{w}_{xxx}(x,t) - i\beta\frac{\partial}{\partial x}\int_x^L p_{xx}(x,y)\tilde{w}(y,t)dy  \\
&\quad +i\beta \frac{\partial}{\partial x} \left(\tilde{w}(x,t) \left(p_x(x,x) + \frac{d}{dx} p(x,x)\right) + \tilde{w}_x(x,t)p(x,x)\right)\\
&= i\beta \tilde{w}_{xxx}(x,t) - \int_x^L i \beta p_{xxx}(x,y)\tilde{w}(y,t)dy \\
&\quad + i\beta \tilde{w}(x,t)\left(p_{xx}(x,x) + \frac{d}{dx}p_x(x,x) + \frac{d^2}{dx^2} p(x,x)\right) \\
&\quad +i\beta \tilde{w}_x(x,t) \left(p_x(x,x) + 2\frac{d}{dx} p(x,x)\right) + i\beta \tilde{w}_{xx}(x,t) p(x,x).
\end{split}
\end{equation}
Adding \eqref{backstepping_p-t_obc}, \eqref{backstepping_p-x1_obc}, \eqref{backstepping_p-x2_obc} and \eqref{backstepping_p-x3_obc} together with $\tilde{u}(L,t) = \tilde{w}(L,t)$ side by side we obtain
\begin{equation*}
\begin{split}
&i\tilde{u}_t + i\beta \tilde{u}_{xxx} + \alpha \tilde{u}_{xx} + i\delta \tilde{u}_x +
p_1(x) \tilde{u}(L,t) \\
=& i\tilde{w}_t + i\beta \tilde{w}_{xxx} + \alpha \tilde{w}_{xx} + i\delta \tilde{w}_x + p_1(x)\tilde{w}(L,t) \\
&+ \int_x^L \tilde{w}(y,t)\left[-i\beta\left(p_{xxx}+p_{yyy}\right)-\alpha\left(p_{xx}-p_{yy}\right)-i\delta\left(p_x + p_y\right) + irp\right](x,y)dy  \\
&+\tilde{w}(L,t)\left[i\beta p_{yy}(x,L) - \alpha p_y(x,L) + i\delta p(x,L)\right]\\
&+\tilde{w}(x,t)\left[i\beta \left(p_{xx}(x,x) - p_{yy}(x,x) + \frac{d}{dx}p_x(x,x) + \frac{d^2}{dx^2} p(x,x)\right) \right.\\
&\left.+ \alpha \left(p_x(x,x)+ p_y(x,x) + \frac{d}{dx}p(x,x)\right)\right]  \\
&+\tilde{w}_x(x,t) \left[i\beta\left(p_x(x,x)+p_y(x,x)+2\frac{d}{dx}(x,x)\right)\right].
\end{split}
\end{equation*}
which is, by \eqref{error_obc} and \eqref{tildew_obc}, equivalent to
\begin{equation*}
\begin{split}
0 =& -ir\tilde{w}(x,t) \\
&+ \int_x^L \tilde{w}(y,t)\left[-i\beta\left(p_{xxx}+p_{yyy}\right)-\alpha\left(p_{xx}-p_{yy}\right)-i\delta\left(p_x + p_y\right) + irp\right](x,y)dy \\
&+\tilde{w}(L,t)\left[p_1(x) + i\beta p_{yy}(x,L) - \alpha p_y(x,L) + i\delta p(x,L)\right] \\
&+\tilde{w}(x,t)\left[i\beta \left(p_{xx}(x,x) - p_{yy}(x,x) + \frac{d}{dx}p_x(x,x) + \frac{d^2}{dx^2} p(x,x)\right) \right.\\
&\left. +\alpha \left(p_x(x,x)+ p_y(x,x) + \frac{d}{dx}p(x,x)\right)\right]  \\
&+\tilde{w}_x(x,t) \left[i\beta\left(p_x(x,x)+p_y(x,x)+2\frac{d}{dx}(x,x)\right)\right].
\end{split}
\end{equation*}
Using the relation $\frac{d}{dx}p(x,y) = p_x(x,x) + p_y(x,x)$ we see that if
\begin{equation} \label{kernel4_bc2_obc}
\frac{d}{dx} p(x,x) = 0,
\end{equation}
then the term in front of $\tilde{w}_x(x,t)$ inside the square brackets is zero. From this assumption and $\frac{d}{dx}p(x,y) = p_x(x,x) + p_y(x,x)$, the term in front of $\tilde{w}(x,t)$ inside the square brackets together with $-ir \tilde{w}(x,t)$ is equivalent to
\begin{equation} \label{kernel4_bc3_obc}
3 \beta \frac{d}{dx}p_x(x,x) - r = 0.
\end{equation}
On the other hand, letting $x = L$ on \eqref{backstepping_p-x1_obc}, we see that we must have $p(L,L) = 0$ in order to ensure $\tilde{u}_x(L,t) = \tilde{w}_x(L,t)$ which, from \eqref{kernel4_bc2_obc} implies $p(x,x) = 0$. Again letting $L = 0$ in \eqref{backstepping_p-x2_obc}, we force $p_x(L,L) = 0$ in order to ensure $\tilde{u}_{xx}(L,t) = \tilde{w}_{xx}(L,t)$ and from \eqref{kernel4_bc3_obc} this implies $p_x(x,x) = -\frac{r(L - x)}{3\beta}$. Also, letting $x = 0$ in the backstepping transformation \eqref{transtildew}, we obtain $p(0,y) = 0$ in order to ensure $\tilde{u}(0,t) = \tilde{w}(0,t)$. Finally, assuming
\begin{equation}
p_1(x) = -i\beta p_{yy}(x,L) + \alpha p_y(x,L) - i\delta p(x,L)
\end{equation}
we obtain the pde model \eqref{p_obc}.

\bibliographystyle{amsplain}
\providecommand{\bysame}{\leavevmode\hbox to3em{\hrulefill}\thinspace}
\providecommand{\MR}{\relax\ifhmode\unskip\space\fi MR }
\providecommand{\MRhref}[2]{%
  \href{http://www.ams.org/mathscinet-getitem?mr=#1}{#2}
}
\providecommand{\href}[2]{#2}

\end{document}